\numberwithin{equation}{section}
\theoremstyle{plain}
\newtheorem{theo}{Theorem}[section]
\newtheorem{lemm}[theo]{Lemma}
\newtheorem{prop}[theo]{Proposition}
\theoremstyle{definition}
\newtheorem{exam}[theo]{Example}
\theoremstyle{remark}
\newtheorem{rema}[theo]{Remark}
\newcommand{\ZZZ}{\mathbb{Z}} %The integers
\newcommand{\ZP}{ \mathbb{\ZZZ}_{p}} % Z_p
\newcommand{\QQQ}{\mathbb{Q}} %The rationals
\newcommand{\QP}{\mathbb{\QQQ}_{p}} % p-adic number field
\newcommand{\qpb}{\overline{\QQQ}_{p}} % algebraic closure of Q_p
\newcommand{\GL}[1]{\mathrm{GL}_{#1}} % GL_{d}
\newcommand{\Dim}[1]{\mathrm{dim}_{#1}} %dimension
\newcommand{\baseo}{\emph{e}_{1}}
\newcommand{\baset}{\emph{e}_{2}}
\newcommand{\baseth}{\emph{e}_{3}}
\newcommand{\hodgen}{\mathrm{t}_{H}}
\newcommand{\newtonn}{\mathrm{t}_{N}}
\newcommand{\val}[1]{v_{p}(#1)}
\newcommand{\vphi}{\varphi}
\newcommand{\lambdao}{\lambda_{1}}
\newcommand{\lambdat}{\lambda_{2}}
\newcommand{\lambdath}{\lambda_{3}}
\newcommand{\filr}{\mathrm{Fil}^{r}}
\newcommand{\fils}{\mathrm{Fil}^{s}}
\newcommand{\fil}[1]{\mathrm{Fil}^{#1}}
\newcommand{\Bst}{\mathrm{B_{st}}}
\newcommand{\Bdr}{\mathrm{B_{dR}}}
\newcommand{\Dst}{\mathrm{D_{st}}}
\newcommand{\mfl}{\mathfrak{L}}
\newcommand{\mflo}{\mathfrak{L}_{1}}
\newcommand{\mflt}{\mathfrak{L}_{2}}
\newcommand{\mflth}{\mathfrak{L}_{3}}
\DeclareMathOperator{\rank}{rank}
\begin{document}

\title{Regular filtered $(\phi,N)$-modules of dimension $3$}

\author{Chol Park}

\address{Department of Mathematics \\
		The University of Arizona}
		
\email{cpark@math.arizona.edu}

\begin{abstract}
We classify $3$-dimensional semi-stable representations of $G_{\QP}$ with coefficients and regular Hodge--Tate weights, by determining the isomorphism classes of admissible filtered $(\phi,N)$-modules of Hodge type $(0,r,s)$ with $0<r<s$.
\end{abstract}

\maketitle

\section{Introduction}
Let $p$ be a prime number, and write $G_{\QP}$ for the Galois group $\mathrm{Gal}(\qpb/\QP)$.  In this paper, we classify $3$-dimensional semi-stable representations of $G_{\QP}$ with coefficients and with regular Hodge--Tate
weights. By a theorem of Colmez and Fontaine \cite{CF}, this is equivalent to determining the isomorphism classes of admissible filtered $(\phi,N)$-modules with Hodge type $(0,r,s)$ with $0 < r < s$, and that is what we do.

Let us explain our motivation for doing this.  This work is the first part of the author's Ph.D. thesis in which we construct deformation spaces whose characteristic $0$ closed points are the semi-stable lifts with Hodge--Tate weights $(0,1,2)$ of a fixed irreducible representation $\overline{\rho} : G_{\QP} \rightarrow \mathrm{GL}_3(\overline{\mathbb{F}}_p)$.  The existence of these deformation spaces was proved by Kisin \cite{Kisin}, and their geometry plays an essential role in the Taylor--Wiles--Kisin method \cite{Wiles, TW, KisinM} for proving the modularity of Galois representations.  In particular, the special fibers of these deformation spaces are described  by a conjecture of Breuil--M\'ezard \cite{BM} as well as a refinement of this conjecture due to Emerton--Gee \cite{EG}.  The $\mathrm{GL}_2(\QP)$ case of the Breuil--M\'ezard conjecture is a theorem of Kisin \cite{KisinF}, and implies the Fontaine--Mazur conjecture for $\mathrm{GL}_2(\mathbb{Q})$.

Our goal is to address certain special cases of the Breuil--M\'ezard conjecture for $\mathrm{GL}_{3}(\QP)$ --- namely,
the semi-stable case with Hodge--Tate weights $(0,1,2)$, with irreducible $\overline{\rho}$ --- following the method of \cite{Savitt}.  The plan, roughly speaking, is to classify lattices in semi-stable representations $\rho : G_{\QP}
\rightarrow \mathrm{GL}_3(\qpb)$ with Hodge--Tate weights $(0,1,2)$, by classifying the corresponding strongly divisible modules \cite{Breuil}  (these are certain integral structures closely related to filtered $(\phi,N)$-modules).  The first step is to classify the semi-stable representations $\rho$ with these Hodge--Tate weights, and that is what is done in this paper.  In fact, since it will add relatively little extra work, we consider the case of distinct Hodge--Tate weights $0 < r < s$, rather than simply the case $(0,1,2)$.

We find 49 families of admissible filtered $(\phi,N)$-modules of dimension $3$ for general $r$ and $s$ with $0<r<s$. Among them, there are $26$ families with $N=0$ (i.e., the crystalline case; see Subsection \ref{ssec:list of N=0}), there are $20$ families with $\rank N=1$ (see Subsection \ref{ssec:list of N=1}), and there are $3$
families with $\rank N=2$ (see Subsection \ref{ssec:list of N=2}).  This is in contrast to the $\GL{2}$ setting, where there are only three families with $N=0$ and one with $\rank N = 1$.  We also determine which of these admissible filtered $(\phi,N)$-modules correspond to irreducible representations; in fact, we determine all submodules of these admissible filtered $(\phi,N)$-modules.

We note the following mild hypothesis.  Let $E/\QP$ be a finite extension.  What we actually do is classify the admissible filtered $(\phi,N)$-modules with $E$-coefficients, corresponding to representations $\rho : G_{\QP} \rightarrow \mathrm{GL}_3(E)$, under the hypothesis that the Jordan form of the Frobenius map $\phi$ is defined over~$E$. Since the correspondence between Galois representations and filtered $(\phi,N)$-modules is compatible with extension of coefficients, this assumption is harmless.

Dousmanis \cite{Dous2} has independently treated the case of $3$-dimensional Frobenius-semisimple semi-stable representations of $G_K$ with $K/\QP$ unramified. Related problems in the case of $2$-dimensional representations have been treated in several articles.  Savitt \cite{Savitt} classifies the potentially crystalline $2$-dimensional representations of $G_{\QP}$ with tamely ramified Galois type.  This is extended to all potentially semi-stable representations of $G_{\QP}$ by Ghate and M\'ezard
\cite{GM} at least in the case when $p$ is odd, and to potentially semi-stable representations of $G_{K}$ for $K/\QP$ finite by Dousmanis \cite{Dous}.

This paper is organized as follows.  In the remainder of the introduction we give a brief review of $p$-adic Hodge theory, and introduce notation that will be used throughout the paper.  In Section $2$, we collect the admissible filtered $\phi$-modules for each Jordan form of $\phi$, and list the isomorphism classes with $N=0$ in
Subsection \ref{ssec:list of N=0}.  In Section $3$, we first find the possible types of $\phi$ satisfying $N\phi=p\phi N$ under the assumption that $N$ has rank $1$.  We then collect the admissible filtered $(\phi,N)$-modules for each type of $\phi$, and list the isomorphism classes with $\rank N=1$ in Subsection \ref{ssec:list of N=1}. The case $\rank N=2$ is treated in Section $4$, following the same method as in Section $3$.  There is only one type of $\phi$ satisfying $N\phi=p\phi N$ in this case and we list the isomorphism classes with $\rank N=2$ in Subsection \ref{ssec:list of N=2}.

\subsection{Review of filtered $(\phi,N)$-modules}
Let $K$ and $E$ be finite extensions of $\QP$ inside $\qpb$ and $K_{0}$ the maximal absolutely unramified subextension of $K$. We write $\sigma$ for the absolute Frobenius element on $K_{0}$, and $G_{K} = \mathrm{Gal}(\qpb/K)$ for the absolute Galois group of $K$. Fix a uniformizer $\pi_{K}$ for $K$, thereby fixing the inclusion
$$K\otimes_{K_{0}}\Bst\hookrightarrow \Bdr,$$
where $\Bst,\Bdr$ are rings of $p$-adic periods defined in \cite{Fontaine}.  Let $v_{p}$ be a valuation on $\qpb$ with $\val{p}=1$.

A \emph{filtered $(\phi,N)$-module} (strictly speaking, a filtered $(\phi,N,K,E)$-module) of rank $d$ is a free $(K_{0}\otimes_{\QP}E)$-module $D$ of rank $d$ together with a triple $(\phi,N,\{\fil{i}D_{K}\}_{i\in\ZZZ})$ where
\begin{itemize}
\item the \emph{Frobenius map} $\phi$ is a $\sigma$-semilinear and $E$-linear automorphism,
\item the \emph{monodromy operator} $N$ is a (nilpotent) $K_{0}\otimes_{\QP}E$-linear endomorphism such that $N\phi=p\phi N$, and
\item the \emph{Hodge filtration} $\{\fil{i}D_{K}\}_{i\in\ZZZ}$ is a decreasing filtration on $D_{K}:=K\otimes_{K_{0}}D$ such that a $(K\otimes_{\QP}E)$-module $\fil{i}D_{K}$ is
$D_{K}$ if $i\ll 0$ and $0$ if $i\gg 0$.
\end{itemize}
A \emph{filtered $\phi$-module} is a filtered $(\phi,N)$-module with trivial monodromy operator $N$. A \emph{morphism of filtered $(\phi,N)$-modules} $$\eta:D=(\phi,N,\{\fil{i}D_{K}\}_{i\in\ZZZ})\longrightarrow D'=(\phi',N',\{\fil{i}D'_{K}\}_{i\in\ZZZ})$$ is a $(K_{0}\otimes_{\QP} E)$-module homomorphism such that
\begin{itemize}
\item $\phi'\circ\eta=\eta\circ\phi$,
\item $N'\circ\eta=\eta\circ N$, and
\item the induced map $\eta_{K}:D_{K}\rightarrow D'_{K}$ satisfies $\eta_{K}(\fil{i}D_{K})\subset\eta_{K}(\fil{i}D'_{K})$ for each $i\in\ZZZ$.
\end{itemize}
The Hodge--Tate weights of a filtered $(\phi,N)$-module $D$ are the integers $r$ such that $\fil{r}D_K \neq \fil{r+1}D_K$, each counted with multiplicity $$\Dim{E}(\fil{r}D_{K}/\fil{r+1}D_{K}).$$  If the rank of $D$ over $K_0 \otimes_{\QP} E$ is $d$, then there are precisely $d \cdot [K:\QP]$ Hodge--Tate weights, with multiplicity.  If $K = \QP$ and the Hodge--Tate weights are $r_1 \le \cdots \le r_d$, we say that the filtered $(\phi,N)$-module $D$ is of \emph{Hodge type $(r_{1},...,r_{d})$}.  When the Hodge--Tate weights of $D$ are distinct, we
say that $D$ is \emph{regular} (or that it has regular Hodge--Tate weights).

If $D$ is a filtered $(\phi,N)$-module of dimension $n$ as a $K_{0}$-vector space, then we give $\otimes^{n}_{K_{0}}D$ the structure of a filtered $(\phi,N)$-module by setting
\begin{itemize}
\item $\phi=\otimes^{n}\phi$,
\item $N=N\otimes1\otimes\cdot\cdot\cdot\otimes1+1\otimes N\otimes
  1\otimes\cdot\cdot\cdot\otimes 1+ \cdots +1\otimes\cdot\cdot\cdot\otimes1\otimes N$, and
\item
  $\fil{i}(K\otimes_{K_{0}}(\otimes^{n}_{K_{0}}D))=\underset{i_{1}+i_{2}+\cdots
    + i_{n}=i}\sum \fil{i_{1}}D_{K}\otimes_{K}...\otimes_{K}\fil{i_{n}}D_{K}$.
\end{itemize}
Taking the image structure on $\wedge_{K_{0}}^{n}D$ makes $\wedge_{K_{0}}^{n}D$ into a filtered $(\phi,N)$-module as well. Since $\Dim{K_{0}} \wedge_{K_{0}}^{n}D=1$, we define $$\hodgen(D)=\frac{\mathrm{max}\{i\in\ZZZ\,|\,\fil{i}(K\otimes_{K_{0}}(\wedge_{K_{0}}^{n}D))\not=0\}}{[E:\QP]}\mbox{
  and  }\newtonn(D)=\frac{\val{\phi(x)/x}}{[E:\QP]}$$ for a nonzero element $x$ in $\wedge_{K_{0}}^{n}D$.  (Our definitions of the Hodge invariant $\hodgen$ and Newton invariant $\newtonn$ are normalized differently from those in \cite{CF} because we divide by $[E:\QP]$, but of course will still give the same notion of admissibility below.)

A $(K_{0}\otimes_{\QP}E)$-submodule $D'$ of a filtered $(\phi,N)$-module $D$ is a \emph{filtered $(\phi,N)$-submodule} if it is $\phi$-invariant and $N$-invariant, in which case $D'$ has a Frobenius map $\phi|_{D'}$, a monodromy operator $N|_{D'}$, and the filtration $\fil{i}D'_{K}=\fil{i}D_{K}\cap D'_{K}$.  A filtered $(\phi,N)$-module $D$ is said to be \emph{admissible} if $\hodgen(D)=\newtonn(D)$, and if $\hodgen(D')\leq\newtonn(D')$ for each filtered $(\phi,N)$-submodule $D'$ of $D$.

Let $V$ be a finite-dimensional $E$-vector space equipped with continuous action of $G_{K}$, and define $$\Dst(V):=(\Bst\otimes_{\QP}V)^{G_{K}}.$$  Then $\rank_{K_{0}\otimes E}\Dst(V)\leq\Dim{E}V$.  If equality holds,
then we say that $V$ is \emph{semi-stable}; in that case $\Dst(V)$ inherits from $\Bst$ the structure of an admissible filtered $(\phi,N)$-module.  (See \cite{Fontaine} for details.)   We say that $V$ is \emph{crystalline} if $V$ is semi-stable and the monodromy operator $N$ on $\Dst(V)$ is~$0$.

If $V$ is semi-stable, then when we refer to the Hodge--Tate weights or the
Hodge type of $V$, we mean those of $\Dst(V)$.  Our normalizations imply that the cyclotomic character $\varepsilon : G_{\QP} \to E^{\times}$ has Hodge--Tate weight $-1$.  Twisting $V$ by a power $\varepsilon^n$ of the cyclotomic character has the effect of shifting all the Hodge--Tate weights of $V$ by $-n$; after a suitable twist, we are therefore free to assume that the lowest weight of $V$ is $0$.

\subsection{Notation and Terminology}
If $D$ is an admissible filtered $(\phi,N)$-module with Hodge--Tate weights $0 < r < s$ corresponding to a representation of $G_{\QP}$, we will write
$$
\mathrm{Fil}^{i}D=
\left\{
  \begin{array}{ll}
    D, & \hbox{if $i\leq 0$;} \\
    L_{2}, & \hbox{if $0 < i\leq r$;}\\
    L_{1}, & \hbox{if $r < i\leq s$;} \\
    0, & \hbox{if $s < i$,}
  \end{array}
\right.
$$
where $D=E(\baseo,\baset,\baseth)$ is an $E$-vector space with basis $\baseo,\baset,\baseth$ and $L_{j}$ is a subspace of $D$ of dimension $j$ for $j=1,2$.  We assume that $E$ is large enough so that the Jordan form of $\phi$ is well-defined over $E$. We let $[T]$ be the matrix presentation of an endomorphism $T$ on $D$ with respect to
$\baseo,\baset,\baseth$ and $\mathbb{P}^{1}(E)$ the $E$-rational points in the projective line. In this paper, we say that a representation is \emph{non-split reducible} if it is reducible but indecomposable, and that a representation is \emph{irreducible} means that it is absolutely irreducible.

\subsection{Acknowledgments}
The author thanks his advisor, David Savitt, for his encouragement, guidance, and numerous helpful comments and suggestions. The author also thanks Matthew Emerton for his helpful comments and suggestions.

This work was completed while the author was a visiting student at Northwestern University during the 2011-12 academic
year, and the author is grateful to the mathematics department at Northwestern for its hospitality.

\section{Admissible filtered $\phi$-modules}
In this section, we classify the admissible filtered $\phi$-modules of Hodge type $(0,r,s)$ for $0<r<s$. At the first six subsections, we collect such modules for each Jordan form of $\phi$, and, at the last subsection, we classify them and list the isomorphism classes of admissible filtered $\phi$-modules.

\subsection{The first case of $N=0$}
Assume that $\phi$ has a minimal polynomial of the form $(x-\lambda)$. So $\phi=\lambda I$, where $\lambda\in E^{\times}$ and $I$ is the $3\times 3$-identity matrix. By admissibility, $\val{\lambda}=\frac{r+s}{3}$. It is clear that every subspace in $D$ is $\phi$-invariant. In particular, $L_{1}$ is $\phi$-invariant. So, by admissibility, $s=\hodgen(L_{1})\leq\newtonn(L_{1})=\val\lambda$. But $s>\frac{r+s}{3}$. Hence, there are no admissible filtered $\phi$-modules in this case.

\subsection{The second case of $N=0$}
Assume that $\phi$ has a minimal polynomial of the form $(x-\lambda)^{2}$. So we may assume that $\phi\baseo=\lambda\baseo+\baset$, $\phi\baset=\lambda\baset$, and $\phi\baseth=\lambda\baseth$. By admissibility, we have $$\val{\lambda}=\frac{r+s}{3}.$$

\begin{lemm} \label{typeofPsecond}
For a $3\times3$-matrix $P=(P_{i,j})$, $P[\phi]=[\phi] P$ if and only if $P$ is a matrix with $P_{1,1}=P_{2,2}$ and $P_{1,2}=P_{1,3}=P_{3,2}=0$.
\end{lemm}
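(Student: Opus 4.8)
The plan is to exploit the extremely simple shape of $[\phi]$. From the prescribed action $\phi\baseo=\lambda\baseo+\baset$, $\phi\baset=\lambda\baset$, $\phi\baseth=\lambda\baseth$, the columns of $[\phi]$ record the images of the basis vectors, so $[\phi]=\lambda I+U$, where $U$ is the matrix whose only nonzero entry is a $1$ in the $(2,1)$-position. Since the scalar matrix $\lambda I$ is central in the matrix algebra, the equation $P[\phi]=[\phi]P$ is equivalent to $PU=UP$. This reduces the entire statement to understanding commutation with the single nilpotent $U$, which I expect to be a one-line computation.

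Next I would compute the two products directly. Right multiplication by $U$ is a column operation and left multiplication by $U$ is a row operation, giving
$$PU=\begin{pmatrix}P_{1,2}&0&0\\P_{2,2}&0&0\\P_{3,2}&0&0\end{pmatrix}\quad\text{and}\quad UP=\begin{pmatrix}0&0&0\\P_{1,1}&P_{1,2}&P_{1,3}\\0&0&0\end{pmatrix}.$$
Equating entries, the $(1,1)$-entry forces $P_{1,2}=0$, the $(3,1)$-entry forces $P_{3,2}=0$, the $(2,3)$-entry forces $P_{1,3}=0$, and the $(2,1)$-entry forces $P_{1,1}=P_{2,2}$; every remaining entry yields the trivial identity $0=0$. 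This establishes necessity. For the converse, I would observe that any $P$ satisfying these four relations visibly makes the two displayed matrices coincide, so $PU=UP$ and hence $P[\phi]=[\phi]P$; thus the condition is both necessary and sufficient.

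I do not anticipate any genuine obstacle, as the argument is a short direct computation. The only point demanding care is the bookkeeping of the matrix convention: one must read $\phi\baseo=\lambda\baseo+\baset$ as placing the nilpotent part of $[\phi]$ in the $(2,1)$-slot rather than the $(1,2)$-slot, since this is exactly what produces the vanishing pattern $P_{1,2}=P_{1,3}=P_{3,2}=0$ rather than its transpose. Getting this placement right is the whole content of matching the stated conclusion.
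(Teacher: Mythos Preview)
Your argument is correct and essentially matches the paper's own proof, which simply computes $P[\phi]$ and $[\phi]P$ for a generic $P$ and compares entries. Your only (cosmetic) addition is to subtract off the central scalar $\lambda I$ first, reducing to the commutation $PU=UP$; this is a harmless simplification and the entry-by-entry comparison is identical.
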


\begin{proof}
Let
$
P=\tiny\left(
  \begin{array}{ccc}
    a & b & c \\
    d & e & f \\
    g & h & i \\
  \end{array}
\right)
$.
We compute the matrix products:
$$P[\phi]=
\begin{tiny}
\left(
  \begin{array}{ccc}
    \lambda a + b & \lambda b & \lambda c \\
    \lambda d + e & \lambda e & \lambda f \\
    \lambda g + h & \lambda h & \lambda i \\
  \end{array}
\right)
\end{tiny}\mbox{ and }
[\phi] P=
\begin{tiny}
\left(
  \begin{array}{ccc}
   \lambda a &\lambda b &\lambda c \\
    a + \lambda d & b + \lambda e & c + \lambda f \\
    \lambda g & \lambda h & \lambda i \\
  \end{array}
\right)
\end{tiny}.
$$
By comparing the entries we get the result.
\end{proof}

\begin{lemm} \label{crysinvariantsecond}
\begin{enumerate}
\item Every $1$-dimensional subspace in $E(\baset,\baseth)$ is the only $\phi$-invariant subspace of dimension $1$.
\item A two dimensional subspace of $E(\baseo,\baset,\baseth)$ is $\phi$-invariant if and only if it contains $\baset$.
\end{enumerate}
\end{lemm}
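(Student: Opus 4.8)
The plan is to reduce both parts to the nilpotent operator $M:=\phi-\lambda I$. Since $\lambda I$ preserves every subspace of $D$, a subspace is $\phi$-invariant if and only if it is $M$-invariant, and $M$ is much easier to handle. From the formulas $\phi\baseo=\lambda\baseo+\baset$, $\phi\baset=\lambda\baset$, $\phi\baseth=\lambda\baseth$ I read off $M\baseo=\baset$ and $M\baset=M\baseth=0$, so that $M$ has image $E\baset$ and kernel $\Ker M=E(\baset,\baseth)$. These two subspaces drive everything that follows.

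For part (1), a line $Ev$ is $\phi$-invariant exactly when $v$ is an eigenvector of $\phi$. Because the minimal polynomial is $(x-\lambda)^{2}$, the only eigenvalue is $\lambda$, so being an eigenvector is equivalent to $Mv=0$, i.e.\ to $v\in\Ker M=E(\baset,\baseth)$. Hence the $\phi$-invariant $1$-dimensional subspaces are precisely the lines contained in $E(\baset,\baseth)$, which is the assertion.

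For part (2), I would argue both implications through $M$. Assume first that $W$ is a $\phi$-invariant plane; then $M(W)\subseteq W$ and also $M(W)\subseteq\mathrm{Im}(M)=E\baset$, so $M(W)\subseteq W\cap E\baset$. The key observation in this direction is that $M(W)$, being a subspace of the line $E\baset$, is either $0$ or all of $E\baset$: if $M(W)=0$ then $W\subseteq\Ker M=E(\baset,\baseth)$ and a dimension count forces $W=E(\baset,\baseth)\ni\baset$; if $M(W)\neq 0$ then $M(W)=E\baset\subseteq W$, so again $\baset\in W$. Conversely, if $\baset\in W$ I may write $W=E\baset+Ev$ with $v=a\baseo+c\baseth$ after subtracting a suitable multiple of $\baset$; then $Mv=a\baset\in W$, so $W$ is $M$-invariant and therefore $\phi$-invariant.

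Once the problem is transported to $M$ the steps are elementary linear algebra, so I do not expect a genuine obstacle; the only point needing a little care is the case split in the forward direction of part (2), namely recognizing that $M(W)\subseteq E\baset$ leaves exactly the two possibilities handled above.
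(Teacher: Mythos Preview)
Your proof is correct and is essentially the same argument as the paper's, just repackaged through the nilpotent part $M=\phi-\lambda I$: the paper picks a vector $\baseo+b\baset+c\baseth\in D_{2}$ and applies $\phi$ to force $\baset\in D_{2}$, which is exactly your case $M(W)\neq 0$ with the subtraction of $\lambda\cdot(\text{vector})$ hidden in the definition of $M$. Your kernel/image framing is slightly cleaner in that it handles the case $W=E(\baset,\baseth)$ uniformly rather than leaving it implicit, but the underlying computation is identical.
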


\begin{proof}
The case (1) is clear since the $\phi$-invariant subspaces of dimension 1 are the eigenspaces. Let $D_{2}$ be a $\phi$-invariant subspace of dimension 2. If $\baset\not\in D_{2}$, then there exists an element $\baseo+b\baset+c\baseth$ in $D_{2}$. Then $\phi(\baseo+b\baset+c\baseth)=\lambda(\baseo+b\baset+c\baseth)+\baset$, and so $\baset\in D_{2}$, which is a contradiction. The converse is clear.
\end{proof}

We start to collect the admissible filtered $\phi$-modules in this case.
\subsubsection{}\label{}
Assume first that $L_{1}$ is $\phi$-invariant. Then, by admissibility, $s=\hodgen(L_{1})\leq\newtonn(L_{1})=\val\lambda$, which contradicts to $\val\lambda=\frac{r+s}{3}$.

\subsubsection{}\label{}
If $L_{2}$ is $\phi$-invariant, then, by admissibility, $r+s=\hodgen(L_{2})\leq\newtonn(L_{2})=2\val\lambda$, which contradicts to $\val\lambda=\frac{r+s}{3}$.

\subsubsection{}\label{cryssecond1}
Finally, assume that neither $L_{1}$ nor $L_{2}$ are $\phi$-invariant. We let $D_{1}$, $D_{2}$ be $\phi$-invariant subspaces of dimension 1 and of dimension 2, respectively. Then, by admissibility, for $D_{1}$ with $D_{1}\subset L_{2}$ $r=\hodgen(D_{1})\leq\newtonn(D_{1})=\val\lambda(s\geq 2r)$, for $D_{1}$ with $D_{1}\nsubseteq L_{2}$ $0=\hodgen(D_{1})\leq\newtonn(D_{1})=\val\lambda$, for $D_{2}$ with $D_{2}\cap L_{2}=L_{1}$ $s=\hodgen(D_{2})\leq\newtonn(D_{2})=2\val\lambda(s\leq 2r)$, and for $D_{2}$ with $L_{1}\nsubseteq D_{2}$ $r=\hodgen(D_{2})\leq\newtonn(D_{2})=2\val\lambda$. So admissible filtered $\phi$-modules occur in this case if and only if $s=2r$, and the corresponding representations are decomposable with submodules $L_{2}\cap E(\baset,\baseth)$ and $D_{2}$ with $L_{1}\subset D_{2}$.

\subsection{The third case of $N=0$}
Assume that $\phi$ has a minimal polynomial of the form $(x-\lambda)^{3}$. So we may assume that $\phi\baseo=\lambda\baseo+\baset$, $\phi\baset=\lambda\baset+\baseth$, and $\phi\baseth=\lambda\baseth$. By admissibility, we have $$\val{\lambda}=\frac{r+s}{3}.$$

\begin{lemm} \label{typeofPthird}
For a $3\times3$-matrix $P=(P_{i,j})$, $P[\phi]=[\phi] P$ if and only if $P$ is a lower triangle matrix with $P_{1,1}=P_{2,2}=P_{3,3}$ and $P_{2,1}=P_{3,2}$.
\end{lemm}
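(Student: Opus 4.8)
The plan is to proceed exactly as in the proof of Lemma~\ref{typeofPsecond}, by a direct matrix computation. First I would record that with respect to $\baseo,\baset,\baseth$ the Frobenius is the single Jordan block
$$
[\phi]=\begin{pmatrix} \lambda & 0 & 0 \\ 1 & \lambda & 0 \\ 0 & 1 & \lambda \end{pmatrix},
$$
so that $[\phi]=\lambda I+N$ where $N$ is the nilpotent lower shift. Since every matrix commutes with $\lambda I$, the relation $P[\phi]=[\phi]P$ is equivalent to $PN=NP$; this is the conceptual reason one expects the commutant to consist of the lower triangular matrices with constant diagonals, i.e. the polynomials $aI+bN+cN^{2}$ in $N$, which already predicts the shape $P_{1,1}=P_{2,2}=P_{3,3}$, $P_{2,1}=P_{3,2}$, and vanishing strictly upper entries with $P_{3,1}$ unconstrained.

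To prove this rigorously I would write $P=(P_{i,j})$ with arbitrary entries and compute both products $P[\phi]$ and $[\phi]P$ explicitly, as was done in Lemma~\ref{typeofPsecond}. Multiplying $P$ on the right by $[\phi]$ adds a shifted copy of the next column into each column, while multiplying on the left adds a shifted copy of the previous row into each row; comparing the two $3\times3$ matrices entry by entry yields nine scalar equations relating the $P_{i,j}$ and $\lambda$.

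The key step is then to read off the constraints. Several of the equations force the strictly upper-triangular entries $P_{1,2},P_{1,3},P_{2,3}$ to vanish, and the remaining ones force the diagonal entries to coincide and $P_{2,1}=P_{3,2}$, while no equation ever involves $P_{3,1}$, so it is left free. I expect the only mild subtlety --- and it is no more than bookkeeping --- to be that several of the nine equations are redundant (for instance the off-diagonal relations reappear when comparing the superdiagonal entries), so one must simply confirm that no hidden extra condition is imposed, in particular none on $P_{3,1}$. Collecting the independent relations gives precisely the asserted description, and conversely any lower triangular matrix with $P_{1,1}=P_{2,2}=P_{3,3}$ and $P_{2,1}=P_{3,2}$ visibly commutes with $[\phi]$.
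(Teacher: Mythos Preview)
Your proposal is correct and takes essentially the same approach as the paper: a direct entry-by-entry comparison of $P[\phi]$ and $[\phi]P$. Your additional observation that $[\phi]=\lambda I+N$ reduces the question to $PN=NP$ and hence to polynomials in $N$ is a nice conceptual framing that the paper does not make explicit, but the underlying computation is identical.
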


\begin{proof}
Let $P$ be as in the proof of Lemma \ref{typeofPsecond}. We compute the matrix products:
$$P[\phi]=
\begin{tiny}
\left(
  \begin{array}{ccc}
    \lambda a + b & \lambda b + c & \lambda c \\
    \lambda d + e & \lambda e + f & \lambda f \\
    \lambda g + h & \lambda h + i & \lambda i \\
  \end{array}
\right)
\end{tiny}\mbox{ and }
[\phi] P=
\begin{tiny}
\left(
  \begin{array}{ccc}
   \lambda a &\lambda b &\lambda c \\
    a + \lambda d & b + \lambda e & c + \lambda f \\
    d + \lambda g & e + \lambda h & f + \lambda i \\
  \end{array}
\right)
\end{tiny}.
$$
By comparing the entries we get the result.
\end{proof}

\begin{lemm} \label{crysinvariantthird}
$E\baseth$ and $E(\baset,\baseth)$ are the only nontrivial and proper $\phi$-invariant subspaces.
\end{lemm}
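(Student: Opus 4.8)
The plan is to reduce to a statement about the nilpotent operator $N_{0}:=\phi-\lambda I$ and then argue by a direct computation in coordinates. Since $K=\QP$ here, the absolute Frobenius $\sigma$ is trivial and $\phi$ is genuinely $E$-linear, so $N_{0}$ is an $E$-linear nilpotent endomorphism; moreover a subspace is $\phi$-invariant if and only if it is $N_{0}$-invariant, because $\phi$ and $N_{0}$ differ by the scalar operator $\lambda I$. From the formulas for $\phi$ we read off $N_{0}\baseo=\baset$, $N_{0}\baset=\baseth$, and $N_{0}\baseth=0$, so $N_{0}$ is a single nilpotent Jordan block, and it suffices to determine its nontrivial proper invariant subspaces.

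First I would dispose of the role of the $\baseo$-coordinate. Let $W$ be a nonzero proper $N_{0}$-invariant subspace, and suppose some $v=a\baseo+b\baset+c\baseth\in W$ has $a\neq0$. Applying $N_{0}$ twice gives $N_{0}v=a\baset+b\baseth\in W$ and $N_{0}^{2}v=a\baseth\in W$; since $a\neq0$ this forces $\baseth\in W$, then $\baset\in W$, and together with $v$ these three vectors span all of $D$, contradicting $W\neq D$. Hence every element of $W$ has vanishing $\baseo$-coordinate, that is, $W\subseteq E(\baset,\baseth)$.

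The same argument one dimension down finishes the proof. Restricted to $E(\baset,\baseth)$, the operator $N_{0}$ sends $\baset\mapsto\baseth\mapsto0$, again a single Jordan block. If some $w=b\baset+c\baseth\in W$ has $b\neq0$, then $N_{0}w=b\baseth\in W$ gives $\baseth\in W$, and $\{w,\baseth\}$ spans $E(\baset,\baseth)$, so $W=E(\baset,\baseth)$. Otherwise every element of $W$ has vanishing $\baset$-coordinate, so $W\subseteq E\baseth$, and since $W$ is nonzero we get $W=E\baseth$. Thus the only nontrivial proper $\phi$-invariant subspaces are $E\baseth$ and $E(\baset,\baseth)$, as claimed.

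There is no serious obstacle here; the only point requiring care is the observation that passing from $\phi$ to $N_{0}=\phi-\lambda I$ leaves the invariant subspaces unchanged and that $\phi$ really is $E$-linear in the case at hand, after which everything reduces to the elementary fact that a single nilpotent Jordan block has a unique invariant subspace of each dimension, namely the kernels of its powers. One could alternatively invoke Lemma \ref{typeofPthird} to describe the commutant of $[\phi]$, but the direct coordinate computation above is shorter and entirely self-contained.
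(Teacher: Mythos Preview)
Your proof is correct and follows essentially the same approach as the paper: both arguments repeatedly apply $\phi-\lambda I$ to a vector with nonzero leading coordinate to force the subspace to be all of $D$, reducing the problem to the span of later basis vectors. Your framing via the nilpotent operator $N_{0}=\phi-\lambda I$ and your unified treatment of all dimensions at once is slightly cleaner than the paper's case split by dimension, but the underlying computation is identical.
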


\begin{proof}
The vector subspaces listed above are obviously $\phi$-invariant. Obviously $E\baseth$ is the only $\phi$-invariant subspace of dimension 1. Let $D_{2}$ be a $\phi$-invariant subspace of dimension 2 with $D_{2}\neq E(\baset,\baseth)$. Then there exists an element $\baseo+b\baset+c\baseth$ in $D$, and $\phi(\baseo+b\baset+c\baseth)=\lambda(\baseo+b\baset+c\baseth)+\baset+b\baseth$. So $\baset+b\baseth\in D_{2}$, and $\phi(\baset+b\baseth)=\lambda(\baset+b\baseth)+\baseth$. Hence, $\baseth\in D_{2}$, i.e., $D_{2}$ has three linearly independent vectors, which is contradiction.
\end{proof}

We start to collect the admissible filtered $\phi$-modules in this case.
\subsubsection{}\label{}
Assume first that $L_{1}=E\baseth$. Then, by admissibility, $s=\hodgen(E\baseth)\leq\newtonn(E\baseth)=\val\lambda$, which contradicts to $\val\lambda=\frac{r+s}{3}$.

\subsubsection{}\label{}
If $L_{2}=E(\baset,\baseth)$, then, by admissibility, $r+s=\hodgen(E(\baset,\baseth))\leq\newtonn(E(\baset,\baseth))=2\val\lambda$, which contradicts to $\val\lambda=\frac{r+s}{3}$.

\subsubsection{}\label{crysthird1}
Assume that neither $L_{1}$ nor $L_{2}$ are $\phi$-invariant and $\baseth\in L_{2}$. Then $L_{2}\cap E(\baset,\baseth)=E\baseth$ and $L_{1}\nsubseteq E(\baset,\baseth)$. By admissibility, $r=\hodgen(E\baseth)\leq\newtonn(E\baseth)=\val\lambda\hspace{0.1cm}(s\geq 2r)$ and $r=\hodgen(E(\baset,\baseth))\leq\newtonn(E(\baset,\baseth))=2\val\lambda$. So admissible filtered $\phi$-modules occur in this case if and only if $s\geq 2r$. The corresponding representations are non-split reducible with submodule $E\baseth$ if $s=2r$ and irreducible if $s>2r$.

\subsubsection{}\label{crysthird2}
Assume that neither $L_{1}$ nor $L_{2}$ are $\phi$-invariant and $L_{1}\subset E(\baset,\baseth)$. Then $L_{2}\cap E(\baset,\baseth)=L_{1}$ and $\baseth\not\in L_{2}$. By admissibility, $0=\hodgen(E\baseth)\leq\newtonn(E\baseth)=\val\lambda$ and $s=\hodgen(E(\baset,\baseth))\leq\newtonn(E(\baset,\baseth))=2\val\lambda\hspace{0.1cm}(s\leq 2r)$. So admissible filtered $\phi$-modules occur in this case if and only if $s\leq 2r$. The corresponding representations are non-split reducible with submodule $E(\baset,\baseth)$ if $s=2r$ and irreducible if $s<2r$.

\subsubsection{}\label{crysthird3}
Assume that neither $L_{1}$ nor $L_{2}$ are $\phi$-invariant, $\baseth\not\in \L_{2}$, and $L_{1}\not\subset E(\baset,\baseth)$. By admissibility, $0=\hodgen(E\baseth)\leq\newtonn(E\baseth)=\val\lambda$ and $r=\hodgen(E(\baset,\baseth))\leq\newtonn(E(\baset,\baseth))=2\val\lambda$. So admissible filtered $\phi$-modules always occur in this case and the corresponding representations are irreducible.

\subsection{The fourth case of $N=0$}
Assume that $\phi$ has a minimal polynomial of the form $(x-\lambda)(x-\lambdath)$ and a characteristic polynomial of the form $(x-\lambda)^{2}(x-\lambdath)$  with $\lambda\neq\lambdath$. So we may assume that $\phi\baseo=\lambda\baseo$, $\phi\baset=\lambda\baset$, and $\phi\baseth=\lambdath\baseth$. By admissibility, we have $$2\val{\lambda}+\val{\lambda_{3}}=r+s.$$

\begin{lemm} \label{typeofPfourth}
For a $3\times3$-matrix $P=(P_{i,j})$, $P[\phi]=[\phi] P$ if and only if $P$ is a matrix with $P_{1,3}=P_{2,3}=P_{3,1}=P_{3,2}=0$.
\end{lemm}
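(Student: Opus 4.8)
The plan is to follow the same direct computation used in Lemmas \ref{typeofPsecond} and \ref{typeofPthird}. Here $[\phi]$ is the diagonal matrix $\mathrm{diag}(\lambda,\lambda,\lambdath)$, so writing $P=(P_{i,j})$ as a general $3\times3$ matrix, the product $P[\phi]$ simply scales the first two columns of $P$ by $\lambda$ and the third column by $\lambdath$, while $[\phi]P$ scales the first two rows by $\lambda$ and the third row by $\lambdath$.

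Next I would compare the two matrices entry by entry. The entries in positions $(1,1),(1,2),(2,1),(2,2)$, where both the row and column indices lie in $\{1,2\}$, as well as the entry in position $(3,3)$, receive the same scalar from both sides and hence impose no condition. The remaining four positions $(1,3),(2,3),(3,1),(3,2)$ are precisely those mixing the $\lambda$-block with the $\lambdath$-block, and for each of them the comparison yields an equation of the form $(\lambda-\lambdath)P_{i,j}=0$.

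The only substantive step---and the sole place the hypothesis enters---is to invoke $\lambda\neq\lambdath$, so that $\lambda-\lambdath$ is a unit in $E$ and the four equations force $P_{1,3}=P_{2,3}=P_{3,1}=P_{3,2}=0$. The converse is immediate: these four vanishing conditions make $P$ block diagonal for the eigenspace decomposition $E(\baseo,\baset)\oplus E\baseth$ of $\phi$, and any such $P$ commutes with the diagonal matrix $[\phi]$. I expect no real obstacle here; the content of the lemma is a one-line matrix computation whose only genuine input is the assumption $\lambda\neq\lambdath$.
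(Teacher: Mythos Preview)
Your proposal is correct and follows essentially the same approach as the paper: compute $P[\phi]$ and $[\phi]P$ directly, compare entries, and use $\lambda\neq\lambdath$ to force the four off-block entries to vanish. The paper's proof is the identical calculation, just written out with explicit matrix entries rather than in words.
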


\begin{proof}
Let $P$ be as in the proof of Lemma \ref{typeofPsecond}. We compute the matrix products:
$$P[\phi]=
\begin{tiny}
\left(
  \begin{array}{ccc}
    \lambda a & \lambda b & \lambdath c \\
    \lambda d & \lambda e & \lambdath f \\
    \lambda g & \lambda h & \lambdath i \\
  \end{array}
\right)
\end{tiny}\mbox{ and }
[\phi] P=
\begin{tiny}
\left(
  \begin{array}{ccc}
   \lambda a &\lambda b &\lambda c \\
   \lambda d & \lambda e & \lambda f \\
   \lambdath g & \lambdath h & \lambdath i \\
  \end{array}
\right)
\end{tiny}.
$$
By comparing the entries we get the result.
\end{proof}

\begin{lemm}\label{crysinvariantfourth}
\begin{enumerate}
\item Every $1$-dimensional subspace of $E(\baseo,\baset)$ and $E\baseth$ are the only $\phi$-invariant subspaces of dimension $1$.
\item For any $(a,b)\in E^{2}\setminus\{(0,0)\}$ $E(a\baseo+b\baset,\baseth)$ and $E(\baseo,\baset)$ are the only $\phi$-invariant subspaces of dimension $2$.
\end{enumerate}
\end{lemm}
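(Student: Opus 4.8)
The plan is to exploit the fact that in this case $\phi$ is semisimple, with minimal polynomial $(x-\lambda)(x-\lambdath)$ having distinct roots, so that $D$ decomposes as the direct sum of the eigenspace $E(\baseo,\baset)$ for $\lambda$ and the eigenspace $E\baseth$ for $\lambdath$. Abstractly, every $\phi$-invariant subspace of a diagonalizable operator is the direct sum of its intersections with the eigenspaces, which already yields both statements; but I will give the direct argument in the concrete style of the preceding lemmas, since it also pins down the parametrization.

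For part (1), I would first observe that a $1$-dimensional $\phi$-invariant subspace is exactly a line spanned by an eigenvector. Writing a general vector as $a\baseo+b\baset+c\baseth$ and imposing $\phi(a\baseo+b\baset+c\baseth)=\mu(a\baseo+b\baset+c\baseth)$, I get the three scalar conditions $\lambda a=\mu a$, $\lambda b=\mu b$, and $\lambdath c=\mu c$. If $c\neq 0$ then $\mu=\lambdath$, and since $\lambda\neq\lambdath$ this forces $a=b=0$, recovering $E\baseth$; otherwise $c=0$ and the vector lies in $E(\baseo,\baset)$, giving an arbitrary line there. This exhausts the $1$-dimensional invariant subspaces.

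For part (2), let $D_{2}$ be a $\phi$-invariant subspace of dimension $2$. The key device is that for any $v=a\baseo+b\baset+c\baseth$ one has $\phi(v)-\lambda v=(\lambdath-\lambda)c\,\baseth$, which isolates the $\baseth$-coordinate. I would split into two cases depending on whether $\baseth\in D_{2}$. If $\baseth\notin D_{2}$, then for every $v\in D_{2}$ invariance gives $\phi(v)-\lambda v\in D_{2}$; since this is a multiple of $\baseth\notin D_{2}$ it must vanish, and as $\lambda\neq\lambdath$ we conclude $c=0$. Hence $D_{2}\subseteq E(\baseo,\baset)$, and by comparing dimensions $D_{2}=E(\baseo,\baset)$. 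If instead $\baseth\in D_{2}$, then $E\baseth\subseteq D_{2}$; picking any $v\in D_{2}$ not proportional to $\baseth$ and subtracting its $\baseth$-component produces a nonzero vector $a\baseo+b\baset\in D_{2}$ with $(a,b)\neq(0,0)$, so that $D_{2}=E(a\baseo+b\baset,\baseth)$.

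I do not expect any serious obstacle: the content is entirely the diagonalizability of $\phi$ together with $\lambda\neq\lambdath$. The only point needing care is to organize part (2) so that the two cases ($\baseth\in D_{2}$ or not) are genuinely exhaustive, and to invoke the hypothesis $\lambda\neq\lambdath$ precisely at the step where it forces $c=0$.
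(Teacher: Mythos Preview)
Your proof is correct and follows essentially the same route as the paper's: both rely on the identity $\phi(v)-\lambda v=(\lambdath-\lambda)c\,\baseth$ to force $\baseth\in D_{2}$ whenever $D_{2}\neq E(\baseo,\baset)$, and both treat part (1) via eigenspaces. Your version is somewhat more explicit in spelling out the case $\baseth\in D_{2}$ and in organizing the case split, but the underlying argument is identical.
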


\begin{proof}
The vector subspaces listed above are obviously $\phi$-invariant.
The case (1) is clear since every $\phi$-invariant subspace of dimension 1 is the eigenspaces. Let $D_{2}$ be a $\phi$-invariant subspace of dimension 2. For the case (2), assume that $\baseth\not\in D_{2}$ and $D_{2}\neq E(\baseo,\baset)$. Then $a\baseo+b\baset+\baseth\in D_{2}$, and $\phi(a\baseo+b\baset+\baseth)=\lambda(a\baseo+b\baset+\baseth)+(\lambdath-\lambda)\baseth$. Hence, $\baseth\in D_{2}$, which is contradiction.
\end{proof}

We start to collect the admissible filtered $\phi$-modules in this case.
\subsubsection{}\label{}
Assume first that $L_{1}=E\baseth$. Then $L_{2}$ is $\phi$-invariant. So, by admissibility,
$r+s=\hodgen(L_{2})\leq\newtonn(L_{2})=\val\lambda+\val\lambdath$ and
$r=\hodgen(E(\baseo,\baset))\leq\newtonn(E(\baseo,\baset))=2\val\lambda$, which contradicts to $2\val\lambda+\val\lambdath=r+s$.

\subsubsection{}\label{}
Assume that $L_{1}\subset E(\baseo,\baset)$. Then $L_{1}$ is $\phi$-invariant. So, by admissibility, $s=\hodgen(L_{1})\leq\newtonn(L_{1})=\val\lambda$ and $0\leq\hodgen(E\baseth)\leq\newtonn(E\baseth)=\val\lambdath$, which contradicts to $2\val\lambda+\val\lambdath=r+s$.

\subsubsection{}\label{}
Assume that $L_{1}$ is not $\phi$-invariant, but $L_{2}$ is. Then $L_{2}\neq E(\baseo,\baset)$. So, by admissibility, $r+s=\hodgen(L_{2})\leq\newtonn(L_{2})=\val\lambda+\val\lambdath$ and $r=\hodgen(L_{2}\cap E(\baseo,\baset))\leq\newtonn(L_{2}\cap E(\baseo,\baset))=\val\lambda$, which contradicts to $2\val\lambda+\val\lambdath=r+s$.

\subsubsection{}\label{crysfourth1}
Assume finally that neither $L_{1}$ nor $L_{2}$ are $\phi$-invariant. Then $L_{1}\not\subset E(\baseo,\baset)$ and $\baseth\not\in L_{2}$. Let $D_{1},D_{2}$ be $\phi$-invariant subspaces of dimension 1 and of dimension $2$, respectively. By admissibility, $0=\hodgen(E\baseth)\leq\newtonn(E\baseth)=\val\lambdath$, $r=\hodgen(E(\baseo,\baset))\leq\newtonn(E(\baseo,\baset))=2\val\lambda$,
for $D_{1}$ with $D_{1}\subset L_{2}$ $r=\hodgen(D_{1})\leq\newtonn(D_{1})=\val\lambda$,
for $D_{1}$ with $D_{1}\nsubseteq L_{2}$ and $D_{1}\neq E\baseth$ $0=\hodgen(D_{1})\leq\newtonn(D_{1})=\val\lambda$,
for $D_{2}$ with $L_{1}\subset D_{2}$ $s=\hodgen(D_{2})\leq\newtonn(D_{2})=\val\lambda+\val\lambdath$,
for $D_{2}$ with $L_{1}\nsubseteq D_{2}$ and $\baseth\in D_{2}$ $r=\hodgen(D_{2})\leq\newtonn(D_{2})=\val\lambda+\val\lambdath$.
So, for $\val\lambda=r$ and $\val\lambdath=s-r$, we have admissible filtered $\phi$-modules in this case. The submodules are $L_{2}\cap E(\baseo,\baset)$ and $D_{2}$ with $L_{1}\subset D_{2}$. Hence, the corresponding representations are decomposable.

\subsection{The fifth case of $N=0$}
Assume that $\phi$ has a minimal polynomial $(x-\lambda)^{2}(x-\lambdath)$ with $\lambda\neq\lambdath$. So we may assume that $\phi\baseo=\lambda\baseo+\baset$, $\phi\baset=\lambda\baset$, and $\phi\baseth=\lambdath\baseth$. By admissibility, we have $$2\val{\lambda}+\val{\lambdath}=r+s.$$

\begin{lemm} \label{typeofPfifth}
For a $3\times3$-matrix $P=(P_{i,j})$, $P[\phi]=[\phi] P$ if and only if $P$ is a lower triangle matrix with $P_{1,1}=P_{2,2}$ and $P_{3,1}=0=P_{3,2}$.
\end{lemm}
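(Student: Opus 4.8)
The plan is to proceed by the same direct entry-by-entry computation used in Lemmas~\ref{typeofPsecond}--\ref{typeofPfourth}. From the relations $\phi\baseo=\lambda\baseo+\baset$, $\phi\baset=\lambda\baset$ and $\phi\baseth=\lambdath\baseth$ the matrix of $\phi$ is
$$[\phi]=\begin{pmatrix}\lambda & 0 & 0\\ 1 & \lambda & 0\\ 0 & 0 & \lambdath\end{pmatrix},$$
a single Jordan block for $\lambda$ in the upper-left $2\times2$ corner together with the eigenvalue $\lambdath$ in position $(3,3)$. First I would write $P=(P_{i,j})$ as an arbitrary $3\times3$ matrix and form the two products $P[\phi]$ and $[\phi]P$; the nine resulting entry comparisons are each an equivalence, so the system they define is equivalent to $P[\phi]=[\phi]P$ and both directions of the lemma fall out at once.

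The constraints coming from the upper-left block are exactly those of Lemma~\ref{typeofPsecond}: comparing the $(1,1)$ entries forces $P_{1,2}=0$, and comparing the $(2,1)$ entries forces $P_{2,2}=P_{1,1}$, while the $(1,2)$, $(2,2)$ and $(3,3)$ comparisons are either trivially satisfied or merely repeat $P_{1,2}=0$. The new feature here is the interaction between the block and the $\lambdath$-eigenline. Comparing the $(1,3)$ entries gives $(\lambdath-\lambda)P_{1,3}=0$ and comparing the $(3,2)$ entries gives $(\lambda-\lambdath)P_{3,2}=0$; since $\lambda\neq\lambdath$ these yield $P_{1,3}=P_{3,2}=0$. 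Feeding these back in, the $(2,3)$ comparison becomes $(\lambdath-\lambda)P_{2,3}=0$ and the $(3,1)$ comparison becomes $(\lambda-\lambdath)P_{3,1}=0$, forcing $P_{2,3}=P_{3,1}=0$ as well. Collecting the surviving equations leaves precisely a lower-triangular $P$ with $P_{1,1}=P_{2,2}$ and $P_{3,1}=P_{3,2}=0$, as claimed.

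This lemma is a routine calculation rather than a conceptual one, so there is no real obstacle; the only point demanding attention is the repeated appeal to $\lambda\neq\lambdath$ to cancel the factor $(\lambda-\lambdath)$ in the four off-block positions. In particular, the positions $(2,3)$ and $(3,1)$ cannot be cleared in isolation: one must first use the $(1,3)$ and $(3,2)$ comparisons to kill $P_{1,3}$ and $P_{3,2}$, since those entries reappear in the $(2,3)$ and $(3,1)$ equations through the off-diagonal $1$ of the Jordan block. Keeping that order straight in the bookkeeping is the whole of the work.
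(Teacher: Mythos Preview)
Your proof is correct and takes essentially the same approach as the paper: a direct entry-by-entry comparison of $P[\phi]$ and $[\phi]P$. The paper simply writes out the two matrix products and says ``by comparing the entries, we get the result,'' while you spell out the order of the deductions; the content is identical.
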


\begin{proof}
Let $P$ be as in the proof of Lemma \ref{typeofPsecond}. Then we compute the matrix products:
$$P[\phi]=
\begin{tiny}\left(
  \begin{array}{ccc}
    \lambda a + b & \lambda b & \lambdath c \\
    \lambda d + e & \lambda e & \lambdath f \\
    \lambda g + h & \lambda h & \lambdath i \\
  \end{array}
\right)
\end{tiny}\mbox{ and }
[\phi] P=
\begin{tiny}
\left(
  \begin{array}{ccc}
   \lambda a &\lambda b &\lambda c \\
   a + \lambda d & b + \lambda e & c + \lambda f \\
   \lambdath g &\lambdath h &\lambdath i \\
  \end{array}
\right)
\end{tiny}.$$
By comparing the entries, we get the result.
\end{proof}

\begin{lemm}\label{crysinvariantfifth}
$E\baset$, $E\baseth$, $E(\baseo,\baset)$, and $E(\baset,\baseth)$ are the only nontrivial and proper $\phi$-invariant subspaces.
\end{lemm}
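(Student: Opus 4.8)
The plan is first to check that the four listed subspaces are $\phi$-invariant, which is immediate from the formulas defining $\phi$: since $\phi\baset=\lambda\baset$, $\phi\baseth=\lambdath\baseth$, and $\phi\baseo=\lambda\baseo+\baset$, each of $E\baset$, $E\baseth$, $E(\baseo,\baset)$, and $E(\baset,\baseth)$ is carried into itself. The substance is the converse, and I would treat the dimension $1$ and dimension $2$ cases separately.

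For dimension $1$, every $\phi$-invariant line is an eigenline, so I would simply compute the eigenspaces of $[\phi]$. On $E(\baseo,\baset)$ the map $\phi$ is a single Jordan block with eigenvalue $\lambda$, so its $\lambda$-eigenspace is exactly $E\baset$; and since $\lambda\neq\lambdath$, the $\lambdath$-eigenspace is exactly $E\baseth$. Thus $E\baset$ and $E\baseth$ are the only invariant lines.

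For dimension $2$, the key point is that any $\phi$-invariant subspace $D_{2}$ of dimension $2$ must contain an eigenvector of $\phi|_{D_{2}}$; as the eigenvalues of $\phi$ lie in $\{\lambda,\lambdath\}$ and the corresponding eigenspaces are $E\baset$ and $E\baseth$, the space $D_{2}$ must contain $\baset$ or $\baseth$. If $\baset\in D_{2}$, I would write $D_{2}=E\baset+Ev$ with $v=a\baseo+c\baseth$, impose $\phi v\in D_{2}$, and compare coefficients; this forces $a=0$ or $c=0$, giving $D_{2}=E(\baset,\baseth)$ or $D_{2}=E(\baseo,\baset)$ respectively. If instead $\baseth\in D_{2}$ but $\baset\notin D_{2}$, I would write $D_{2}=E\baseth+Ev$ with $v=a\baseo+b\baset$; imposing $\phi v\in D_{2}$ and comparing the $\baseo$- and $\baset$-coefficients forces $a=0$, whence $v\in E\baset$ and $\baset\in D_{2}$, a contradiction. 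Hence the only $2$-dimensional invariant subspaces are $E(\baseo,\baset)$ and $E(\baset,\baseth)$.

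The main thing to get right is the exhaustiveness of the dimension-$2$ casework, specifically the claim that $D_{2}$ contains $\baset$ or $\baseth$. This rests on the standing hypothesis that the Jordan form of $\phi$ is defined over $E$, so that the eigenvalues of $\phi|_{D_{2}}$ genuinely lie in $E$ and the corresponding eigenvector lies in one of the full eigenspaces $E\baset$, $E\baseth$. The coefficient comparisons themselves are routine, paralleling the computations in the proof of Lemma \ref{crysinvariantfourth}.
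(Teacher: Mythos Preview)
Your proof is correct. The dimension~$1$ argument is the same as the paper's. For dimension~$2$ you take a slightly different route: you first observe that any $\phi$-invariant plane $D_{2}$ must contain an eigenvector (hence $\baset$ or $\baseth$), and then do a case split on which one, comparing coefficients in each case. The paper instead argues by contradiction without this preliminary step: it assumes $D_{2}$ is not on the list, picks an element of the form $a\baseo+b\baset+\baseth$ with $a\neq0$, and applies $\phi$ twice to produce three independent vectors in $D_{2}$. Your approach is a bit more structural and makes explicit why the hypothesis that the eigenvalues lie in $E$ matters; the paper's is a single direct computation. Both are short and essentially equivalent in difficulty.
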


\begin{proof}
The vector subspaces listed above are obviously $\phi$-invariant. Obviously $E\baset,E\baseth$ are the only $\phi$-invariant subspaces of dimension 1 since they are the eigenspaces. Let $D_{2}$ be a $\phi$-invariant subspace of dimension 2. Assume that $D_{2}$ is not a subspace listed above. Then there is an element $a\baseo+b\baset+\baseth\in D_{2}$ with $a\neq 0$, and $\phi(a\baseo+b\baset+\baseth)=\lambda(a\baseo+b\baset+\baseth)+a\baset+(\lambdath-\lambda)\baseth$. Thus $a\baset+(\lambdath-\lambda)\baseth\in D_{2}$ and $\phi(a\baset+(\lambdath-\lambda)\baseth)=\lambda(a\baset+(\lambdath-\lambda)\baseth)+ (\lambdath-\lambda)^{2}\baseth$. Hence, $\baseth\in D_{2}$ and so $D_{2}$ is of dimension 3, which is contradiction.
\end{proof}

We start to collect the admissible filtered $\phi$-modules in this case.
\subsubsection{}\label{}
Assume first that $L_{1}=E\baset$. By admissibility, $s=\hodgen(E\baset)\leq\newtonn(E\baset)=\val\lambda$ and $0\leq\hodgen(E\baseth)\leq\newtonn(E\baseth)=\val\lambdath$, which contradicts to $2\val\lambda+\val\lambdath=r+s$.

\subsubsection{}\label{crysfifth1}
Assume that $L_{1}=E\baseth$ and $L_{2}\neq E(\baset,\baseth)$. Then $\baset\not\in L_{2}$. By admissibility, $0=\hodgen(E\baset)\leq\newtonn(E\baset)=\val\lambda$, $s=\hodgen(E\baseth)\leq\newtonn(E\baseth)=\val\lambdath$, $r=\hodgen(E(\baseo,\baset))\leq\newtonn(E(\baseo,\baset))=2\val\lambda$, and $s=\hodgen(E(\baset,\baseth))\leq\newtonn(E(\baset,\baseth))=\val\lambda+\val\lambdath$. So, for $\val\lambda=\frac{r}{2}$ and $\val\lambdath=s$, we have admissible filtered $\phi$-modules. The corresponding representations are decomposable with submodules $E(\baseo,\baset)$ and $E\baseth$.

\subsubsection{}\label{}
Assume that $L_{2}=E(\baset,\baseth)$. By admissibility, $r+s=\hodgen(E(\baset,\baseth))\leq\newtonn(E(\baset,\baseth))=\val\lambda+\val\lambdath$ and $r\leq\hodgen(E\baset)\leq\newtonn(E\baset)=\val\lambda$, which contradicts to $2\val\lambda+\val\lambdath=r+s$.

\subsubsection{}\label{crysfifth2}
Assume that $L_{1}$ is not $\phi$-invariant and $L_{2}=E(\baseo,\baset)$. By admissibility, $r=\hodgen(E\baset)\leq\newtonn(E\baset)=\val\lambda$, $0=\hodgen(E\baseth)\leq\newtonn(E\baseth)=\val\lambdath$, $r+s=\hodgen(E(\baseo,\baset))\leq\newtonn(E(\baseo,\baset))=2\val\lambda$, and $r=\hodgen(E(\baset,\baseth))\leq\newtonn(E(\baset,\baseth))=\val\lambda+\val\lambdath$. So, for $\val\lambda=\frac{r+s}{2}$ and $\val\lambdath=0$, we have admissible filtered $\phi$-modules. The corresponding representations are decomposable with submodules $E(\baseo,\baset)$ and $E\baseth$.

\subsubsection{}\label{crysfifth3}
Assume that neither $L_{1}$ nor $L_{2}$ are $\phi$-invariant, $L_{1}\subset E(\baseo,\baset)$, and $\baseth\in L_{2}$. Then $\baset\not\in L_{2}$ and, by admissibility, $0=\hodgen(E\baset)\leq\newtonn(E\baset)=\val\lambda$, $r=\hodgen(E\baseth)\leq\newtonn(E\baseth)=\val\lambdath$, $s=\hodgen(E(\baseo,\baset))\leq\newtonn(E(\baseo,\baset))=2\val\lambda$, and $r=\hodgen(E(\baset,\baseth))\leq\newtonn(E(\baset,\baseth))=\val\lambda+\val\lambdath$. So, for $\val\lambda=\frac{s}{2}$ and $\val\lambdath=r$, we have admissible filtered $\phi$-modules. The submodules are $E\baseth$ and $E(\baseo,\baset)$. So the corresponding representations are decomposable.

\subsubsection{}\label{crysfifth4}
Assume that neither $L_{1}$ nor $L_{2}$ are $\phi$-invariant, $L_{1}\subset E(\baseo,\baset)$, and $\baseth\not\in L_{2}$. Then $\baset\not\in L_{2}$ and, by admissibility, $0=\hodgen(E\baset)\leq\newtonn(E\baset)=\val\lambda$, $0=\hodgen(E\baseth)\leq\newtonn(E\baseth)=\val\lambdath$, $s=\hodgen(E(\baseo,\baset))\leq\newtonn(E(\baseo,\baset))=2\val\lambda$, and $r=\hodgen(E(\baset,\baseth))\leq\newtonn(E(\baset,\baseth))=\val\lambda+\val\lambdath$. So, for $\frac{s}{2}\leq\val\lambda\leq\frac{r+s}{2}$ and $2\val\lambda+\val\lambdat=r+s$, we have admissible filtered $\phi$-modules. The corresponding representations are non-split reducible with submodule $E(\baseo,\baset)$ if $\val\lambda=\frac{s}{2}$, non-split reducible with submodule $E\baseth$ if $\val\lambda=\frac{r+s}{2}$, and irreducible if $\frac{s}{2}<\val\lambda<\frac{r+s}{2}$.

\subsubsection{}\label{crysfifth5}
Assume that neither $L_{1}$ nor $L_{2}$ are $\phi$-invariant and $L_{1}\subset E(\baset,\baseth)$. Then $\baset,\baseth\not\in L_{2}$ and, by admissibility, $0=\hodgen(E\baset)\leq\newtonn(E\baset)=\val\lambda$, $0=\hodgen(E\baseth)\leq\newtonn(E\baseth)=\val\lambdath$, $r=\hodgen(E(\baseo,\baset))\leq\newtonn(E(\baseo,\baset))=2\val\lambda$, and $s=\hodgen(E(\baset,\baseth))\leq\newtonn(E(\baset,\baseth))=\val\lambda+\val\lambdath$. So, for $\frac{r}{2}\leq\val\lambda\leq r$ and $2\val\lambda+\val\lambdat=r+s$, we have admissible filtered $\phi$-modules. The corresponding representations are non-split reducible with submodule $E(\baseo,\baset)$ if $\val\lambda=\frac{r}{2}$, non-split reducible with submodule $E(\baset,\baseth)$ if $\val\lambda=r$, and irreducible if $\frac{r}{2}<\val\lambda<r$.

\subsubsection{}\label{crysfifth6}
Assume that neither $L_{1}$ nor $L_{2}$ are $\phi$-invariant, $L_{1}$ is not contained in any $\phi$-invariant subspace, and $\baset\in L_{2}$. Then $\baseth\not\in L_{2}$ and, by admissibility, $r=\hodgen(E\baset)\leq\newtonn(E\baset)=\val\lambda$, $0=\hodgen(E\baseth)\leq\newtonn(E\baseth)=\val\lambdath$, $r=\hodgen(E(\baseo,\baset))\leq\newtonn(E(\baseo,\baset))=2\val\lambda$, and $r=\hodgen(E(\baset,\baseth))\leq\newtonn(E(\baset,\baseth))=\val\lambda+\val\lambdath$. So, for $r\leq\val\lambda\leq \frac{r+s}{2}$ and $2\val\lambda+\val\lambdat=r+s$, we have admissible filtered $\vphi$-modules. The corresponding representations are non-split reducible with submodule $E\baset$ if $\val\lambda=r$, non-split reducible with submodule $E\baseth$ if $\val\lambda=\frac{r+s}{2}$, and irreducible if $r<\val\lambda<\frac{r+s}{2}$.

\subsubsection{}\label{crysfifth7}
Assume that neither $L_{1}$ nor $L_{2}$ are $\phi$-invariant, $L_{1}$ is not contained in any $\phi$-invariant subspace, and $\baseth\in L_{2}$. Then $\baset\not\in L_{2}$ and, by admissibility,
$0=\hodgen(E\baset)\leq\newtonn(E\baset)=\val\lambda$, $r=\hodgen(E\baseth)\leq\newtonn(E\baseth)=\val\lambdath$, $r=\hodgen(E(\baseo,\baset))\leq\newtonn(E(\baseo,\baset))=2\val\lambda$, and $r=\hodgen(E(\baset,\baseth))\leq\newtonn(E(\baset,\baseth))=\val\lambda+\val\lambdath$. So, for $\frac{r}{2}\leq\val\lambda\leq \frac{s}{2}$ and $2\val\lambda+\val\lambdat=r+s$, we have admissible filtered $\phi$-modules. The corresponding representations are non-split reducible with submodule $E(\baseo,\baset)$ if $\val\lambda=\frac{r}{2}$, non-split reducible with submodule $E\baseth$ if $\val\lambda=\frac{s}{2}$, and irreducible if $\frac{r}{2}<\val\lambda<\frac{s}{2}$.

\subsubsection{}\label{crysfifth8}
Assume that neither $L_{1}$ nor $L_{2}$ are $\phi$-invariant, $L_{1}$ is not contained in any $\phi$-invariant subspace, and $\baset,\baseth\not\in L_{2}$. By admissibility, $0=\hodgen(E\baset)\leq\newtonn(E\baset)=\val\lambda$, $0=\hodgen(E\baseth)\leq\newtonn(E\baseth)=\val\lambdath$, $r=\hodgen(E(\baseo,\baset))\leq\newtonn(E(\baseo,\baset))=2\val\lambda$, and $r=\hodgen(E(\baset,\baseth))\leq\newtonn(E(\baset,\baseth))=\val\lambda+\val\lambdath$. So, for $\frac{r}{2}\leq\val\lambda\leq \frac{r+s}{2}$ and $2\val\lambda+\val\lambdat=r+s$, we have admissible filtered $\phi$-modules. The corresponding representations are non-split reducible with submodule $E(\baseo,\baset)$ if $\val\lambda=\frac{r}{2}$, non-split reducible with submodule $E\baseth$ if $\val\lambda=\frac{r+s}{2}$, and irreducible if $\frac{r}{2}<\val\lambda<\frac{r+s}{2}$.

\subsection{The sixth case of $N=0$}
Assume that $\phi$ has a minimal polynomial of the form $(x-\lambdao)(x-\lambdat)(x-\lambdath)$ with distinct $\lambda_{i}$. So we may assume that $\phi\baseo=\lambdao\baseo$, $\phi\baset=\lambdat\baset$, and $\phi\baseth=\lambdath\baseth$. By admissibility, we have $$\val{\lambda_{1}}+\val{\lambda_{2}}+\val{\lambda_{3}}=r+s.$$ Without loss of generality, we assume that $$\val{\lambdao}\geq\val{\lambdat}\geq\val{\lambdath}.$$

\begin{lemm}\label{typeofPsixth}
Let $[\phi]=[\lambdao,\lambdat,\lambdath]$ and $[\phi']=[\lambdao',\lambdat',\lambdath']$ be the diagonal matrices. Then, for a $3\times3$-matrix $P$, $P[\phi]=[\phi']P$ if and only if
$$P=\left\{
  \begin{array}{ll}
     \tiny\left(
  \begin{array}{ccc}
    x & 0 & 0 \\
    0 & y & 0 \\
    0 & 0 & z \\
  \end{array}
\right), & \hbox{if $\lambdao=\lambdao',\lambdat=\lambdat',\lambdath=\lambdath'$;} \\
     \tiny\left(
  \begin{array}{ccc}
    0 & 0 & z \\
    x & 0 & 0 \\
    0 & y & 0 \\
  \end{array}
\right), & \hbox{if $\lambdao=\lambdat',\lambdat=\lambdath',\lambdath=\lambdao'$;} \\
     \tiny\left(
  \begin{array}{ccc}
    0 & y & 0 \\
    0 & 0 & z \\
    x & 0 & 0 \\
  \end{array}
\right), & \hbox{if $\lambdao=\lambdath',\lambdat=\lambdao',\lambdath=\lambdat'$;} \\
     \tiny\left(
  \begin{array}{ccc}
    x & 0 & 0 \\
    0 & 0 & y \\
    0 & z & 0 \\
  \end{array}
\right), & \hbox{if $\lambdao=\lambdao',\lambdat=\lambdath',\lambdath=\lambdat'$;} \\
     \tiny\left(
  \begin{array}{ccc}
    0 & 0 & z \\
    0 & y & 0 \\
    x & 0 & 0 \\
  \end{array}
\right), & \hbox{if $\lambdao=\lambdath',\lambdat=\lambdat',\lambdath=\lambdao'$;} \\
     \tiny\left(
  \begin{array}{ccc}
    0 & y & 0 \\
    x & 0 & 0 \\
    0 & 0 & z \\
  \end{array}
\right), & \hbox{if $\lambdao=\lambdat',\lambdat=\lambdao',\lambdath=\lambdath'$.}
  \end{array}
\right.
$$
\end{lemm}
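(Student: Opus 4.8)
The plan is to follow exactly the method of the preceding lemmas. Write $P=(P_{i,j})$ as in the proof of Lemma~\ref{typeofPsecond}, and compute both matrix products directly. Since $[\phi]$ and $[\phi']$ are diagonal, right-multiplication by $[\phi]=[\lambdao,\lambdat,\lambdath]$ scales the $j$-th column of $P$ by $\lambda_j$, while left-multiplication by $[\phi']=[\lambdao',\lambdat',\lambdath']$ scales the $i$-th row of $P$ by $\lambda_i'$. Concretely, the $(i,j)$ entry of $P[\phi]$ is $\lambda_j P_{i,j}$ and the $(i,j)$ entry of $[\phi']P$ is $\lambda_i' P_{i,j}$. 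Comparing entries then reduces the single matrix equation $P[\phi]=[\phi']P$ to the nine scalar conditions $(\lambda_j-\lambda_i')P_{i,j}=0$ for $1\leq i,j\leq 3$.

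Thus $P_{i,j}$ is forced to vanish whenever $\lambda_j\neq\lambda_i'$, and is a free parameter whenever $\lambda_j=\lambda_i'$. Because the $\lambda_i$ are distinct --- and likewise the $\lambda_i'$, being a relabelling of the same three distinct values --- for each row index $i$ there is at most one column index $j$ with $\lambda_j=\lambda_i'$, and symmetrically for each column. Hence the nonzero pattern of $P$ is a partial permutation matrix, and since the two eigenvalue sets coincide, it is in fact a full permutation pattern.

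Finally, I would run through the six bijections between $\{\lambdao,\lambdat,\lambdath\}$ and $\{\lambdao',\lambdat',\lambdath'\}$ listed in the statement; in each case the surviving positions $(i,j)$ with $\lambda_j=\lambda_i'$ are precisely the three entries displayed, with arbitrary values $x,y,z\in E$, yielding the stated form of $P$. I expect no genuine obstacle here: the computation is routine bookkeeping, and the only point requiring brief care is invoking the distinctness of the $\lambda_i$ to guarantee that exactly one entry per row and per column survives, so that each of the six cases produces a monomial pattern rather than anything larger.
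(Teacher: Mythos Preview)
Your proposal is correct and follows essentially the same approach as the paper: compute the two matrix products, compare entries to obtain the conditions $(\lambda_j-\lambda_i')P_{i,j}=0$, and then read off the surviving entries in each of the six permutation cases. The paper's proof is terser (it just writes out $P[\phi]$ and $[\phi']P$ explicitly and says ``by comparing the entries for each case, we get the results''), while you add the observation that distinctness of the $\lambda_i$ forces a permutation pattern; this is a helpful clarification but not a different method.
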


\begin{proof}
Let $P$ be as in the proof of the Lemma \ref{typeofPsecond}. Then we compute the matrix products:
$$P[\phi]=
\begin{tiny}
\left(
  \begin{array}{ccc}
    \lambdao a & \lambdat b & \lambdath c \\
    \lambdao d & \lambdat e & \lambdath f \\
    \lambdao g & \lambdat h & \lambdath i \\
  \end{array}
\right)
\end{tiny}
\mbox{ and }
[\phi']P=
\begin{tiny}
\left(
  \begin{array}{ccc}
   \lambdao' a &\lambdao' b &\lambdao' c \\
   \lambdat' d &\lambdat' e &\lambdat' f \\
   \lambdath' g &\lambdath' h &\lambdath' i \\
  \end{array}
\right)
\end{tiny}.$$
By comparing the entries for each case, we get the results.
\end{proof}

\begin{lemm}\label{crysinvariantsixth}
$E\baseo$, $E\baset$, $E\baseth$, $E(\baseo,\baset)$, $E(\baset,\baseth)$, and $E(\baseo,\baseth)$ are the only nontrivial and proper $\phi$-invariant subspaces.
\end{lemm}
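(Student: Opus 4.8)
The plan is to mimic the pattern established in the previous lemmas of this section (for instance Lemma~\ref{crysinvariantfifth}), classifying the $\phi$-invariant subspaces directly using the eigenbasis. Since $\phi$ acts on $D=E(\baseo,\baset,\baseth)$ diagonally with distinct eigenvalues $\lambdao,\lambdat,\lambdath$, the six subspaces listed are obviously $\phi$-invariant, so the only content is showing there are no others. I would handle the dimension-$1$ and dimension-$2$ cases separately.

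For dimension $1$, a $\phi$-invariant line is precisely an eigenspace of $\phi$. Because the three eigenvalues are distinct, each eigenspace is $1$-dimensional, giving exactly the three coordinate lines $E\baseo$, $E\baset$, $E\baseth$. This is immediate from linear algebra and requires only a sentence. For dimension $2$, I would argue by contradiction in the style of the earlier proofs: suppose $D_{2}$ is a $\phi$-invariant plane not equal to any of $E(\baseo,\baset)$, $E(\baset,\baseth)$, $E(\baseo,\baseth)$. Then $D_{2}$ must contain a vector $v=a\baseo+b\baset+c\baseth$ with at least two of $a,b,c$ nonzero (otherwise $v$ lies on a coordinate axis, and a plane is not determined by axes unless it is one of the listed coordinate planes). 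Applying $\phi$ repeatedly, the vectors $v$, $\phi v$, $\phi^{2}v$ have coordinates governed by a Vandermonde-type relation in the distinct eigenvalues $\lambdao,\lambdat,\lambdath$; since these eigenvalues are distinct, these three vectors are linearly independent, forcing $\dim D_{2}\geq 3$, a contradiction.

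Concretely, if $v$ has all three coordinates nonzero, then $v,\phi v,\phi^2 v$ span $D$ by the invertibility of the Vandermonde matrix built from $\lambdao,\lambdat,\lambdath$, immediately contradicting $\dim D_2=2$. If exactly two coordinates are nonzero, say $v=a\baseo+b\baset$ with $a,b\neq 0$, then $\phi v=\lambdao a\baseo+\lambdat b\baset$ is independent of $v$ (as $\lambdao\neq\lambdat$), so $D_2=E(\baseo,\baset)$, which is one of the listed planes --- and symmetrically for the other two coordinate-pair cases. Thus any $\phi$-invariant plane is forced to be one of the three coordinate planes.

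The argument is entirely elementary and I do not anticipate a genuine obstacle; the only point requiring care is the bookkeeping in the dimension-$2$ case, namely ruling out all planes by systematically splitting into the subcases according to how many coordinates of a generic vector in $D_2$ vanish. The distinctness of the eigenvalues is exactly the hypothesis that makes every subcase collapse either to a contradiction or to one of the six listed subspaces, so I would be sure to invoke $\lambda_i\neq\lambda_j$ at each branch.
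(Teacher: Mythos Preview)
Your proposal is correct and follows essentially the same approach as the paper. The paper's proof also notes that the $1$-dimensional invariant subspaces are exactly the eigenspaces, and for the $2$-dimensional case it takes a vector in $D_{2}$ with $abc\neq 0$ and applies $\phi$ twice, subtracting at each stage to peel off coordinates until $\baseth\in D_{2}$ forces $\dim D_{2}=3$; this is precisely your Vandermonde argument written out by hand, and your additional case of exactly two nonzero coordinates just makes explicit the step the paper leaves implicit when asserting such a vector exists.
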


\begin{proof}
The vector subspaces listed above are obviously $\phi$-invariant. For the dimension 1, it is clear since every $\phi$-invariant subspace of dimension 1 is the eigenspace. Let $D_{2}$ be a $\phi$-invariant subspace of dimension 2 that are not listed above. Then there is an element $a\lambdao+b\lambdat+c\lambdath$ in $D_{2}$ with $abc\neq0$ and $\phi(a\lambdao+b\lambdat+c\lambdath)=\lambdao(a\lambdao+b\lambdat+c\lambdath)+(\lambdat-\lambdao)b\baset+(\lambdath-\lambdao)c\baseth$. So $(\lambdat-\lambdao)b\baset+(\lambdath-\lambdao)c\baseth\in D_{2}$ and $\phi((\lambdat-\lambdao)b\baset+(\lambdath-\lambdao)c\baseth)=\lambdat((\lambdat-\lambdao)b\baset+(\lambdath-\lambdao)c\baseth)+(\lambdath-\lambdat)(\lambdath-\lambdao)c\baseth$. Hence, $\baseth\in D_{2}$, i.e., $D_{2}$ is of dimension 3, which is contradiction.
\end{proof}

We start to collect the admissible filtered $\phi$-modules in this case.
\subsubsection{}\label{cryssixth1}
Assume first that $L_{1}=E\baseo$ and $L_{2}=E(\baseo,\baset)$. By admissibility, we have $s=\hodgen(E\baseo)\leq\newtonn(E\baseo)=\val{\lambdao}$, $r=\hodgen(E\baset)\leq\newtonn(E\baset)=\val{\lambdat}$, $0=\hodgen(E\baseth)\leq\newtonn(E\baseth)=\val{\lambdath}$, $r+s=\hodgen(E(\baseo,\baset))\leq\newtonn(E(\baseo,\baset))=\val{\lambdao}+\val{\lambdat}$, $r=\hodgen(E(\baset,\baseth))\leq\newtonn(E(\baset,\baseth))=\val{\lambdat}+\val{\lambdath}$, and $s=\hodgen(E(\baseo,\baseth))\leq\newtonn(E(\baseo,\baseth))=\val{\lambdao}+\val{\lambdath}$. So, for $\val\lambdao=s$, $\val\lambdat=r$, and $\val\lambdath=0$, we have admissible filtered $\phi$-modules in this case. The corresponding representations are obviously decomposable.

\subsubsection{}\label{}
Assume that $L_{1}=E\baseo$ and $L_{2}=E(\baseo,\baseth)$. By the same argument as above, we have $\val\lambdao=s,\val\lambdat=0,\val\lambdath=r$ which contradict to the assumption $\val\lambdao\geq\val\lambdat\geq\val\lambdath$. Similarly, if $L_{1}$ is a $\phi$-invariant subspace other than $E\baseo$ and if $L_{2}$ is a $\phi$-invariant subspace, it violates the assumption $\val\lambdao\geq\val\lambdat\geq\val\lambdath$.

\subsubsection{}\label{cryssixth2}
Assume that $L_{1}=E\baseo$ and $L_{2}$ is not $\phi$-invariant. Then $\baset,\baseth\not\in L_{2}$ and, by admissibility, we have $s=\hodgen(E\baseo)\leq\newtonn(E\baseo)=\val{\lambdao}$, $0=\hodgen(E\baset)\leq\newtonn(E\baset)=\val{\lambdat}$, $0=\hodgen(E\baseth)\leq\newtonn(E\baseth)=\val{\lambdath}$, $s=\hodgen(E(\baseo,\baset))\leq\newtonn(E(\baseo,\baset))=\val{\lambdao}+\val{\lambdat}$, $r=\hodgen(E(\baset,\baseth))\leq\newtonn(E(\baset,\baseth))=\val{\lambdat}+\val{\lambdath}$, and $s=\hodgen(E(\baseo,\baseth))\leq\newtonn(E(\baseo,\baseth))=\val{\lambdao}+\val{\lambdath}$. So, for $s=\val\lambdao\geq\val\lambdat\geq\val\lambdath\geq0$ and $\val\lambdat+\val\lambdath=r$, we have admissible filtered $\phi$-modules in this case. The corresponding representations are decomposable with submodules $E\baseo$ and $E(\baset,\baseth)$ in this case; moreover, if $\val\lambdath=0$ then $E\baseth$ and $E(\baseo,\baseth)$ are also submodules.

\subsubsection{}\label{}
Assume that $L_{1}=E\baset$ and $L_{2}$ is not $\vphi$-invariant. Then $\baseo,\baseth\not\in L_{2}$ and, by admissibility, we have $0=\hodgen(E\baseo)\leq\newtonn(E\baseo)=\val{\lambdao}$, $s=\hodgen(E\baset)\leq\newtonn(E\baset)=\val{\lambdat}$, $0=\hodgen(E\baseth)\leq\newtonn(E\baseth)=\val{\lambdath}$, $s=\hodgen(E(\baseo,\baset))\leq\newtonn(E(\baseo,\baset))=\val{\lambdao}+\val{\lambdat}$, $s=\hodgen(E(\baset,\baseth))\leq\newtonn(E(\baset,\baseth))=\val{\lambdat}+\val{\lambdath}$, and $r=\hodgen(E(\baseo,\baseth))\leq\newtonn(E(\baseo,\baseth))=\val{\lambdao}+\val{\lambdath}$. So we get $\val\lambdao\geq\val\lambdat\geq s$ and $\val\lambdath\geq0$, which violates $\val\lambdao+\val\lambdat+\val\lambdath=r+s$. Hence, there are no admissible filtered $\phi$-modules in this case.

\subsubsection{}\label{}
Assume that $L_{1}=E\baseth$ and $L_{2}$ is not $\vphi$-invariant. Then $\baseo,\baset\not\in L_{2}$ and, by admissibility, we have $0=\hodgen(E\baseo)\leq\newtonn(E\baseo)=\val{\lambdao}$, $0=\hodgen(E\baset)\leq\newtonn(E\baset)=\val{\lambdat}$, $s=\hodgen(E\baseth)\leq\newtonn(E\baseth)=\val{\lambdath}$, $r=\hodgen(E(\baseo,\baset))\leq\newtonn(E(\baseo,\baset))=\val{\lambdao}+\val{\lambdat}$, $s=\hodgen(E(\baset,\baseth))\leq\newtonn(E(\baset,\baseth))=\val{\lambdat}+\val{\lambdath}$, and $s=\hodgen(E(\baseo,\baseth))\leq\newtonn(E(\baseo,\baseth))=\val{\lambdao}+\val{\lambdath}$. So we get $\val\lambdao\geq\val\lambdat\geq\val\lambdath\geq s$, which violates $\val\lambdao+\val\lambdat+\val\lambdath=r+s$. Hence, there are no admissible filtered $\phi$-modules in this case.

\subsubsection{}\label{cryssixth3}
Assume that $L_{1}$ is not $\phi$-invariant and $L_{2}=E(\baseo,\baset)$. By admissibility, we have $r=\hodgen(E\baseo)\leq\newtonn(E\baseo)=\val{\lambdao}$, $r=\hodgen(E\baset)\leq\newtonn(E\baset)=\val{\lambdat}$, $0=\hodgen(E\baseth)\leq\newtonn(E\baseth)=\val{\lambdath}$, $r+s=\hodgen(E(\baseo,\baset))\leq\newtonn(E(\baseo,\baset))=\val{\lambdao}+\val{\lambdat}$, $r=\hodgen(E(\baset,\baseth))\leq\newtonn(E(\baset,\baseth))=\val{\lambdat}+\val{\lambdath}$, and $r=\hodgen(E(\baseo,\baseth))\leq\newtonn(E(\baseo,\baseth))=\val{\lambdao}+\val{\lambdath}$. So, for $\val\lambdao\geq\val\lambdat\geq r$, $\val\lambdath=0$, and $\val\lambdao+\val\lambdat=r+s$, we have admissible filtered $\phi$-modules in this case. The corresponding representations are decomposable with submodules $E\baseth$ and $E(\baseo,\baset)$; moreover, if $\val\lambdat=r$ then $E\baset$ and $E(\baset,\baseth)$ are also submodules.

\subsubsection{}\label{}
Assume that $L_{1}$ is not $\phi$-invariant and $L_{2}=E(\baset,\baseth)$. By admissibility, we have
$0=\hodgen(E\baseo)\leq\newtonn(E\baseo)=\val{\lambdao}$, $r=\hodgen(E\baset)\leq\newtonn(E\baset)=\val{\lambdat}$, $r=\hodgen(E\baseth)\leq\newtonn(E\baseth)=\val{\lambdath}$, $r=\hodgen(E(\baseo,\baset))\leq\newtonn(E(\baseo,\baset))=\val{\lambdao}+\val{\lambdat}$, $r+s=\hodgen(E(\baset,\baseth))\leq\newtonn(E(\baset,\baseth))=\val{\lambdat}+\val{\lambdath}$, and $r=\hodgen(E(\baseo,\baseth))\leq\newtonn(E(\baseo,\baseth))=\val{\lambdao}+\val{\lambdath}$. So we get $\val\lambdao=0$, and $\val\lambdat+\val\lambdath=r+s$, which violates the assumption $\val\lambdao\geq\val\lambdat\geq\val\lambdath$.

\subsubsection{}\label{}
Assume that $L_{1}$ is not $\phi$-invariant and $L_{2}=E(\baseo,\baseth)$. By admissibility, we have $r=\hodgen(E\baseo)\leq\newtonn(E\baseo)=\val{\lambdao}$, $0=\hodgen(E\baset)\leq\newtonn(E\baset)=\val{\lambdat}$, $r=\hodgen(E\baseth)\leq\newtonn(E\baseth)=\val{\lambdath}$, $r=\hodgen(E(\baseo,\baset))\leq\newtonn(E(\baseo,\baset))=\val{\lambdao}+\val{\lambdat}$, $r=\hodgen(E(\baset,\baseth))\leq\newtonn(E(\baset,\baseth))=\val{\lambdat}+\val{\lambdath}$, and $r+s=\hodgen(E(\baseo,\baseth))\leq\newtonn(E(\baseo,\baseth))=\val{\lambdao}+\val{\lambdath}$. So we get $\val\lambdat=0$, $\val\lambdath\geq r$, and $\val\lambdao+\val\lambdath=r+s$, which violates the assumption $\val\lambdao\geq\val\lambdat\geq\val\lambdath$.

\subsubsection{}\label{cryssixth4}
Assume that neither $L_{1}$ nor $L_{2}$ are $\phi$-invariant, $L_{1}\subset E(\baseo,\baset)$, and $\baseth\in L_{2}$. Then $\baseo,\baset\not\in L_{2}$ and, by admissibility, we have $0=\hodgen(E\baseo)\leq\newtonn(E\baseo)=\val{\lambdao}$, $0=\hodgen(E\baset)\leq\newtonn(E\baset)=\val{\lambdat}$, $r=\hodgen(E\baseth)\leq\newtonn(E\baseth)=\val{\lambdath}$, $s=\hodgen(E(\baseo,\baset))\leq\newtonn(E(\baseo,\baset))=\val{\lambdao}+\val{\lambdat}$, $r=\hodgen(E(\baset,\baseth))\leq\newtonn(E(\baset,\baseth))=\val{\lambdat}+\val{\lambdath}$, and $r=\hodgen(E(\baseo,\baseth))\leq\newtonn(E(\baseo,\baseth))=\val{\lambdao}+\val{\lambdath}$. So, for $\val\lambdao\geq\val\lambdat\geq\val\lambdath=r$, $\val\lambdao+\val\lambdat=s$, and $s\geq2r$, we have admissible filtered $\phi$-modules. The corresponding representations are decomposable with submodules $E\baseth$ and $E(\baseo,\baset)$.

\subsubsection{}\label{cryssixth5}
Assume that neither $L_{1}$ nor $L_{2}$ are $\phi$-invariant, $L_{1}\subset E(\baseo,\baset)$, and $\baseth\not\in L_{2}$. Then $\baseo,\baset\not\in L_{2}$ and, by admissibility, we have $0=\hodgen(E\baseo)\leq\newtonn(E\baseo)=\val{\lambdao}$, $0=\hodgen(E\baset)\leq\newtonn(E\baset)=\val{\lambdat}$, $0=\hodgen(E\baseth)\leq\newtonn(E\baseth)=\val{\lambdath}$, $s=\hodgen(E(\baseo,\baset))\leq\newtonn(E(\baseo,\baset))=\val{\lambdao}+\val{\lambdat}$, $r=\hodgen(E(\baset,\baseth))\leq\newtonn(E(\baset,\baseth))=\val{\lambdat}+\val{\lambdath}$, and $r=\hodgen(E(\baseo,\baseth))\leq\newtonn(E(\baseo,\baseth))=\val{\lambdao}+\val{\lambdath}$. So, for $s\geq\val\lambdao\geq\val\lambdat\geq\val\lambdath\geq0$, $\val\lambdath\leq r$, and $\val\lambdao+\val\lambdat+\val\lambdath=r+s$, we have admissible filtered $\phi$-modules. The submodules are $E\baseth$ if $\val\lambdath=0$, $E(\baseo,\baset)$ if $\val\lambdath=r$, and $E(\baset,\baseth)$ if $\val\lambdao=s$.
Hence, we have the irreducible admissible filtered $\phi$-modules if $s>\val\lambdao\geq\val\lambdat\geq\val\lambdath>0$ and $\val\lambdath< r.$

\subsubsection{}\label{cryssixth6}
Assume that neither $L_{1}$ nor $L_{2}$ are $\phi$-invariant, $L_{1}\subset E(\baset,\baseth)$, and $\baseo\in L_{2}$. Then $\baset,\baseth\not\in L_{2}$ and, by admissibility, we have $r=\hodgen(E\baseo)\leq\newtonn(E\baseo)=\val{\lambdao}$, $0=\hodgen(E\baset)\leq\newtonn(E\baset)=\val{\lambdat}$, $0=\hodgen(E\baseth)\leq\newtonn(E\baseth)=\val{\lambdath}$, $r=\hodgen(E(\baseo,\baset))\leq\newtonn(E(\baseo,\baset))=\val{\lambdao}+\val{\lambdat}$, $s=\hodgen(E(\baset,\baseth))\leq\newtonn(E(\baset,\baseth))=\val{\lambdat}+\val{\lambdath}$, and $r=\hodgen(E(\baseo,\baseth))\leq\newtonn(E(\baseo,\baseth))=\val{\lambdao}+\val{\lambdath}$. So, for $r=\val\lambdao\geq\val\lambdat\geq\val\lambdath$, $\val\lambdat+\val\lambdath=s$, and $s\leq 2r$, we have admissible filtered $\phi$-modules. The corresponding representations are decomposable with submodules $E\baseo$ and $E(\baset,\baseth)$.

\subsubsection{}\label{cryssixth7}
Assume that neither $L_{1}$ nor $L_{2}$ are $\phi$-invariant, $L_{1}\subset E(\baset,\baseth)$, and $\baseo\not\in L_{2}$. Then $\baset,\baseth\not\in L_{2}$ and, by admissibility, we have
$0=\hodgen(E\baseo)\leq\newtonn(E\baseo)=\val{\lambdao}$, $0=\hodgen(E\baset)\leq\newtonn(E\baset)=\val{\lambdat}$, $0=\hodgen(E\baseth)\leq\newtonn(E\baseth)=\val{\lambdath}$, $r=\hodgen(E(\baseo,\baset))\leq\newtonn(E(\baseo,\baset))=\val{\lambdao}+\val{\lambdat}$, $s=\hodgen(E(\baset,\baseth))\leq\newtonn(E(\baset,\baseth))=\val{\lambdat}+\val{\lambdath}$, and $r=\hodgen(E(\baseo,\baseth))\leq\newtonn(E(\baseo,\baseth))=\val{\lambdao}+\val{\lambdath}$. So, for $r\geq\val\lambdao\geq\val\lambdat\geq\val\lambdath$, $\val\lambdao+\val\lambdat+\val\lambdath=r+s$, and $s\leq 2r$, we have admissible filtered $\phi$-modules. The submodules are $E(\baset,\baseth)$ if $\val\lambdao=r$. Hence, we have the irreducible admissible filtered $\phi$-modules if $r>\val\lambdao\geq\val\lambdat\geq\val\lambdath$ and $s\leq 2r-1.$

\subsubsection{}\label{cryssixth8}
Assume that neither $L_{1}$ nor $L_{2}$ are $\phi$-invariant, $L_{1}\subset E(\baseo,\baseth)$, and $\baset\in L_{2}$. Then $\baseo,\baseth\not\in L_{2}$ and, by admissibility, we have $0=\hodgen(E\baseo)\leq\newtonn(E\baseo)=\val{\lambdao}$, $r=\hodgen(E\baset)\leq\newtonn(E\baset)=\val{\lambdat}$, $0=\hodgen(E\baseth)\leq\newtonn(E\baseth)=\val{\lambdath}$, $r=\hodgen(E(\baseo,\baset))\leq\newtonn(E(\baseo,\baset))=\val{\lambdao}+\val{\lambdat}$, $r=\hodgen(E(\baset,\baseth))\leq\newtonn(E(\baset,\baseth))=\val{\lambdat}+\val{\lambdath}$, and $s=\hodgen(E(\baseo,\baseth))\leq\newtonn(E(\baseo,\baseth))=\val{\lambdao}+\val{\lambdath}$. So, for $\val\lambdao\geq\val\lambdat=r\geq\val\lambdath\geq0$ and $\val\lambdao+\val\lambdath=s$, we have admissible filtered $\phi$-modules. The submodules are $E\baset$ and $E(\baseo,\baseth)$. Moreover, $E\baseth$ and $E(\baset,\baseth)$ are also submodules if $\val\lambdao=s$. So the corresponding representations are decomposable.

\subsubsection{}\label{cryssixth9}
Assume that neither $L_{1}$ nor $L_{2}$ are $\phi$-invariant, $L_{1}\subset E(\baseo,\baseth)$, and $\baset\not\in L_{2}$. Then $\baseo,\baseth\not\in L_{2}$ and, by admissibility, we have $0=\hodgen(E\baseo)\leq\newtonn(E\baseo)=\val{\lambdao}$, $0=\hodgen(E\baset)\leq\newtonn(E\baset)=\val{\lambdat}$, $0=\hodgen(E\baseth)\leq\newtonn(E\baseth)=\val{\lambdath}$, $r=\hodgen(E(\baseo,\baset))\leq\newtonn(E(\baseo,\baset))=\val{\lambdao}+\val{\lambdat}$, $r=\hodgen(E(\baset,\baseth))\leq\newtonn(E(\baset,\baseth))=\val{\lambdat}+\val{\lambdath}$, and $s=\hodgen(E(\baseo,\baseth))\leq\newtonn(E(\baseo,\baseth))=\val{\lambdao}+\val{\lambdath}$. So, for $s\geq\val\lambdao\geq\val\lambdat\geq\val\lambdath$, $\val\lambdat\leq r$, and $\val\lambdao+\val\lambdat+\val\lambdath=r+s$, we have admissible filtered $\phi$-modules. The submodules are $E(\baseo,\baseth)$ if $\val\lambdat=r$, $E(\baset,\baseth)$ if $\val\lambdao=s$, and  $E\baseth$, $E(\baset,\baseth)$, $E(\baseo,\baseth)$ if $\val\lambdath=0$. Hence, we have the irreducible admissible filtered $\phi$-modules if $s>\val\lambdao\geq\val\lambdat\geq\val\lambdath$ and $\val\lambdat< r.$

\subsubsection{}\label{cryssixth10}
Assume that neither $L_{1}$ nor $L_{2}$ are $\phi$-invariant, $L_{1}$ is not contained in any $\phi$-invariant subspace, and $\baseo\in L_{2}$. Then $\baset,\baseth\not\in L_{2}$ and, by admissibility, we have $r=\hodgen(E\baseo)\leq\newtonn(E\baseo)=\val{\lambdao}$, $0=\hodgen(E\baset)\leq\newtonn(E\baset)=\val{\lambdat}$, $0=\hodgen(E\baseth)\leq\newtonn(E\baseth)=\val{\lambdath}$, $r=\hodgen(E(\baseo,\baset))\leq\newtonn(E(\baseo,\baset))=\val{\lambdao}+\val{\lambdat}$, $r=\hodgen(E(\baset,\baseth))\leq\newtonn(E(\baset,\baseth))=\val{\lambdat}+\val{\lambdath}$, and $r=\hodgen(E(\baseo,\baseth))\leq\newtonn(E(\baseo,\baseth))=\val{\lambdao}+\val{\lambdath}$. So, for $\val\lambdao\geq\val\lambdat\geq\val\lambdath\geq0$, $r\leq\val\lambdao\leq s$, and $\val\lambdao+\val\lambdat+\val\lambdath=r+s$, we have admissible filtered $\phi$-modules. The submodules are $E\baseth$ if $\val\lambdath=0$, $E\baseo$ if $\val\lambdao=r$, and
$E(\baset,\baseth)$ if $\val\lambdao=s$. Hence, we have the irreducible admissible filtered $\phi$-modules if $\val\lambdao\geq\val\lambdat\geq\val\lambdath>0$ and $r<\val\lambdao<s.$

\subsubsection{}\label{cryssixth11}
Assume that neither $L_{1}$ nor $L_{2}$ are $\phi$-invariant, $L_{1}$ is not contained in any $\phi$-invariant subspace, and $\baset\in L_{2}$. Then $\baseo,\baseth\not\in L_{2}$ and, by admissibility, we have $0=\hodgen(E\baseo)\leq\newtonn(E\baseo)=\val{\lambdao}$, $r=\hodgen(E\baset)\leq\newtonn(E\baset)=\val{\lambdat}$, $0=\hodgen(E\baseth)\leq\newtonn(E\baseth)=\val{\lambdath}$, $r=\hodgen(E(\baseo,\baset))\leq\newtonn(E(\baseo,\baset))=\val{\lambdao}+\val{\lambdat}$, $r=\hodgen(E(\baset,\baseth))\leq\newtonn(E(\baset,\baseth))=\val{\lambdat}+\val{\lambdath}$, and $r=\hodgen(E(\baseo,\baseth))\leq\newtonn(E(\baseo,\baseth))=\val{\lambdao}+\val{\lambdath}$. So, for $\val\lambdao\geq\val\lambdat\geq\val\lambdath\geq0$, $r\leq\val\lambdat$, and $\val\lambdao+\val\lambdat+\val\lambdath=r+s$, we have admissible filtered $\phi$-modules. The submodules are $E\baseth$ if $\val\lambdath=0$, $E\baset$ if $\val\lambdat=r$, $E(\baset,\baseth)$ if $\val\lambdao=s$. Hence, we have the irreducible admissible filtered $\phi$-modules if $\val\lambdao\geq\val\lambdat\geq\val\lambdath>0$ and $\val\lambdat>r.$

\subsubsection{}\label{cryssixth12}
Assume that neither $L_{1}$ nor $L_{2}$ are $\phi$-invariant, $L_{1}$ is not contained in any $\vphi$-invariant subspace, and $\baseth\in L_{2}$. Then $\baseo,\baset\not\in L_{2}$ and, by admissibility, we have $0=\hodgen(E\baseo)\leq\newtonn(E\baseo)=\val{\lambdao}$, $0=\hodgen(E\baset)\leq\newtonn(E\baset)=\val{\lambdat}$, $r=\hodgen(E\baseth)\leq\newtonn(E\baseth)=\val{\lambdath}$, $r=\hodgen(E(\baseo,\baset))\leq\newtonn(E(\baseo,\baset))=\val{\lambdao}+\val{\lambdat}$, $r=\hodgen(E(\baset,\baseth))\leq\newtonn(E(\baset,\baseth))=\val{\lambdat}+\val{\lambdath}$, and $r=\hodgen(E(\baseo,\baseth))\leq\newtonn(E(\baseo,\baseth))=\val{\lambdao}+\val{\lambdath}$. So, for $\val\lambdao\geq\val\lambdat\geq\val\lambdath\geq r$, $\val\lambdao+\val\lambdat+\val\lambdath=r+s$, and $s\geq2r$, we have admissible filtered $\phi$-modules. The submodules are $E\baseth$ if $\val\lambdath=r$. Hence, we have the irreducible admissible filtered $\phi$-modules if $\val\lambdao\geq\val\lambdat\geq\val\lambdath>r$ and $s\geq2r+1.$

\subsubsection{}\label{cryssixth13}
Assume that neither $L_{1}$ nor $L_{2}$ are $\phi$-invariant, $L_{1}$ is not contained in any $\phi$-invariant subspace, and $\baseo,\baset,\baseth\not\in L_{2}$. Then, by admissibility, we have $0=\hodgen(E\baseo)\leq\newtonn(E\baseo)=\val{\lambdao}$, $0=\hodgen(E\baset)\leq\newtonn(E\baset)=\val{\lambdat}$, $0=\hodgen(E\baseth)\leq\newtonn(E\baseth)=\val{\lambdath}$, $r=\hodgen(E(\baseo,\baset))\leq\newtonn(E(\baseo,\baset))=\val{\lambdao}+\val{\lambdat}$, $r=\hodgen(E(\baset,\baseth))\leq\newtonn(E(\baset,\baseth))=\val{\lambdat}+\val{\lambdath}$, and $r=\hodgen(E(\baseo,\baseth))\leq\newtonn(E(\baseo,\baseth))=\val{\lambdao}+\val{\lambdath}$. So, for $s\geq\val\lambdao\geq\val\lambdat\geq\val\lambdath\geq0$ and $\val\lambdao+\val\lambdat+\val\lambdath=r+s$, we have admissible filtered $\phi$-modules. The submodules are $E\baseth$ if $\val\lambdath=0$ and $E(\baset,\baseth)$ if $\val\lambdao=s$. Hence, we have the irreducible admissible filtered $\phi$-modules if $s>\val\lambdao\geq\val\lambdat\geq\val\lambdath>0.$

\subsection{List of the isomorphism classes with $N=0$}\label{ssec:list of N=0}

In the previous subsections, we found all of the admissible filtered $\phi$-modules of Hodge type $(0,r,s)$ for $0<r<s$. In this subsection, we classify the isomorphism classes of the admissible filtered $\phi$-modules on $D=E(\baseo,\baset,\baseth)$.

The following example arises from \ref{cryssecond1}.
\begin{exam}
A filtered $\phi$-module of Hodge type $(0,r,s)$
$$D_{cris}^{1}=D_{cris}^{1}(\lambda);$$
\begin{itemize}
\item $\fil{r} D=E(\baseo,\baseth)\mbox{ and }\fil{s} D=E\baseo$
\item $
[\phi]=
\begin{small}\left(
  \begin{array}{ccc}
    \lambda & 0  & 0 \\
    1 & \lambda & 0 \\
    0 & 0 & \lambda \\
  \end{array}
\right)
\end{small}
$ for $\lambda\in E$.
\item $\val\lambda=\frac{r+s}{3}$ and $s=2r$.
\end{itemize}
\end{exam}

\begin{prop}\label{class crys D1}
\begin{enumerate}
\item $D_{cris}^{1}$ represents admissible filtered $\phi$-modules.
\item The corresponding representations to $D_{cris}^{1}$ are decomposable with submodules $E(\baseo,\baset)$ and $E\baseth$.
\item $D_{cris}^{1}(\lambda)$ is isomorphic to $D_{cris}^{1}(\lambda')$ if and only if $\lambda=\lambda'$.
\end{enumerate}
\end{prop}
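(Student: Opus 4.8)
The plan is to verify the three assertions in order, leaning on the explicit description of this filtered $\phi$-module. For part (1), admissibility, I would compute $\newtonn(D) = \val{\Det[\phi]} = \val{\lambda^3} = 3\val\lambda = r+s = \hodgen(D)$, using that $\fil{s}D = E\baseo$ gives top Hodge weight $s$ and $\fil{r}D = E(\baseo,\baseth)$ contributes weight $r$, so the sum of Hodge--Tate weights is $0+r+s$. Then I must check $\hodgen(D')\le\newtonn(D')$ for every filtered $\phi$-submodule $D'$. The $\phi$-invariant subspaces are those identified in Lemma \ref{crysinvariantsecond}: the eigenspace direction $E\baseth$ (and the other eigenvectors in $E(\baset,\baseth)$), and the $2$-dimensional subspaces containing $\baset$. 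For each such $D'$ I compute $\newtonn(D')$ from the $\phi$-eigenvalues and $\hodgen(D')$ from the induced filtration $\fil{i}D'_K = \fil{i}D_K\cap D'_K$, exactly as was done in the analysis culminating in \ref{cryssecond1}; since this is precisely the module produced there under the condition $s=2r$, all the submodule inequalities hold by that computation.

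For part (2), I would exhibit the decomposition directly. With $[\phi]$ as given, $E(\baseo,\baset)$ is $\phi$-stable (it is the generalized eigenspace for the Jordan block) and $E\baseth$ is the remaining eigenline, so $D = E(\baseo,\baset)\oplus E\baseth$ as $\phi$-modules. To see this is a decomposition of \emph{filtered} $\phi$-modules I check that the Hodge filtration respects the splitting, i.e. $\fil{i}D_K = (\fil{i}D_K\cap E(\baseo,\baset))\oplus(\fil{i}D_K\cap E\baseth)$ for all $i$. Since $\fil{s}D = E\baseo\subset E(\baseo,\baset)$ and $\fil{r}D = E(\baseo,\baseth)$ meets $E(\baseo,\baset)$ in $E\baseo$ and meets $E\baseth$ in $E\baseth$, the filtration splits compatibly; hence both summands are admissible filtered $\phi$-submodules and the representation is decomposable with the claimed factors.

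For part (3), the isomorphism classification, I would let $\eta\colon D_{cris}^1(\lambda)\to D_{cris}^1(\lambda')$ be an isomorphism of filtered $\phi$-modules with matrix $P=[\eta]$. The condition $\eta\circ\phi = \phi'\circ\eta$ forces $\val{\lambda}=\val{\lambda'}$ at the level of eigenvalues, but more precisely, comparing determinants or the eigenvalue of the common $\phi$-structure shows the Frobenius data determine $\lambda$ up to nothing: the single scalar $\lambda$ is literally an eigenvalue of $\phi$, an isomorphism invariant, so $\lambda=\lambda'$. Conversely, $\lambda=\lambda'$ gives the identity as an isomorphism. The one point requiring care is that conjugating $[\phi]$ by an arbitrary invertible $P$ could in principle rescale the Jordan block entry and thereby leave $\lambda$ as the only invariant; here, since $\phi=\phi'$ would need $P[\phi]=[\phi']P$, the shape of $P$ is constrained by Lemma \ref{typeofPsecond}, and I would confirm that no admissible change of basis alters $\lambda$.

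\emph{The main obstacle} I expect is part (3): I must be careful to rule out a hidden rescaling freedom in the Jordan block coefficient ($1$ in the $(2,1)$-entry). The genuine content is that the filtration is rigid enough that the normalization $[\phi]_{2,1}=1$ cannot be changed while preserving the filtered structure, so that $\lambda$ is the sole parameter and is an isomorphism invariant. Verifying this amounts to using Lemma \ref{typeofPsecond} together with the requirement that $P$ preserve $\fil{r}D$ and $\fil{s}D$, which I expect pins down $P$ up to scalars acting trivially on $\lambda$.
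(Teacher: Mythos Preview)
Your argument for each part is correct, but your route differs from the paper's in a way worth noting.

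For parts (1) and (2), the paper does not verify admissibility of $D_{cris}^{1}$ directly. Instead, it starts from the \emph{general} admissible module found in \ref{cryssecond1}, with $L_{1}=E(\baseo+b\baset+c\baseth)$ and $L_{2}=E(\baseo+b\baset+c\baseth,\, d\baset+\baseth)$, and exhibits an explicit change of basis (of the shape permitted by Lemma~\ref{typeofPsecond}) bringing it to the normal form $D_{cris}^{1}$. Admissibility and the decomposition then follow immediately, since they were already established in \ref{cryssecond1}. Your direct verification is fine for showing that $D_{cris}^{1}$ itself is admissible and splits, but the paper's normalization argument carries extra weight: it shows that \emph{every} admissible module arising in case \ref{cryssecond1} is isomorphic to some $D_{cris}^{1}(\lambda)$. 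That is what ``represents'' is meant to convey, and it is exactly what is cited later in the proof that the $D_{cris}^{i}$ exhaust all crystalline representations. So your approach proves the proposition as literally stated, while the paper's approach simultaneously supplies the ingredient needed for the global classification theorem.

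For part (3), your concern about the Jordan block entry is unnecessary, and the paper dispenses with it in one line: any isomorphism of filtered $\phi$-modules conjugates $\phi$ to $\phi'$, so the eigenvalues coincide, giving $\lambda=\lambda'$. The normalization of the off-diagonal entry to $1$ is irrelevant to this conclusion, since the eigenvalue is a conjugation invariant regardless of how the Jordan block is scaled. Your worry about ``hidden rescaling freedom'' would only matter if you were trying to show the filtration parameter is an invariant (as in later propositions with an $\mfl$), but here there is no such parameter.
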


\begin{proof}
From \ref{cryssecond1}, if we let $L_{1}=E(\baseo+b\baset+c\baseth)$ and $L_{2}=(\baseo+b\baset+c\baseth,d\baset+\baseth)$, then, by change of a basis: $\baseo\mapsto\baseo+(cd-b)\baset-c\baseth$, $\baset\mapsto\baset$, and $\baseth\mapsto-d\baset+\baseth$, we get $D_{cris}^{1}(\lambda)$. Notice that this change of a basis does not change the matrix presentation of $\phi$ by Lemma \ref{typeofPsecond}. So now part (1) and (2) are clear. Part (3) is clear, since any isomorphism preserves the eigenvalues of $\phi$.
\end{proof}

The following example arises from \ref{crysthird1}.
\begin{exam}
A filtered $\phi$-module of Hodge type $(0,r,s)$
$$D_{cris}^{2}=D_{cris}^{2}(\lambda);$$
\begin{itemize}
\item $\fil{r} D=E(\baseo,\baseth)\mbox{ and }\fil{s} D=E\baseo$
\item $
[\phi]=
\begin{small}\left(
  \begin{array}{ccc}
    \lambda & 0  & 0 \\
    1 & \lambda & 0 \\
    0 & 1 & \lambda \\
  \end{array}
\right)
\end{small}
$ for $\lambda\in E$.
\item $\val\lambda=\frac{r+s}{3}$ and $s\geq2r$.
\end{itemize}
\end{exam}

\begin{prop}\label{class crys D2}
\begin{enumerate}
\item $D_{cris}^{2}$ represents admissible filtered $\phi$-modules.
\item The corresponding representations to $D_{cris}^{2}$ are
      \begin{itemize}
      \item non-split reducible with submodule $E\baseth$ if $s=2r$ and
      \item irreducible if $s>2r$.
      \end{itemize}
\item $D_{cris}^{2}(\lambda)$ is isomorphic to $D_{cris}^{2}(\lambda')$ if and only if $\lambda=\lambda'$.
\end{enumerate}
\end{prop}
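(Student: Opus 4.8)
The plan is to verify the three parts of Proposition \ref{class crys D2} by mimicking the structure of the proof of Proposition \ref{class crys D1}, but starting from the data collected in \ref{crysthird1} where $\phi$ has minimal polynomial $(x-\lambda)^3$. First I would recall that in the third case, $\phi$ acts by $\phi\baseo=\lambda\baseo+\baset$, $\phi\baset=\lambda\baset+\baseth$, $\phi\baseth=\lambda\baseth$, and that by Lemma \ref{crysinvariantthird} the only nontrivial proper $\phi$-invariant subspaces are $E\baseth$ and $E(\baset,\baseth)$. The situation \ref{crysthird1} is the one where neither $L_1$ nor $L_2$ is $\phi$-invariant but $\baseth\in L_2$, which forces $L_2\cap E(\baset,\baseth)=E\baseth$ and $L_1\not\subset E(\baset,\baseth)$, with admissibility holding precisely when $s\geq 2r$. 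To prove part (1), I would write a general $L_1=E(\baseo+b\baset+c\baseth)$ and a general $L_2$ containing this line together with $\baseth$, and then produce an explicit change of basis that normalizes the filtration to $\fil{r}D=E(\baseo,\baseth)$ and $\fil{s}D=E\baseo$ while leaving $[\phi]$ in the displayed Jordan form.

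The key step is checking that the change of basis does not disturb the matrix of $\phi$: by Lemma \ref{typeofPthird}, the change-of-basis matrices $P$ that commute with $[\phi]$ are exactly the lower triangular matrices with $P_{1,1}=P_{2,2}=P_{3,3}$ and $P_{2,1}=P_{3,2}$. So I would parametrize the allowed change of basis by such a $P$ (a one-parameter family of strictly-lower-triangular unipotent transformations, up to scaling), and solve for the entries that send the generic $L_1,L_2$ to the normalized ones. The point is that the freedom in choosing $b,c$ and the second generator of $L_2$ is exactly matched by the freedom in the commuting matrices $P$, so a suitable $P$ carries any admissible filtration of this type to the standard one. Once the filtration and the Frobenius are in the stated normal form, part (1) follows from \ref{crysthird1}, and part (2) is immediate from the reducibility/irreducibility analysis recorded there: non-split reducible with submodule $E\baseth$ when $s=2r$ and irreducible when $s>2r$.

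For part (3), the direction that $\lambda=\lambda'$ implies isomorphism is trivial since the two modules are then literally equal. For the converse I would observe that any isomorphism of filtered $\phi$-modules is in particular an isomorphism commuting with $\phi$, hence carries the eigenvalues of $\phi$ to those of $\phi'$; since $\phi$ here has the single eigenvalue $\lambda$ (with minimal polynomial $(x-\lambda)^3$) and $\phi'$ has the single eigenvalue $\lambda'$, we must have $\lambda=\lambda'$. This is exactly the argument used in Proposition \ref{class crys D1}(3), so I would simply invoke it.

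The main obstacle I anticipate is the bookkeeping in part (1): producing the precise change-of-basis formulas (analogous to the explicit substitution $\baseo\mapsto\baseo+(cd-b)\baset-c\baseth$ used in Proposition \ref{class crys D1}) that simultaneously normalizes $L_1$ and $L_2$ while respecting the constraint imposed by Lemma \ref{typeofPthird}. Because $\phi$ is now a full Jordan block rather than a single Jordan block plus a scalar, the commuting matrices $P$ form a smaller (essentially one-parameter after fixing the diagonal) family, so I must check that this family is still large enough to absorb all the moduli in the generic $L_1,L_2$; verifying that the relevant linear system is solvable, rather than over-determined, is the delicate point. I expect it to work out because the constraints $\baseth\in L_2$ and $L_1\not\subset E(\baset,\baseth)$ cut down the parameters to exactly what one unipotent lower-triangular transformation can fix.
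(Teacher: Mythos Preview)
Your approach is correct and essentially identical to the paper's: start from \ref{crysthird1}, normalize the generic filtration via a change of basis commuting with $[\phi]$ (Lemma~\ref{typeofPthird}), and for part~(3) invoke preservation of eigenvalues. Your one miscount is the dimension of the commutant: after fixing the diagonal, Lemma~\ref{typeofPthird} leaves \emph{two} free parameters ($P_{2,1}=P_{3,2}$ and $P_{3,1}$), not one, which exactly matches the two parameters in $L_1=E(\baseo+a\baset+b\baseth)$ (note $L_2$ is then forced since $\baseth\in L_2$); the paper's explicit substitution $\baseo\mapsto\baseo-a\baset+(a^{2}-b)\baseth$, $\baset\mapsto\baset-a\baseth$, $\baseth\mapsto\baseth$ confirms your anticipated obstacle does not arise.
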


\begin{proof}
From \ref{crysthird1}, if we let $L_{1}=E(\baseo+a\baset+b\baseth)$ and $L_{2}=(\baseo+a\baset+b\baseth,\baseth)$, then, by change of a basis: $\baseo\mapsto\baseo-a\baset+(a^{2}-b)\baseth$, $\baset\mapsto\baset-a\baseth$, and $\baseth\mapsto\baseth$, we get $D_{cris}^{2}(\lambda)$. Notice that this change of a basis does not change the matrix presentation of $\phi$ by Lemma \ref{typeofPthird}. So now part (1) and (2) are clear. Part (3) is clear, since any isomorphism reserves the eigenvalues of $\phi$.
\end{proof}

The following example arises from \ref{crysthird2}.
\begin{exam}
A filtered $\phi$-module of Hodge type $(0,r,s)$
$$D_{cris}^{3}=D_{cris}^{3}(\lambda);$$
\begin{itemize}
\item $\fil{r} D=E(\baseo,\baset)\mbox{ and }\fil{s} D=E\baset$
\item $
[\phi]=
\begin{small}\left(
  \begin{array}{ccc}
    \lambda & 0  & 0 \\
    1 & \lambda & 0 \\
    0 & 1 & \lambda \\
  \end{array}
\right)
\end{small}
$ for $\lambda\in E$.
\item $\val\lambda=\frac{r+s}{3}$ and $s\leq2r$.
\end{itemize}
\end{exam}

\begin{prop}\label{class crys D3}
\begin{enumerate}
\item $D_{cris}^{3}$ represents admissible filtered $\phi$-modules.
\item The corresponding representations to $D_{cris}^{3}$ are
      \begin{itemize}
      \item non-split reducible with submodule $E(\baset,\baseth)$ if $s=2r$ and
      \item irreducible if $s<2r$.
      \end{itemize}
\item $D_{cris}^{3}(\lambda)$ is isomorphic to $D_{cris}^{3}(\lambda')$ if and only if $\lambda=\lambda'$.
\end{enumerate}
\end{prop}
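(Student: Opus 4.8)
The plan is to follow the template of Propositions~\ref{class crys D1} and~\ref{class crys D2}: produce an explicit change of basis that fixes the matrix $[\phi]$ and carries an arbitrary admissible module arising from~\ref{crysthird2} into the normal form $D_{cris}^{3}(\lambda)$, and then deduce the remaining assertions from the analysis already carried out in~\ref{crysthird2}.

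First I would pin down the filtration. By Lemma~\ref{crysinvariantthird} the only nontrivial proper $\phi$-invariant subspaces are $E\baseth$ and $E(\baset,\baseth)$, so the hypotheses of~\ref{crysthird2} (namely $L_{1}\subset E(\baset,\baseth)$ with $L_{1}$ not $\phi$-invariant) force $L_{1}=E(\baset+c\baseth)$ for some $c\in E$. Since $L_{2}\cap E(\baset,\baseth)=L_{1}$ and $\baseth\notin L_{2}$, after subtracting a suitable multiple of the generator of $L_{1}$ from a generator of $L_{2}$ I may write $L_{2}=E(\baseo+\beta\baseth,\,\baset+c\baseth)$ for some $\beta\in E$.

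Next I would normalize. Put $f_{1}=\baseo+c\baset+(\beta+c^{2})\baseth$ and set $f_{2}=(\phi-\lambda)f_{1}$, $f_{3}=(\phi-\lambda)^{2}f_{1}$. Using $(\phi-\lambda)\baseo=\baset$, $(\phi-\lambda)\baset=\baseth$, and $(\phi-\lambda)\baseth=0$, a direct computation gives $f_{2}=\baset+c\baseth$ and $f_{3}=\baseth$, so $\{f_{1},f_{2},f_{3}\}$ is a basis in which $\phi$ acts by the Jordan block of $D_{cris}^{3}(\lambda)$. Equivalently, the transition matrix from $\baseo,\baset,\baseth$ to $f_{1},f_{2},f_{3}$ is lower triangular with equal diagonal entries and $P_{2,1}=P_{3,2}$, hence commutes with $[\phi]$ by Lemma~\ref{typeofPthird} and leaves it unchanged. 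By construction $Ef_{2}=L_{1}$ and $E(f_{1},f_{2})=L_{2}$ (indeed $f_{1}-cf_{2}=\baseo+\beta\baseth$), which is exactly the filtration $\fil{s}D=Ef_{2}$, $\fil{r}D=E(f_{1},f_{2})$ of $D_{cris}^{3}(\lambda)$. This proves~(1). Part~(2) is then immediate from~\ref{crysthird2}, where it was shown that admissible modules occur here precisely when $s\leq 2r$, being non-split reducible with submodule $E(\baset,\baseth)$ when $s=2r$ and irreducible when $s<2r$.

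For part~(3), any isomorphism $D_{cris}^{3}(\lambda)\to D_{cris}^{3}(\lambda')$ is in particular an isomorphism of the underlying $\phi$-modules, so it conjugates $[\phi]$ to $[\phi']$ and preserves the spectrum; since $\lambda$ and $\lambda'$ are the unique eigenvalues of $[\phi]$ and $[\phi']$, this gives $\lambda=\lambda'$, and the converse is trivial. The only genuinely computational point is the simultaneous normalization in the middle step: one must check that a single cyclic vector $f_{1}$ can be chosen so that $L_{1}$ and $L_{2}$ both land in standard position while $[\phi]$ is preserved. This succeeds because the degrees of freedom match---the two filtration parameters $c,\beta$ against the two off-diagonal parameters of a matrix commuting with $[\phi]$---so no obstruction arises.
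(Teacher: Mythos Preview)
Your proof is correct and follows essentially the same approach as the paper. Both arguments normalize the filtration by a change of basis commuting with $[\phi]$; the paper writes down the isomorphism directly as $\baseo\mapsto\baseo-a\baset-b\baseth$, $\baset\mapsto\baset-a\baseth$, $\baseth\mapsto\baseth$, whereas you construct its inverse by choosing the cyclic vector $f_{1}$ and generating $f_{2},f_{3}$ via $\phi-\lambda$, which is a slightly more conceptual but equivalent packaging of the same computation.
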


\begin{proof}
From \ref{crysthird2}, if we let $L_{1}=E(\baset+a\baseth)$ and $L_{2}=(\baset+a\baseth,\baseo+b\baseth)$, then, by change of a basis: $\baseo\mapsto\baseo-a\baset-b\baseth$, $\baset\mapsto\baset-a\baseth$, and $\baseth\mapsto\baseth$, we get $D_{cris}^{3}(\lambda)$. Notice that this change of a basis does not change the matrix presentation of $\phi$ by Lemma \ref{typeofPthird}. So now part (1) and (2) are clear. Part (3) is clear, since any isomorphism preserves the eigenvalues of $\phi$.
\end{proof}

The following example arises from \ref{crysthird3}.
\begin{exam}
A filtered $\phi$-module of Hodge type $(0,r,s)$
$$D_{cris}^{4}=D_{cris}^{4}(\lambda,\mfl);$$
\begin{itemize}
\item $\fil{r} D=E(\baseo,\baset+\mfl\baseth)\mbox{ and }\fil{s} D=E\baseo$
\item $
[\phi]=
\begin{small}\left(
  \begin{array}{ccc}
    \lambda & 0  & 0 \\
    1 & \lambda & 0 \\
    0 & 1 & \lambda \\
  \end{array}
\right)
\end{small}
$ for $\lambda\in E$.
\item $\mfl\in E$ and $\val\lambda=\frac{r+s}{3}$.
\end{itemize}
\end{exam}

\begin{prop}\label{class crys D4}
\begin{enumerate}
\item $D_{cris}^{4}$ represents admissible filtered $\phi$-modules.
\item The corresponding representations to $D_{cris}^{4}$ are irreducible.
\item $D_{cris}^{4}(\lambda,\mfl)$ is isomorphic to $D_{cris}^{4}(\lambda',\mfl')$ if and only if $\lambda=\lambda'$ and $\mfl=\mfl'$.
\end{enumerate}
\end{prop}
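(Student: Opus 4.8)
The plan is to follow the template of Propositions \ref{class crys D1}--\ref{class crys D3}: exhibit an explicit change of basis that commutes with $\phi$ and carries the general filtration produced in \ref{crysthird3} into the normal form defining $D_{cris}^{4}(\lambda,\mfl)$, and then read off the remaining parameter $\mfl$. Concretely, in case \ref{crysthird3} one has $L_{1}=E(\baseo+a\baset+b\baseth)$ (since $L_{1}\not\subset E(\baset,\baseth)$, the $\baseo$-coefficient is nonzero and may be normalized to $1$), and because $\baseth\notin L_{2}\supset L_{1}$ one may write $L_{2}=E(\baseo+a\baset+b\baseth,\ \baset+c\baseth)$ for some $a,b,c\in E$ (the line $L_{2}\cap E(\baset,\baseth)$ is one-dimensional and misses $\baseth$). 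I would then take $P$ to be the change of basis $\baseo\mapsto\baseo-a\baset+(a^{2}-b)\baseth$, $\baset\mapsto\baset-a\baseth$, $\baseth\mapsto\baseth$, which is of the shape permitted by Lemma \ref{typeofPthird} and hence leaves $[\phi]$ unchanged. A direct computation shows that $P$ sends the generator of $L_{1}$ to $\baseo$ and sends $\baset+c\baseth$ to $\baset+(c-a)\baseth$, so the filtration becomes $\fil{s}D=E\baseo$ and $\fil{r}D=E(\baseo,\baset+\mfl\baseth)$ with $\mfl=c-a$. This proves part (1) and realizes the module as $D_{cris}^{4}(\lambda,\mfl)$; admissibility and the value $\val\lambda=\tfrac{r+s}{3}$ are exactly what \ref{crysthird3} records.

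For part (2), I would invoke Lemma \ref{crysinvariantthird}: the only nonzero proper $\phi$-invariant subspaces are $E\baseth$ and $E(\baset,\baseth)$, and since $N=0$ these are the only candidate filtered submodules. One checks $\hodgen(E\baseth)=0<\val\lambda=\newtonn(E\baseth)$ and $\hodgen(E(\baset,\baseth))=r<2\val\lambda=\newtonn(E(\baset,\baseth))$, both strict because $\val\lambda=\tfrac{r+s}{3}>0$ and $3r<2(r+s)$. Hence neither invariant subspace is admissible, so $D_{cris}^{4}$ has no proper nonzero admissible filtered submodule and the corresponding representation is irreducible.

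For part (3), the necessity of $\lambda=\lambda'$ is immediate, since any isomorphism of filtered $\phi$-modules intertwines the two Frobenii and therefore preserves their common eigenvalue. The genuinely new point compared with Propositions \ref{class crys D1}--\ref{class crys D3} --- and the step I expect to require the most care --- is that $\mfl$ is a true invariant, which the eigenvalue argument alone does not detect. To handle it, assume $\lambda=\lambda'$, so both modules carry the same Jordan matrix $[\phi]$; an isomorphism is then a matrix $P$ commuting with $[\phi]$, hence of the form given by Lemma \ref{typeofPthird}. Since an isomorphism must carry $\fil{s}=E\baseo$ to $\fil{s}=E\baseo$, the condition $P\baseo\in E\baseo$ forces the entries $P_{2,1}=P_{3,2}$ and $P_{3,1}$ to vanish, so $P$ is scalar. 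A scalar fixes every subspace, hence carries $E(\baseo,\baset+\mfl\baseth)$ to itself; comparing with $E(\baseo,\baset+\mfl'\baseth)$ then gives $\mfl=\mfl'$. The converse is trivial, completing part (3).
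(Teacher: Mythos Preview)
Your proof is correct and follows essentially the same approach as the paper: the same change of basis $\baseo\mapsto\baseo-a\baset+(a^{2}-b)\baseth$, $\baset\mapsto\baset-a\baseth$, $\baseth\mapsto\baseth$ is used for part (1), and the same Lemma \ref{typeofPthird} argument (an isomorphism preserving $\fil{s}D=E\baseo$ is forced to be scalar) handles part (3). Your treatment is in fact more explicit than the paper's---you compute $\mfl=c-a$ and spell out the strict inequalities $\hodgen<\newtonn$ for the two invariant subspaces in part (2), whereas the paper simply cites \ref{crysthird3} and declares parts (1) and (2) ``clear''.
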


\begin{proof}
From \ref{crysthird3}, if we let $L_{1}=E(\baseo+a\baset+b\baseth)$ and $L_{2}=(\baseo+a\baset+b\baseth,\baset+c\baseth)$, then, by change of a basis: $\baseo\mapsto\baseo-a\baset+(a^{2}-b)\baseth$, $\baset\mapsto\baset-a\baseth$, and $\baseth\mapsto\baseth$, we get $D_{cris}^{4}(\lambda)$. Notice that this change of a basis does not change the matrix presentation of $\phi$ by Lemma \ref{typeofPthird}. So now part (1) and (2) are clear. Since any isomorphism preserves the filtration and the eigenvalues of $\phi$, Lemma \ref{typeofPthird} implies that an automorphism of $D_{cris}^{4}$ is a scalar multiple of the identity map, which completes part (3).
\end{proof}

The following example arises from \ref{crysfourth1}.
\begin{exam}
A filtered $\phi$-module of Hodge type $(0,r,s)$
$$D_{cris}^{5}=D_{cris}^{5}(\lambda,\lambdath);$$
\begin{itemize}
\item $\fil{r} D=E(\baseo+\baseth,\baset)\mbox{ and }\fil{s} D=E(\baseo+\baseth)$
\item $
[\phi]=
\begin{small}\left(
  \begin{array}{ccc}
    \lambda & 0  & 0 \\
    0 & \lambda & 0 \\
    0 & 0 & \lambdath \\
  \end{array}
\right)
\end{small}
$ for $\lambda\neq\lambdath\in E$.
\item $\val\lambda=r$ and $\val\lambdath=s-r$.
\end{itemize}
\end{exam}

\begin{prop}\label{class crys D5}
\begin{enumerate}
\item $D_{cris}^{5}$ represents admissible filtered $\phi$-modules.
\item The corresponding representations to $D_{cris}^{5}$ are decomposable with submodules $E\baset$ and $E(\baseo,\baseth)$.
\item $D_{cris}^{5}(\lambda,\lambdath)$ is isomorphic to $D_{cris}^{5}(\lambda',\lambdath')$ if and only if $\lambda=\lambda'$ and $\lambdath=\lambdath'$.
\end{enumerate}
\end{prop}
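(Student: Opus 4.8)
The plan is to follow exactly the template of Propositions \ref{class crys D1}--\ref{class crys D4}, adapting it to the diagonal Frobenius of the fourth case. First I would recall from \ref{crysfourth1} that the admissible modules arise precisely when neither $L_1$ nor $L_2$ is $\phi$-invariant, with $\val\lambda=r$ and $\val\lambdath=s-r$; in particular $\lambda\neq\lambdath$ since $r\neq s-r$ is \emph{not} automatic, but the admissibility computation forces these two valuations, and the hypothesis $\lambda\neq\lambdath$ of the fourth case guarantees the diagonal form. The conditions $L_1\not\subset E(\baseo,\baset)$ and $\baseth\notin L_2$ mean I may write $L_1=E(a\baseo+b\baset+\baseth)$ and $L_2=E(a\baseo+b\baset+\baseth,\, c\baseo+d\baset)$ for suitable scalars, where the coefficient of $\baseth$ in the generator of $L_1$ is nonzero (normalized to $1$) and $(c,d)\neq(0,0)$.

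The key step for parts (1) and (2) is to exhibit an explicit change of basis that normalizes $L_1$ and $L_2$ to the stated forms $\fil{s}D=E(\baseo+\baseth)$ and $\fil{r}D=E(\baseo+\baseth,\baset)$ while leaving $[\phi]$ diagonal. By Lemma \ref{typeofPfourth}, the matrices commuting with the diagonal $[\phi]=[\lambda,\lambda,\lambdath]$ are exactly those with $P_{1,3}=P_{2,3}=P_{3,1}=P_{3,2}=0$, i.e.\ block-diagonal of shape $2+1$ acting as an arbitrary invertible map on $E(\baseo,\baset)$ and a scalar on $E\baseth$. I would use the freedom in the $2\times 2$ block to send the $(\baseo,\baset)$-projection of the generator of $L_1$ to a chosen line and to adjust $L_2\cap E(\baseo,\baset)$, arranging $L_1=E(\baseo+\baseth)$ and $L_2=E(\baseo+\baseth,\baset)$; since such $P$ commutes with $[\phi]$, the Frobenius matrix is unchanged. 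Once the module is in this normal form, admissibility (part 1) is the computation already recorded in \ref{crysfourth1}, and the submodules $E\baset$ and $E(\baseo,\baseth)$ are visibly $\phi$-stable and compatible with the filtration, giving the decomposability in part (2).

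For part (3), the forward direction is immediate: any isomorphism intertwines $\phi$, hence preserves its characteristic polynomial, so the unordered eigenvalue data is an invariant; because the Hodge--Tate weights pin down which eigenvalue plays the role of $\lambda$ (valuation $r$, multiplicity two) versus $\lambdath$ (valuation $s-r$), we recover both $\lambda=\lambda'$ and $\lambdath=\lambdath'$. The converse is trivial since equal parameters give literally the same module.

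The step I expect to require the most care is the normalization in parts (1)--(2): I must check that the $2\times 2$ block of $P$ genuinely has enough freedom to simultaneously normalize both $L_1$ and the $2$-dimensional space $L_2$, and that the resulting $P$ is invertible so that it is a legitimate isomorphism of filtered $\phi$-modules. Unlike the earlier non-diagonalizable cases, here $\phi$ acts as a scalar $\lambda$ on the entire plane $E(\baseo,\baset)$, so the commuting block is the \emph{full} group $\GL{2}(E)$ of that plane; this gives ample room, and the only thing to verify is that the chosen basis change is invertible, which follows from $L_1\not\subset E(\baseo,\baset)$ together with $(c,d)\neq(0,0)$. I would then remark, as in Proposition \ref{class crys D4}, that preservation of the filtration and eigenvalues forces automorphisms to be suitably rigid, confirming that no further parameters collapse.
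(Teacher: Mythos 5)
Your proposal is correct and is essentially the paper's own argument: the paper normalizes $L_{1}=E(a\baseo+b\baset+\baseth)$, $L_{2}=E(a\baseo+b\baset+\baseth,\,c\baseo+d\baset)$ via the basis change $a\baseo+b\baset\mapsto\baseo$, $c\baseo+d\baset\mapsto\baset$, $\baseth\mapsto\baseth$, which leaves the diagonal $[\phi]$ unchanged by Lemma \ref{typeofPfourth} --- precisely your $\GL{2}(E)$-block normalization --- and it settles part (3) by preservation of the eigenvalues (with their multiplicities) of $\phi$. One small precision: invertibility of this basis change requires $(a,b)$ and $(c,d)$ to be linearly independent, which follows from $\baseth\notin L_{2}$ (already in your setup, since proportionality would force $\baseth\in L_{2}$), not merely from $(c,d)\neq(0,0)$.
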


\begin{proof}
From \ref{crysfourth1}, if we let $L_{1}=E(a\baseo+b\baset+\baseth)$ and $L_{2}=(a\baseo+b\baset+\baseth,c\baseo+d\baset)$ with $ad-bc\neq0$, then, by change of a basis: $a\baseo+b\baset\mapsto\baseo$, $c\baseo+d\baset\mapsto\baset$, and $\baseth\mapsto\baseth$, we get $D_{cris}^{5}(\lambda,\lambdath)$. Notice that this change of a basis does not change the matrix presentation of $\phi$ by Lemma \ref{typeofPfourth}. So now the part (1) and (2) are clear. The part (3) is also clear, since any isomorphism preserves the eigenvalues of $\phi$.
\end{proof}

The following example arises from \ref{crysfifth1}.
\begin{exam}
A filtered $\phi$-module of Hodge type $(0,r,s)$
$$D_{cris}^{6}=D_{cris}^{6}(\lambda,\lambdath);$$
\begin{itemize}
\item $\fil{r} D=E(\baseo,\baseth)\mbox{ and }\fil{s} D=E\baseth$
\item $
[\phi]=
\begin{small}\left(
  \begin{array}{ccc}
    \lambda & 0  & 0 \\
    1 & \lambda & 0 \\
    0 & 0 & \lambdath \\
  \end{array}
\right)
\end{small}
$ for $\lambda\neq\lambdath\in E$.
\item $\val\lambda=\frac{r}{2}$ and $\val\lambdath=s$.
\end{itemize}
\end{exam}

\begin{prop}\label{class crys D6}
\begin{enumerate}
\item $D_{cris}^{6}$ represents admissible filtered $\phi$-modules.
\item The corresponding representations to $D_{cris}^{6}$ are decomposable with submodules $E\baseth$ and $E(\baseo,\baset)$.
\item $D_{cris}^{6}(\lambda,\lambdath)$ is isomorphic to $D_{cris}^{6}(\lambda',\lambdath')$ if and only if $\lambda=\lambda'$ and $\lambdath=\lambdath'$.
\end{enumerate}
\end{prop}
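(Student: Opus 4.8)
Looking at this proposition, I need to prove three things about $D_{cris}^{6}(\lambda,\lambdath)$.

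The plan follows the template established in the preceding propositions (\ref{class crys D1} through \ref{class crys D5}), so I will mirror that structure closely.

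For parts (1) and (2), my plan is to start from the configuration described in \ref{crysfifth1}, where neither $L_{1}$ nor $L_{2}$ is $\phi$-invariant, $L_{1}=E\baseth$, and $L_{2}\neq E(\baset,\baseth)$. The general such filtration is $\fil{s}D=L_{1}=E(c\baseo+\baseth)$ for some $c\in E$ (normalizing the coefficient of $\baseth$ to be nonzero, since $L_{1}=E\baseth$ means $\baset\notin L_{1}$ but we must avoid $E\baset$) and $\fil{r}D=L_{2}$ a two-dimensional space containing $L_{1}$. Actually, re-reading \ref{crysfifth1}: there $L_{1}=E\baseth$ is stated directly, so the filtration data is already $\fil{s}D=E\baseth$ together with $\fil{r}D=L_{2}$ containing $\baseth$ but unequal to $E(\baset,\baseth)$, hence containing some $\baseo+d\baset$ term. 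First I would write $L_{2}=E(\baseth,\,\baseo+d\baset+e\baseth)$ for suitable $d,e\in E$, and exhibit an explicit change of basis (fixing $\baseth$, and adjusting $\baseo,\baset$) that kills the parameters $d,e$ and brings the filtration to the normal form $\fil{r}D=E(\baseo,\baseth)$, $\fil{s}D=E\baseth$. The key point, exactly as in the earlier propositions, is that by Lemma \ref{typeofPfifth} the admissible base changes (those commuting with $[\phi]$) form a large enough group to absorb these parameters \emph{without} altering the matrix $[\phi]$; so the resulting module really is $D_{cris}^{6}(\lambda,\lambdath)$. Once the normal form is reached, parts (1) and (2) follow immediately from the admissibility and submodule computations already carried out in \ref{crysfifth1}, where the submodules were identified as $E(\baseo,\baset)$ and $E\baseth$ and the representation shown to be decomposable.

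For part (3), the forward direction is trivial. For the reverse, any isomorphism $\eta:D_{cris}^{6}(\lambda,\lambdath)\to D_{cris}^{6}(\lambda',\lambdath')$ must intertwine the Frobenius maps, hence preserve their eigenvalues with multiplicity; since the Frobenius here has eigenvalue $\lambda$ (with multiplicity two, in a single Jordan block) and $\lambdath$ (simple), comparing characteristic and minimal polynomials forces $\lambda=\lambda'$ and $\lambdath=\lambdath'$. This is the same one-line argument used in \ref{class crys D5}(3) and \ref{class crys D1}(3).

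The main obstacle is verifying that the change of basis normalizing the filtration genuinely commutes with $[\phi]$, i.e.\ lies among the matrices $P$ classified in Lemma \ref{typeofPfifth}. Concretely, I must check that the map fixing $\baseth$ and sending $\baseo+d\baset+e\baseth$ to $\baseo$ (and adjusting $\baset$ compatibly) has a matrix of the lower-triangular shape with $P_{1,1}=P_{2,2}$ and $P_{3,1}=P_{3,2}=0$ prescribed by that lemma. Because $\baseth$ spans the $\lambdath$-eigenline and $E(\baseo,\baset)$ is the $\phi$-stable plane for the Jordan block of $\lambda$, the required base change respects this decomposition and so automatically has the admissible form; this makes the verification routine, just as in the $D_{cris}^{5}$ case. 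So I expect no real difficulty, only the bookkeeping of writing down the explicit basis change.
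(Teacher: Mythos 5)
Your proposal is correct and follows essentially the same route as the paper: normalize the filtration from \ref{crysfifth1} by a change of basis lying in the commutant of $[\phi]$ described in Lemma \ref{typeofPfifth} (the paper uses $\baseo\mapsto\baseo-a\baset$, $\baset\mapsto\baset$, $\baseth\mapsto\baseth$, with your redundant parameter $e$ absorbed for free since $\baseth\in L_{2}$), then deduce (1) and (2) from \ref{crysfifth1} and (3) from the fact that any isomorphism preserves the eigenvalues of $\phi$ with their multiplicities.
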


\begin{proof}
From \ref{crysfifth1}, if we let $L_{1}=E\baseth$ and $L_{2}=(\baseth,\baseo+a\baset)$, then, by change of a basis: $\baseo\mapsto\baseo-a\baset$, $\baset\mapsto\baset$, and $\baseth\mapsto\baseth$, we get $D_{cris}^{6}(\lambda,\lambdath)$. Notice that this change of a basis does not change the matrix presentation of $\phi$ by Lemma \ref{typeofPfifth}. So now the part (1) and (2) are clear. The part (3) is also clear, since any isomorphism preserves the eigenvalues of $\phi$.
\end{proof}

The following example arises from \ref{crysfifth2}.
\begin{exam}
A filtered $\phi$-module of Hodge type $(0,r,s)$
$$D_{cris}^{7}=D_{cris}^{7}(\lambda,\lambdath);$$
\begin{itemize}
\item $\fil{r} D=E(\baseo,\baset)\mbox{ and }\fil{s} D=E\baseo$
\item $
[\phi]=
\begin{small}\left(
  \begin{array}{ccc}
    \lambda & 0  & 0 \\
    1 & \lambda & 0 \\
    0 & 0 & \lambdath \\
  \end{array}
\right)
\end{small}
$ for $\lambda\neq\lambdath\in E$.
\item $\val\lambda=\frac{r+s}{2}$ and $\val\lambdath=0$.
\end{itemize}
\end{exam}

\begin{prop}\label{class crys D7}
\begin{enumerate}
\item $D_{cris}^{7}$ represents admissible filtered $\phi$-modules.
\item The corresponding representations to $D_{cris}^{7}$ are decomposable with submodules $E\baseth$ and $E(\baseo,\baset)$.
\item $D_{cris}^{7}(\lambda,\lambdath)$ is isomorphic to $D_{cris}^{7}(\lambda',\lambdath)$ if and only if $\lambda=\lambda'$ and $\lambdath=\lambdath'$.
\end{enumerate}
\end{prop}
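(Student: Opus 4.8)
The plan is to follow the normalization strategy already used for the families $D_{cris}^{5}$ and $D_{cris}^{6}$. First I would return to the configuration analyzed in \ref{crysfifth2}, where $\phi\baseo=\lambda\baseo+\baset$, $\phi\baset=\lambda\baset$, $\phi\baseth=\lambdath\baseth$, the two-dimensional part of the filtration is $L_{2}=E(\baseo,\baset)$, and $L_{1}$ is a line inside $L_{2}$ that is not $\phi$-invariant. By Lemma \ref{crysinvariantfifth} the only $\phi$-invariant line in $E(\baseo,\baset)$ is $E\baset$, so writing $L_{1}=E(a\baseo+b\baset)$ forces $a\neq0$; after rescaling I may assume $L_{1}=E(\baseo+c\baset)$ for some $c\in E$.

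Next I would normalize via the change of basis $\baseo\mapsto\baseo-c\baset$, $\baset\mapsto\baset$, $\baseth\mapsto\baseth$, which carries the generator $\baseo+c\baset$ of $L_{1}$ to $\baseo$ while fixing $L_{2}=E(\baseo,\baset)$, so that the filtration acquires exactly the shape prescribed for $D_{cris}^{7}$. The matrix of this change of basis is lower triangular with equal $(1,1)$ and $(2,2)$ entries and vanishing $(3,1)$ and $(3,2)$ entries, hence it lies in the centralizer of $[\phi]$ by Lemma \ref{typeofPfifth}; therefore it is an isomorphism of filtered $\phi$-modules and leaves $[\phi]$ unchanged. Since admissibility under the constraints $\val\lambda=\frac{r+s}{2}$ and $\val\lambdath=0$ was already checked in \ref{crysfifth2}, part (1) is immediate.

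For part (2) I would note that in the normalized coordinates $D=E(\baseo,\baset)\oplus E\baseth$, that both summands are $\phi$-invariant, and that the filtration splits along this decomposition: both $\fil{r}D=E(\baseo,\baset)$ and $\fil{s}D=E\baseo$ lie in the first summand, so $\fil{i}D=(\fil{i}D\cap E(\baseo,\baset))\oplus(\fil{i}D\cap E\baseth)$ for every $i$. Thus $D$ is the direct sum of the filtered $\phi$-submodules $E(\baseo,\baset)$ and $E\baseth$, and by the compatibility of the Colmez--Fontaine correspondence with direct sums the corresponding representation is decomposable with these submodules. For part (3) the ``if'' direction is trivial, while for the converse I would use that any isomorphism of filtered $\phi$-modules is $\phi$-equivariant and so preserves the characteristic polynomial $(x-\lambda)^{2}(x-\lambdath)$ of $\phi$; since $\lambda\neq\lambdath$, the double and the simple eigenvalues are individually determined, forcing $\lambda=\lambda'$ and $\lambdath=\lambdath'$.

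I do not anticipate a serious obstacle, as this is essentially a bookkeeping computation parallel to the earlier cases. The only points requiring care are verifying that the normalizing matrix genuinely lies in the centralizer described by Lemma \ref{typeofPfifth}, and confirming that the filtration splits compatibly with the eigenspace decomposition, so that one obtains honest decomposability rather than mere reducibility.
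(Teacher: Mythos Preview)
Your proposal is correct and follows essentially the same route as the paper: starting from the configuration in \ref{crysfifth2}, normalizing $L_{1}=E(\baseo+c\baset)$ via the change of basis $\baseo\mapsto\baseo-c\baset$ (which lies in the centralizer of $[\phi]$ by Lemma \ref{typeofPfifth}), and then invoking preservation of eigenvalues for part (3). Your write-up is somewhat more explicit than the paper's in spelling out the filtration splitting for decomposability and in noting that the multiplicity distinguishes $\lambda$ from $\lambdath$, but the argument is the same.
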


\begin{proof}
From \ref{crysfifth2}, if we let $L_{1}=E(\baseo+a\baset)$ and $L_{2}=(\baseo+a\baset,\baset)$, then, by change of a basis: $\baseo\mapsto\baseo-a\baset$, $\baset\mapsto\baset$, and $\baseth\mapsto\baseth$, we get $D_{cris}^{7}(\lambda,\lambdath)$. Notice that this change of a basis does not change the matrix presentation of $\phi$ by Lemma \ref{typeofPfifth}. So now the part (1) and (2) are clear. The part (3) is also clear, since any isomorphism preserves the eigenvalues of $\phi$.
\end{proof}

The following example arises from \ref{crysfifth3}.
\begin{exam}
A filtered $\phi$-module of Hodge type $(0,r,s)$
$$D_{cris}^{8}=D_{cris}^{8}(\lambda,\lambdath);$$
\begin{itemize}
\item $\fil{r} D=E(\baseo,\baseth)\mbox{ and }\fil{s} D=E\baseo$
\item $
[\phi]=
\begin{small}\left(
  \begin{array}{ccc}
    \lambda & 0  & 0 \\
    1 & \lambda & 0 \\
    0 & 0 & \lambdath \\
  \end{array}
\right)
\end{small}
$ for $\lambda\neq\lambdath\in E$.
\item $\val\lambda=\frac{s}{2}$ and $\val\lambdath=r$.
\end{itemize}
\end{exam}

\begin{prop}\label{class crys D8}
\begin{enumerate}
\item $D_{cris}^{8}$ represents admissible filtered $\phi$-modules.
\item The corresponding representations to $D_{cris}^{8}$ are decomposable with submodules $E\baseth$ and $E(\baseo,\baset)$.
\item $D_{cris}^{8}(\lambda,\lambdath)$ is isomorphic to $D_{cris}^{8}(\lambda',\lambdath')$ if and only if $\lambda=\lambda'$ and $\lambdath=\lambdath'$.
\end{enumerate}
\end{prop}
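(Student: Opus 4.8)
The plan is to follow the same template as the proofs of Propositions \ref{class crys D6} and \ref{class crys D7}, reducing everything to the analysis already carried out in \ref{crysfifth3}. First I would record the shape of the data there. Since $L_{1}$ is a one-dimensional subspace of $E(\baseo,\baset)$ that is not $\phi$-invariant, and by Lemma \ref{crysinvariantfifth} the only $\phi$-invariant line inside $E(\baseo,\baset)$ is $E\baset$, we must have $L_{1}=E(\baseo+a\baset)$ for some $a\in E$. Because $\baseth\in L_{2}$, $\Dim{E}L_{2}=2$, and $L_{1}\subset L_{2}$, this forces $L_{2}=E(\baseo+a\baset,\baseth)$.

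For parts (1) and (2), I would apply the change of basis $\baseo\mapsto\baseo-a\baset$, $\baset\mapsto\baset$, $\baseth\mapsto\baseth$. This map sends $\baseo+a\baset$ to $\baseo$, hence carries $L_{1}$ to $E\baseo=\fil{s}D$ and $L_{2}$ to $E(\baseo,\baseth)=\fil{r}D$, so the filtration becomes the standard one for $D_{cris}^{8}$; and since its matrix is lower triangular with equal top two diagonal entries and vanishing $(3,1)$ and $(3,2)$ entries, Lemma \ref{typeofPfifth} shows it commutes with $\phi$ and therefore leaves $[\phi]$ unchanged. Thus every module arising in \ref{crysfifth3} is isomorphic to some $D_{cris}^{8}(\lambda,\lambdath)$, and both the admissibility and the identification of the submodules $E\baseth$ and $E(\baseo,\baset)$ (whence decomposability) were already established there.

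For part (3), the ``if'' direction is immediate, since the module is determined by the pair $(\lambda,\lambdath)$. For ``only if'', any isomorphism $\eta:D_{cris}^{8}(\lambda,\lambdath)\to D_{cris}^{8}(\lambda',\lambdath')$ satisfies $\phi'\circ\eta=\eta\circ\phi$, so $\eta$ conjugates $[\phi]$ to $[\phi']$ and the two matrices have the same eigenvalues counted with multiplicity. As $\lambda\neq\lambdath$, the eigenvalue $\lambda$ occurs with multiplicity $2$ and $\lambdath$ with multiplicity $1$; reading these off forces $\lambda=\lambda'$ and $\lambdath=\lambdath'$.

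The argument is entirely routine and I do not expect a genuine obstacle. The one point that deserves a line of verification is that the single change of basis chosen above simultaneously normalizes the filtration to the standard form and preserves the matrix of $\phi$; the latter is exactly the content of Lemma \ref{typeofPfifth}, so no computation beyond a direct check is required.
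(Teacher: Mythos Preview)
Your proof is correct and follows essentially the same approach as the paper: the identical change of basis $\baseo\mapsto\baseo-a\baset$, $\baset\mapsto\baset$, $\baseth\mapsto\baseth$ (justified via Lemma~\ref{typeofPfifth}) normalizes the filtration, and part~(3) is handled by the preservation of eigenvalues of $\phi$. Your write-up is slightly more explicit in deriving the form of $L_{1}$ and $L_{2}$ from the hypotheses of \ref{crysfifth3}, but the argument is the same.
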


\begin{proof}
From \ref{crysfifth3}, if we let $L_{1}=E(\baseo+a\baset)$ and $L_{2}=(\baseo+a\baset,\baseth)$, then, by change of a basis: $\baseo\mapsto\baseo-a\baset$, $\baset\mapsto\baset$, and $\baseth\mapsto\baseth$, we get $D_{cris}^{8}(\lambda,\lambdath)$. Notice that this change of a basis does not change the matrix presentation of $\phi$ by Lemma \ref{typeofPfifth}. So now the part (1) and (2) are clear. The part (3) is also clear, since any isomorphism preserves the eigenvalues of $\phi$.
\end{proof}

The following example arises from \ref{crysfifth4}.
\begin{exam}
A filtered $\phi$-module of Hodge type $(0,r,s)$
$$D_{cris}^{9}=D_{cris}^{9}(\lambda,\lambdath);$$
\begin{itemize}
\item $\fil{r} D=E(\baseo,\baset+\baseth)\mbox{ and }\fil{s} D=E\baseo$
\item $
[\phi]=
\begin{small}\left(
  \begin{array}{ccc}
    \lambda & 0  & 0 \\
    1 & \lambda & 0 \\
    0 & 0 & \lambdath \\
  \end{array}
\right)
\end{small}
$ for $\lambda\neq\lambdath\in E$.
\item $\frac{s}{2}\leq\val\lambda\leq\frac{r+s}{2}$ and $2\val\lambda+\val\lambdath=r+s$.
\end{itemize}
\end{exam}

\begin{prop}\label{class crys D9}
\begin{enumerate}
\item $D_{cris}^{9}$ represents admissible filtered $\phi$-modules.
\item The corresponding representations to $D_{cris}^{9}$ are
      \begin{itemize}
      \item non-split reducible with submodule $E(\baseo,\baset)$ if $\val\lambda=\frac{s}{2}$,
      \item non-split reducible with submodule $E\baseth$ if $\val\lambda=\frac{r+s}{2}$, and
      \item irreducible if $\frac{s}{2}<\val\lambda<\frac{r+s}{2}$.
      \end{itemize}
\item $D_{cris}^{9}(\lambda,\lambdath)$ is isomorphic to $D_{cris}^{9}(\lambda',\lambdath')$ if and only if $\lambda=\lambda'$ and $\lambdath=\lambdath'$.
\end{enumerate}
\end{prop}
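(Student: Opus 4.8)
The plan is to follow the same template used for Propositions~\ref{class crys D6}--\ref{class crys D8}: I would read off from the case analysis in~\ref{crysfifth4} the general shape of the pair $(L_{1},L_{2})$, produce a single change of basis that commutes with $\phi$ and carries that pair to the normalized filtration of $D_{cris}^{9}$, and then obtain parts~(1) and~(2) directly from~\ref{crysfifth4} and part~(3) from the invariance of the eigenvalues of $\phi$ under isomorphism.

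First I would parametrize the filtration. In the situation of~\ref{crysfifth4} the line $L_{1}$ lies in $E(\baseo,\baset)$ and is not $\phi$-invariant, so (the only $\phi$-invariant line in that plane being $E\baset$) we may write $L_{1}=E(\baseo+b\baset)$ for a unique $b\in E$. Since $L_{2}$ is two-dimensional, contains $L_{1}$, is not $\phi$-invariant, and omits $\baseth$, it is not the $\phi$-invariant plane $E(\baseo,\baset)$; hence it contains a vector with nonzero $\baseth$-coordinate, and after subtracting a multiple of $\baseo+b\baset$ we can write $L_{2}=E(\baseo+b\baset,\,c\baset+\baseth)$. The hypothesis $\baseth\notin L_{2}$ forces $c\neq 0$, and this is the decisive point: it is precisely what lets $c$ be scaled away rather than persisting as a modulus.

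Next I would apply the change of basis $\baseo\mapsto\baseo-b\baset$, $\baset\mapsto\baset$, $\baseth\mapsto c\baseth$. Its matrix is lower triangular with equal $(1,1)$ and $(2,2)$ entries and vanishing $(3,1)$ and $(3,2)$ entries, so by Lemma~\ref{typeofPfifth} it commutes with $[\phi]$ and leaves the Frobenius presentation untouched; a direct check shows it sends $L_{1}$ to $E\baseo$ and $L_{2}$ to $E(\baseo,\baset+\baseth)$, which is exactly the filtration of $D_{cris}^{9}$. This identifies the module of~\ref{crysfifth4} with $D_{cris}^{9}(\lambda,\lambdath)$, so parts~(1) and~(2) follow from~\ref{crysfifth4}; moreover this basis change fixes both $E(\baseo,\baset)$ and $E\baseth$, so the submodules recorded there are exactly the ones appearing in the statement. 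For part~(3) the ``if'' direction is immediate, while any isomorphism conjugates $\phi$ to $\phi'$ and so preserves the eigenvalue multiset $\{\lambda,\lambda,\lambdath\}$; since $\lambda\neq\lambdath$, the repeated eigenvalue $\lambda$ and the simple eigenvalue $\lambdath$ are each determined, giving $\lambda=\lambda'$ and $\lambdath=\lambdath'$.

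I do not expect a serious obstacle, as this is a normalization argument of the same flavour as the preceding propositions. The one point needing care is the simultaneous elimination of both filtration parameters: the $(2,1)$ entry of the basis change removes $b$ while the $(3,3)$ entry removes $c$, so that, in contrast to $D_{cris}^{4}$, no continuous invariant survives beyond $(\lambda,\lambdath)$.
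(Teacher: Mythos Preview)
Your argument is correct and follows essentially the same template as the paper's proof: parametrize the filtration from~\ref{crysfifth4}, normalize via a change of basis of the shape in Lemma~\ref{typeofPfifth}, and read off part~(3) from eigenvalue invariance. The only cosmetic difference is that you write the second generator of $L_{2}$ as $c\baset+\baseth$ (then scale $\baseth\mapsto c\baseth$), whereas the paper writes it as $\baset+c\baseth$ (then scales $c\baseth\mapsto\baseth$); since both the $\baset$- and $\baseth$-coefficients are nonzero here, the two normalizations are equivalent.
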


\begin{proof}
From \ref{crysfifth4}, if we let $L_{1}=E(\baseo+a\baset)$ and $L_{2}=(\baseo+a\baset,\baset+c\baseth)$ with $c\neq0$, then, by change of a basis: $\baseo\mapsto\baseo-a\baset$, $\baset\mapsto\baset$, and $c\baseth\mapsto\baseth$, we get $D_{cris}^{9}(\lambda,\lambdath)$. Notice that this change of a basis does not change the matrix presentation of $\phi$ by Lemma \ref{typeofPfifth}. So now the part (1) and (2) are clear. The part (3) is also clear, since any isomorphism preserves the eigenvalues of $\phi$.
\end{proof}

The following example arises from \ref{crysfifth5}.
\begin{exam}
A filtered $\phi$-module of Hodge type $(0,r,s)$
$$D_{cris}^{10}=D_{cris}^{10}(\lambda,\lambdath);$$
\begin{itemize}
\item $\fil{r} D=E(\baset+\baseth,\baseo)\mbox{ and }\fil{s} D=E(\baset+\baseth)$
\item $
[\phi]=
\begin{small}\left(
  \begin{array}{ccc}
    \lambda & 0  & 0 \\
    1 & \lambda & 0 \\
    0 & 0 & \lambdath \\
  \end{array}
\right)
\end{small}
$ for $\lambda\neq\lambdath\in E$.
\item $\frac{r}{2}\leq\val\lambda\leq r$ and $2\val\lambda+\val\lambdath=r+s$.
\end{itemize}
\end{exam}

\begin{prop}\label{class crys D10}
\begin{enumerate}
\item $D_{cris}^{10}$ represents admissible filtered $\phi$-modules.
\item The corresponding representations to $D_{cris}^{10}$ are
      \begin{itemize}
      \item non-split reducible with submodule $E(\baseo,\baset)$ if $\val\lambda=\frac{r}{2}$,
      \item non-split reducible with submodule $E(\baset,\baseth)$ if $\val\lambda=r$, and
      \item irreducible if $\frac{r}{2}<\val\lambda<r$.
      \end{itemize}
\item $D_{cris}^{10}(\lambda,\lambdath)$ is isomorphic to $D_{cris}^{10}(\lambda',\lambdath')$ if and only if $\lambda=\lambda'$ and $\lambdath=\lambdath'$.
\end{enumerate}
\end{prop}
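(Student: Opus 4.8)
The strategy mirrors the proof of each preceding classification proposition (\ref{class crys D6}--\ref{class crys D9}), and is built on the data collected in \ref{crysfifth5}. The case \ref{crysfifth5} concerns $\phi$ with minimal polynomial $(x-\lambda)^2(x-\lambdath)$, where $\phi$ fixes $E\baset$, $E\baseth$ as eigenlines (in the Jordan basis $\phi\baseo=\lambda\baseo+\baset$, $\phi\baset=\lambda\baset$, $\phi\baseth=\lambdath\baseth$), under the hypothesis that neither $L_1$ nor $L_2$ is $\phi$-invariant and $L_1\subset E(\baset,\baseth)$. First I would parametrize the possible filtrations arising in \ref{crysfifth5}: one writes $L_1=E(\baset+a\baseth)$ for some $a\in E$ (with $L_1\subset E(\baset,\baseth)$ but $L_1\neq E\baset,E\baseth$, so the generic position forces the coefficient of $\baset$ to be nonzero, and after rescaling we may take it to be $1$), and $L_2=E(\baset+a\baseth,\,\baseo+b\baseth)$ for some $b\in E$. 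The role of the valuation conditions $\frac{r}{2}\le\val\lambda\le r$ and $2\val\lambda+\val\lambdath=r+s$ carried over from \ref{crysfifth5} guarantees admissibility, so parts (1) is immediate once the normal form is established.

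The heart of the argument is a change of basis reducing the general filtration to the standard one displayed in the example, namely $\filr D=E(\baset+\baseth,\baseo)$ and $\fils D=E(\baset+\baseth)$. I would use the change of basis $\baseo\mapsto\baseo-b\baseth$, $\baset\mapsto\baset$, and scale $\baseth$ so that $a\baseth\mapsto\baseth$ (legitimate since $a\neq0$ in the relevant subcase); this sends $L_1$ to $E(\baset+\baseth)$ and $L_2$ to $E(\baset+\baseth,\baseo)$. The essential point, exactly as in the earlier propositions, is that by Lemma \ref{typeofPfifth} any such base change---being lower triangular with equal $(1,1),(2,2)$ entries and zero in positions $(3,1),(3,2)$---commutes with $[\phi]$, hence leaves the matrix presentation of $\phi$ unchanged. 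This simultaneously yields parts (1) and (2): the submodule structure (non-split reducible with submodule $E(\baseo,\baset)$ when $\val\lambda=\frac{r}{2}$, with submodule $E(\baset,\baseth)$ when $\val\lambda=r$, irreducible strictly between) is read off directly from the admissibility analysis in \ref{crysfifth5}, since the $\phi$-invariant subspaces are those enumerated in Lemma \ref{crysinvariantfifth} and one checks which remain filtration-compatible.

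For part (3), the isomorphism invariance, I would argue that any isomorphism $D^{10}_{cris}(\lambda,\lambdath)\xrightarrow{\sim}D^{10}_{cris}(\lambda',\lambdath')$ is in particular an isomorphism of the underlying $\phi$-modules, hence preserves the eigenvalues of $\phi$; since $\lambda$ (the repeated eigenvalue) and $\lambdath$ (the simple eigenvalue) are distinguished by their algebraic multiplicities, we must have $\lambda=\lambda'$ and $\lambdath=\lambdath'$. The converse is trivial. I expect the main obstacle to be bookkeeping rather than conceptual: one must verify that the displayed standard filtration genuinely lies in generic position---i.e.\ that $E(\baset+\baseth,\baseo)$ is neither of the excluded $\phi$-invariant planes $E(\baseo,\baset)$, $E(\baset,\baseth)$ and that $E(\baset+\baseth)$ is none of the $\phi$-invariant lines---so that \ref{crysfifth5} indeed applies, and that no \emph{additional} automorphisms beyond scalars could identify distinct parameter values. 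Because Lemma \ref{typeofPfifth} pins down the commutant of $\phi$ so tightly, this last check reduces to the eigenvalue argument already given.
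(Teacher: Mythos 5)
Your overall route coincides with the paper's: parametrize the filtrations in \ref{crysfifth5} as $L_{1}=E(\baset+a\baseth)$, $L_{2}=E(\baset+a\baseth,\baseo+b\baseth)$ with $a\neq 0$, normalize by a base change lying in the commutant of $[\phi]$, read off parts (1) and (2) from \ref{crysfifth5}, and deduce part (3) from preservation of the eigenvalues of $\phi$ --- this is exactly what the paper does. However, your specific base change fails at the decisive point. Whichever way one composes the two operations you prescribe, the resulting map $T$ sends $\baseo$ to $\baseo$ minus a nonzero multiple of $\baseth$ (when $b\neq 0$), so $[T]$ has a nonzero $(3,1)$-entry; by Lemma \ref{typeofPfifth} such a $T$ does \emph{not} commute with $[\phi]$, contrary to your parenthetical claim that it is lower triangular with vanishing $(3,1)$ and $(3,2)$ entries. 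Concretely, for $T(\baseo)=\baseo-\frac{b}{a}\baseth$, $T(\baset)=\baset$, $T(\baseth)=\frac{1}{a}\baseth$ one computes
$$\phi T(\baseo)-T\phi(\baseo)=\tfrac{b}{a}(\lambda-\lambdath)\,\baseth\neq 0,$$
since $\lambda\neq\lambdath$. Thus in your new basis the matrix of $\phi$ is no longer the displayed Jordan form (it acquires a $(3,1)$-entry), and the asserted isomorphism with $D_{cris}^{10}(\lambda,\lambdath)$ is not established.

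The repair is the paper's choice of base change: one can only normalize the $L_{2}$-generator \emph{modulo} $L_{1}$, not on the nose. Take $\baseo\mapsto\baseo+\frac{b}{a}\baset$, $\baset\mapsto\baset$, $a\baseth\mapsto\baseth$. This map is lower triangular with equal $(1,1)$- and $(2,2)$-entries and vanishing $(3,1)$- and $(3,2)$-entries, hence commutes with $[\phi]$ by Lemma \ref{typeofPfifth}; it sends $\baset+a\baseth$ to $\baset+\baseth$ and $\baseo+b\baseth$ to $\baseo+\frac{b}{a}(\baset+\baseth)$, which together with $\baset+\baseth$ spans $E(\baset+\baseth,\baseo)$. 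So the filtration is carried to the standard one while the matrix of $\phi$ is unchanged, and with this substitution the remainder of your argument (including the eigenvalue argument for part (3), which matches the paper's) goes through as written.
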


\begin{proof}
From \ref{crysfifth5}, if we let $L_{1}=E(\baset+a\baseth)$ and $L_{2}=(\baset+a\baseth,\baseo+b\baseth)$ with $a\neq0$, then, by change of a basis: $\baseo\mapsto\baseo+\frac{b}{a}\baset$, $\baset\mapsto\baset$, and $a\baseth\mapsto\baseth$, we get $D_{cris}^{10}(\lambda,\lambdath)$. Notice that this change of a basis does not change the matrix presentation of $\phi$ by Lemma \ref{typeofPfifth}. So now the part (1) and (2) are clear. The part (3) is also clear, since any isomorphism preserves the eigenvalues of $\phi$.
\end{proof}

The following example arises from \ref{crysfifth6}.
\begin{exam}
A filtered $\phi$-module of Hodge type $(0,r,s)$
$$D_{cris}^{11}=D_{cris}^{11}(\lambda,\lambdath);$$
\begin{itemize}
\item $\fil{r} D=E(\baseo+\baseth,\baset)\mbox{ and }\fil{s} D=E(\baseo+\baseth)$
\item $
[\phi]=
\begin{small}\left(
  \begin{array}{ccc}
    \lambda & 0  & 0 \\
    1 & \lambda & 0 \\
    0 & 0 & \lambdath \\
  \end{array}
\right)
\end{small}
$ for $\lambda\neq\lambdath\in E$.
\item $r\leq\val\lambda\leq \frac{r+s}{2}$ and $2\val\lambda+\val\lambdath=r+s$.
\end{itemize}
\end{exam}

\begin{prop}\label{class crys D11}
\begin{enumerate}
\item $D_{cris}^{11}$ represents admissible filtered $\phi$-modules.
\item The corresponding representations to $D_{cris}^{11}$ are
      \begin{itemize}
      \item non-split reducible with submodule $E\baset$ if $\val\lambda=r$,
      \item non-split reducible with submodule $E\baseth$ if $\val\lambda=\frac{r+s}{2}$, and
      \item irreducible if $r<\val\lambda<\frac{r+s}{2}$.
      \end{itemize}
\item $D_{cris}^{11}(\lambda,\lambdath)$ is isomorphic to $D_{cris}^{11}(\lambda',\lambdath')$ if and only if $\lambda=\lambda'$ and $\lambdath=\lambdath'$.
\end{enumerate}
\end{prop}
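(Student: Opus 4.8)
The plan is to proceed exactly as in the classifications of $D_{cris}^{9}$ and $D_{cris}^{10}$: I would start from the general filtration produced in \ref{crysfifth6}, normalize it by a change of basis that leaves the matrix $[\phi]$ unchanged, and then read off the three claims. First I would pin down the shape of $L_{1}$ and $L_{2}$. By Lemma \ref{crysinvariantfifth} the only nontrivial proper $\phi$-invariant subspaces are $E\baset$, $E\baseth$, $E(\baseo,\baset)$, and $E(\baset,\baseth)$. The hypothesis of \ref{crysfifth6} that $L_{1}$ lies in none of these forces its generator to have nonzero $\baseo$- and $\baseth$-coordinates, so after scaling I may write $L_{1}=E(\baseo+b\baset+c\baseth)$ with $c\neq 0$; since $\baset\in L_{2}$ and $L_{1}\subset L_{2}$, this gives $L_{2}=E(\baseo+c\baseth,\baset)$.

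Next I would straighten this filtration to the standard datum $\fil{s}D=E(\baseo+\baseth)$ and $\fil{r}D=E(\baseo+\baseth,\baset)$, with $[\phi]$ as displayed. Lemma \ref{typeofPfifth} records precisely which changes of basis commute with $[\phi]$, namely the lower triangular ones with equal first two diagonal entries and vanishing $(3,1),(3,2)$ entries. The map $\baseo\mapsto\baseo-b\baset$, $\baset\mapsto\baset$, $c\baseth\mapsto\baseth$ has a matrix of exactly this permitted form, sends $\baseo+b\baset+c\baseth$ to $\baseo+\baseth$, and preserves $E\baset$; it therefore carries $(L_{1},L_{2})$ to the standard pair while fixing $[\phi]$, which proves part (1). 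Part (2) is then immediate from the submodule analysis already carried out in \ref{crysfifth6}: the module is non-split reducible with submodule $E\baset$ when $\val\lambda=r$, non-split reducible with submodule $E\baseth$ when $\val\lambda=\frac{r+s}{2}$, and irreducible for $r<\val\lambda<\frac{r+s}{2}$.

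For part (3), any isomorphism of filtered $\phi$-modules $D_{cris}^{11}(\lambda,\lambdath)\to D_{cris}^{11}(\lambda',\lambdath')$ intertwines the two Frobenius maps and hence matches their eigenvalues; reading off the repeated eigenvalue $\lambda$ and the simple eigenvalue $\lambdath$ forces $\lambda=\lambda'$ and $\lambdath=\lambdath'$, while the converse is trivial. I expect no genuine obstacle here: the single point requiring care is checking that the normalizing map both commutes with $\phi$ and simultaneously straightens $L_{1}$ and $L_{2}$, and here the condition $c\neq 0$ coming from \ref{crysfifth6} is exactly what makes the scaling $c\baseth\mapsto\baseth$ invertible, so the normalization never degenerates.
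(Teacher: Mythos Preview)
Your argument is correct and follows the paper's own proof essentially verbatim: you start from the general $L_1,L_2$ in \ref{crysfifth6}, apply the change of basis $\baseo\mapsto\baseo-b\baset$, $\baset\mapsto\baset$, $c\baseth\mapsto\baseth$ (the paper uses the letters $a,b$ in place of your $b,c$), verify via Lemma~\ref{typeofPfifth} that $[\phi]$ is preserved, and deduce (3) from preservation of eigenvalues. Your extra remarks explaining why the $\baseth$-coefficient is nonzero and why the normalizing map is invertible are correct elaborations of points the paper leaves implicit.
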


\begin{proof}
From \ref{crysfifth6}, if we let $L_{1}=E(\baseo+a\baset+b\baseth)$ and $L_{2}=(\baseo+a\baset+b\baseth,\baset)$ with $b\neq0$, then, by change of a basis: $\baseo\mapsto\baseo-a\baset$, $\baset\mapsto\baset$, and $b\baseth\mapsto\baseth$, we get $D_{cris}^{11}(\lambda,\lambdath)$. Notice that this change of a basis does not change the matrix presentation of $\phi$ by Lemma \ref{typeofPfifth}. So now the part (1) and (2) are clear. The part (3) is also clear, since any isomorphism preserves the eigenvalues of $\phi$.
\end{proof}

The following example arises from \ref{crysfifth7}.
\begin{exam}
A filtered $\phi$-module of Hodge type $(0,r,s)$
$$D_{cris}^{12}=D_{cris}^{12}(\lambda,\lambdath);$$
\begin{itemize}
\item $\fil{r} D=E(\baseo,\baseth)\mbox{ and }\fil{s} D=E(\baseo+\baseth)$
\item $
[\phi]=
\begin{small}\left(
  \begin{array}{ccc}
    \lambda & 0  & 0 \\
    1 & \lambda & 0 \\
    0 & 0 & \lambdath \\
  \end{array}
\right)
\end{small}
$ for $\lambda\neq\lambdath\in E$.
\item $\frac{r}{2}\leq\val\lambda\leq \frac{s}{2}$ and $2\val\lambda+\val\lambdath=r+s$.
\end{itemize}
\end{exam}

\begin{prop}\label{class crys D12}
\begin{enumerate}
\item $D_{cris}^{12}$ represents admissible filtered $\phi$-modules.
\item The corresponding representations to $D_{cris}^{12}$ are
      \begin{itemize}
      \item non-split reducible with submodule $E(\baseo,\baset)$ if $\val\lambda=\frac{r}{2}$,
      \item non-split reducible with submodule $E\baseth$ if $\val\lambda=\frac{s}{2}$, and
      \item irreducible if $\frac{r}{2}<\val\lambda<\frac{s}{2}$.
      \end{itemize}
\item $D_{cris}^{12}(\lambda,\lambdath)$ is isomorphic to $D_{cris}^{12}(\lambda',\lambdath')$ if and only if $\lambda=\lambda'$ and $\lambdath=\lambdath'$.
\end{enumerate}
\end{prop}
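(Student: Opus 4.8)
The plan is to proceed exactly as in the proofs of Propositions \ref{class crys D6}--\ref{class crys D11}: I would exhibit an explicit isomorphism from the general admissible module produced in \ref{crysfifth7} to the normal form $D_{cris}^{12}(\lambda,\lambdath)$, realized by a change of basis that commutes with $\phi$ and therefore leaves $[\phi]$ unchanged by Lemma \ref{typeofPfifth}. Once this is done, parts (1) and (2) are immediate from the admissibility inequalities and the determination of submodules already carried out in \ref{crysfifth7}, and part (3) follows from the invariance of the eigenvalues of $\phi$ under isomorphism.

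First I would recall the shape of the filtration in \ref{crysfifth7}. Since $L_{1}$ is contained in no $\phi$-invariant subspace (so its $\baseo$- and $\baseth$-components are both nonzero) and $\baseth\in L_{2}$, one may normalize $L_{1}=E(\baseo+a\baset+b\baseth)$ with $b\neq0$, and then, subtracting a multiple of $\baseth$, write $L_{2}=E(\baseo+a\baset,\baseth)$ for suitable $a,b\in E$. I would then propose the change of basis $\baseo\mapsto\baseo-a\baset$, $\baset\mapsto\baset$, $b\baseth\mapsto\baseth$, whose matrix is lower triangular with equal $(1,1)$ and $(2,2)$ entries and vanishing $(3,1)$ and $(3,2)$ entries, hence lies in the centralizer of $[\phi]$ by Lemma \ref{typeofPfifth}. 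A short verification shows that this map sends $\baseo+a\baset$ to $\baseo$ and $\baseo+a\baset+b\baseth$ to $\baseo+\baseth$, so it carries $L_{2}$ to $E(\baseo,\baseth)=\fil{r}D$ and $L_{1}$ to $E(\baseo+\baseth)=\fil{s}D$, producing exactly $D_{cris}^{12}(\lambda,\lambdath)$. This establishes (1), and (2) is read off directly from \ref{crysfifth7}.

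For part (3), I would argue that any isomorphism $\eta\colon D_{cris}^{12}(\lambda,\lambdath)\to D_{cris}^{12}(\lambda',\lambdath')$ intertwines the two Frobenius maps, so the characteristic polynomials $(x-\lambda)^{2}(x-\lambdath)$ and $(x-\lambda')^{2}(x-\lambdath')$ coincide; since $\lambda\neq\lambdath$ and $\lambda'\neq\lambdath'$, comparing the double root with the simple root forces $\lambda=\lambda'$ and $\lambdath=\lambdath'$. The converse is trivial, as the two modules then literally agree.

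The computations involved are entirely routine, so I anticipate no genuine obstacle; the only point demanding care is confirming that the single proposed change of basis simultaneously clears the $\baset$-component and normalizes the $\baseth$-component of the filtration while remaining in the centralizer of $[\phi]$ described by Lemma \ref{typeofPfifth}. This is the step I would double-check, but the substantive content---the admissibility inequalities and the list of submodules---was already settled in \ref{crysfifth7}.
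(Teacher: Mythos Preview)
Your proposal is correct and follows exactly the same approach as the paper: the change of basis $\baseo\mapsto\baseo-a\baset$, $\baset\mapsto\baset$, $b\baseth\mapsto\baseth$ is precisely the one the paper uses, and your justification via Lemma \ref{typeofPfifth} and the eigenvalue argument for part (3) match the paper's reasoning (your expansion of the characteristic-polynomial step is a harmless elaboration of what the paper states in one line).
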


\begin{proof}
From \ref{crysfifth7}, if we let $L_{1}=E(\baseo+a\baset+b\baseth)$ and $L_{2}=(\baseo+a\baset+b\baseth,\baseth)$ with $b\neq0$, then, by change of a basis: $\baseo\mapsto\baseo-a\baset$, $\baset\mapsto\baset$, and $b\baseth\mapsto\baseth$, we get $D_{cris}^{12}(\lambda,\lambdath)$. Notice that this change of a basis does not change the matrix presentation of $\phi$ by Lemma \ref{typeofPfifth}. So now the part (1) and (2) are clear. The part (3) is also clear, since any isomorphism preserves the eigenvalues of $\phi$.
\end{proof}

The following example arises from \ref{crysfifth8}.
\begin{exam}
A filtered $\phi$-module of Hodge type $(0,r,s)$
$$D_{cris}^{13}=D_{cris}^{13}(\lambda,\lambdath,\mfl);$$
\begin{itemize}
\item $\fil{r} D=E(\baseo+\baseth,\baset+\mfl\baseth)\mbox{ and }\fil{s} D=E(\baseo+\baseth)$
\item $
[\phi]=
\begin{small}\left(
  \begin{array}{ccc}
    \lambda & 0  & 0 \\
    1 & \lambda & 0 \\
    0 & 0 & \lambdath \\
  \end{array}
\right)
\end{small}
$ for $\lambda\neq\lambdath\in E$.
\item $\mfl\in E^{\times}$, $\frac{r}{2}\leq\val\lambda\leq \frac{r+s}{2}$, and $2\val\lambda+\val\lambdath=r+s$.
\end{itemize}
\end{exam}

\begin{prop}\label{class crys D13}
\begin{enumerate}
\item $D_{cris}^{13}$ represents admissible filtered $\phi$-modules.
\item The corresponding representations to $D_{cris}^{13}$ are
      \begin{itemize}
      \item non-split reducible with submodule $E(\baseo,\baset)$ if $\val\lambda=\frac{r}{2}$,
      \item non-split reducible with submodule $E\baseth$ if $\val\lambda=\frac{r+s}{2}$, and
      \item irreducible if $\frac{r}{2}<\val\lambda<\frac{r+s}{2}$.
      \end{itemize}
\item $D_{cris}^{13}(\lambda,\lambdath,\mfl)$ is isomorphic to $D_{cris}^{13}(\lambda',\lambdath',\mfl')$ if and only if $\lambda=\lambda'$, $\lambdath=\lambdath'$, and $\mfl=\mfl'$.
\end{enumerate}
\end{prop}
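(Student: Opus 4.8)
The plan is to follow the template set by the preceding propositions: parts (1) and (2) will be obtained by exhibiting an explicit change of basis that commutes with $\phi$ — hence leaves $[\phi]$ unchanged, by Lemma \ref{typeofPfifth} — and carries the general configuration of \ref{crysfifth8} into the normalized form displayed in the example. Once that is done, admissibility (for $\frac{r}{2}\leq\val\lambda\leq\frac{r+s}{2}$ with $2\val\lambda+\val\lambdath=r+s$) and the reducibility dichotomy are inherited verbatim from the analysis carried out in \ref{crysfifth8}. Part (3) is where the genuinely new content lies, because of the extra invariant $\mfl$.

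For the normalization I would start from the general data of \ref{crysfifth8}. Since $L_{1}$ is contained in no $\phi$-invariant subspace, and these are listed in Lemma \ref{crysinvariantfifth}, I may write $L_{1}=E(\baseo+a\baset+b\baseth)$ with $b\neq0$; subtracting a suitable multiple of this generator from a second vector of $L_{2}$ and then using $\baset,\baseth\notin L_{2}$, I may write $L_{2}=E(\baseo+a\baset+b\baseth,\,\baset+e\baseth)$ with $e\neq0$. I then apply the automorphism $\baseo\mapsto b\baseo-ab\baset$, $\baset\mapsto b\baset$, $\baseth\mapsto\baseth$, which belongs to the centralizer described in Lemma \ref{typeofPfifth}. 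It sends the generator of $L_{1}$ to $b(\baseo+\baseth)$ and sends $\baset+e\baseth$ to $b\baset+e\baseth$, thereby putting $\fil{s}D=E(\baseo+\baseth)$ and $\fil{r}D=E(\baseo+\baseth,\,\baset+\mfl\baseth)$ with $\mfl=e/b\in E^{\times}$. Because this map commutes with $\phi$, the matrix $[\phi]$ is unchanged, so parts (1) and (2) follow immediately.

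For part (3), any isomorphism intertwines $\phi$ and $\phi'$ and therefore matches their characteristic polynomials; since $\lambda\neq\lambdath$ and $\lambda'\neq\lambdath'$, this forces $\lambda=\lambda'$ and $\lambdath=\lambdath'$, so $\phi=\phi'$. An isomorphism $g$ then lies in the centralizer of Lemma \ref{typeofPfifth}, i.e.\ $g$ is lower triangular with equal first two diagonal entries and vanishing $(3,1)$ and $(3,2)$ entries. Requiring $g$ to preserve $\fil{s}D=E(\baseo+\baseth)$ forces the subdiagonal entry to vanish and the two diagonal blocks to agree, so $g$ must be a scalar; comparing the images of $\fil{r}D=E(\baseo+\baseth,\,\baset+\mfl\baseth)$ under this scalar then yields $\mfl=\mfl'$. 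The main obstacle is exactly this last step: one must show that $\mfl$ is a genuine isomorphism invariant, and this reduces to the computation that the automorphism group of $D_{cris}^{13}$ consists only of scalar multiples of the identity — the same phenomenon that appears in the proof of Proposition \ref{class crys D4}.
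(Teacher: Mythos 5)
Your proposal is correct and follows essentially the same route as the paper: a change of basis lying in the centralizer of $[\phi]$ from Lemma \ref{typeofPfifth} (yours is just $b$ times the paper's map $\baseo\mapsto\baseo-a\baset$, $\baset\mapsto\baset$, $\baseth\mapsto\frac{1}{b}\baseth$, so it induces the identical normalization of the filtration), with parts (1) and (2) then inherited from \ref{crysfifth8}. Your part (3) — eigenvalues force $\lambda=\lambda'$, $\lambdath=\lambdath'$, then preservation of $\fil{s}D$ forces the centralizer element to be scalar, whence $\mfl=\mfl'$ — is exactly the paper's argument, merely written out in more detail.
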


\begin{proof}
From \ref{crysfifth8}, if we let $L_{1}=E(\baseo+a\baset+b\baseth)$ and $L_{2}=(\baseo+a\baset+b\baseth,\baset+c\baseth)$ with $bc\neq0$, then, by change of a basis: $\baseo\mapsto\baseo-a\baset$, $\baset\mapsto\baset$, and $b\baseth\mapsto\baseth$, we get $D_{cris}^{13}(\lambda,\lambdath)$. Notice that this change of a basis does not change the matrix presentation of $\phi$ by Lemma \ref{typeofPfifth}. So now the part (1) and (2) are clear. Since an isomorphism preserves the filtration and the eigenvalues of $\phi$, Lemma \ref{typeofPfifth} implies that such an isomorphism should be a scalar multiple of the identity map, which completes the part (3).
\end{proof}

The following example arises from \ref{cryssixth1}.
\begin{exam}
A filtered $\phi$-module of Hodge type $(0,r,s)$
$$D_{cris}^{14}=D_{cris}^{14}(\lambdao,\lambdat,\lambdath);$$
\begin{itemize}
\item $\fil{r} D=E(\baseo,\baset)\mbox{ and }\fil{s} D=E\baseo$
\item $
[\phi]=
\begin{small}\left(
  \begin{array}{ccc}
    \lambdao & 0  & 0 \\
    0 & \lambdat & 0 \\
    0 & 0 & \lambdath \\
  \end{array}
\right)
\end{small}
$ for distinct $\lambda_{i}$'s in $E$.
\item $\val\lambdao=s$, $\val\lambdat=r$, and $\val\lambdath=0$.
\end{itemize}
\end{exam}

\begin{prop}\label{class crys D14}
\begin{enumerate}
\item $D_{cris}^{14}$ represents admissible filtered $\phi$-modules.
\item Every $\phi$-invariant subspace is a submodule of $D_{cris}^{14}$ and so the corresponding representations to $D_{cris}^{14}$ are decomposable.
\item $D_{cris}^{14}(\lambdao,\lambdat,\lambdath)$ is isomorphic to $D_{cris}^{14}(\lambdao',\lambdat',\lambdath')$ if and only if $\lambdao=\lambdao'$, $\lambdat=\lambdat'$, and $\lambdath=\lambdath'$
\end{enumerate}
\end{prop}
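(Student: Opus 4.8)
The plan is to read off all three parts from the analysis already carried out in \ref{cryssixth1}, together with the invariant-subspace classification in Lemma \ref{crysinvariantsixth} (and, for part (3), the description of intertwiners in Lemma \ref{typeofPsixth}). The key simplifying feature is that in \ref{cryssixth1} the subspaces $L_1$ and $L_2$ are forced to equal $E\baseo$ and $E(\baseo,\baset)$, so the filtration is already in coordinate form and no normalizing change of basis is required: the diagonal Frobenius together with this filtration is literally the data defining $D_{cris}^{14}$. For part (1), I would simply note that \ref{cryssixth1} verifies $\hodgen(D')\leq\newtonn(D')$ for every relevant subspace once one imposes $\val\lambdao=s$, $\val\lambdat=r$, and $\val\lambdath=0$, while $\hodgen(D)=r+s=\newtonn(D)$ under these constraints; hence $D_{cris}^{14}$ is admissible.

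For part (2), I would invoke Lemma \ref{crysinvariantsixth}, which lists the six nontrivial proper $\phi$-invariant subspaces. The decisive observation is that, with the valuations above, every inequality appearing in \ref{cryssixth1} is in fact an equality, so each of these six subspaces satisfies $\hodgen=\newtonn$ and is therefore an admissible filtered $\phi$-submodule, i.e.\ corresponds to a subrepresentation. To conclude decomposability I would exhibit the splitting $D=E\baseo\oplus E(\baset,\baseth)$ and check in one line that the induced filtrations on the two summands recombine to give $\fil{r}D$ and $\fil{s}D$.

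For part (3), the ``if'' direction is immediate since equal parameters give identical data. For ``only if'', any isomorphism $\eta$ commutes with $\phi$ and preserves the filtration. Since $\eta$ carries $\fil{s}D=E\baseo$ onto $\fil{s}D'$ and both are $\phi$-eigenlines, comparing the eigenvalue of $\phi$ on these lines forces $\lambdao=\lambdao'$; applying the same argument to the $\phi$-stable plane $\fil{r}D=E(\baseo,\baset)$ forces $\lambdat=\lambdat'$, and since $\eta$ preserves the multiset of eigenvalues of $\phi$, the remaining equality $\lambdath=\lambdath'$ follows. No step here presents a genuine obstacle; the only point deserving care is the admissibility claim for the six invariant subspaces in part (2), which is exactly the observation that the inequalities established in \ref{cryssixth1} become equalities once the valuations of the $\lambda_i$ are pinned down.
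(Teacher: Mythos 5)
Your proposal is correct, and for parts (1) and (2) it takes the paper's route while usefully filling in what the paper leaves as ``clear from \ref{cryssixth1}'': with $\val\lambdao=s$, $\val\lambdat=r$, $\val\lambdath=0$, every inequality in \ref{cryssixth1} becomes an equality, so each of the six nontrivial proper $\phi$-invariant subspaces of Lemma \ref{crysinvariantsixth} is itself admissible with the induced filtration (hence a submodule), and the splitting $D=E\baseo\oplus E(\baset,\baseth)$, whose induced filtrations recombine to $\fil{r}D=E(\baseo,\baset)$ and $\fil{s}D=E\baseo$, gives decomposability. Part (3) is where you genuinely diverge: the paper invokes Lemma \ref{typeofPsixth} to conclude that any filtration-preserving intertwiner between the two diagonal Frobenii must be a scalar multiple of the identity, and the parameter equalities follow; you instead argue directly with eigenvalues --- $\fil{s}D=E\baseo$ is a $\phi$-eigenline carried onto the eigenline $\fil{s}D'$, forcing $\lambdao=\lambdao'$, the $\phi$-stable plane $\fil{r}D$ then forces $\lambdat=\lambdat'$ (using distinctness of the $\lambda_i$), and equality of the eigenvalue multisets gives $\lambdath=\lambdath'$. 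Both arguments are sound: the paper's has the advantage of being uniform with the proofs of the neighboring propositions ($D_{cris}^{15}$ through $D_{cris}^{26}$, all of which lean on Lemma \ref{typeofPsixth}), while yours avoids the explicit matrix classification entirely and would apply verbatim to any filtered $\phi$-module whose filtration steps are spanned by eigenvectors.
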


\begin{proof}
From \ref{cryssixth1}, the part (1) and (2) are clear. Since every morphism should preserve the filtration, Lemma \ref{typeofPsixth} implies that the only isomorphisms are scalar multiple of the identity map, which completes the part (3).
\end{proof}

The following example arises from \ref{cryssixth2}.
\begin{exam}
A filtered $\phi$-module of Hodge type $(0,r,s)$
$$D_{cris}^{15}=D_{cris}^{15}(\lambdao,\lambdat,\lambdath);$$
\begin{itemize}
\item $\fil{r} D=E(\baseo,\baset+\baseth)\mbox{ and }\fil{s} D=E\baseo$
\item $
[\phi]=
\begin{small}\left(
  \begin{array}{ccc}
    \lambdao & 0  & 0 \\
    0 & \lambdat & 0 \\
    0 & 0 & \lambdath \\
  \end{array}
\right)
\end{small}
$ for distinct $\lambda_{i}$'s in $E$.
\item $s=\val\lambdao\geq\val\lambdat\geq\val\lambdath\geq0$ and $\val\lambdat+\val\lambdath=r$.
\end{itemize}
\end{exam}

\begin{prop}\label{class crys D15}
\begin{enumerate}
\item $D_{cris}^{15}$ represents admissible filtered $\phi$-modules.
\item The corresponding representations to $D_{cris}^{15}$ are decomposable with submodules $E\baseo$ and $E(\baset,\baseth)$; moreover, if $\val\lambdath=0$, then $E\baseth$ and $E(\baseo,\baseth)$ are also submodules.
\item $D_{cris}^{15}(\lambdao,\lambdat,\lambdath)$ is isomorphic to $D_{cris}^{15}(\lambdao',\lambdat',\lambdath')$ if and only if either
$$
\left\{
  \begin{array}{ll}
    \lambdao=\lambdao',\lambdat=\lambdat', \lambdath=\lambdath', & \hbox{or} \\
    \lambdao=\lambdao', \lambdat=\lambdath', \lambdath=\lambdat'. & \hbox{}
  \end{array}
\right.
$$
\end{enumerate}
\end{prop}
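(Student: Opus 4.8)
The plan is to follow the template of the previous propositions: reduce the general situation of \ref{cryssixth2} to the normal form $D_{cris}^{15}$ by an explicit change of basis, read off parts (1) and (2) directly, and then concentrate on the isomorphism classification in part (3).

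First I would make the reduction. In the setting of \ref{cryssixth2} we have $L_{1}=E\baseo$ and $L_{2}$ not $\phi$-invariant with $\baset,\baseth\notin L_{2}$; since $\fil{s}D\subseteq\fil{r}D$ we have $\baseo\in L_{2}$, so $L_{2}=E(\baseo,a\baset+b\baseth)$ for some $a,b\in E$, and the conditions $\baset,\baseth\notin L_{2}$ force $a\neq0$ and $b\neq0$. Rescaling $\baseth$---which by Lemma \ref{typeofPsixth} keeps $[\phi]$ diagonal with the same eigenvalues---normalizes $a\baset+b\baseth$ to $\baset+\baseth$, yielding $D_{cris}^{15}(\lambdao,\lambdat,\lambdath)$. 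Parts (1) and (2), including the list of submodules and the extra submodules when $\val{\lambdath}=0$, then follow at once from \ref{cryssixth2}.

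For part (3) I would use that any isomorphism $\eta$ of filtered $\phi$-modules preserves the filtration and intertwines the two Frobenii, hence carries each eigenline of $\phi$ to the eigenline of $\phi'$ with the same eigenvalue; in particular $\{\lambdao,\lambdat,\lambdath\}=\{\lambdao',\lambdat',\lambdath'\}$ as multisets. Because $\fil{s}D=E\baseo$ is precisely the $\lambdao$-eigenline while $\eta(\fil{s}D)=\fil{s}D'=E\baseo$ is the $\lambdao'$-eigenline of $\phi'$, distinctness of the eigenvalues forces $\lambdao=\lambdao'$, and hence $\{\lambdat,\lambdath\}=\{\lambdat',\lambdath'\}$. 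This leaves exactly two possibilities, and the content of (3) is that both occur.

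The decisive step, and the only one requiring care, is preservation of $\fil{r}D=E(\baseo,\baset+\baseth)$. When $\lambdat=\lambdat'$ and $\lambdath=\lambdath'$, Lemma \ref{typeofPsixth} makes $\eta$ diagonal, say $\eta(\baset)=\beta\baset$ and $\eta(\baseth)=\gamma\baseth$, and $\eta(\baset+\baseth)$ lies in $E(\baseo,\baset+\baseth)$ exactly when $\beta=\gamma$, which is achievable (e.g.\ $\eta=\mathrm{id}$). When $\lambdat=\lambdath'$ and $\lambdath=\lambdat'$, Lemma \ref{typeofPsixth} forces $\eta$ to interchange the $\baset$- and $\baseth$-eigenlines; the choice $\eta(\baseo)=\baseo$, $\eta(\baset)=\baseth$, $\eta(\baseth)=\baset$ then intertwines $\phi$ and $\phi'$ and fixes $\baset+\baseth$, so it preserves both $\fil{s}D$ and $\fil{r}D$ and is an isomorphism. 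Thus precisely the two tuples listed in (3) yield isomorphic modules. I expect the main obstacle to be spotting this extra $\lambdat\leftrightarrow\lambdath$ symmetry---absent from the earlier $D_{cris}^{i}$---which is present only because the filtration line $\baset+\baseth$ is symmetric in $\baset$ and $\baseth$; once noticed, the verification via Lemma \ref{typeofPsixth} is routine.
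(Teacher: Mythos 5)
Your proposal is correct and follows essentially the same route as the paper: normalize $L_{2}=E(\baseo,\baset+a\baseth)$ by rescaling $\baseth$, deduce (1) and (2) from \ref{cryssixth2}, and for (3) use preservation of the filtration together with Lemma \ref{typeofPsixth} to conclude that an isomorphism must fix $E\baseo$ and either fix or swap the $\baset$- and $\baseth$-eigenlines. Your write-up merely makes explicit the verification that both the identity and the swap $\baset\leftrightarrow\baseth$ preserve $\fil{r}D$, which the paper leaves implicit.
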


\begin{proof}
From \ref{cryssixth2}, if we let $L_{1}=E\baseo$ and $L_{2}=E(\baseo,\baset+a\baseth)$ with $a\neq0$, then, by change of a basis: $\baseo\mapsto\baseo$, $\baset\mapsto\baset$, and $a\baseth\mapsto\baseth$, we get $D_{cris}^{15}$. So now the part (1) and (2) are clear. Since every isomorphism preserves the filtration, if there is an isomorphism then it should fix $\baseo$ but it can either fix or swap $\baset$ and $\baseth$ by lemma \ref{typeofPsixth}. So we get the part (3).
\end{proof}

The following example arises from \ref{cryssixth3}.
\begin{exam}
A filtered $\phi$-module of Hodge type $(0,r,s)$
$$D_{cris}^{16}=D_{cris}^{16}(\lambdao,\lambdat,\lambdath);$$
\begin{itemize}
\item $\fil{r} D=E(\baseo,\baset)\mbox{ and }\fil{s} D=E(\baseo+\baset)$
\item $
[\phi]=
\begin{small}\left(
  \begin{array}{ccc}
    \lambdao & 0  & 0 \\
    0 & \lambdat & 0 \\
    0 & 0 & \lambdath \\
  \end{array}
\right)
\end{small}
$ for distinct $\lambda_{i}$'s in $E$.
\item $\val\lambdao\geq\val\lambdat\geq r, \val\lambdath=0$ and $\val\lambdao+\val\lambdat=r+s$.
\end{itemize}
\end{exam}

\begin{prop}\label{class crys D16}
\begin{enumerate}
\item $D_{cris}^{16}$ represents admissible filtered $\phi$-modules.
\item The corresponding representations to $D_{cris}^{16}$ are decomposable with submodules $E\baseth$ and $E(\baseo,\baset)$; moreover, if $\val\lambdat=r$, then $E\baset$ and $E(\baset,\baseth)$ are also submodules.
\item $D_{cris}^{16}(\lambdao,\lambdat,\lambdath)$ is isomorphic to $D_{cris}^{16}(\lambdao',\lambdat',\lambdath')$ if and only if either
$$
\left\{
  \begin{array}{ll}
    \lambdao=\lambdao',\lambdat=\lambdat', \lambdath=\lambdath', & \hbox{or} \\
    \lambdao=\lambdat',\lambdat=\lambdao', \lambdath=\lambdath'. & \hbox{}
  \end{array}
\right.
$$
\end{enumerate}
\end{prop}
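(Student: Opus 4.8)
The plan is to follow the two-step strategy of the earlier diagonal cases, most closely Proposition~\ref{class crys D15}: first normalize the data coming out of \ref{cryssixth3} into the standard presentation $D_{cris}^{16}$, which makes parts (1) and (2) immediate, and then determine the isomorphism classes in part (3) by combining Lemma~\ref{typeofPsixth} with the constraint that an isomorphism must respect the filtration.

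For parts (1) and (2), recall that \ref{cryssixth3} treats the case $L_{2}=E(\baseo,\baset)$ with $L_{1}$ a one-dimensional subspace of $L_{2}$ that is not $\phi$-invariant. By Lemma~\ref{crysinvariantsixth} the only $\phi$-invariant lines are the eigenlines $E\baseo$, $E\baset$, $E\baseth$, so necessarily $L_{1}=E(a\baseo+b\baset)$ with $ab\neq0$. I would then apply the change of basis $a\baseo\mapsto\baseo$, $b\baset\mapsto\baset$, $\baseth\mapsto\baseth$; this merely rescales each eigenvector, so it leaves $[\phi]$ diagonal with the same entries and fixes $\fil{r}D=E(\baseo,\baset)$, while carrying $L_{1}$ to $E(\baseo+\baset)=\fil{s}D$. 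This produces the normal form $D_{cris}^{16}$, and parts (1) and (2) are then read off from the admissibility inequalities and the submodule list already established in \ref{cryssixth3}.

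For part (3), any isomorphism $\eta:D_{cris}^{16}(\lambdao,\lambdat,\lambdath)\to D_{cris}^{16}(\lambdao',\lambdat',\lambdath')$ has a matrix $P$ with $P[\phi]=[\phi']P$, hence is one of the six forms of Lemma~\ref{typeofPsixth}, indexed by how the two eigenvalue triples are matched. The crucial point is that $\fil{r}D=E(\baseo,\baset)$ distinguishes $E\baseth$ as the unique eigenline not contained in $\fil{r}D$: since $\eta$ carries eigenlines to eigenlines of the same eigenvalue and must send $\fil{r}D$ onto $\fil{r}D'$, it cannot push $E\baseth$ into $\fil{r}D'$ without collapsing dimensions, so $\eta(E\baseth)=E\baseth$ and therefore $\lambdath=\lambdath'$. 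This eliminates the four matchings of Lemma~\ref{typeofPsixth} that move the third eigenvalue, leaving only the identity matching and the transposition $\lambdao\leftrightarrow\lambdat$. In the first case $P=\mathrm{diag}(x,y,z)$ sends $\baseo+\baset$ to $x\baseo+y\baset$, and in the second the corresponding anti-diagonal matrix sends it to $y\baseo+x\baset$; in each case preservation of $\fil{s}D=E(\baseo+\baset)$ holds precisely when $x=y$, which is achievable, so both matchings are realized by genuine filtered isomorphisms. This yields exactly the two alternatives of part (3).

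The main subtlety is entirely in part (3): one must notice that the distinguished line $E(\baseo+\baset)$ is symmetric under interchanging $\baseo$ and $\baset$, and that this symmetry is exactly what permits the swap $\lambdao\leftrightarrow\lambdat$ to be realized by an honest filtered isomorphism, whereas the rigidity of the $\lambdath$-eigenline $E\baseth$ relative to $\fil{r}D$ forbids every matching that would disturb $\lambdath$. The remaining verifications are the routine bookkeeping already illustrated in the preceding propositions.
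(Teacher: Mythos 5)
Your proposal is correct and takes essentially the same route as the paper's proof: normalize $L_{1}=E(a\baseo+b\baset)$ to $E(\baseo+\baset)$ by rescaling eigenvectors (which leaves the diagonal $[\phi]$ unchanged and fixes $\fil{r}D$), then combine Lemma~\ref{typeofPsixth} with filtration preservation to conclude that an isomorphism must fix the eigenline $E\baseth$ (forcing $\lambdath=\lambdath'$) and can only fix or swap $\baseo$ and $\baset$. Your part (3) is in fact a fleshed-out version of the paper's one-line argument, supplying the dimension count that pins down $E\baseth$ and the scaling condition $x=y$ showing the swap is genuinely realized.
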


\begin{proof}
From \ref{cryssixth3}, if we let $L_{1}=E(\baseo+a\baset)$ and $L_{2}=E(\baseo,\baset)$ with $a\neq0$, then, by change of a basis: $\baseo\mapsto\baseo$, $a\baset\mapsto\baset$, and $\baseth\mapsto\baseth$, we get $D_{cris}^{16}$. So now the part (1) and (2) are clear. Since every isomorphism preserves the filtration, if there is an isomorphism then it should fix $\baseth$ but it can either fix or swap $\baseo$ and $\baset$ by lemma \ref{typeofPsixth}. So we get the part (3).
\end{proof}

The following example arises from \ref{cryssixth4}.
\begin{exam}
A filtered $\phi$-module of Hodge type $(0,r,s)$
$$D_{cris}^{17}=D_{cris}^{17}(\lambdao,\lambdat,\lambdath);$$
\begin{itemize}
\item $\fil{r} D=E(\baseo+\baset,\baseth)\mbox{ and }\fil{s} D=E(\baseo+\baset)$
\item $
[\phi]=
\begin{small}\left(
  \begin{array}{ccc}
    \lambdao & 0  & 0 \\
    0 & \lambdat & 0 \\
    0 & 0 & \lambdath \\
  \end{array}
\right)
\end{small}
$ for distinct $\lambda_{i}$'s in $E$.
\item $\val\lambdao\geq\val\lambdat\geq\val\lambdath=r$, $\val\lambdao+\val\lambdat=s$, and $s\geq 2r$.
\end{itemize}
\end{exam}

\begin{prop}\label{class crys D17}
\begin{enumerate}
\item $D_{cris}^{17}$ represents admissible filtered $\phi$-modules.
\item The corresponding representations to $D_{cris}^{17}$ are decomposable with submodules $E\baseth$ and $E(\baseo,\baset)$.
\item $D_{cris}^{17}(\lambdao,\lambdat,\lambdath)$ is isomorphic to $D_{cris}^{17}(\lambdao',\lambdat',\lambdath')$ if and only if either
$$
\left\{
  \begin{array}{ll}
    \lambdao=\lambdao',\lambdat=\lambdat', \lambdath=\lambdath', & \hbox{or} \\
    \lambdao=\lambdat',\lambdat=\lambdao', \lambdath=\lambdath'. & \hbox{}
  \end{array}
\right.
$$
\end{enumerate}
\end{prop}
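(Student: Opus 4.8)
The plan is to follow the template of the neighbouring propositions (compare Proposition~\ref{class crys D16}): first put an arbitrary member of the family produced in~\ref{cryssixth4} into the normal form $D_{cris}^{17}$ by a diagonal change of basis, deducing parts~(1) and~(2) directly from the admissibility computation already carried out there, and then analyse isomorphisms using Lemma~\ref{typeofPsixth} together with the requirement that the Hodge filtration be preserved.

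For the normalization, recall that in~\ref{cryssixth4} one has $L_{1}\subset E(\baseo,\baset)$, $\baseth\in L_{2}$ and $\baseo,\baset\notin L_{2}$, so $L_{1}=E(a\baseo+b\baset)$ for some scalars $a,b$ and $L_{2}=E(a\baseo+b\baset,\baseth)$. First I would note that $ab\neq0$: if $a=0$ then $\baset\in L_{1}\subset L_{2}$, and if $b=0$ then $\baseo\in L_{1}\subset L_{2}$, either of which contradicts $\baseo,\baset\notin L_{2}$. Rescaling by $a\baseo\mapsto\baseo$, $b\baset\mapsto\baset$, $\baseth\mapsto\baseth$ then sends $a\baseo+b\baset$ to $\baseo+\baset$, so that $\filr D=E(\baseo+\baset,\baseth)$ and $\fils D=E(\baseo+\baset)$, which is exactly $D_{cris}^{17}$. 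Since this change of basis is diagonal, the first case of Lemma~\ref{typeofPsixth} shows that it leaves $[\phi]$ diagonal with the same eigenvalues; thus $\phi$ is unchanged, and parts~(1) and~(2) follow from the data recorded in~\ref{cryssixth4}. In particular one checks that the filtration splits as the direct sum of its restrictions, giving $D=E(\baseo,\baset)\oplus E\baseth$ as filtered $\phi$-modules and hence the decomposability asserted in~(2).

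For part~(3), an isomorphism $\eta\colon D_{cris}^{17}(\lambdao,\lambdat,\lambdath)\to D_{cris}^{17}(\lambdao',\lambdat',\lambdath')$ must intertwine the two diagonal, distinct-eigenvalue Frobenius maps and carry $\fils$ onto $\fils$ and $\filr$ onto $\filr$. By Lemma~\ref{typeofPsixth}, intertwining the Frobenii forces $[\eta]$ to be one of the six monomial matrices, i.e.\ $\eta$ matches the source eigenlines $E\baseo,E\baset,E\baseth$ with target eigenlines according to a matching of the eigenvalues. I would then impose $\eta(\fils D)=\fils D'$, that is $\eta\bigl(E(\baseo+\baset)\bigr)=E(\baseo'+\baset')$: writing $\eta(\baseo+\baset)=\eta\baseo+\eta\baset$ as a sum of two target eigenvectors, proportionality to $\baseo'+\baset'$ forces the supporting eigenlines to be exactly $\{E\baseo',E\baset'\}$ with equal coefficients, and hence $\eta(E\baseth)=E\baseth'$ as well. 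This leaves precisely the identity matching and the matching that transposes the first two coordinates, yielding the two alternatives $(\lambdao,\lambdat,\lambdath)=(\lambdao',\lambdat',\lambdath')$ and $(\lambdao,\lambdat,\lambdath)=(\lambdat',\lambdao',\lambdath')$; conversely each such matching is realized by an explicit scaled (possibly coordinate-swapping) map that preserves both $\fils$ and $\filr$.

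The routine parts are the normalization and the verification that the two surviving matchings really do give isomorphisms. The main point requiring care is the filtration bookkeeping in part~(3): one must verify that the single condition $\eta\bigl(E(\baseo+\baset)\bigr)=E(\baseo'+\baset')$ already eliminates the four ``off-diagonal'' matchings of Lemma~\ref{typeofPsixth} that move $\baseth$, thereby forcing $\lambdath=\lambdath'$ and leaving only the transposition of $\lambdao,\lambdat$ as a nontrivial possibility. Once this is established the $\filr$-condition is automatic, and the stated criterion follows.
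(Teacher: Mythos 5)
Your proposal is correct and follows essentially the same route as the paper: a diagonal change of basis (harmless by the first case of Lemma~\ref{typeofPsixth}) puts the modules from~\ref{cryssixth4} into the normal form $D_{cris}^{17}$, giving (1) and (2), and then Lemma~\ref{typeofPsixth} plus preservation of $\fils D=E(\baseo+\baset)$ forces any isomorphism to fix $E\baseth$ and either fix or swap $E\baseo$ and $E\baset$, giving (3). Your write-up merely makes explicit the steps the paper leaves terse (why $ab\neq 0$, why the $\fils$-condition kills the four matchings moving $\baseth$, and why the $\filr$-condition is then automatic).
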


\begin{proof}
From \ref{cryssixth4}, if we let $L_{1}=E(\baseo+a\baset)$ and $L_{2}=E(\baseo+a\baset,\baseth)$ with $a\neq0$, then, by change of a basis: $\baseo\mapsto\baseo$, $a\baset\mapsto\baset$, and $\baseth\mapsto\baseth$, we get $D_{cris}^{17}$. So now the part (1) and (2) are clear. Since every isomorphism preserves the filtration, if there is an isomorphism then it should fix $\baseth$ but it can either fix or swap $\baseo$ and $\baset$ by lemma \ref{typeofPsixth}. So we get the part (3).
\end{proof}

The following example arises from \ref{cryssixth5}.
\begin{exam}
A filtered $\phi$-module of Hodge type $(0,r,s)$
$$D_{cris}^{18}=D_{cris}^{18}(\lambdao,\lambdat,\lambdath);$$
\begin{itemize}
\item $\fil{r} D=E(\baseo+\baset,\baset+\baseth)\mbox{ and }\fil{s} D=E(\baseo+\baset)$
\item $
[\phi]=
\begin{small}\left(
  \begin{array}{ccc}
    \lambdao & 0  & 0 \\
    0 & \lambdat & 0 \\
    0 & 0 & \lambdath \\
  \end{array}
\right)
\end{small}
$ for distinct $\lambda_{i}$'s in $E$.
\item $s\geq\val\lambdao\geq\val\lambdat\geq\val\lambdath\geq0$,
$\val\lambdath\leq r$, and $\val\lambdao+\val\lambdat+\val\lambdath=r+s$.
\end{itemize}
\end{exam}

\begin{prop}\label{class crys D18}
\begin{enumerate}
\item $D_{cris}^{18}$ represents admissible filtered $\phi$-modules.
\item The corresponding representations to $D_{cris}^{18}$ are
    \begin{itemize}
    \item non-split reducible with submodule $E\baseth$ if $\val\lambdath=0$,
    \item non-split reducible with submodule $E(\baseo,\baset)$ if $\val\lambdath=r$,
    \item non-split reducible with submodule $E(\baset,\baseth)$ if $\val\lambdao=s$, and
    \item irreducible if $s>\val\lambdao\geq\val\lambdat\geq\val\lambdath>0$ and $\val\lambdath<r$.
    \end{itemize}
\item $D_{cris}^{18}(\lambdao,\lambdat,\lambdath)$ is isomorphic to $D_{cris}^{18}(\lambdao',\lambdat',\lambdath')$ if and only if either
$$
\left\{
  \begin{array}{ll}
    \lambdao=\lambdao',\lambdat=\lambdat', \lambdath=\lambdath', & \hbox{or} \\
    \lambdao=\lambdat',\lambdat=\lambdao', \lambdath=\lambdath'. & \hbox{}
  \end{array}
\right.
$$
\end{enumerate}
\end{prop}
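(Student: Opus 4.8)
The plan is to reduce a general module of the shape arising in \ref{cryssixth5} to the normal form $D_{cris}^{18}$ by a $\phi$-equivariant change of basis, and then to read off parts (1) and (2) directly from \ref{cryssixth5}. First I would record the shape of $L_{1}$ and $L_{2}$ in this case: since $L_{1}\subset E(\baseo,\baset)$ is not $\phi$-invariant it cannot be either eigenline $E\baseo$ or $E\baset$, so $L_{1}=E(\baseo+a\baset)$ with $a\neq0$; and since $L_{2}$ is a two-dimensional space containing $L_{1}$ that is not $\phi$-invariant with $\baseth\notin L_{2}$, one checks that $\baseo,\baset\notin L_{2}$, so after clearing the $\baseo$-component of the second generator one may write $L_{2}=E(\baseo+a\baset,\,b\baset+\baseth)$. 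The condition $\baseth\notin L_{2}$ forces $b\neq0$. Now the diagonal change of basis $\baseo\mapsto\baseo$, $a\baset\mapsto\baset$, $\frac{a}{b}\baseth\mapsto\baseth$ carries $L_{1}$ to $E(\baseo+\baset)$ and $L_{2}$ to $E(\baseo+\baset,\baset+\baseth)$, yielding $D_{cris}^{18}$; by Lemma \ref{typeofPsixth} (the case $\lambda_{i}=\lambda_{i}'$) a diagonal matrix commutes with $[\phi]$, so this leaves the Frobenius unchanged. Parts (1) and (2) are then immediate from \ref{cryssixth5}.

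For part (3) I would argue as follows. Any isomorphism $\eta$ between two copies $D=D_{cris}^{18}(\lambdao,\lambdat,\lambdath)$ and $D'=D_{cris}^{18}(\lambdao',\lambdat',\lambdath')$ satisfies $[\eta][\phi]=[\phi'][\eta]$, so by Lemma \ref{typeofPsixth} its matrix is one of the six monomial forms listed there. Since the $\lambda_{i}$ are distinct, the relevant form is dictated by a permutation $\pi$ matching eigenvalues, with $\eta$ sending each coordinate axis $E\baseo,E\baset,E\baseth$ to a coordinate axis. The requirement that $\eta$ preserve $\fil{s}D=E(\baseo+\baset)$, a line lying in $E(\baseo,\baset)$, forces $\{\pi(1),\pi(2)\}=\{1,2\}$ (hence $\pi(3)=3$) and makes the two surviving scalars equal; thus $\pi$ is either the identity or the transposition of the first two indices, which are exactly the two alternatives in the statement.

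Finally I would verify sufficiency by exhibiting the isomorphisms. The identity case is realized by a scalar map. For the transposition case $\lambdao=\lambdat'$, $\lambdat=\lambdao'$, $\lambdath=\lambdath'$, the map $\eta:\baseo\mapsto\baset,\ \baset\mapsto\baseo,\ \baseth\mapsto-\baseth$ intertwines $[\phi]=[\lambdao,\lambdat,\lambdath]$ with $[\phi']=[\lambdat,\lambdao,\lambdath]$, and it preserves the filtration: one has $\eta(\baseo+\baset)=\baseo+\baset$, while the decisive computation $\eta(\baset+\baseth)=\baseo-\baseth=(\baseo+\baset)-(\baset+\baseth)$ shows that $\eta$ carries $\fil{r}D=E(\baseo+\baset,\baset+\baseth)$ onto $\fil{r}D'$. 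Hence both alternatives genuinely give isomorphisms, completing part (3).

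I expect the only real subtlety to be the necessity half of part (3): one must correctly sift the six monomial forms of Lemma \ref{typeofPsixth} against both levels of the filtration, and in particular notice that the transposition $(1\,2)$ survives (so that $D_{cris}^{18}$ admits the nontrivial self-identification above), whereas the remaining four permutations are eliminated by the requirement that $\fil{s}D$ lie in the span of $\baseo,\baset$. The reduction to normal form and the two sufficiency computations are routine by comparison.
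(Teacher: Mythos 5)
Your proposal is correct and follows essentially the same route as the paper: reduce to the normal form $D_{cris}^{18}$ by a diagonal (hence $\phi$-commuting, by Lemma \ref{typeofPsixth}) change of basis starting from $L_{1}=E(\baseo+a\baset)$, $L_{2}=E(\baseo+a\baset,\,b\baset+\baseth)$ with $ab\neq 0$, read off (1) and (2) from \ref{cryssixth5}, and then use Lemma \ref{typeofPsixth} together with preservation of $\fil{s}D$ to restrict any isomorphism to the identity or the swap of $\baseo,\baset$, the latter preserving $\fil{r}D$ exactly when $\baseth\mapsto-\baseth$. The only differences are cosmetic (your parametrization $b\baset+\baseth$ versus the paper's $\baset+b\baseth$, and your more explicit verification of sufficiency), so nothing further is needed.
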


\begin{proof}
From \ref{cryssixth5}, if we let $L_{1}=E(\baseo+a\baset)$ and $L_{2}=E(\baseo+a\baset,\baset+b\baseth)$ with $ab\neq0$, then, by change of a basis: $\baseo\mapsto\baseo$, $a\baset\mapsto\baset$, and $ab\baseth\mapsto\baseth$, we get $D_{cris}^{18}$. So now the part (1) and (2) are clear. Since every isomorphism preserves the filtration, if there is an isomorphism then it can either fix or swap $\baseo$ and $\baset$ to preserve $\fils D$, by lemma \ref{typeofPsixth}. If it swap those two, by sending $\baseth$ to $-\baseth$ it can also preserve $\filr D$. So we get the part (3).
\end{proof}

The following example arises from \ref{cryssixth6}.
\begin{exam}
A filtered $\phi$-module of Hodge type $(0,r,s)$
$$D_{cris}^{19}=D_{cris}^{19}(\lambdao,\lambdat,\lambdath);$$
\begin{itemize}
\item $\fil{r} D=E(\baset+\baseth,\baseo)\mbox{ and }\fil{s} D=E(\baset+\baseth)$
\item $
[\phi]=
\begin{small}\left(
  \begin{array}{ccc}
    \lambdao & 0  & 0 \\
    0 & \lambdat & 0 \\
    0 & 0 & \lambdath \\
  \end{array}
\right)
\end{small}
$ for distinct $\lambda_{i}$'s in $E$.
\item $r=\val\lambdao\geq\val\lambdat\geq\val\lambdath$, $\val\lambdat+\val\lambdath=s$, and $s\leq 2r$.
\end{itemize}
\end{exam}

\begin{prop}\label{class crys D19}
\begin{enumerate}
\item $D_{cris}^{19}$ represents admissible filtered $\phi$-modules.
\item The corresponding representations to $D_{cris}^{19}$ are decomposable with submodules $E\baseo$ and $E(\baset,\baseth)$.
\item $D_{cris}^{19}(\lambdao,\lambdat,\lambdath)$ is isomorphic to $D_{cris}^{19}(\lambdao',\lambdat',\lambdath')$ if and only if either
$$
\left\{
  \begin{array}{ll}
    \lambdao=\lambdao',\lambdat=\lambdat', \lambdath=\lambdath', & \hbox{or} \\
    \lambdao=\lambdao',\lambdat=\lambdath', \lambdath=\lambdat'. & \hbox{}
  \end{array}
\right.
$$
\end{enumerate}
\end{prop}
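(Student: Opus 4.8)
The plan is to mirror the preceding propositions: first extract the normal form $D_{cris}^{19}$ from the admissibility analysis in \ref{cryssixth6}, which immediately yields parts (1) and (2), and then read off the isomorphisms in part (3) from Lemma \ref{typeofPsixth} combined with the constraint that an isomorphism must preserve the filtration.

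First I would record the normal form. In the situation of \ref{cryssixth6} we have $L_{1}\subset E(\baset,\baseth)$ with $L_{1}$ not $\phi$-invariant, so neither coordinate of a generator of $L_{1}$ can vanish (otherwise $L_1$ would be one of the eigenlines $E\baset$, $E\baseth$); hence we may write $L_{1}=E(\baset+a\baseth)$ with $a\neq0$, and since $\baseo\in L_{2}\supset L_{1}$ we get $L_{2}=E(\baseo,\baset+a\baseth)$. The change of basis $\baseo\mapsto\baseo$, $\baset\mapsto\baset$, $a\baseth\mapsto\baseth$ normalizes the filtration to $\fils D=E(\baset+\baseth)$ and $\filr D=E(\baseo,\baset+\baseth)$, and because $\phi$ is diagonal this rescaling leaves $[\phi]$ unchanged (Lemma \ref{typeofPsixth}, first case). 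Parts (1) and (2) are then immediate from \ref{cryssixth6}.

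For part (3) I would argue as follows. An isomorphism $\eta:D_{cris}^{19}(\lambdao,\lambdat,\lambdath)\to D_{cris}^{19}(\lambdao',\lambdat',\lambdath')$ intertwines the two diagonal Frobenii, so by Lemma \ref{typeofPsixth} its matrix $[\eta]$ must be one of the six monomial matrices listed there, the precise shape being dictated by how the $\lambda_i$ match the $\lambda_i'$. It remains to test which of these respects the filtration $\fils D=E(\baset+\baseth)$, $\filr D=E(\baseo,\baset+\baseth)$. Since $[\eta]$ is monomial, $\eta(\baset+\baseth)$ lies in $E(\baset+\baseth)$ only when $\eta$ carries $\{\baset,\baseth\}$ into $\{\baset,\baseth\}$ (fixing or swapping the two lines, up to scalars) with equal scalars on the two summands; and then preservation of $\filr D$ forces $\eta(\baseo)\in E\baseo$, because $\baset,\baseth\notin E(\baseo,\baset+\baseth)$. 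Thus exactly two of the six cases survive, namely the identity matching $\lambdao=\lambdao'$, $\lambdat=\lambdat'$, $\lambdath=\lambdath'$ and the transposition $\lambdao=\lambdao'$, $\lambdat=\lambdath'$, $\lambdath=\lambdat'$, each realized by a genuine filtration-preserving isomorphism (invertibility costs nothing once the equal-scalar condition is imposed). Conversely, each of these two matchings manifestly produces such an $\eta$, giving the ``if'' direction and hence the stated list.

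I expect the only real work to lie in the case-check of part (3): one must verify that the four remaining matrices of Lemma \ref{typeofPsixth} all push $\baset+\baseth$ off the line $E(\baset+\baseth)$ --- indeed each of them sends $\baset+\baseth$ to a vector with a nonzero $\baseo$-component --- and confirm that the surviving two preserve both $\fils D$ and $\filr D$. This bookkeeping is the main obstacle, but it is routine given Lemma \ref{typeofPsixth}, exactly as in the proof of the analogous statement for $D_{cris}^{18}$.
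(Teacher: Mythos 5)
Your proposal is correct and follows essentially the same route as the paper: normalize $L_{1}=E(\baset+a\baseth)$, $L_{2}=E(\baseo,\baset+a\baseth)$ via the diagonal change of basis $a\baseth\mapsto\baseth$ (which leaves the diagonal $[\phi]$ untouched), then invoke Lemma \ref{typeofPsixth} together with preservation of the filtration to conclude that an isomorphism must fix or swap $\baset$ and $\baseth$. The only difference is that you spell out the monomial-matrix case check that the paper leaves implicit.
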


\begin{proof}
From \ref{cryssixth6}, if we let $L_{1}=E(\baset+a\baseth)$ and $L_{2}=E(\baset+a\baseth,\baseo)$ with $a\neq0$, then, by change of a basis: $\baseo\mapsto\baseo$, $\baset\mapsto\baset$, and $a\baseth\mapsto\baseth$, we get $D_{cris}^{19}$. So now the part (1) and (2) are clear. Since every isomorphism preserves the filtration, if there is an isomorphism then it can either fix or swap $\baset$ and $\baseth$, by lemma \ref{typeofPsixth}. So we get the part (3).
\end{proof}

The following example arises from \ref{cryssixth7}.
\begin{exam}
A filtered $\phi$-module of Hodge type $(0,r,s)$
$$D_{cris}^{20}=D_{cris}^{20}(\lambdao,\lambdat,\lambdath);$$
\begin{itemize}
\item $\fil{r} D=E(\baset+\baseth,\baseo+\baseth)\mbox{ and }\fil{s} D=E(\baset+\baseth)$
\item $
[\phi]=
\begin{small}\left(
  \begin{array}{ccc}
    \lambdao & 0  & 0 \\
    0 & \lambdat & 0 \\
    0 & 0 & \lambdath \\
  \end{array}
\right)
\end{small}
$ for distinct $\lambda_{i}$'s in $E$.
\item $r\geq\val\lambdao\geq\val\lambdat\geq\val\lambdath$, $\val\lambdao+\val\lambdat+\val\lambdath=r+s$, and $s\leq 2r$.
\end{itemize}
\end{exam}

\begin{prop}\label{class crys D20}
\begin{enumerate}
\item $D_{cris}^{20}$ represents admissible filtered $\phi$-modules.
\item The corresponding representations to $D_{cris}^{20}$ are
      \begin{itemize}
      \item non-split reducible with submodules $E(\baset,\baseth)$ if $\val\lambdao=r$ and
      \item irreducible if $r>\val\lambdao\geq\val\lambdat\geq\val\lambdath$ and so $s\leq2r-1$.
      \end{itemize}
\item $D_{cris}^{20}(\lambdao,\lambdat,\lambdath)$ is isomorphic to $D_{cris}^{20}(\lambdao',\lambdat',\lambdath')$ if and only if either
$$
\left\{
  \begin{array}{ll}
    \lambdao=\lambdao',\lambdat=\lambdat', \lambdath=\lambdath', & \hbox{or} \\
    \lambdao=\lambdao',\lambdat=\lambdath', \lambdath=\lambdat'. & \hbox{}
  \end{array}
\right.
$$
\end{enumerate}
\end{prop}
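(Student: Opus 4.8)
The plan is to follow the template of the preceding propositions, treating parts (1)--(2) by reduction to the normal form and part (3) by the commutation relation of Lemma \ref{typeofPsixth} together with preservation of the filtration.

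First, for parts (1) and (2), I would start from the data of \ref{cryssixth7}: there $L_1$ is a non-$\phi$-invariant line inside $E(\baset,\baseth)$, hence equal to neither $E\baset$ nor $E\baseth$, so $L_1 = E(\baset + a\baseth)$ with $a \neq 0$; and since $L_2 \neq E(\baset,\baseth)$ and $\baseo \notin L_2$, reducing the second generator modulo $L_1$ gives $L_2 = E(\baset + a\baseth,\, \baseo + d\baseth)$ with $d \neq 0$. Applying the diagonal change of basis $\baseo \mapsto \baseo$, $\tfrac{d}{a}\baset \mapsto \baset$, $d\baseth \mapsto \baseth$ then carries $L_1$ to $E(\baset+\baseth)$ and $L_2$ to $E(\baset+\baseth, \baseo+\baseth)$, i.e.\ to the filtration of $D_{cris}^{20}$. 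Because this change of basis is diagonal it commutes with the diagonal matrix $[\phi]$ and leaves the Frobenius unchanged (the diagonal case of Lemma \ref{typeofPsixth}), so parts (1) and (2) follow at once from the admissibility bounds and submodule analysis already established in \ref{cryssixth7}.

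For part (3), I would let $\eta$ be an isomorphism $D_{cris}^{20}(\lambdao,\lambdat,\lambdath) \to D_{cris}^{20}(\lambdao',\lambdat',\lambdath')$. Since $\eta$ intertwines the two diagonal Frobenii, Lemma \ref{typeofPsixth} forces $[\eta]$ into one of six shapes according to the matching of eigenvalues; as the eigenvalues inside each module are distinct, the matching permutation is unique when it exists, so it suffices to decide which of the six shapes are compatible with the filtration. The condition $\eta(\fils D) = E(\baset+\baseth)$ requires the image of $\baset+\baseth$ to have zero $\baseo$-component, which rules out every shape moving $\baseo$; this leaves only the identity matching and the transposition $\lambdat \leftrightarrow \lambdath$ fixing $\baseo$, matching the two alternatives in the statement.

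The step needing the most care is verifying that the surviving transposition genuinely yields an isomorphism respecting the \emph{finer} datum $\filr D = E(\baset+\baseth,\baseo+\baseth)$, not merely $\fils D$. Writing $[\eta]$ in the fourth shape of Lemma \ref{typeofPsixth}, the requirement $\eta(\baset+\baseth) \in E(\baset+\baseth)$ forces its two off-diagonal scalars to coincide, and then $\eta(\baseo+\baseth) \in \filr D$ forces them to equal the negative of the $\baseo$-scalar; thus $\eta$ must act by $\baseo \mapsto x\baseo$, $\baset \mapsto -x\baseth$, $\baseth \mapsto -x\baset$. Since $\eta(\baseo+\baseth) = x(\baseo-\baset) = x\bigl((\baseo+\baseth)-(\baset+\baseth)\bigr) \in \filr D$, such an $\eta$ exists and is invertible, giving the alternative $(\lambdao,\lambdat,\lambdath) = (\lambdao',\lambdath',\lambdat')$; the identity matching similarly forces $\eta$ to be a scalar, giving the first alternative. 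This completes the plan.
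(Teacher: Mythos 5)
Your proposal is correct and takes essentially the same route as the paper's proof: parts (1)--(2) by a diagonal change of basis carrying the data of \ref{cryssixth7} to the normal form (your parametrization $L_{2}=E(\baset+a\baseth,\baseo+d\baseth)$ is just the paper's $E(\baset+a\baseth,b\baseo+\baseth)$ with $d=1/b$), and part (3) by combining Lemma \ref{typeofPsixth} with preservation of $\fils D$ and $\filr D$, which eliminates every permutation moving $\baseo$ and forces the surviving swap to act as $\baseo\mapsto x\baseo$, $\baset\mapsto -x\baseth$, $\baseth\mapsto -x\baset$ --- i.e., up to scalar, the paper's ``swap $\baset,\baseth$ and send $\baseo$ to $-\baseo$.'' Your write-up merely makes explicit the verification the paper leaves as a one-line check.
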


\begin{proof}
From \ref{cryssixth7}, if we let $L_{1}=E(\baset+a\baseth)$ and $L_{2}=E(\baset+a\baseth,b\baseo+\baseth)$ with $ab\neq0$, then, by change of a basis: $\baseo\mapsto\baseo$, $\baset\mapsto ab\baset$, and $\baseth\mapsto b\baseth$, we get $D_{cris}^{20}$. So now the part (1) and (2) are clear. Since every isomorphism preserves the filtration, if there is an isomorphism then it can either fix or swap $\baset$ and $\baseth$ to preserve $\fils D$, by lemma \ref{typeofPsixth}. If it swap $\baset$ and $\baseth$, it should send $\baseo$ to $-\baseo$ to preserve $\filr D$. So we get the part (3).
\end{proof}

The following example arises from \ref{cryssixth8}.
\begin{exam}
A filtered $\phi$-module of Hodge type $(0,r,s)$
$$D_{cris}^{21}=D_{cris}^{21}(\lambdao,\lambdat,\lambdath);$$
\begin{itemize}
\item $\fil{r} D=E(\baseo+\baseth,\baset)\mbox{ and }\fil{s} D=E(\baseo+\baseth)$
\item $
[\phi]=
\begin{small}\left(
  \begin{array}{ccc}
    \lambdao & 0  & 0 \\
    0 & \lambdat & 0 \\
    0 & 0 & \lambdath \\
  \end{array}
\right)
\end{small}
$ for distinct $\lambda_{i}$'s in $E$.
\item $\val\lambdao\geq\val\lambdat=r\geq\val\lambdath\geq0$ and $\val\lambdao+\val\lambdath=s$.
\end{itemize}
\end{exam}

\begin{prop}\label{class crys D21}
\begin{enumerate}
\item $D_{cris}^{21}$ represents admissible filtered $\phi$-modules.
\item The corresponding representations to $D_{cris}^{21}$ are decomposable with submodules $E\baset$ and $E(\baseo,\baseth)$; moreover, if $\val\lambdao=s$ then $E\baseth$ and $E(\baset,\baseth)$ are also submodules.
\item $D_{cris}^{21}(\lambdao,\lambdat,\lambdath)$ is isomorphic to $D_{cris}^{21}(\lambdao',\lambdat',\lambdath')$ if and only if either
$$
\left\{
  \begin{array}{ll}
    \lambdao=\lambdao',\lambdat=\lambdat', \lambdath=\lambdath', & \hbox{or} \\
    \lambdao=\lambdath',\lambdat=\lambdat', \lambdath=\lambdao'. & \hbox{}
  \end{array}
\right.
$$
\end{enumerate}
\end{prop}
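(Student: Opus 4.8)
The plan is to follow the same template used for Propositions \ref{class crys D15} through \ref{class crys D20}: parts (1) and (2) are read off from the computation in \ref{cryssixth8} once the defining data are put into the normalized shape of $D_{cris}^{21}$, and part (3) is extracted from Lemma \ref{typeofPsixth} together with the requirement that an isomorphism preserve the Hodge filtration.

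For parts (1) and (2) I would start from the data produced in \ref{cryssixth8}. Since $L_{1}\subset E(\baseo,\baseth)$ is not $\phi$-invariant it has the form $L_{1}=E(\baseo+a\baseth)$ with $a\neq0$, and since $\baset\in L_{2}$ we have $L_{2}=E(\baseo+a\baseth,\baset)$. Rescaling the eigenvector $\baseth$ by $a$ (the change of basis $\baseo\mapsto\baseo$, $\baset\mapsto\baset$, $a\baseth\mapsto\baseth$) turns $L_{1}$ into $E(\baseo+\baseth)$ and $L_{2}$ into $E(\baseo+\baseth,\baset)$, which is exactly the filtration of $D_{cris}^{21}$. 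Because this change of basis is diagonal it commutes with the diagonal $\phi$ and hence leaves $[\phi]$ unchanged by the first case of Lemma \ref{typeofPsixth}; so $D_{cris}^{21}$ does represent the admissible modules found in \ref{cryssixth8}, giving (1). For (2) I would observe that $D_{cris}^{21}=E\baset\oplus E(\baseo,\baseth)$ as filtered $\phi$-modules (both summands are $\phi$-invariant, and the induced filtrations make each admissible, with Hodge and Newton invariants $r$ and $s$ respectively), so the representation is decomposable; the extra submodules $E\baseth$ and $E(\baset,\baseth)$ when $\val\lambdao=s$, equivalently $\val\lambdath=0$, are exactly those recorded in \ref{cryssixth8}.

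For part (3), any isomorphism $D_{cris}^{21}(\lambdao,\lambdat,\lambdath)\to D_{cris}^{21}(\lambdao',\lambdat',\lambdath')$ is given by a matrix $P$ with $P[\phi]=[\phi']P$, so $P$ is one of the six monomial types classified in Lemma \ref{typeofPsixth}, indexed by the way the two triples of eigenvalues are matched. The additional constraint is that $P$ must carry $\fils D=E(\baseo+\baseth)$ to itself and $\filr D=E(\baseo+\baseth,\baset)$ to itself. I would check the six types against these two conditions: the line $E(\baseo+\baseth)$ is stabilized only when $P$ either fixes the pair $\{\baseo,\baseth\}$ or interchanges $\baseo$ and $\baseth$, in each instance with equal scalars on $\baseo$ and $\baseth$ and with $\baset$ sent to a multiple of itself. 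These are precisely the first and fifth types of Lemma \ref{typeofPsixth}, corresponding to the matchings $(\lambdao,\lambdat,\lambdath)=(\lambdao',\lambdat',\lambdath')$ and $(\lambdao,\lambdat,\lambdath)=(\lambdath',\lambdat',\lambdao')$. In both cases $\filr D$ is then automatically preserved, so these two matchings are exactly the isomorphisms, which is (3).

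The only step needing genuine care is this last verification. In contrast to $D_{cris}^{20}$, where swapping the two symmetric eigenvectors forced a sign correction on the third basis vector to keep $\filr D$ stable, here the third vector $\baset$ is itself an eigenvector lying in $\filr D$, so no correction is needed and the bookkeeping is lighter; the main obstacle is simply to confirm, by inspecting all six matrix shapes, that the other four types genuinely fail to preserve $E(\baseo+\baseth)$ (each sends $\baseo+\baseth$ outside this line) and that only the identity and the $\baseo\leftrightarrow\baseth$ interchange survive.
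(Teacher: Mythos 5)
Your proposal is correct and follows essentially the same route as the paper: the same rescaling $a\baseth\mapsto\baseth$ normalizes the data from \ref{cryssixth8} into $D_{cris}^{21}$ for parts (1) and (2), and part (3) is obtained exactly as in the paper by combining Lemma \ref{typeofPsixth} with preservation of the filtration, which forces an isomorphism to either fix or swap $\baseo$ and $\baseth$. Your additional check that the other four monomial types fail to stabilize $E(\baseo+\baseth)$ merely makes explicit what the paper leaves implicit.
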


\begin{proof}
From \ref{cryssixth8}, if we let $L_{1}=E(\baseo+a\baseth)$ and $L_{2}=E(\baseo+a\baseth,\baset)$ with $a\neq0$, then, by change of a basis: $\baseo\mapsto\baseo$, $\baset\mapsto \baset$, and $a\baseth\mapsto \baseth$, we get $D_{cris}^{21}$. So now the part (1) and (2) are clear. Since every isomorphism preserves the filtration, if there is an isomorphism then it can either fix or swap $\baseo$ and $\baseth$, by lemma \ref{typeofPsixth}. So we get the part (3).
\end{proof}

The following example arises from \ref{cryssixth9}.
\begin{exam}
A filtered $\phi$-module of Hodge type $(0,r,s)$
$$D_{cris}^{22}=D_{cris}^{22}(\lambdao,\lambdat,\lambdath);$$
\begin{itemize}
\item $\fil{r} D=E(\baseo+\baseth,\baset+\baseth)\mbox{ and }\fil{s} D=E(\baseo+\baseth)$
\item $
[\phi]=
\begin{small}\left(
  \begin{array}{ccc}
    \lambdao & 0  & 0 \\
    0 & \lambdat & 0 \\
    0 & 0 & \lambdath \\
  \end{array}
\right)
\end{small}
$ for distinct $\lambda_{i}$'s in $E$.
\item $s\geq\val\lambdao\geq\val\lambdat\geq\val\lambdath$, $\val\lambdat\leq r$, and $\val\lambdao+\val\lambdat+\val\lambdath=r+s$.
\end{itemize}
\end{exam}

\begin{prop}\label{class crys D22}
\begin{enumerate}
\item $D_{cris}^{22}$ represents admissible filtered $\phi$-modules.
\item The corresponding representations to $D_{cris}^{22}$ are
      \begin{itemize}
      \item non-split reducible with submodule $E\baseth$ if $\val\lambdath=0$,
      \item non-split reducible with submodule $E(\baseo,\baseth)$ if $\val\lambdat=r$,
      \item non-split reducible with submodule $E(\lambdat,\lambdath)$ if $\val\lambdao=s$, and
      \item irreducible if $s>\val\lambdao\geq\val\lambdat\geq\val\lambdath$ and $\val\lambdat<r$.
      \end{itemize}
\item $D_{cris}^{22}(\lambdao,\lambdat,\lambdath)$ is isomorphic to $D_{cris}^{22}(\lambdao',\lambdat',\lambdath')$ if and only if either
$$
\left\{
  \begin{array}{ll}
    \lambdao=\lambdao',\lambdat=\lambdat', \lambdath=\lambdath', & \hbox{or} \\
    \lambdao=\lambdath',\lambdat=\lambdat', \lambdath=\lambdao'. & \hbox{}
  \end{array}
\right.
$$
\end{enumerate}
\end{prop}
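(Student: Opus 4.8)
The plan is to follow the template already used for the fully diagonalizable Frobenius in Propositions \ref{class crys D14}--\ref{class crys D21}: first normalize the filtration data produced in \ref{cryssixth9} to the shape recorded for $D_{cris}^{22}$ by a basis change commuting with $[\phi]$, and then read off the isomorphisms from Lemma \ref{typeofPsixth}.

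For parts (1) and (2) I would begin from the generic filtration in \ref{cryssixth9}. There $L_{1}\subset E(\baseo,\baseth)$ is not $\phi$-invariant, hence not an eigenline, so after scaling $L_{1}=E(\baseo+a\baseth)$ with $a\neq0$; and since $L_{2}\supset L_{1}$ is two-dimensional with $\baseo,\baset,\baseth\notin L_{2}$, its $\baset$-component cannot vanish (otherwise $L_{2}=E(\baseo,\baseth)$), so $L_{2}=E(\baseo+a\baseth,\baset+b\baseth)$ with $b\neq0$. I would then apply the diagonal change of basis $\baseo\mapsto\baseo$, $\tfrac{a}{b}\baset\mapsto\baset$, $a\baseth\mapsto\baseth$, which carries $\baseo+a\baseth$ to $\baseo+\baseth$ and $\baset+b\baseth$ to a scalar multiple of $\baset+\baseth$, thereby producing exactly $\fils D=E(\baseo+\baseth)$ and $\filr D=E(\baseo+\baseth,\baset+\baseth)$. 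Since this map merely rescales the eigenvectors of $\phi$, it commutes with $[\phi]$ by Lemma \ref{typeofPsixth} and so is an isomorphism of filtered $\phi$-modules; parts (1) and (2) then follow from the admissibility bounds and the list of submodules already established in \ref{cryssixth9}.

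For part (3) I would observe that any isomorphism $\eta\colon D_{cris}^{22}(\lambdao,\lambdat,\lambdath)\to D_{cris}^{22}(\lambdao',\lambdat',\lambdath')$ intertwines the two diagonal Frobenii, so by Lemma \ref{typeofPsixth} it sends each coordinate eigenline to a coordinate eigenline and realizes a permutation $\sigma$ with $\lambda'_{\sigma(i)}=\lambda_{i}$. The decisive constraint is that $\eta$ preserve $\fils D=E(\baseo+\baseth)$: since $\eta(\baseo+\baseth)$ lies in the span of the two distinct eigenlines that are the images of $\baseo$ and $\baseth$, proportionality to $\baseo+\baseth$ forces those images to be $E\baseo$ and $E\baseth$, i.e.\ $\{\sigma(1),\sigma(3)\}=\{1,3\}$ and $\sigma(2)=2$. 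This leaves only $\sigma=\mathrm{id}$, giving the first relation, and the transposition $\sigma=(1\,3)$. For the latter I would exhibit the explicit map $\baseo\mapsto\baseth$, $\baset\mapsto-\baset$, $\baseth\mapsto\baseo$ and check that it also preserves $\filr D$, since $\baset+\baseth\mapsto\baseo-\baset=(\baseo+\baseth)-(\baset+\baseth)\in\filr D$; this yields the second relation $\lambdao=\lambdath'$, $\lambdat=\lambdat'$, $\lambdath=\lambdao'$.

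The one point requiring care is this final verification. Preservation of $\fils D$ by itself already eliminates every permutation except $\mathrm{id}$ and $(1\,3)$, but one must still confirm that the transposition is realizable \emph{compatibly with the entire filtration}, and in particular that the sign $-1$ on the $\baset$-coordinate is exactly what is needed to return $\filr D$ to itself; without it, $\filr D$ would not be preserved. Everything else is a routine rescaling argument of the same type as in the preceding propositions.
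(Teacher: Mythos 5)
Your proposal is correct and follows essentially the same route as the paper: parts (1) and (2) are obtained by normalizing the filtration from \ref{cryssixth9} with a diagonal change of basis (harmless by Lemma \ref{typeofPsixth}), and part (3) by noting that preservation of $\fil{s}D=E(\baseo+\baseth)$ restricts any isomorphism to either fixing or swapping the eigenlines $E\baseo$ and $E\baseth$ while fixing $E\baset$, the swap requiring $\baset\mapsto-\baset$ to preserve $\fil{r}D$. Your write-up only differs in supplying slightly more detail (the nonvanishing of $a,b$ and the explicit check $\baseo-\baset=(\baseo+\baseth)-(\baset+\baseth)$), which matches the paper's terser argument.
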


\begin{proof}
From \ref{cryssixth9}, if we let $L_{1}=E(\baseo+a\baseth)$ and $L_{2}=E(\baseo+a\baseth,\baset+b\baseth)$ with $ab\neq0$, then, by change of a basis: $\baseo\mapsto a\baseo$, $\baset\mapsto b\baset$, and $\baseth\mapsto \baseth$, we get $D_{cris}^{22}$. So now the part (1) and (2) are clear. Since every isomorphism preserves the filtration, if there is an isomorphism then it can either fix or swap $\baset$ and $\baseth$ to preserve $\fils D$, by lemma \ref{typeofPsixth}. If it swap $\baseo$ and $\baseth$, it should send $\baset$ to $-\baset$ to preserve $\filr D$. So we get the part (3).
\end{proof}

The following example arises from \ref{cryssixth10}.
\begin{exam}
A filtered $\phi$-module of Hodge type $(0,r,s)$
$$D_{cris}^{23}=D_{cris}^{23}(\lambdao,\lambdat,\lambdath);$$
\begin{itemize}
\item $\fil{r} D=E(\baseo+\baset+\baseth,\baseo)\mbox{ and }\fil{s} D=E(\baseo+\baset+\baseth)$
\item $
[\phi]=
\begin{small}\left(
  \begin{array}{ccc}
    \lambdao & 0  & 0 \\
    0 & \lambdat & 0 \\
    0 & 0 & \lambdath \\
  \end{array}
\right)
\end{small}
$ for distinct $\lambda_{i}$'s in $E$.
\item $\val\lambdao\geq\val\lambdat\geq\val\lambdath\geq0$, $r\leq\val\lambdao\leq s$, and $\val\lambdao+\val\lambdat+\val\lambdath=r+s$.
\end{itemize}
\end{exam}

\begin{prop}\label{class crys D23}
\begin{enumerate}
\item $D_{cris}^{23}$ represents admissible filtered $\phi$-modules.
\item The corresponding representations to $D_{cris}^{23}$ are
     \begin{itemize}
     \item non-split reducible with submodule $E\baseth$ if $\val\lambdath=0$,
     \item non-split reducible with submodule $E\baseo$ if $\val\lambdao=r$,
     \item non-split reducible with submodule $E(\baset,\baseth)$ if $\val\lambdao=s$, and
     \item irreducible if $\val\lambdao\geq\val\lambdat\geq\val\lambdath>0$ and $r<\val\lambdao<s$.
     \end{itemize}
\item $D_{cris}^{23}(\lambdao,\lambdat,\lambdath)$ is isomorphic to $D_{cris}^{23}(\lambdao',\lambdat',\lambdath')$ if and only if either
$$
\left\{
  \begin{array}{ll}
    \lambdao=\lambdao',\lambdat=\lambdat', \lambdath=\lambdath', & \hbox{or} \\
    \lambdao=\lambdao',\lambdat=\lambdath', \lambdath=\lambdat'. & \hbox{}
  \end{array}
\right.
$$
\end{enumerate}
\end{prop}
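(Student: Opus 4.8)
For the final statement (Proposition \ref{class crys D23}), the plan is to follow the same two-stage strategy used for the preceding diagonal cases (Propositions \ref{class crys D15}--\ref{class crys D22}): first normalize the filtration to the stated standard form by a diagonal change of basis, which settles parts (1) and (2), and then determine which Frobenius-intertwiners are compatible with the filtration to obtain part (3).

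For the first stage, I would start from the data recorded in \ref{cryssixth10}: there $L_1$ is a line not contained in any $\phi$-invariant subspace, so after scaling we may write $L_1 = E(\baseo + a\baset + b\baseth)$ with $a, b \neq 0$, and the hypotheses $\baseo \in L_2$ and $L_1 \subset L_2$ force $L_2 = E(\baseo, a\baset + b\baseth)$. The diagonal change of basis $\baseo \mapsto \baseo$, $a\baset \mapsto \baset$, $b\baseth \mapsto \baseth$ then carries $L_1$ to $E(\baseo + \baset + \baseth)$ and $L_2$ to $E(\baseo, \baset + \baseth)$, which are exactly $\fils D$ and $\filr D$ of $D_{cris}^{23}$. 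By Lemma \ref{typeofPsixth} a diagonal matrix commutes with the diagonal $[\phi]$, so this change of basis does not alter the matrix of $\phi$; hence every module arising in \ref{cryssixth10} is isomorphic to some $D_{cris}^{23}(\lambdao,\lambdat,\lambdath)$, and parts (1) and (2) follow from the admissibility computation and the list of submodules already obtained in \ref{cryssixth10}.

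For part (3), I would note that any isomorphism $D_{cris}^{23}(\lambdao,\lambdat,\lambdath) \to D_{cris}^{23}(\lambdao',\lambdat',\lambdath')$ is given by a matrix $P$ with $P[\phi] = [\phi']P$; since both Frobenii are diagonal with distinct eigenvalues, $P$ must take one of the six shapes listed in Lemma \ref{typeofPsixth}, indexed by the way the eigenvalues of $\phi'$ are matched with those of $\phi$. The remaining task is to impose preservation of the filtration. The key observation is that $\fils D = E(\baseo + \baset + \baseth)$ is symmetric in the three basis vectors, which forces the free scalars $x, y, z$ in each case to be equal, while $\filr D = E(\baseo, \baset + \baseth)$ singles out $\baseo$ yet remains symmetric under $\baset \leftrightarrow \baseth$. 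Testing the six shapes against the requirement $P(\baseo) \in \filr D$ then eliminates every matching that moves $\baseo$ into the span of $\baset$ or $\baseth$, leaving only the identity matching (giving $\lambdao=\lambdao'$, $\lambdat=\lambdat'$, $\lambdath=\lambdath'$) and the transposition $\baset \leftrightarrow \baseth$ (giving $\lambdao = \lambdao'$, $\lambdat = \lambdath'$, $\lambdath = \lambdat'$). I expect this last verification to be the only real content of the proof: it is a short but genuinely case-by-case check, and one must confirm that for the two surviving matchings the scalars can indeed be chosen (namely $x = y = z$) so that $P$ preserves both steps of the filtration, so that both matchings actually occur.
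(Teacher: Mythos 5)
Your proposal is correct and follows essentially the same route as the paper: the same diagonal change of basis normalizing $L_{1}$ and $L_{2}$ from \ref{cryssixth10} to the standard filtration (which settles parts (1) and (2) by appeal to the computations there), and the same use of Lemma \ref{typeofPsixth} together with filtration-preservation to cut the six generalized-permutation shapes down to the identity matching and the $\baset \leftrightarrow \baseth$ swap for part (3). The only cosmetic difference is the order in which you impose the two filtration conditions (the paper uses $\filr D$ to pin down $\baseo$ first, then $\fils D$), which is immaterial.
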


\begin{proof}
From \ref{cryssixth10}, if we let $L_{1}=E(\baseo+a\baset+b\baseth)$ and $L_{2}=E(\baseo+a\baset+b\baseth,\baseo)$ with $ab\neq0$, then, by change of a basis: $\baseo\mapsto\baseo$, $a\baset\mapsto\baset$, and $b\baseth\mapsto\baseth$, we get $D_{cris}^{23}$. So now the part (1) and (2) are clear. Since every isomorphism preserves the filtration, if there is an isomorphism then it should fix $\baseo$ to preserve $\filr D$, by lemma \ref{typeofPsixth}. To preserve $\fils D$, it can either fix or swap $\baset$ and $\baseth$. So we get the part (3).
\end{proof}

The following example arises from \ref{cryssixth11}.
\begin{exam}
A filtered $\phi$-module of Hodge type $(0,r,s)$
$$D_{cris}^{24}=D_{cris}^{24}(\lambdao,\lambdat,\lambdath);$$
\begin{itemize}
\item $\fil{r} D=E(\baseo+\baset+\baseth,\baset)\mbox{ and }\fil{s} D=E(\baseo+\baset+\baseth)$
\item $
[\phi]=
\begin{small}\left(
  \begin{array}{ccc}
    \lambdao & 0  & 0 \\
    0 & \lambdat & 0 \\
    0 & 0 & \lambdath \\
  \end{array}
\right)
\end{small}
$ for distinct $\lambda_{i}$'s in $E$.
\item $\val\lambdao\geq\val\lambdat\geq\val\lambdath\geq0$, $\val\lambdat\geq r$, and $\val\lambdao+\val\lambdat+\val\lambdath=r+s$.
\end{itemize}
\end{exam}

\begin{prop}\label{class crys D24}
\begin{enumerate}
\item $D_{cris}^{24}$ represents admissible filtered $\phi$-modules.
\item The corresponding representations to $D_{cris}^{24}$ are
     \begin{itemize}
     \item non-split reducible with submodule $E\baseth$ if $\val\lambdath=0$,
     \item non-split reducible with submodule $E\baset$ if $\val\lambdat=r$,
     \item non-split reducible with submodule $E(\baset,\baseth)$ if $\val\lambdao=s$, and
     \item irreducible if $\val\lambdao\geq\val\lambdat\geq\val\lambdath>0$ and $\val\lambdat>r$.
     \end{itemize}
\item $D_{cris}^{24}(\lambdao,\lambdat,\lambdath)$ is isomorphic to $D_{cris}^{24}(\lambdao',\lambdat',\lambdath')$ if and only if either
$$
\left\{
  \begin{array}{ll}
    \lambdao=\lambdao',\lambdat=\lambdat', \lambdath=\lambdath', & \hbox{or} \\
    \lambdao=\lambdath',\lambdat=\lambdat', \lambdath=\lambdao'. & \hbox{}
  \end{array}
\right.
$$
\end{enumerate}
\end{prop}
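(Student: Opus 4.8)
The plan is to follow the template of the preceding propositions: parts (1) and (2) will be read off from \ref{cryssixth11} once the filtration is put into the normal form recorded in the statement, while part (3) will be an analysis of which of the generalized permutation matrices of Lemma \ref{typeofPsixth} are compatible with that filtration.

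First I would normalize. In the situation of \ref{cryssixth11} one has a line $L_{1}=E(\baseo+a\baset+b\baseth)$ not contained in any $\phi$-invariant subspace, which forces $ab\neq0$, together with $L_{2}=E(\baseo+a\baset+b\baseth,\baset)$ (as $\baset\in L_{2}$ and $L_{1}\subset L_{2}$). I would then apply the diagonal change of basis $\baseo\mapsto\baseo$, $a\baset\mapsto\baset$, $b\baseth\mapsto\baseth$, which sends $\baseo+a\baset+b\baseth$ to $\baseo+\baset+\baseth$ and carries $E\baset$ to itself, hence produces exactly the filtration $\fil{s}D=E(\baseo+\baset+\baseth)$, $\fil{r}D=E(\baseo+\baset+\baseth,\baset)$ of $D_{cris}^{24}$. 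Since this matrix is diagonal it commutes with the diagonal $[\phi]$ by Lemma \ref{typeofPsixth}, so the Frobenius is unchanged, and parts (1) and (2) follow immediately from the admissibility ranges and the list of submodules already computed in \ref{cryssixth11}.

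For part (3) I would argue that an isomorphism $D_{cris}^{24}(\lambdao,\lambdat,\lambdath)\to D_{cris}^{24}(\lambdao',\lambdat',\lambdath')$ is a matrix $P$ with $P[\phi]=[\phi']P$, so by Lemma \ref{typeofPsixth} it is one of the six generalized permutation matrices, its shape dictated by how $\{\lambda_{i}\}$ is matched with $\{\lambda_{i}'\}$. The two filtration conditions then cut this down. Preserving $\fil{s}D=E(\baseo+\baset+\baseth)$ forces the three nonzero entries of $P$ to coincide (each form sends $\baseo+\baset+\baseth$ to a permutation of the coordinates, proportional to $\baseo+\baset+\baseth$ only when these entries are equal), so $P$ is a scalar multiple of an honest permutation matrix. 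Preserving $\fil{r}D=E(\baseo+\baset+\baseth,\baset)$ then reduces, after the $\fil{s}$-step, to the requirement $P\baset\in\fil{r}D'$; since $\fil{r}D'=E(\baseo+\baseth,\baset)$ contains $\baset$ but neither $\baseo$ nor $\baseth$, the underlying permutation must fix $\baset$. The only such permutations are the identity and the transposition $\baseo\leftrightarrow\baseth$, yielding respectively $\lambdao=\lambdao',\lambdat=\lambdat',\lambdath=\lambdath'$ (the diagonal case of Lemma \ref{typeofPsixth}) and $\lambdao=\lambdath',\lambdat=\lambdat',\lambdath=\lambdao'$ (the case swapping $\baseo$ and $\baseth$), and conversely each of these genuinely gives an isomorphism. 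The only delicate point, and the one I would verify most carefully, is exactly this bookkeeping: checking that the $\fil{s}$-condition really collapses all six forms to scalar permutations, and that among those only the two fixing $\baset$ survive the $\fil{r}$-condition — the asymmetry between $\baset$ and the pair $\baseo,\baseth$ being precisely what singles out the swap $\baseo\leftrightarrow\baseth$ here, in contrast to the swaps appearing for $D_{cris}^{23}$.
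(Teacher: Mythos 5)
Your proposal is correct and follows essentially the same route as the paper: normalize via the diagonal change of basis $\baseo\mapsto\baseo$, $a\baset\mapsto\baset$, $b\baseth\mapsto\baseth$ so that parts (1) and (2) follow from \ref{cryssixth11}, and then use Lemma \ref{typeofPsixth} together with preservation of the filtration to pin down the isomorphisms in part (3). The only cosmetic difference is that you impose the $\fil{s}$-condition before the $\fil{r}$-condition while the paper does the reverse (fixing $\baset$ first, then allowing the $\baseo\leftrightarrow\baseth$ swap), and your write-up spells out the scalar-collapse step in more detail; the content is identical.
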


\begin{proof}
From \ref{cryssixth11}, if we let $L_{1}=E(\baseo+a\baset+b\baseth)$ and $L_{2}=E(\baseo+a\baset+b\baseth,\baset)$ with $ab\neq0$, then, by change of a basis: $\baseo\mapsto\baseo$, $a\baset\mapsto\baset$, and $b\baseth\mapsto\baseth$, we get $D_{cris}^{24}$. So now the part (1) and (2) are clear. Since every isomorphism preserves the filtration, if there is an isomorphism then it should fix $\baset$ to preserve $\filr D$, by lemma \ref{typeofPsixth}. To preserve $\fils D$, it can either fix or swap $\baseo$ and $\baseth$. So we get the part (3).
\end{proof}

The following example arises from \ref{cryssixth12}.
\begin{exam}
A filtered $\phi$-module of Hodge type $(0,r,s)$
$$D_{cris}^{25}=D_{cris}^{25}(\lambdao,\lambdat,\lambdath);$$
\begin{itemize}
\item $\fil{r} D=E(\baseo+\baset+\baseth,\baseth)\mbox{ and }\fil{s} D=E(\baseo+\baset+\baseth)$
\item $
[\phi]=
\begin{small}\left(
  \begin{array}{ccc}
    \lambdao & 0  & 0 \\
    0 & \lambdat & 0 \\
    0 & 0 & \lambdath \\
  \end{array}
\right)
\end{small}
$ for distinct $\lambda_{i}$'s in $E$.
\item $\val\lambdao\geq\val\lambdat\geq\val\lambdath\geq r$, $\val\lambdao+\val\lambdat+\val\lambdath=r+s$, and $s\geq2r$.
\end{itemize}
\end{exam}

\begin{prop}\label{class crys D25}
\begin{enumerate}
\item $D_{cris}^{25}$ represents admissible filtered $\phi$-modules.
\item The corresponding representations to $D_{cris}^{25}$ are
     \begin{itemize}
     \item non-split reducible with submodule $E\baseth$ if $\val\lambdath=r$ and
     \item irreducible if $\val\lambdao\geq\val\lambdat\geq\val\lambdath>r$ and so $s\geq2r+1$.
     \end{itemize}
\item $D_{cris}^{25}(\lambdao,\lambdat,\lambdath)$ is isomorphic to $D_{cris}^{25}(\lambdao',\lambdat',\lambdath')$ if and only if either
$$
\left\{
  \begin{array}{ll}
    \lambdao=\lambdao',\lambdat=\lambdat', \lambdath=\lambdath', & \hbox{or} \\
    \lambdao=\lambdat',\lambdat=\lambdao', \lambdath=\lambdath'. & \hbox{}
  \end{array}
\right.
$$
\end{enumerate}
\end{prop}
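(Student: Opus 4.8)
The plan is to follow the same strategy used in the proofs of Propositions \ref{class crys D23} and \ref{class crys D24}: first normalize an arbitrary module arising from \ref{cryssixth12} to the standard shape $D_{cris}^{25}$ by a base change that is diagonal (and hence leaves $[\phi]$ untouched, by the first case of Lemma \ref{typeofPsixth}), and then read off parts (1) and (2) directly from the admissibility computation already carried out in \ref{cryssixth12}.

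Concretely, the configuration in \ref{cryssixth12} has $L_{1}$ contained in no $\phi$-invariant subspace and $\baseth\in L_{2}$. By Lemma \ref{crysinvariantsixth}, requiring $L_{1}=E(\baseo+a\baset+b\baseth)$ to avoid all six coordinate subspaces forces $ab\neq 0$, and since $L_{1}\subset L_{2}$ with $\baseth\in L_{2}$ we get $L_{2}=E(\baseo+a\baset+b\baseth,\baseth)$. I would then apply the base change $\baseo\mapsto\baseo$, $a\baset\mapsto\baset$, $b\baseth\mapsto\baseth$, which sends $L_{1}$ to $E(\baseo+\baset+\baseth)=\fils D$ and $L_{2}$ to $E(\baseo+\baset+\baseth,\baseth)=\filr D$; being diagonal, it fixes the diagonal form of $[\phi]$. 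Parts (1) and (2) are then exactly the admissibility range and submodule list recorded in \ref{cryssixth12}, namely non-split reducibility with submodule $E\baseth$ when $\val{\lambdath}=r$ and irreducibility when $\val{\lambdao}\geq\val{\lambdat}\geq\val{\lambdath}>r$.

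For part (3), I would argue that any isomorphism $\eta:D_{cris}^{25}(\lambdao,\lambdat,\lambdath)\to D_{cris}^{25}(\lambdao',\lambdat',\lambdath')$ is represented by an invertible matrix $P$ with $P[\phi]=[\phi']P$, so $P$ is one of the six weighted-permutation matrices of Lemma \ref{typeofPsixth}, each having exactly three nonzero entries $x,y,z$. The decisive constraint is that $\eta$ must carry $\fils D=E(\baseo+\baset+\baseth)$ to itself: computing $P(\baseo+\baset+\baseth)$ and demanding the result be a scalar multiple of $\baseo+\baset+\baseth$ forces $x=y=z=\mu$, so $P=\mu\pi$ for a genuine permutation matrix $\pi$. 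It remains to impose preservation of $\filr D$, which I would describe intrinsically as the plane on which the $\baseo$- and $\baset$-coordinates agree; a permutation $\pi$ stabilizes this plane precisely when it fixes the set $\{\baseo,\baset\}$ setwise, i.e. $\pi$ is the identity or the transposition exchanging $\baseo$ and $\baset$. These two options yield exactly the two eigenvalue matchings $(\lambdao,\lambdat,\lambdath)=(\lambdao',\lambdat',\lambdath')$ and $(\lambdao,\lambdat,\lambdath)=(\lambdat',\lambdao',\lambdath')$ asserted in part (3).

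I do not expect any genuine obstacle: admissibility and the submodule structure are inherited verbatim from \ref{cryssixth12}, and the isomorphism analysis is the same finite case-check used in the neighbouring propositions. The only real bookkeeping is confirming that the four permutations moving the index $3$ genuinely fail to preserve $\filr D$ (and not merely $\fils D$); the ``$\baseo$- and $\baset$-coordinates agree'' description of $\filr D$ is precisely what makes this stabilizer computation immediate, so I would foreground that description to keep the argument short.
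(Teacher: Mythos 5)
Your proposal is correct and follows essentially the same route as the paper: normalize the configuration from \ref{cryssixth12} by the diagonal base change $\baseo\mapsto\baseo$, $a\baset\mapsto\baset$, $b\baseth\mapsto\baseth$ (which fixes the diagonal $[\phi]$), read off parts (1) and (2) from the admissibility computation there, and for part (3) combine Lemma \ref{typeofPsixth} with preservation of the filtration to reduce to the identity and the transposition of $\baseo,\baset$. The only difference is cosmetic --- you impose preservation of $\fils D$ first (forcing equal weights, hence a scalar times a genuine permutation) and then $\filr D$, while the paper argues in the opposite order --- and your intrinsic description of $\filr D$ as the plane where the $\baseo$- and $\baset$-coordinates agree is a clean way to run that case check.
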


\begin{proof}
From \ref{cryssixth12}, if we let $L_{1}=E(\baseo+a\baset+b\baseth)$ and $L_{2}=E(\baseo+a\baset+b\baseth,\baseth)$ with $ab\neq0$, then, by change of a basis: $\baseo\mapsto\baseo$, $a\baset\mapsto\baset$, and $b\baseth\mapsto\baseth$, we get $D_{cris}^{25}$. So now the part (1) and (2) are clear. Since every isomorphism preserves the filtration, if there is an isomorphism then it should fix $\baseth$ to preserve $\filr D$, by lemma \ref{typeofPsixth}. To preserve $\fils D$, it can either fix or swap $\baseo$ and $\baset$. So we get the part (3).
\end{proof}

The following example arises from \ref{cryssixth13}.
\begin{exam}
A filtered $\phi$-module of Hodge type $(0,r,s)$
$$D_{cris}^{26}=D_{cris}^{26}(\lambdao,\lambdat,\lambdath,\mfl);$$
\begin{itemize}
\item $\fil{r} D=E(\baseo+\baset+\baseth,\baset+\mfl\baseth)\mbox{ and }\fil{s} D=E(\baseo+\baset+\baseth)$
\item $
[\phi]=
\begin{small}\left(
  \begin{array}{ccc}
    \lambdao & 0  & 0 \\
    0 & \lambdat & 0 \\
    0 & 0 & \lambdath \\
  \end{array}
\right)
\end{small}
$ for distinct $\lambda_{i}$'s in $E$.
\item $\mfl\in E\setminus\{0,1\}$, $s\geq\val\lambdao\geq\val\lambdat\geq\val\lambdath\geq 0$, and $\val\lambdao+\val\lambdat+\val\lambdath=r+s$.
\end{itemize}
\end{exam}

\begin{prop}\label{class crys D26}
\begin{enumerate}
\item $D_{cris}^{26}$ represents admissible filtered $\phi$-modules.
\item The corresponding representations to $D_{cris}^{26}$ are
     \begin{itemize}
     \item non-split reducible with submodule $E\baseth$ if $\val\lambdath=0$,
     \item non-split reducible with submodule $E(\baset,\baseth)$ if $\val\lambdao=s$, and
     \item irreducible if $s>\val\lambdao\geq\val\lambdat\geq\val\lambdath>0$.
     \end{itemize}
\item $D_{cris}^{26}(\lambdao,\lambdat,\lambdath,\mfl)$ is isomorphic to $D_{cris}^{26}(\lambdao',\lambdat',\lambdath',\mfl')$ if and only if one of the following holds:
$$
\left\{
  \begin{array}{ll}
\lambdao=\lambdao', \lambdat=\lambdat', \lambdath=\lambdath',\mbox{ and } \mathfrak{L}=\mathfrak{L}', & \hbox{}\\
\lambdao=\lambdat', \lambdat=\lambdath', \lambdath=\lambdao',\mbox{ and } \mathfrak{L}'+\frac{1}{\mathfrak{L}}=1, & \hbox{}\\
\lambdao=\lambdath', \lambdat=\lambdao', \lambdath=\lambdat',\mbox{ and } \mathfrak{L}+\frac{1}{\mathfrak{L}'}=1, & \hbox{}\\
\lambdao=\lambdao', \lambdat=\lambdath', \lambdath=\lambdat',\mbox{ and } \mathfrak{L}\mathfrak{L}'=1, & \hbox{}\\
\lambdao=\lambdath', \lambdat=\lambdat', \lambdath=\lambdao', \mbox{ and } \frac{1}{\mathfrak{L}}+\frac{1}{\mathfrak{L}'}=1, & \hbox{or}\\
\lambdao=\lambdat', \lambdat=\lambdao', \lambdath=\lambdath', \mbox{ and } \mathfrak{L}+\mathfrak{L}'=1.
  \end{array}
\right.
$$
\end{enumerate}
\end{prop}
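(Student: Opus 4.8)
The plan is to follow the template of the preceding propositions: put an arbitrary module arising from \ref{cryssixth13} into the normal form $D_{cris}^{26}$, read off parts (1) and (2) directly from \ref{cryssixth13}, and then analyze isomorphisms via Lemma \ref{typeofPsixth} for part (3). For the normalization, recall from \ref{cryssixth13} that $L_{1}=E(a\baseo+b\baset+c\baseth)$ with $abc\neq0$ (since $L_{1}$ lies in no $\phi$-invariant subspace) and that $L_{2}\supset L_{1}$ is $2$-dimensional containing none of $\baseo,\baset,\baseth$. First I would rescale the basis by $\baseo\mapsto a\baseo$, $\baset\mapsto b\baset$, $\baseth\mapsto c\baseth$, which is diagonal and so commutes with the diagonal $[\phi]$ by the first case of Lemma \ref{typeofPsixth}, making $L_{1}=E(\baseo+\baset+\baseth)$. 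Then I would take a second generator of $L_{2}$, subtract a multiple of $\baseo+\baset+\baseth$ to clear its $\baseo$-coordinate, and rescale to bring it to the form $\baset+\mfl\baseth$; the exclusions $\baseo,\baset,\baseth\notin L_{2}$ translate precisely into $\mfl\in E\setminus\{0,1\}$. This produces $D_{cris}^{26}$, and since admissibility and the list of submodules were already established in \ref{cryssixth13}, parts (1) and (2) follow at once.

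For part (3), an isomorphism $\eta:D_{cris}^{26}(\lambdao,\lambdat,\lambdath,\mfl)\to D_{cris}^{26}(\lambdao',\lambdat',\lambdath',\mfl')$ is a linear map whose matrix $P$ satisfies $P[\phi]=[\phi']P$ and which carries $\fils D$ and $\filr D$ onto $\fils D'$ and $\filr D'$. Because the $\lambda_{i}$ (resp. the $\lambda_{i}'$) are distinct, Lemma \ref{typeofPsixth} shows that $P$ must be one of the six monomial matrices, one for each bijection between $\{\lambdao,\lambdat,\lambdath\}$ and $\{\lambdao',\lambdat',\lambdath'\}$; this is the source of the six eigenvalue-matching conditions. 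In each case I would next impose $\eta(\fils D)=\fils D'$: since both lines equal $E(\baseo+\baset+\baseth)$, whose three coordinates are equal, a monomial matrix preserving this line must have all three nonzero entries equal, so $\eta$ reduces to a scalar multiple of a permutation matrix. As lines are scale-invariant, the scalar is irrelevant and only the underlying permutation matters for the filtration in degree $r$.

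Finally I would compute $\eta(\baset+\mfl\baseth)$ for the relevant permutation, reduce the result modulo $\baseo+\baset+\baseth$ to the normal form $\baset+\mfl''\baseth$, and set $\mfl''=\mfl'$; running through all six permutations produces exactly the six relations in the statement. The point I expect to require the most care is the bookkeeping that pairs each permutation of the eigenlines with its correct Möbius relation, so that the admissible $\mfl'$ are precisely the six values $\mfl,\,1-\mfl,\,\tfrac{1}{\mfl},\,\tfrac{1}{1-\mfl},\,\tfrac{\mfl}{\mfl-1},\,\tfrac{\mfl-1}{\mfl}$. Conceptually this is not a coincidence: $\mfl$ records the position of the line $\filr D$ relative to the three $\phi$-eigenlines and the fixed line $\fils D$, and permuting the eigenlines acts on this datum exactly as the symmetric group $S_{3}$ acts on a cross-ratio through the anharmonic group, which is what yields the stated list.
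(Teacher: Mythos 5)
Your proposal is correct and follows essentially the same route as the paper's own proof: normalize the data from \ref{cryssixth13} by a diagonal (hence $\phi$-commuting) change of basis to reach $D_{cris}^{26}$, read off parts (1) and (2), and for part (3) use Lemma \ref{typeofPsixth} together with preservation of $\fil{s}D=E(\baseo+\baset+\baseth)$ to reduce any isomorphism to a scalar multiple of a permutation matrix, then track the induced M\"obius action on $\mfl$ case by case. The cross-ratio/anharmonic-group remark is a nice conceptual gloss not present in the paper, but the underlying computation is identical.
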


\begin{proof}
From \ref{cryssixth13}, if we let $L_{1}=E(\baseo+a\baset+b\baseth)$ and $L_{2}=E(\baseo+a\baset+b\baseth,c\baset+d\baseth)$ with $abcd\neq0$ and $ad-bc\neq0$, then, by change of a basis: $\baseo\mapsto\baseo$, $a\baset\mapsto\baset$, and $b\baseth\mapsto\baseth$, we get $D_{cris}^{25}$. So now the part (1) and (2) are clear.

By Lemma \ref{typeofPsixth}, to preserve $\fils D$, every isomorphism should be a permutation on the bases modulo scalar multiples. Let $T$ be an isomorphism from $D_{cris}^{26}(\lambdao,\lambdat,\lambdath,\mfl)$ to $D^{26}(\lambdao',\lambdat',\lambdath',\mfl')$. If $\lambdao=\lambdat',\lambdat=\lambdath',\lambdath=\lambdao'$, then $T$ sends $\baseo\mapsto\baset,\baset\mapsto\baseth,\baseth\mapsto\baseo$ and $T(\filr D)=E(\baseo+\baset+\baseth, \baset+(1-\frac{1}{\mfl})\baseth)$, and so we get $\mfl'+\frac{1}{\mfl}=1$. If $\lambdao=\lambdath',\lambdat=\lambdao',\lambdath=\lambdat'$, then $T$ sends $\baseo\mapsto\baseth,\baset\mapsto\baseo,\baseth\mapsto\baset$ and $T(\filr D)=E(\baseo+\baset+\baseth, \baset+\frac{1}{1-\mfl}\baseth)$, and so we get $\mfl+\frac{1}{\mfl'}=1$. If $\lambdao=\lambdao',\lambdat=\lambdath',\lambdath=\lambdat'$, then $T$ sends $\baseo\mapsto\baseo,\baset\mapsto\baseth,\baseth\mapsto\baset$ and $T(\filr D)=E(\baseo+\baset+\baseth, \baset+\frac{1}{\mfl}\baseth)$, and so we get $\mfl\mfl'=1$. If $\lambdao=\lambdath',\lambdat=\lambdat',\lambdath=\lambdao'$, then $T$ sends $\baseo\mapsto\baseth,\baset\mapsto\baset,\baseth\mapsto\baseo$ and $T(\filr D)=E(\baseo+\baset+\baseth, \baset+\frac{\mfl}{\mfl-1}\baseth)$, and so we get $\frac{1}{\mfl}+\frac{1}{\mfl'}=1$. If $\lambdao=\lambdat',\lambdat=\lambdao',\lambdath=\lambdath'$, then $T$ sends $\baseo\mapsto\baset,\baset\mapsto\baseo,\baseth\mapsto\baseth$ and $T(\filr D)=E(\baseo+\baset+\baseth, \baset+(1-\mfl)\baseth)$, and so we get $\mfl+\mfl'=1$. The converse is also very routine and easy to check.
\end{proof}

\begin{prop}
Assume that $1\leq i<j\leq 26$. Then there is an isomorphism from $D_{cris}^{i}$ to $D_{cris}^{j}$ if and only if one of the following holds:
\begin{enumerate}

\item $i=17$, $j=19$, $\lambdao=\lambdath'$, $\lambdat=\lambdat'$, and $\lambdath=\lambdao'$,
\item $i=17$, $j=19$, $\lambdao=\lambdat'$, $\lambdat=\lambdath'$, and $\lambdath=\lambdao'$,
\item $i=17$, $j=21$, $\lambdao=\lambdao'$, $\lambdat=\lambdath'$, and $\lambdath=\lambdat'$,
\item $i=17$, $j=21$, $\lambdao=\lambdath'$, $\lambdat=\lambdao'$, and $\lambdath=\lambdat'$,
\item $i=19$, $j=21$, $\lambdao=\lambdat'$, $\lambdat=\lambdao'$, and $\lambdath=\lambdath'$,
\item $i=19$, $j=21$, $\lambdao=\lambdat'$, $\lambdat=\lambdath'$, and $\lambdath=\lambdao'$,
\item $i=18$, $j=20$, $\lambdao=\lambdat'$, $\lambdat=\lambdath'$, and $\lambdath=\lambdao'$,
\item $i=18$, $j=22$, $\lambdao=\lambdao'$, $\lambdat=\lambdath'$, and $\lambdath=\lambdat'$,
\item $i=20$, $j=22$, $\lambdao=\lambdat'$, $\lambdat=\lambdao'$, and $\lambdath=\lambdath'$,
\item $i=23$, $j=24$, $\lambdao=\lambdat'$, $\lambdat=\lambdao'$, and $\lambdath=\lambdath'$,
\item $i=23$, $j=24$, $\lambdao=\lambdat'$, $\lambdat=\lambdath'$, and $\lambdath=\lambdao'$,
\item $i=23$, $j=25$, $\lambdao=\lambdath'$, $\lambdat=\lambdat'$, and $\lambdath=\lambdao'$,
\item $i=23$, $j=25$, $\lambdao=\lambdath'$, $\lambdat=\lambdao'$, and $\lambdath=\lambdat'$,
\item $i=24$, $j=25$, $\lambdao=\lambdao'$, $\lambdat=\lambdath'$, and $\lambdath=\lambdat'$, or
\item $i=24$, $j=25$, $\lambdao=\lambdat'$, $\lambdat=\lambdath'$, and $\lambdath=\lambdao'$,
\end{enumerate}
where $D_{cris}^{i}=D_{cris}^{i}(\lambdao,\lambdat,\lambdath)$ and $D_{cris}^{j}=D_{cris}^{j}(\lambdao',\lambdat',\lambdath')$.
\end{prop}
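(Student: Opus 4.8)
The plan is to compare two isomorphism invariants: the Jordan type of $\phi$ together with the valuations of its eigenvalues, and the \emph{position type} of the flag $\fil{s}D\subset\fil{r}D$ relative to the $\phi$-invariant subspaces catalogued in Lemmas \ref{crysinvariantthird}, \ref{crysinvariantfifth}, and \ref{crysinvariantsixth}. The guiding principle is that an isomorphism of filtered $\phi$-modules is in particular an isomorphism of $\phi$-modules, hence sends $\phi$-invariant subspaces to $\phi$-invariant subspaces and sends $\fil{i}D$ to $\fil{i}D'$; so it preserves the position type, up to whatever relabelling of eigenlines the eigenvalues permit.

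First I would separate the families by Jordan type. The six normal forms give $D_{cris}^1$ a single eigenvalue with block sizes $(2,1)$; $D_{cris}^2,D_{cris}^3,D_{cris}^4$ a single eigenvalue with one block of size $3$; $D_{cris}^5$ two distinct eigenvalues with $\phi$ diagonalizable; $D_{cris}^6,\dots,D_{cris}^{13}$ two distinct eigenvalues with a size-$2$ block; and $D_{cris}^{14},\dots,D_{cris}^{26}$ three distinct eigenvalues. These five Jordan types are pairwise distinct, so no isomorphism crosses between the groups, and $D_{cris}^1$, $D_{cris}^5$ are already isolated. For $\{D_{cris}^2,D_{cris}^3,D_{cris}^4\}$ the eigenvalue is rigid, so an isomorphism commutes with a fixed $[\phi]$ and is of the form in Lemma \ref{typeofPthird}; since by Lemma \ref{crysinvariantthird} the only proper $\phi$-invariant subspaces are $E\baseth\subset E(\baset,\baseth)$, the position of $(\fil{s}D,\fil{r}D)$ relative to this fixed flag is an invariant, and one checks that $D_{cris}^2$ ($\fil{r}D\supset E\baseth$), $D_{cris}^3$ ($\fil{s}D\subset E(\baset,\baseth)$), and $D_{cris}^4$ (neither) have distinct positions. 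The same argument handles $\{D_{cris}^6,\dots,D_{cris}^{13}\}$: the matrices of Lemma \ref{typeofPfifth} fix each of $E\baset,E\baseth,E(\baseo,\baset),E(\baset,\baseth)$ individually — the eigenline $E\baset$ of the block cannot be swapped with $E\baseth$ because $\lambda\neq\lambdath$, and $\baset$ is distinguished as the socle of the block — so position type is an invariant, and $D_{cris}^6,\dots,D_{cris}^{13}$ were defined by mutually exclusive position types. This explains the absence of $2$–$13$ from the list.

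The main work is the group $\{D_{cris}^{14},\dots,D_{cris}^{26}\}$ of three distinct eigenvalues. By Lemma \ref{typeofPsixth} every isomorphism intertwining two diagonal matrices is a scaled permutation matrix, realizing a $\pi\in S_3$ with $\lambdao,\lambdat,\lambdath$ matched to $\lambda'_{\pi(1)},\lambda'_{\pi(2)},\lambda'_{\pi(3)}$. The following quantities are $S_3$-invariant and hence preserved: whether $\fil{s}D$ is $\phi$-invariant, whether $\fil{r}D$ is $\phi$-invariant, whether $\fil{s}D$ lies in a coordinate plane $E(\emph{e}_i,\emph{e}_j)$, and whether $\fil{r}D$ contains a coordinate eigenline. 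I would use these to partition the families into the orbits $\{14\}$, $\{15\}$, $\{16\}$, $\{17,19,21\}$, $\{18,20,22\}$, $\{23,24,25\}$, $\{26\}$ (for instance $D_{cris}^{14}$ is the unique family with both $\fil{s}D$ and $\fil{r}D$ invariant, $D_{cris}^{15}$ resp.\ $D_{cris}^{16}$ the unique ones with exactly $\fil{s}D$ resp.\ $\fil{r}D$ invariant, and $D_{cris}^{26}$ the unique generic family whose $\fil{r}D$ meets no eigenline), which rules out all cross-orbit isomorphisms. Within each of the three triples the members differ only by which coordinate plane contains $\fil{s}D$ and which eigenline sits in $\fil{r}D$, so they are permuted by the $3$-cycles of $S_3$; applying such a $\pi$ to the normal form and comparing filtrations — exactly as in the proof of Proposition \ref{class crys D26} — produces the explicit eigenvalue-matching conditions, and composing with the stabilizing transposition recorded in the pertinent within-family proposition (e.g.\ Proposition \ref{class crys D17}) yields the second condition for those pairs that admit two.

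The hard part will be this last step: executing the filtration computation for each of the nine pairs inside the three triples, keeping straight which permutation carries $D_{cris}^i$ to $D_{cris}^j$ and therefore which identification $\lambda_a=\lambda'_{\pi(a)}$ it forces, and confirming that the two families' admissibility ranges are simultaneously met (this is forced by the stated eigenvalue equalities but should be verified, as it pins down boundary cases such as $s=2r$). One must also check completeness — that the four invariants genuinely separate the seven orbits, and that inside each triple no isomorphism survives beyond the one or two permutations found.
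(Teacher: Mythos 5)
Your outline reproduces the paper's proof: the same five Jordan-type groups, the same use of Lemmas \ref{typeofPthird}, \ref{typeofPfifth}, and \ref{typeofPsixth} to pin down intertwiners, the same reduction to the triples $\{17,19,21\}$, $\{18,20,22\}$, $\{23,24,25\}$, and the same plan of checking which scaled permutations preserve the filtrations; your separation step (position types, the four $S_3$-invariants) is correct and in fact more explicit than the paper's ``straightforward to check''. The gap is in the step you defer as ``the hard part'', and it is not merely that it is unfinished: the heuristic you give for it contradicts the statement being proved. Composition with stabilizing transpositions applies to \emph{every} pair in a triple, not only to ``those pairs that admit two''. Indeed, condition (7) gives $D_{cris}^{18}(\lambdao,\lambdat,\lambdath)\cong D_{cris}^{20}(\lambdath,\lambdao,\lambdat)$, Proposition \ref{class crys D20}(3) gives $D_{cris}^{20}(\lambdath,\lambdao,\lambdat)\cong D_{cris}^{20}(\lambdath,\lambdat,\lambdao)$ (both targets are valid family-$20$ parameters, because (7) already forces all three valuations to equal $\frac{r+s}{3}$), and the composite realizes the matching $\lambdao=\lambdath'$, $\lambdat=\lambdat'$, $\lambdath=\lambdao'$ for $(i,j)=(18,20)$, which is not among (1)--(15). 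So your plan, executed correctly, cannot terminate in the stated list.

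The point your proposal fails to engage --- and where the paper's own proof also goes wrong --- is that the intertwiners in Lemma \ref{typeofPsixth} carry three \emph{independent} scalars, and for the triple $\{18,20,22\}$ a sign matters. Take $r=1$, $s=2$, $\lambdao=p$, $\lambdat=2p$, $\lambdath=3p$: all valuations equal $1$, and one checks that $(p,2p,3p)$ and $(3p,2p,p)$ satisfy the defining constraints of families $18$ and $20$ respectively. The map $T\baseo=\baseth$, $T\baset=\baset$, $T\baseth=-\baseo$ intertwines $\mathrm{diag}(p,2p,3p)$ with $\mathrm{diag}(3p,2p,p)$, carries $\fil{s}D=E(\baseo+\baset)$ to $E(\baset+\baseth)$, and carries $\fil{r}D=E(\baseo+\baset,\baset+\baseth)$ onto $E(\baset+\baseth,\baseo+\baseth)$ because $T(\baset+\baseth)=\baset-\baseo=(\baset+\baseth)-(\baseo+\baseth)$. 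Hence $D_{cris}^{18}(p,2p,3p)\cong D_{cris}^{20}(3p,2p,p)$, a pair satisfying none of the fifteen conditions; the unsigned permutation $\baseo\mapsto\baseth$, $\baset\mapsto\baset$, $\baseth\mapsto\baseo$ does fail to preserve $\fil{r}$, which is exactly the check recorded in the paper, but the sign rescues it. The same trick produces unlisted isomorphisms for the pairs $(18,22)$ and $(20,22)$. All of these occur only in the boundary case $\val\lambdao=\val\lambdat=\val\lambdath=\frac{r+s}{3}$, which is precisely the regime where (7)--(9) are non-vacuous; so any completed argument must either enlarge the list by these three extra matchings or exclude that boundary case, rather than assume the computation will return the list as stated.
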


\begin{proof}
Since an isomorphism preserves the Jordan forms of the Frobenius map, if $D^{i}_{cris}$ is isomorphic to $D^{j}_{cris}$, then both $i$ and $j$ belong to either $\{1\}$, $[2,4]$, $\{5\}$, $[6,13]$, or $[14,26]$.

If $i$ and $j$ belong to $[2,4]$, the isomorphism should be of the form in Lemma \ref{typeofPthird}. But it is straightforward that such a form can not preserve the filtration if $i\neq j$. Similarly, if $i$ and $j$ belong to $[6,13]$, then the isomorphism should be of the form in Lemma \ref{typeofPfifth}, and such a form can not preserve the filtration either if $i\neq j$.

Assume that $D_{cris}^{i}=D_{cris}^{i}(\lambdao,\lambdat,\lambdath)$ is isomorphic to $D_{cris}^{j}=D_{cris}^{j}(\lambdao',\lambdat',\lambdath')$ for $i<j$ in $[14,26]$. Then $\{\lambdao,\lambdat,\lambdath\}=\{\lambdao',\lambdat',\lambdath'\}$. So the question reduces to whether or not there is a matrix of the form in Lemma \ref{typeofPsixth} preserving the filtration. Now it is straightforward to check that $D_{cris}^{i}$ for $i\in\{14,15,16,26\}$ is not isomorphic to any other $D_{cris}^{j}$ for $j$ in $[14,26]$. Let $i< j$ in $[17,25]$. If $D_{cris}^{i}$ is isomorphic to $D_{cris}^{j}$, their filtration should have the same form. So both $i$ and $j$ belong to either $\{17,19,21\}$, $\{18,20,22\}$, or $\{23,24,25\}$.\\
If there is an isomorphism from $D_{cris}^{17}$ to $D_{cris}^{19}$, to preserve $\fil
{s}D$ either $(\baseo\mapsto\baset,\baset\mapsto\baseth)$ or $(\baseo\mapsto\baseth,\baset\mapsto\baset)$. For either case, the isomorphism sends $\baseth$ to $\baseo$ and so it preserves $\fil{r}D$. Hence, we get (1) and (2). The cases (3), (4), (5), and (6) are similar.\\
If there is an isomorphism from $D_{cris}^{18}$ to $D_{cris}^{20}$, to preserve $\fil
{s}D$ either $(\baseo\mapsto\baset,\baset\mapsto\baseth)$ or $(\baseo\mapsto\baseth,\baset\mapsto\baset)$. For either case, the isomorphism sends $\baseth$ to $\baseo$. It is easy to check that $(\baseo\mapsto\baseth,\baset\mapsto\baset,\baseth\mapsto\baseo)$ does not preserve the filtration $\filr D$ while $(\baseo\mapsto\baset,\baset\mapsto\baseth,\baseth\mapsto\baseo)$ does, so we get (7). The cases (8) and (9) are similar.\\
If there is an isomorphism from $D_{cris}^{23}$ to $D_{cris}^{24}$, to preserve $\fil
{r}D$ the only condition we need is $\baseo\mapsto\baset$ and there are no condition to preserve $\fils D$. So we get (10) and (11). The cases (12), (13), (14), and (15) are similar.

The converse is straightforward.
\end{proof}

\begin{prop}
Every $3$-dimensional crystalline representation of $G_{\QP}$ with regular Hodge--Tate weights is isomorphic to a representation corresponding to some $D_{cris}^{i}$ up to twist by a power of the cyclotomic character.
\end{prop}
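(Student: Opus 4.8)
The plan is to combine the theorem of Colmez--Fontaine with the case-by-case analysis already carried out in Subsections \ref{ssec:list of N=0} and the preceding parts of Section $2$. By \cite{CF}, a $3$-dimensional crystalline representation of $G_{\QP}$ corresponds to a $3$-dimensional admissible filtered $\phi$-module (that is, an admissible filtered $(\phi,N)$-module with $N=0$), and this correspondence is an equivalence compatible with isomorphisms. Thus it suffices to show that every admissible filtered $\phi$-module of dimension $3$ with regular Hodge--Tate weights becomes isomorphic, after a twist, to one of $D_{cris}^{1},\dots,D_{cris}^{26}$.

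First I would reduce to Hodge type $(0,r,s)$. Since the Hodge--Tate weights are regular, they are three distinct integers; twisting the representation by a suitable power $\varepsilon^{n}$ of the cyclotomic character shifts all the weights by $-n$, as recorded in the review of $\Dst$, so after such a twist we may assume the smallest weight is $0$ and hence that the Hodge type is $(0,r,s)$ with $0<r<s$. This is precisely the normalization under which the examples $D_{cris}^{i}$ were defined.

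Next I would run through the possible Jordan forms of $\phi$. Because $E$ is assumed large enough that the Jordan form of $\phi$ is defined over $E$, the pair consisting of the minimal and characteristic polynomials determines $\phi$ up to conjugacy, and for a $3\times3$ matrix there are exactly six possibilities: $\phi$ is scalar; $\phi$ has a single eigenvalue with Jordan blocks of sizes $2$ and $1$; $\phi$ has a single eigenvalue with one Jordan block of size $3$; $\phi$ is diagonalizable with eigenvalues $(\lambda,\lambda,\lambdath)$, $\lambda\neq\lambdath$; $\phi$ has eigenvalues $(\lambda,\lambda,\lambdath)$ with a size-$2$ block; and $\phi$ has three distinct eigenvalues. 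These are exactly the six cases treated in Subsections $2.1$--$2.6$. In each case the corresponding subsection determined all admissible filtered $\phi$-modules, ruling out some configurations of $L_{1},L_{2}$ by admissibility and in the surviving configurations producing an explicit module, while the accompanying Proposition showed that, after a change of basis fixing the matrix of $\phi$ (possible by the commutant computations of Lemmas \ref{typeofPsecond}, \ref{typeofPthird}, \ref{typeofPfourth}, \ref{typeofPfifth}, and \ref{typeofPsixth}), any such module is isomorphic to one of the normal forms $D_{cris}^{i}$.

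Assembling these, every admissible filtered $\phi$-module of Hodge type $(0,r,s)$ is isomorphic to some $D_{cris}^{i}$ with $1\leq i\leq 26$ (the scalar case $\phi=\lambda I$ of Subsection $2.1$ contributing nothing), which gives the claim. The only points requiring care are verifying that the six Jordan types genuinely exhaust the possibilities and that the enumeration of configurations of $(L_{1},L_{2})$ within each subsection is complete; both are finite checks, and no essential difficulty arises beyond this bookkeeping.
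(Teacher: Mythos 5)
Your proposal is correct and takes essentially the same approach as the paper: the paper's own proof just observes that Subsections 2.1--2.6 exhaustively enumerated the admissible filtered $\phi$-modules (one subsection per Jordan form of $\phi$) and that the normal forms $D_{cris}^{i}$ of Subsection \ref{ssec:list of N=0} represent all of them. Your write-up simply makes explicit the steps the paper leaves implicit (the Colmez--Fontaine equivalence, the cyclotomic twist to reach Hodge type $(0,r,s)$, and the exhaustion of the six Jordan types).
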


\begin{proof}
We found all the admissible filtered $\phi$-modules in the previous subsections. Since the list of filtered modules in this subsection represents all we found in the previous subsections, we are done.
\end{proof}

\section{Admissible filtered $(\phi,N)$-modules with $\rank N=1$}

In this section, we classify the admissible filtered $(\phi,N)$-modules of Hodge type $(0,r,s)$ for $0<r<s$ and with the monodromy operator $N$ of rank $1$. Assume first that $N$ has rank $1$. By choice of a basis for $D=E(\baseo,\baset,\baseth)$, we may set
$$N\baseo=\baseth\mbox{ and }N\baset=0=N\baseth.$$
From the equation $N\phi=p\phi N$, we should have that $$\phi\baseo=px\baseo+u\baset+v\baseth,\hspace{0.1cm}\phi\baset=y\baset+w\baseth,\mbox{ and }\phi\baseth=x\baseth,$$ for some $u$, $v$, $w$, and $x\not=0\not=y$. By change of a basis for $D=E(\baseo,\baset,\baseth)$, we can say a bit more.
\begin{lemm}
There is an invertible matrix $P$ such that $P[N]P^{-1}=[N]$ and
$$
P[\phi] P^{-1}=
\left\{
  \begin{array}{ll}
\tiny\left(
\begin{array}{ccc}
    px &  0 &  0 \\
    0  &  x  & 0 \\
    0  &  0  & x \\
\end{array}
\right), & \hbox{ if $y=x$ and $w=0$;} \\
\tiny\left(
\begin{array}{ccc}
    px &  0 &  0 \\
    0  &  x  & 0 \\
    0  &  1  & x \\
\end{array}
\right), & \hbox{if $y=x$ and $w\not=0$;} \\
\tiny\left(
\begin{array}{ccc}
    px &  0  & 0 \\
    0  &  px & 0 \\
    0  &  0  & x \\
\end{array}
\right), & \hbox{if $y=px$ and $u=0$;} \\
\tiny\left(
\begin{array}{ccc}
    px &  0  & 0 \\
    1  &  px & 0 \\
    0  &  0  & x \\
\end{array}
\right), & \hbox{if $y=px$ and $u\not=0$;}\\
\tiny\left(
\begin{array}{ccc}
    px &  0 &  0 \\
    0  &  y  & 0 \\
    0  &  0  & x \\
\end{array}
\right), & \hbox{if $px\not=y\not=x$.}
  \end{array}
\right.
$$
\end{lemm}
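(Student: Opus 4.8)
The plan is to work directly with an explicit change of basis. With respect to $\baseo,\baset,\baseth$ the two operators are
$$
[N]=\begin{pmatrix}0&0&0\\0&0&0\\1&0&0\end{pmatrix},\qquad
[\phi]=\begin{pmatrix}px&0&0\\u&y&0\\v&w&x\end{pmatrix},
$$
so $[\phi]$ is lower triangular with diagonal $px,y,x$; these are the eigenvalues of $\phi$ and are preserved under any conjugation. First I would determine the matrices $P$ with $P[N]P^{-1}=[N]$, i.e. the centralizer of $[N]$. A direct comparison of $P[N]$ and $[N]P$ shows these are exactly the invertible matrices with $P_{1,2}=P_{1,3}=P_{2,3}=0$ and $P_{1,1}=P_{3,3}$. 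Equivalently — and this is the more convenient form for the computation — a basis $f_{1},f_{2},f_{3}$ restores the normalized shape $Nf_{1}=f_{3}$, $Nf_{2}=Nf_{3}=0$ precisely when $f_{1}=\alpha\baseo+\beta\baset+\gamma\baseth$, $f_{2}=b\baset+c\baseth$ and $f_{3}=Nf_{1}=\alpha\baseth$, the only constraints being $\alpha,b\neq0$. This is the five-parameter family I will exploit.

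Next I would compute $\phi$ in such a basis. One finds $\phi f_{3}=xf_{3}$ always, the diagonal stays $px,y,x$, and the new $(2,1)$, $(3,2)$ and $(3,1)$ entries are
$$
\tfrac1b\big[\alpha u+\beta(y-px)\big],\qquad
\tfrac1\alpha\big[bw+c(x-y)\big],\qquad
-\tfrac{c}{b\alpha}\big[\alpha u+\beta(y-px)\big]+\tfrac1\alpha\big[\alpha v+\beta w+\gamma(x-px)\big]
$$
respectively. The crucial structural fact is that $p\neq1$ and $x\neq0$ force $px\neq x$; hence the coefficient $x-px$ of $\gamma$ in the $(3,1)$ entry is nonzero, so $\gamma$ can always be chosen \emph{last} to kill the $(3,1)$ entry whatever the other choices were. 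The same inequality shows $y=x$ and $y=px$ cannot hold simultaneously, so the five cases of the statement are exhaustive and mutually exclusive.

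The argument then splits according to the coincidences among $px,y,x$. When they are distinct I clear the $(2,1)$ entry by solving for $\beta$ (legitimate as $y\neq px$), clear the $(3,2)$ entry by solving for $c$ (as $x\neq y$), and clear $(3,1)$ by $\gamma$, reaching $\mathrm{diag}(px,y,x)$. When $y=x$ the $(3,2)$ entry collapses to $bw/\alpha$ while $(2,1)$ is still cleared by $\beta$ (using $px\neq x$): if $w=0$ this is already diagonal, and if $w\neq0$ I normalize $bw/\alpha=1$ by taking $\alpha=bw$, producing the Jordan block in the lower right. Dually, when $y=px$ the $(2,1)$ entry collapses to $\alpha u/b$ while $(3,2)$ is cleared by $c$ (using $x\neq px$): if $u=0$ this is diagonal, and if $u\neq0$ I normalize $\alpha u/b=1$ by $b=\alpha u$, producing the Jordan block in the upper left. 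In every case the $(3,1)$ entry is cleared last by $\gamma$.

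The main obstacle is essentially bookkeeping: the choices of $\alpha,\beta,\gamma,b,c$ must be made in an order that keeps each parameter free when it is used, and one must verify that normalizing an off-diagonal entry to $1$ does not reintroduce an obstruction elsewhere. The one genuinely delicate point is the case $y=px$ with $u\neq0$: there $\alpha u+\beta(y-px)=\alpha u\neq0$, so the first summand of the $(3,1)$ entry no longer vanishes and contributes an extra term $-cu/b=\tfrac{wu}{x-px}$ after $c$ is fixed; but since the coefficient $x-px$ of $\gamma$ is still nonzero, $\gamma$ absorbs this term as well and the reduction goes through.
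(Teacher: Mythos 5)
Your proposal is correct and takes essentially the same route as the paper: both proofs exhibit an explicit conjugation $P$ in the centralizer of $[N]$ that normalizes $[\phi]$, with the same five-case split governed by the fact that $px\neq x$. The only difference is presentational --- the paper writes down the five matrices $P$ outright and leaves the verification implicit, whereas you derive them by parametrizing the bases preserving $[N]$ and solving for the free parameters ($\beta$, $c$, $\gamma$, and a normalizing scalar), which recovers exactly the paper's matrices.
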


\begin{proof}
For each case, use the following matrices for $P$ in order:

$$
\begin{tiny}\left(
\begin{array}{ccc}
    1 &  0 &  0 \\
    \frac{u}{x(1-p)}  &  1  & 0 \\
    \frac{v}{x(1-p)}  &  0  & 1 \\
\end{array}
\right)
\end{tiny},
\begin{tiny}
\left(
\begin{array}{ccc}
    1 &  0 &  0 \\
    \frac{uw}{x(1-p)}  &  w  & 0 \\
    \frac{vx(1-p)-uw}{x^{2}(1-p)^{2}}  &  0  & 1 \\
\end{array}
\right)
\end{tiny},
\begin{tiny}
\left(
\begin{array}{ccc}
    1  &  0 &  0 \\
    0  &  1  & 0 \\
    \frac{v}{x(1-p)}  &  \frac{w}{x(1-p)}  & 1 \\
\end{array}
\right)
\end{tiny},
$$
$$
\begin{tiny}\left(
\begin{array}{ccc}
    u &  0 &  0 \\
    0  &  1 & 0 \\
    \frac{u^{2}w+uvx(1-p)}{x^{2}(1-p)^{2}}  &  \frac{uw}{x(1-p)}  &  u \\
\end{array}
\right)
\end{tiny},
\mbox{ and }
\begin{tiny}\left(
\begin{array}{ccc}
    1 &  0 &  0 \\
    \frac{u}{y-px}  &  1  & 0 \\
    \frac{vx-vy+uw}{x(x-y)(1-p)}  &  \frac{w}{x-y}  & 1 \\
\end{array}
\right)
\end{tiny}.
$$
\end{proof}
Hence, we may assume that $[\phi]$ has one of the forms in the Lemma above. For each type of $[\phi]$, we collect all the admissible filtered $(\phi,N)$-modules with $\rank N=1$ in the following subsections.

\subsection{The first case of $\rank N$=1}
Assume that
$$[\phi]=
\begin{small}\left(
\begin{array}{ccc}
    p\lambda &  0 &  0 \\
    0  &  \lambda  & 0 \\
    0  &  0  & \lambda \\
\end{array}
\right)
\end{small}
\mbox{ and }
[N]=
\begin{small}\left(
\begin{array}{ccc}
    0  &  0 &  0 \\
    0  &  0  & 0 \\
    1  &  0  & 0 \\
\end{array}
\right)
\end{small}.
$$
By admissibility, $$\val\lambda=\frac{r+s-1}{3}.$$

\begin{lemm}\label{rank1typeofPfirst}
For a $3\times 3$-matrix $P=(P_{i,j})$, $P[\phi]=[\phi] P$ and $P[N]=[N]P$ if and only if $P$ is a lower triangle matrix with $P_{1,1}=P_{3,3}$ and $P_{2,1}=0=P_{3,1}$.
\end{lemm}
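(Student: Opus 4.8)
The plan is to follow the same strategy used in the proofs of Lemmas \ref{typeofPsecond}--\ref{typeofPsixth}: write $P=(P_{i,j})$ as a completely general $3\times 3$ matrix, compute the four products $P[\phi]$, $[\phi]P$, $P[N]$, and $[N]P$ explicitly, and then read off the constraints imposed by the two commutation relations by comparing entries. Since $[\phi]$ is diagonal and $[N]$ has a single nonzero entry, all four products are immediate to write down, so the bulk of the work is just bookkeeping.

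First I would impose $P[\phi]=[\phi]P$. Because $[\phi]=\mathrm{diag}(p\lambda,\lambda,\lambda)$, right multiplication scales the columns of $P$ by $(p\lambda,\lambda,\lambda)$ while left multiplication scales the rows by the same factors. Comparing entries, the three diagonal positions and the $(2,3)$, $(3,2)$ positions impose no condition, whereas each of the four entries linking the first coordinate to the other two, namely $P_{1,2},P_{1,3},P_{2,1},P_{3,1}$, satisfies a relation of the form $\lambda X=p\lambda X$. Since $\lambda\in E^{\times}$ and $p\neq 1$, these four entries must vanish; thus $\phi$-commutation alone forces $P$ to be block diagonal with a $1\times 1$ block and a $2\times 2$ block.

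Next I would impose $P[N]=[N]P$. With $[N]$ having its unique nonzero entry equal to $1$ in position $(3,1)$, the product $P[N]$ has first column equal to the third column of $P$ and is otherwise zero, while $[N]P$ has third row equal to the first row of $P$ and is otherwise zero. Comparing these gives $P_{1,3}=P_{2,3}=0$ and $P_{1,2}=0$ from the off-diagonal positions, together with $P_{1,1}=P_{3,3}$ from the $(3,1)$ position. Intersecting the two sets of conditions yields $P_{1,2}=P_{1,3}=P_{2,1}=P_{3,1}=P_{2,3}=0$ together with $P_{1,1}=P_{3,3}$, which is precisely the asserted lower-triangular shape with $P_{1,1}=P_{3,3}$ and $P_{2,1}=P_{3,1}=0$. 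The converse is then immediate by substituting a matrix of this form back into the two products.

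I do not expect any real obstacle: the only point requiring care is to invoke $\lambda\in E^{\times}$ and $p\neq 1$ (equivalently $1-p\neq 0$) when cancelling, exactly as in the earlier lemmas. The one mildly non-obvious feature worth flagging is that the constraint $P_{1,1}=P_{3,3}$ is forced by the monodromy relation rather than by $\phi$: since $\phi$ has the eigenvalue $\lambda$ with multiplicity two, it does not on its own link the $(1,1)$ and $(3,3)$ entries, and it is the operator $N$ that ties the $p\lambda$-eigenline to the $\lambda$-eigenspace.
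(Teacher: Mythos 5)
Your proposal is correct and takes essentially the same approach as the paper: a direct entry-wise comparison of the products arising from the two commutation relations, using $\lambda\in E^{\times}$ and $p\neq 1$ to cancel. The only difference is cosmetic --- the paper imposes $P[N]=[N]P$ first (forcing $P$ lower triangular with $P_{1,1}=P_{3,3}$) and then $P[\phi]=[\phi]P$ (forcing $P_{2,1}=0=P_{3,1}$), whereas you process the relations in the opposite order and intersect the resulting conditions, arriving at the same shape for $P$.
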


\begin{proof}
The equation $P[N]=[N]P$ forces that $P$ be a lower triangle matrix with $P_{1,1}=P_{3,3}$. Then $P[\phi]=[\phi] P$ forces $P_{2,1}=0=P_{3,1}$.
\end{proof}

\begin{lemm} \label{rank1invariantfirst}
\begin{enumerate}
\item every $1$-dimensional subspace of $E(\baset,\baseth)$ is the only $\phi$- and $N$-invariant subspace of dimension $1$.
\item $E(\baset,\baseth)$ and $E(\baseo,\baseth)$ are the only $\phi$- and $N$-invariant subspaces of dimension $2$.
\end{enumerate}
\end{lemm}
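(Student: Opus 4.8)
The plan is to exploit the fact that $\phi$ is already diagonal, with eigenvalue $p\lambda$ on the line $E\baseo$ and eigenvalue $\lambda$ on the plane $E(\baset,\baseth)$. Since $p\neq 1$, these two eigenvalues are distinct, so $\phi$ is diagonalizable and every $\phi$-invariant subspace $W$ splits as $W=(W\cap E\baseo)\oplus\bigl(W\cap E(\baset,\baseth)\bigr)$. This reduces the classification of $\phi$-invariant subspaces to simple dimension bookkeeping, after which I would impose $N$-invariance as a single extra condition, using that $N\baseo=\baseth$ while $N$ annihilates $\baset$ and $\baseth$.

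For part (1), a $\phi$-invariant line must be an eigenline, hence either $E\baseo$ or a line contained in $E(\baset,\baseth)$. The line $E\baseo$ fails to be $N$-invariant because $N\baseo=\baseth\notin E\baseo$, whereas any line $L\subset E(\baset,\baseth)$ satisfies $N|_{E(\baset,\baseth)}=0$, so that $NL=0\subset L$. Thus the $\phi$- and $N$-invariant lines are precisely the lines in $E(\baset,\baseth)$.

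For part (2), the eigenspace splitting shows that a $2$-dimensional $\phi$-invariant subspace $D_{2}$ is either $E(\baset,\baseth)$ itself (the case $D_{2}\cap E\baseo=0$), or of the form $E\baseo\oplus L$ with $L\subset E(\baset,\baseth)$ a line (the case $\baseo\in D_{2}$). The first is $N$-invariant since $N$ kills $\baset$ and $\baseth$. For the second, $N$-invariance forces $\baseth=N\baseo\in E\baseo\oplus L$; as $\baseth\in E(\baset,\baseth)$ and $E\baseo\cap E(\baset,\baseth)=0$, this forces $\baseth\in L$, i.e.\ $L=E\baseth$ and $D_{2}=E(\baseo,\baseth)$. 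Hence the only $2$-dimensional $\phi$- and $N$-invariant subspaces are $E(\baset,\baseth)$ and $E(\baseo,\baseth)$.

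The only point requiring genuine care is the completeness claim, namely that no further invariant subspaces exist, and this is exactly what the eigenspace decomposition of $\phi$ delivers cleanly, turning the whole problem into a finite case check governed by where $\baseo$ and $\baseth$ sit. An alternative, in the spirit of the earlier invariance lemmas such as Lemma~\ref{crysinvariantsixth}, would be to take a general vector in a putative exotic invariant subspace and apply $\phi$ and $N$ repeatedly to force extra basis vectors into the subspace and reach a contradiction; but the diagonalizability of $\phi$ makes that computation unnecessary here.
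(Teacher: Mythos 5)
Your proof is correct and is essentially the paper's argument: both first pin down the $\phi$-invariant subspaces of each dimension using the eigenstructure of $\phi$, and then impose $N$-invariance via $N\baseo=\baseth$, $N\baset=N\baseth=0$, arriving at the same case check. The only cosmetic difference is that the paper obtains the list of $\phi$-invariant subspaces by citing Lemma \ref{crysinvariantfourth} (itself proved by the explicit eigenvalue computation you mention as an alternative), whereas you rederive it from the splitting $W=(W\cap E\baseo)\oplus\bigl(W\cap E(\baset,\baseth)\bigr)$ valid for invariant subspaces of a diagonalizable operator --- noting only that the distinctness of the eigenvalues $p\lambda$ and $\lambda$ uses $\lambda\neq 0$ (automatic, since $\phi$ is an automorphism) in addition to $p\neq 1$.
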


\begin{proof}
By Lemma \ref{crysinvariantfourth}, we know that the $\phi$-invariant subspaces of dimension $1$ are $E\baseo$ and the $1$-dimensional subspaces of $E(\baset,\baseth)$ and that the $\phi$-invariant subspaces of dimension $2$ are $E(\baset,\baseth)$ and the $2$-dimensional subspaces including $\baseo$. Now it is easy to check which ones are $N$-invariant.
\end{proof}

We start to collect the admissible filtered $(\phi,N)$-modules in this case.
\subsubsection{}
Assume that $L_{1}$ is $\phi$- and $N$-invariant. Then, by admissibility,
$s=\hodgen(L_{1})\leq\newtonn(L_{1})=\val\lambda$, which contradicts to $\val\lambda=\frac{r+s-1}{3}$.

\subsubsection{}
Assume that $L_{2}=E(\baseo,\baseth)$. Then, by admissibility,
$r+s=\hodgen(L_{2})\leq\newtonn(L_{2})=2\val\lambda+1$, which contradicts to $\val\lambda=\frac{r+s-1}{3}$.

\subsubsection{} \label{rank1first1}
Assume that neither $L_{1}$ nor $L_{2}$ are $\phi$- and $N$-invariant and $L_{1}\subset E(\baseo,\baseth)$.
Then $L_{1}\not\subset E(\baset,\baseth)$. Let $D_{1}$ be a $\phi$- and $N$-invariant subspace of dimension $1$.
By admissibility, $s=\hodgen(E(\baseo,\baseth))\leq\newtonn(E(\baseo,\baseth))=2\val\lambda+1$, $r=\hodgen(E(\baset,\baseth))\leq\newtonn(E(\baset,\baseth))=2\val\lambda$, for $D_{1}$ with $D_{1}\subset L_{2}$ $r=\hodgen(D_{1})\leq\newtonn(D_{1})=\val\lambda$, and for $D_{1}$ with $D_{1}\not\subset L_{2}$ $0=\hodgen(D_{1})\leq\newtonn(D_{1})=\val\lambda$.
So admissible filtered $(\phi,N)$-modules occur in this case if and only if $s=2r+1$, but the corresponding representations are decomposable with
submodules $E(\baset,\baseth)\cap L_{2}$ and $E(\baseo,\baseth)$.

\subsubsection{} \label{rank1first2}
Assume that neither $L_{1}$ nor $L_{2}$ are $\phi$- and $N$-invariant and $L_{1}\not\subset E(\baseo,\baseth)$.
Then $L_{1}\not\subset E(\baset,\baseth)$. Let $D_{1}$ be a $\phi$- and $N$-invariant subspace of dimension $1$.
By admissibility, $r=\hodgen(E(\baseo,\baseth))\leq\newtonn(E(\baseo,\baseth))=2\val\lambda+1$, $r=\hodgen(E(\baset,\baseth))\leq\newtonn(E(\baset,\baseth))=2\val\lambda$, for $D_{1}$ with $D_{1}\subset L_{2}$ $r=\hodgen(D_{1})\leq\newtonn(D_{1})=\val\lambda$, and for $D_{1}$ with $D_{1}\not\subset L_{2}$ $0=\hodgen(D_{1})\leq\newtonn(D_{1})=\val\lambda$.
So admissible filtered $(\phi,N)$-modules occur in this case if and only if $s\geq 2r+1$. The corresponding representations are non-split reducible with submodules $E(\baset,\baseth)\cap L_{2}$ if $s=2r+1$ and irreducible if $s>2r+1$.

\subsection{The second case of $\rank N=1$}
Assume that
$$[\phi]=
\begin{small}\left(
\begin{array}{ccc}
    p\lambda &  0 &  0 \\
    0  &  \lambda  & 0 \\
    0  &  1  & \lambda \\
\end{array}
\right)
\end{small}\mbox{ and }
[N]=
\begin{small}\left(
\begin{array}{ccc}
    0  &  0 &  0 \\
    0  &  0  & 0 \\
    1  &  0  & 0 \\
\end{array}
\right)
\end{small}.
$$
By admissibility, $$\val\lambda=\frac{r+s-1}{3}.$$

\begin{lemm}\label{rank1typeofPsecond}
For a $3\times 3$-matrix $P=(P_{i,j})$, $P[\phi]=[\phi] P$ and $P[N]=[N]P$ if and only if $P$ is a lower triangle matrix with $P_{1,1}=P_{2,2}=P_{3,3}$ and $P_{2,1}=0=P_{3,1}$.
\end{lemm}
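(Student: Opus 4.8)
The plan is to mirror the two-step strategy of Lemma \ref{rank1typeofPfirst}: first extract the constraints coming from $P[N]=[N]P$, and then feed the resulting shape of $P$ into the equation $P[\phi]=[\phi]P$.

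First I would write $P=(P_{i,j})$ and compute $P[N]$ and $[N]P$ directly. Since $[N]$ is unchanged from the first case (its only nonzero entry sits in position $(3,1)$), the product $P[N]$ has the third column of $P$ appearing as its first column with the remaining columns zero, while $[N]P$ has the first row of $P$ appearing as its third row. Comparing entries forces the strictly upper-triangular entries $P_{1,2},P_{1,3},P_{2,3}$ to vanish and imposes $P_{1,1}=P_{3,3}$. This is exactly the conclusion of the first step of Lemma \ref{rank1typeofPfirst}.

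Next I would substitute this lower-triangular $P$ (with $P_{1,1}=P_{3,3}$) into $P[\phi]=[\phi]P$. The only change from the first case is the extra $1$ in position $(3,2)$ of $[\phi]$, and this is precisely what rigidifies the answer. Comparing the $(2,1)$ entries yields $(p-1)\lambda P_{2,1}=0$, and comparing the $(3,1)$ entries (after using $P_{2,1}=0$) yields $(p-1)\lambda P_{3,1}=0$, so $P_{2,1}=P_{3,1}=0$ since $\lambda\neq 0$ and $p\neq 1$. Finally, comparing the $(3,2)$ entries gives $P_{1,1}=P_{2,2}$, which together with $P_{1,1}=P_{3,3}$ makes all three diagonal entries equal. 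Conversely, any $P$ of the asserted form visibly commutes with both $[\phi]$ and $[N]$, closing the equivalence.

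There is no genuine obstacle here; the content is a finite entry-by-entry comparison. The one point worth flagging is that the subdiagonal entries $P_{2,1}$ and $P_{3,1}$ are killed precisely by $p\neq 1$ (equivalently $(p-1)\lambda\neq 0$), which is why the centralizer is strictly smaller than in the first case: there the repeated lower-block eigenvalue $\lambda$ left those entries free, whereas the Jordan block here removes them. The only surviving free parameter is $P_{3,2}$.
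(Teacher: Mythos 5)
Your proof is correct and follows the paper's own two-step argument exactly: $P[N]=[N]P$ forces $P$ to be lower triangular with $P_{1,1}=P_{3,3}$, and then $P[\phi]=[\phi]P$ kills $P_{2,1},P_{3,1}$ and forces $P_{1,1}=P_{2,2}$; the entry-by-entry computations all check out, as does the converse. One correction to your closing remark, however: in the first case the entries $P_{2,1}$ and $P_{3,1}$ are \emph{not} left free --- they vanish there as well, killed by exactly the same $(p-1)\lambda\neq 0$ coming from the distinct eigenvalues $p\lambda\neq\lambda$, and indeed the statement of Lemma \ref{rank1typeofPfirst} includes $P_{2,1}=0=P_{3,1}$. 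What the extra Jordan block in $[\phi]$ actually removes, compared with the first case, is the freedom of $P_{2,2}$: the $(3,2)$-entry comparison forces $P_{1,1}=P_{2,2}=P_{3,3}$, and that is the sole reason this centralizer is smaller (two parameters, the scalar and $P_{3,2}$, instead of three).
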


\begin{proof}
The equation $P[N]=[N]P$ forces that $P$ be a lower triangle matrix with $P_{1,1}=P_{3,3}$. Then $P[\phi]=[\phi] P$ forces $P_{2,1}=0=P_{3,1}$ and $P_{1,1}=P_{2,2}=P_{3,3}$.
\end{proof}

\begin{lemm}\label{rank1invariantsecond}
$E\baseth$, $E(\baseo,\baseth)$, and $E(\baset,\baseth)$ are the only nontrivial proper $\phi$- and $N$-invariant subspaces.
\end{lemm}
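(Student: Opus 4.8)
The plan is to classify first the $\phi$-invariant subspaces and then isolate those that are in addition $N$-invariant, exactly as in the proof of Lemma~\ref{rank1invariantfirst}.

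First I would record that the three listed subspaces are indeed invariant. Directly from the matrices, $\phi\baseth=\lambda\baseth$ and $N\baseth=0$, so $E\baseth$ is $\phi$- and $N$-invariant; on $E(\baset,\baseth)$ we have $\phi\baset=\lambda\baset+\baseth$, $\phi\baseth=\lambda\baseth$ and $N\baset=N\baseth=0$, so $E(\baset,\baseth)$ is invariant; and $\phi\baseo=p\lambda\baseo$, $N\baseo=\baseth$ together with the previous relations show $E(\baseo,\baseth)$ is invariant.

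For the converse, I would first find all $\phi$-invariant subspaces. The operator $\phi$ has characteristic polynomial $(x-p\lambda)(x-\lambda)^{2}$, and since $p\neq 1$ the eigenvalues $\lambda$ and $p\lambda$ are distinct; it acts on $E(\baset,\baseth)$ as a single Jordan block for $\lambda$, while $E\baseo$ is the eigenline for $p\lambda$. This is precisely the Jordan type classified in Lemma~\ref{crysinvariantfifth}, after relabeling the basis so that $\baset$ is the Jordan-top vector, $\baseth$ the corresponding eigenvector, and $\baseo$ the eigenvector for the separate eigenvalue. Transporting the list there, the $\phi$-invariant subspaces are exactly $E\baseo$, $E\baseth$, $E(\baset,\baseth)$, and $E(\baseo,\baseth)$. (Equivalently, since the generalized eigenspaces $E\baseo$ and $E(\baset,\baseth)$ carry distinct eigenvalues, every $\phi$-invariant subspace is the direct sum of its intersections with them, and one reads off the same four nonzero proper subspaces.)

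Finally I would test these four against $N$. Since $N\baseo=\baseth\notin E\baseo$, the line $E\baseo$ is not $N$-invariant, while $N$ annihilates $\baset$ and $\baseth$ and sends $\baseo$ into $E(\baseo,\baseth)$, so $E\baseth$, $E(\baset,\baseth)$, and $E(\baseo,\baseth)$ are all $N$-invariant. This leaves exactly the three claimed subspaces. The argument is routine; the only step meriting a little care is the classification of the $\phi$-invariant $2$-planes, but this is immediate from the Jordan structure (or from Lemma~\ref{crysinvariantfifth}), so I anticipate no real obstacle.
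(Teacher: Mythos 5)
Your proposal is correct and follows essentially the same route as the paper: invoke Lemma \ref{crysinvariantfifth} (after the evident relabeling of the basis, which the paper leaves implicit but you spell out) to obtain the four nontrivial proper $\phi$-invariant subspaces $E\baseo$, $E\baseth$, $E(\baseo,\baseth)$, $E(\baset,\baseth)$, and then discard $E\baseo$ since $N\baseo=\baseth\notin E\baseo$. Your parenthetical generalized-eigenspace argument is a fine self-contained substitute for the citation, but it is not a genuinely different method.
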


\begin{proof}
By Lemma \ref{crysinvariantfifth}, we know that $E\baseo$, $E\baseth$, $E(\baseo,\baseth)$, and $E(\baset,\baseth)$ are  the only nontrivial proper $\phi$-invariant subspaces. Now it is easy to check which ones are $N$-invariant.
\end{proof}

We start to collect the admissible filtered $(\phi,N)$-modules in this case.
\subsubsection{}
Assume that $L_{1}=E\baseth$. Then, by admissibility,
$s=\hodgen(L_{1})\leq\newtonn(L_{1})=\val\lambda$, which contradicts to $\val\lambda=\frac{r+s-1}{3}$.

\subsubsection{}
Assume that $L_{2}=E(\baset,\baseth)$. Then, by admissibility,
$r+s=\hodgen(L_{2})\leq\newtonn(L_{2})=2\val\lambda$, which contradicts to $\val\lambda=\frac{r+s-1}{3}$.

\subsubsection{}
Assume that $L_{2}=E(\baseo,\baseth)$. Then, by admissibility,
$r+s=\hodgen(L_{2})\leq\newtonn(L_{2})=2\val\lambda+1$, which contradicts to $\val\lambda=\frac{r+s-1}{3}$.

\subsubsection{} \label{rank1second1}
Assume that neither $L_{1}$ nor $L_{2}$ are $\phi$- and $N$-invariant and $L_{1}\subset E(\baset,\baseth)$.
Then $L_{1}\not\subset E(\baseo,\baseth)$ and $\baseth\not\in L_{2}$. By admissibility,
$0=\hodgen(E\baseth)\leq\newtonn(E\baseth)=\val\lambda$, $s=\hodgen(E(\baset,\baseth))\leq\newtonn(E(\baset,\baseth))=2\val\lambda$, and $r=\hodgen(E(\baseo,\baseth))\leq\newtonn(E(\baseo,\baseth))=2\val\lambda+1$.
So admissible filtered $(\phi,N)$-modules occur in this case if and only if $s\leq 2r-2$. The corresponding representations are non-split reducible with submodule $E(\baset,\baseth)$ if $s=2r-2$ and irreducible if $s<2r-2$.

\subsubsection{} \label{rank1second2}
Assume that neither $L_{1}$ nor $L_{2}$ are $\phi$- and $N$-invariant and $L_{1}\subset E(\baseo,\baseth)$.
Then $L_{1}\not\subset E(\baset,\baseth)$ and $\baseth\not\in L_{2}$. By admissibility,
$0=\hodgen(E\baseth)\leq\newtonn(E\baseth)=\val\lambda$, $r=\hodgen(E(\baset,\baseth))\leq\newtonn(E(\baset,\baseth))=2\val\lambda$, and $s=\hodgen(E(\baseo,\baseth))\leq\newtonn(E(\baseo,\baseth))=2\val\lambda+1$.
So admissible filtered $(\phi,N)$-modules occur in this case if and only if $s\leq 2r+1$. The corresponding representations are non-split reducible with submodule $E(\baseo,\baseth)$ if $s=2r+1$ and irreducible if $s<2r+1$.

\subsubsection{} \label{rank1second3}
Assume that neither $L_{1}$ nor $L_{2}$ are $\phi$- and $N$-invariant, $L_{1}$ is not contained in any $\phi$- and $N$-invariant subspaces, and $\baseth\in L_{2}$.
Then, by admissibility,
$r=\hodgen(E\baseth)\leq\newtonn(E\baseth)=\val\lambda$, $r=\hodgen(E(\baset,\baseth))\leq\newtonn(E(\baset,\baseth))=2\val\lambda$, and $r=\hodgen(E(\baseo,\baseth))\leq\newtonn(E(\baseo,\baseth))=2\val\lambda+1$.
So admissible filtered $(\phi,N)$-modules occur in this case if and only if $s\geq 2r+1$. The corresponding representations are non-split reducible with submodule $E\baseth$ if $s=2r+1$ and irreducible if $s>2r+1$.

\subsubsection{} \label{rank1second4}
Assume that neither $L_{1}$ nor $L_{2}$ are $\phi$- and $N$-invariant, $L_{1}$ is not contained in any $\phi$- and $N$-invariant subspaces, and $\baseth\not\in L_{2}$.
Then, by admissibility,
$0=\hodgen(E\baseth)\leq\newtonn(E\baseth)=\val\lambda$, $r=\hodgen(E(\baset,\baseth))\leq2\val\lambda$, and $r=\hodgen(E(\baseo,\baseth))\leq2\val\lambda+1$.
So admissible filtered $(\phi,N)$-modules occur in this case if and only if $r+2\leq 2s$.
If $0<r<s$, then $r+2<2s$, and so there are always admissible filtered $(\phi,N)$-modules in this case and the corresponding representations are irreducible.

\subsection{The third case of $\rank N=1$}
Assume that
$$[\phi]=
\begin{small}\left(
\begin{array}{ccc}
    p\lambda &  0 &  0 \\
    0  &  p\lambda  & 0 \\
    0  &  0  & \lambda \\
\end{array}
\right)
\end{small}
\mbox{ and }
[N]=
\begin{small}\left(
\begin{array}{ccc}
    0  &  0 &  0 \\
    0  &  0  & 0 \\
    1  &  0  & 0 \\
\end{array}
\right)
\end{small}.
$$
By admissibility, $$\val\lambda=\frac{r+s-2}{3}.$$

\begin{lemm}\label{rank1typeofPthird}
For a $3\times 3$-matrix $P=(P_{i,j})$, $P[\phi]=[\phi] P$ and $P[N]=[N]P$ if and only if $P$ is a lower triangle matrix with $P_{1,1}=P_{3,3}$ and $P_{3,1}=0=P_{3,2}$.
\end{lemm}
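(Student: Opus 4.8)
The plan is to proceed by a direct matrix computation, exactly parallel to the proofs of Lemmas \ref{rank1typeofPfirst} and \ref{rank1typeofPsecond}. Writing $P=(P_{i,j})$, I would impose the two commutation relations one at a time, beginning with the one involving $N$. Since the monodromy operator $[N]$ is literally the same matrix here as in the first two cases of $\rank N=1$ (its only nonzero entry is a $1$ in position $(3,1)$), the equation $P[N]=[N]P$ imposes precisely the same constraints as before. Comparing the two products entry by entry forces $P_{1,2}=P_{1,3}=P_{2,3}=0$ together with $P_{1,1}=P_{3,3}$; that is, $P$ must be lower triangular with equal first and last diagonal entries. One may simply quote this part of the earlier computation rather than redo it.

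Next I would impose $P[\phi]=[\phi]P$, where now $[\phi]=\mathrm{diag}(p\lambda,p\lambda,\lambda)$ is diagonal. Because $[\phi]$ is diagonal, $P[\phi]$ scales the $j$-th column of $P$ by the $j$-th diagonal entry, while $[\phi]P$ scales the $i$-th row by the $i$-th diagonal entry; the two therefore agree in position $(i,j)$ precisely when either the $i$-th and $j$-th diagonal entries coincide or $P_{i,j}=0$. The pairs of indices for which these diagonal entries differ are exactly those with a single index equal to $3$, namely $(1,3)$, $(2,3)$, $(3,1)$, $(3,2)$. The first two positions are already zero by the $N$-relation, so the only new content is the pair of equations $p\lambda\,P_{3,1}=\lambda\,P_{3,1}$ and $p\lambda\,P_{3,2}=\lambda\,P_{3,2}$. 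Since $p\neq 1$ (as $p$ is prime) and $\lambda\neq 0$ (as $\phi$ is an automorphism), these force $P_{3,1}=P_{3,2}=0$. Combining with the constraints coming from the $N$-relation yields exactly the asserted shape, with free parameters $P_{1,1}=P_{3,3}$, $P_{2,1}$, and $P_{2,2}$.

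The converse — that any lower triangular $P$ with $P_{1,1}=P_{3,3}$ and $P_{3,1}=P_{3,2}=0$ commutes with both $[\phi]$ and $[N]$ — is immediate upon substituting such a $P$ back into the two products. I do not expect any genuine obstacle in this lemma; it is pure bookkeeping. The only points that require a little care are the correct identification of which diagonal entries of $[\phi]$ coincide, and the explicit use of the hypotheses $p\neq 1$ and $\lambda\neq 0$ to conclude $P_{3,1}=P_{3,2}=0$ rather than leaving them free.
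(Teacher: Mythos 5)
Your proof is correct and follows essentially the same route as the paper: first use $P[N]=[N]P$ to force $P$ lower triangular with $P_{1,1}=P_{3,3}$, then use $P[\phi]=[\phi]P$ (with $[\phi]=\mathrm{diag}(p\lambda,p\lambda,\lambda)$, $p\neq 1$, $\lambda\neq 0$) to kill $P_{3,1}$ and $P_{3,2}$. The only difference is that you spell out the row/column scaling argument and the converse, which the paper leaves implicit.
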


\begin{proof}
The equation $P[N]=[N]P$ forces that $P$ be a lower triangle matrix with $P_{1,1}=P_{3,3}$. Then $P[\phi]=[\phi] P$ forces $P_{3,1}=0=P_{3,2}$.
\end{proof}

\begin{lemm}\label{rank1invariantthrid}
\begin{enumerate}
\item $E\baset$ and $E\baseth$ are the only $\phi$- and $N$-invariant subspace of dimension $1$.
\item For each $(a,b)\in E^{2}\setminus\{(0,0)\}$, $E(a\baseo+b\baset,\baseth)$ is the only $\phi$- and $N$-invariant subspace of dimension $2$.
\end{enumerate}
\end{lemm}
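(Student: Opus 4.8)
The plan is to follow exactly the two-step strategy used in the proofs of Lemmas \ref{rank1invariantfirst} and \ref{rank1invariantsecond}: first import the list of $\phi$-invariant subspaces from the matching $N=0$ case, and then test each candidate for $N$-invariance. The whole point is that the $\phi$-invariance analysis has already been done and need not be repeated.

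First I would observe that $[\phi]$ is diagonal with eigenvalue $p\lambda$ on the plane $E(\baseo,\baset)$ and eigenvalue $\lambda$ on the line $E\baseth$, and that these two eigenvalues are distinct because $p\neq 1$. This is precisely the eigenvalue configuration of the Frobenius in the fourth case of $N=0$ (a two-dimensional eigenspace $E(\baseo,\baset)$ and a one-dimensional eigenspace $E\baseth$, with distinct eigenvalues). Consequently Lemma \ref{crysinvariantfourth} applies without change: the $\phi$-invariant subspaces of dimension $1$ are the lines contained in $E(\baseo,\baset)$ together with $E\baseth$, and the $\phi$-invariant subspaces of dimension $2$ are $E(\baseo,\baset)$ together with the planes $E(a\baseo+b\baset,\baseth)$ for $(a,b)\in E^{2}\setminus\{(0,0)\}$.

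Next I would cut this list down by imposing $N$-invariance, using $N\baseo=\baseth$ and $N\baset=N\baseth=0$. For the lines: $N$ sends $a\baseo+b\baset$ to $a\baseth$, and $a\baseth$ lies on the line $E(a\baseo+b\baset)$ only when $a=0$ (otherwise $\baseth$ would be proportional to $a\baseo+b\baset$, which is impossible); hence among the lines in $E(\baseo,\baset)$ only $E\baset$ survives, while $E\baseth$ is killed by $N$ and is therefore $N$-invariant. This gives part (1). For the planes: $E(\baseo,\baset)$ fails, since $N\baseo=\baseth$ escapes it; whereas each plane $E(a\baseo+b\baset,\baseth)$ contains $\baseth$, so the image $N(a\baseo+b\baset)=a\baseth$ lies inside it and $N\baseth=0$, making it $N$-stable. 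This gives part (2). There is no genuine obstacle here; the only step requiring any care is recognizing that the present $[\phi]$ has the same eigenvalue structure as the Frobenius in Lemma \ref{crysinvariantfourth}, so that the classification of $\phi$-invariant subspaces can be quoted directly, after which each $N$-invariance verification is a one-line computation.
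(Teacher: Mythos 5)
Your proposal is correct and follows exactly the paper's own route: quote Lemma \ref{crysinvariantfourth} for the list of $\phi$-invariant subspaces (valid here since the eigenvalues $p\lambda$ and $\lambda$ are distinct), then discard those not stable under $N$. The only difference is that you spell out the $N$-invariance checks that the paper dismisses as ``easy to check,'' and you make explicit the eigenvalue-matching justification for invoking that lemma, both of which are fine.
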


\begin{proof}
By Lemma \ref{crysinvariantfourth}, we know that the $1$-dimensional $\phi$-invariant subspaces of $D$ are the subspaces of $E(\baseo,\baset)$ and $E\baseth$ and that the $2$-dimensional subspaces are $E(\baseo,\baset)$ and $E(a\baseo+b\baset,\baseth)$ for each $(a,b)\in E^{2}\setminus\{(0,0)\}$. Now it is easy to check which ones are $N$-invariant.
\end{proof}

We start to collect the admissible filtered $(\phi,N)$-modules in this case.
\subsubsection{}
Assume that $L_{1}=E\baset$. Then, by admissibility,
$s=\hodgen(L_{1})\leq\newtonn(L_{1})=\val\lambda+1$, which contradicts to $\val\lambda=\frac{r+s-2}{3}$.

\subsubsection{}
Assume that $L_{1}=E\baseth$. Then, by admissibility,
$s=\hodgen(L_{1})\leq\newtonn(L_{1})=\val\lambda$, which contradicts to $\val\lambda=\frac{r+s-2}{3}$.

\subsubsection{}
Assume that $L_{2}$ is $\phi$- and $N$-invariant. Then, by admissibility, $r+s=\hodgen(L_{2})\leq\newtonn(L_{2})=2\val\lambda+1$, which contradicts to $\val\lambda=\frac{r+s-2}{3}$.

\subsubsection{}\label{rank1third1}
Assume that neither $L_{1}$ nor $L_{2}$ are $\phi$- and $N$-invariant and $\baset\in L_{2}$.
Then $L_{1}\not\subset E(\baset,\baseth)$ and $\baseth\not\in L_{2}$. Let $D_{2}$ be a $\phi$- and $N$-invariant subspace of dimension $2$.
By admissibility, $r=\hodgen(E\baset)\leq\newtonn(E\baset)=\val\lambda+1$, $0=\hodgen(E\baseth)\leq\newtonn(E\baseth)=\val\lambda$, for $D_{2}$ with $L_{1}\subset D_{2}$ $s=\hodgen(D_{2})\leq\newtonn(D_{2})=2\val\lambda+1$, and for $D_{2}$ with $L_{1}\not\subset D_{2}$ $r=\hodgen(D_{2})\leq\newtonn(D_{2})=2\val\lambda+1$. So admissible filtered $(\phi,N)$-modules occur in this case if and only if $s=2r-1$, and the corresponding representations are decomposable with submodules $E\baset$ and $D_{2}$ with $L_{1}\subset D_{2}$.

\subsubsection{} \label{rank1third2}
Assume that neither $L_{1}$ nor $L_{2}$ are $\phi$- and $N$-invariant and $\baset\not\in L_{2}$. Then $\baseth\not\in L_{2}$. Let $D_{2}$ be a $\phi$- and $N$-invariant subspace of dimension $2$. By admissibility, $0=\hodgen(E\baset)\leq\newtonn(E\baset)=\val\lambda+1$, $0=\hodgen(E\baseth)\leq\newtonn(E\baseth)=\val\lambda$, for $D_{2}$ with $L_{1}\subset D_{2}$ $s=\hodgen(D_{2})\leq=\newtonn(D_{2})=2\val\lambda+1$, and for $D_{2}$ with $L_{1}\not\subset D_{2}$ $r=\hodgen(D_{2})\leq\newtonn(D_{2})=2\val\lambda+1$.
So admissible filtered $(\phi,N)$-modules occur in this case if and only if $s\leq 2r-1$. The corresponding representations are non-split reducible with submodules $D_{2}$ with $L_{1}\subset D_{2}$ if $s=2r-1$ and irreducible if $s<2r-1$.

\subsection{The fourth case of $\rank N=1$}
Assume that
$$[\phi]=
\begin{small}\left(
\begin{array}{ccc}
    p\lambda &  0 &  0 \\
    1  &  p\lambda  & 0 \\
    0  &  0  & \lambda \\
\end{array}
\right)
\end{small}
\mbox{ and }
[N]=
\begin{small}\left(
\begin{array}{ccc}
    0  &  0 &  0 \\
    0  &  0  & 0 \\
    1  &  0  & 0 \\
\end{array}
\right)
\end{small}.
$$
By admissibility, $$\val\lambda=\frac{r+s-2}{3}.$$

\begin{lemm}\label{rank1typeofPfourth}
For a $3\times 3$-matrix $P=(P_{i,j})$, $P[\phi]=[\phi]P$ and $P[N]=[N]P$ if and only if $P$ is a lower triangle matrix with $P_{1,1}=P_{2,2}=P_{3,3}$ and $P_{3,1}=0=P_{3,2}$.
\end{lemm}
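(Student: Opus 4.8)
The plan is to follow exactly the two-step template used for Lemmas \ref{rank1typeofPfirst}, \ref{rank1typeofPsecond}, and \ref{rank1typeofPthird}: first exploit $P[N]=[N]P$, which sees only the nilpotent structure and is identical across all four cases, and then feed the resulting shape of $P$ into $P[\phi]=[\phi]P$. Writing $P=(P_{i,j})$ with general entries, the equation $P[N]=[N]P$ compares $P[N]$, whose only possibly-nonzero column is the first (equal to the third column of $P$), against $[N]P$, whose only possibly-nonzero row is the third (equal to the first row of $P$). Matching these forces $P_{1,2}=P_{1,3}=P_{2,3}=0$ together with $P_{1,1}=P_{3,3}$; that is, $P$ is lower triangular with equal $(1,1)$ and $(3,3)$ entries, exactly the conclusion recorded in the preceding cases.

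Next I would substitute this lower-triangular $P$ into $P[\phi]=[\phi]P$ and compare entries block by block. The entries involving $\baseth$ are decisive: comparing the $(3,2)$ positions yields $p\lambda\,P_{3,2}=\lambda\,P_{3,2}$, and since $\lambda\neq 0$ and $p\neq 1$ (as $p$ is prime) this forces $P_{3,2}=0$; substituting this into the $(3,1)$ comparison then gives $p\lambda\,P_{3,1}=\lambda\,P_{3,1}$, hence $P_{3,1}=0$. Finally the $(2,1)$ comparison returns $P_{2,2}=P_{1,1}$, which together with the earlier $P_{1,1}=P_{3,3}$ gives $P_{1,1}=P_{2,2}=P_{3,3}$. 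This is precisely the asserted form, in which the only surviving off-diagonal entry is $P_{2,1}$.

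For the converse, any $P$ of the stated shape (lower triangular, equal diagonal entries, and $P_{3,1}=P_{3,2}=0$) commutes with both $[N]$ and $[\phi]$; this is verified by a single direct multiplication, so the characterization is an equivalence.

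I expect no genuine obstacle: the statement is a routine linear-algebra verification of the same flavor as Lemmas \ref{rank1typeofPfirst}--\ref{rank1typeofPthird}, and the only arithmetic input beyond bookkeeping is the use of $p\neq 1$ and $\lambda\neq 0$ to annihilate $P_{3,2}$ (and then $P_{3,1}$). The one point requiring care is the order of operations: reducing via $[N]$ first keeps the $[\phi]$-computation to a product of lower-triangular matrices rather than a general $3\times 3$ product, which makes the entrywise comparison transparent.
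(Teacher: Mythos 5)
Your proposal is correct and follows essentially the same two-step argument as the paper: the commutation with $[N]$ forces $P$ to be lower triangular with $P_{1,1}=P_{3,3}$, and then the commutation with $[\phi]$ (using $\lambda\neq 0$ and $p\neq 1$) kills $P_{3,2}$ and $P_{3,1}$ and forces $P_{1,1}=P_{2,2}=P_{3,3}$. The entrywise computations you indicate all check out, so nothing is missing.
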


\begin{proof}
The equation $P[N]=[N]P$ forces that $P$ be a lower triangle matrix with $P_{1,1}=P_{3,3}$. Then $P[\phi]=[\phi] P$ forces $P_{3,1}=0=P_{3,2}$ and $P_{1,1}=P_{2,2}=P_{3,3}$.
\end{proof}

\begin{lemm}\label{rank1invariantfourth}
$E\baset$, $E\baseth$, and $E(\baset,\baseth)$ are the only nontrivial proper $\phi$- and $N$-invariant subspaces.
\end{lemm}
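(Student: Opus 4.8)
The plan is to follow the same two-step strategy used in the proofs of Lemmas~\ref{rank1invariantsecond} and~\ref{rank1invariantthrid}: first read off the $\phi$-invariant subspaces from the corresponding crystalline computation in Section~2, and then decide which of those are additionally stable under $N$. For the first step, note that here $\phi\baseo=p\lambda\baseo+\baset$, $\phi\baset=p\lambda\baset$, and $\phi\baseth=\lambda\baseth$, so $[\phi]$ consists of a single $2\times2$ Jordan block with eigenvalue $p\lambda$ on $E(\baseo,\baset)$ together with the eigenline $E\baseth$ of eigenvalue $\lambda$. Since $\lambda\neq0$ and $p$ is prime, we have $p\lambda\neq\lambda$, so this is precisely the shape treated in Lemma~\ref{crysinvariantfifth}, after renaming the repeated eigenvalue $\lambda$ there as $p\lambda$ and the simple eigenvalue $\lambdath$ there as $\lambda$. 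That lemma then tells us that the nontrivial proper $\phi$-invariant subspaces are exactly $E\baset$, $E\baseth$, $E(\baseo,\baset)$, and $E(\baset,\baseth)$.

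For the second step I would simply test each of these four subspaces against $N$, using $N\baseo=\baseth$ and $N\baset=N\baseth=0$. The subspaces $E\baset$, $E\baseth$, and $E(\baset,\baseth)$ all lie in the kernel of $N$ and are therefore $N$-invariant, whereas $E(\baseo,\baset)$ fails to be $N$-invariant because $N\baseo=\baseth\notin E(\baseo,\baset)$. This leaves exactly $E\baset$, $E\baseth$, and $E(\baset,\baseth)$, which is the asserted list.

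There is essentially no serious obstacle here: the content is entirely the transport of Lemma~\ref{crysinvariantfifth} to the present Frobenius, followed by a one-line check of $N$-invariance. The only point requiring a moment's care is the bookkeeping in the identification with Lemma~\ref{crysinvariantfifth}—in particular verifying the distinctness hypothesis $p\lambda\neq\lambda$, which is exactly what guarantees that $E(\baseo,\baset)$ and $E(\baset,\baseth)$ are the only invariant planes and that no further eigenlines intrude.
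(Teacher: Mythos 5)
Your proposal is correct and follows exactly the paper's own argument: the paper likewise invokes Lemma~\ref{crysinvariantfifth} (with repeated eigenvalue $p\lambda$ and simple eigenvalue $\lambda$, distinct since $\lambda\neq0$) to list the $\phi$-invariant subspaces $E\baset$, $E\baseth$, $E(\baseo,\baset)$, $E(\baset,\baseth)$, and then discards $E(\baseo,\baset)$ as the only one not killed by $N$. Your write-up merely makes explicit the eigenvalue bookkeeping and the $N$-invariance checks that the paper leaves as ``easy to check.''
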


\begin{proof}
By Lemma \ref{crysinvariantfifth}, we know that $E\baset$, $E\baseth$, $E(\baseo,\baset)$ and $E(\baset,\baseth)$ are the only nontrivial proper $\phi$-invariant subspaces. Now it is easy to check which ones are $N$-invariant.
\end{proof}

We start to collect the admissible filtered $(\phi,N)$-modules in this case.
\subsubsection{}
Assume that $L_{1}=E\baset$. Then, by admissibility,
$s=\hodgen(L_{1})\leq\newtonn(L_{1})=\val\lambda+1$, which contradicts to $\val\lambda=\frac{r+s-2}{3}$.

\subsubsection{}
Assume that $L_{1}=E\baseth$. Then, by admissibility,
$s=\hodgen(L_{1})\leq\newtonn(L_{1})=\val\lambda$, which contradicts to $\val\lambda=\frac{r+s-2}{3}$.

\subsubsection{}
Assume that $L_{2}=E(\baset,\baseth)$. Then, by admissibility,
$r+s=\hodgen(L_{2})\leq\newtonn(L_{2})=2\val\lambda+1$, which contradicts to $\val\lambda=\frac{r+s-2}{3}$.

\subsubsection{}\label{rank1fourth1}
Assume that neither $L_{1}$ nor $L_{2}$ are $\phi$- and $N$-invariant and $L_{1}\subset E(\baset,\baseth)$.
Then $\baset,\baseth\not\in L_{2}$. By admissibility,
$0=\hodgen(E\baset)\leq\newtonn(E\baset)=\val\lambda+1$, $0=\hodgen(E\baseth)\leq\newtonn(E\baseth)=\val\lambda$, and $s=\hodgen(E(\baset,\baseth))\leq\newtonn(E(\baset,\baseth))=2\val\lambda+1$.
So admissible filtered $(\phi,N)$-modules occur in this case if and only if $s\leq 2r-1$. The corresponding representations are non-split reducible with submodule $E(\baset,\baseth)$ if $s=2r-1$ and irreducible if $s<2r-1$.

\subsubsection{}\label{rank1fourth2}
Assume that neither $L_{1}$ nor $L_{2}$ are $\phi$- and $N$-invariant, $L_{1}\not\subset E(\baset,\baseth)$, and $\baset\in L_{2}$.
Then $\baseth\not\in L_{2}$. By admissibility,
$r=\hodgen(E\baset)\leq\newtonn(E\baset)=\val\lambda+1$, $0=\hodgen(E\baseth)\leq\newtonn(E\baseth)=\val\lambda$, and $r=\hodgen(E(\baset,\baseth))\leq\newtonn(E(\baset,\baseth))=2\val\lambda+1$.
So admissible filtered $(\phi,N)$-modules occur in this case if and only if $s\geq 2r-1$. The corresponding representations are non-split reducible with submodule $E\baset$ if $s=2r-1$ and irreducible if $s>2r-1$.

\subsubsection{}\label{rank1fourth3}
Assume that neither $L_{1}$ nor $L_{2}$ are $\phi$- and $N$-invariant, $L_{1}\not\subset E(\baset,\baseth)$, and $\baseth\in L_{2}$.
Then $\baset\not\in L_{2}$. By admissibility,
$0=\hodgen(E\baset)\leq\newtonn(E\baset)=\val\lambda+1$, $r=\hodgen(E\baseth)\leq\newtonn(E\baseth)=\val\lambda$, and $r=\hodgen(E(\baset,\baseth))\leq\newtonn(E(\baset,\baseth))=2\val\lambda+1$.
So admissible filtered $(\phi,N)$-modules occur in this case if and only if $s\geq 2r+2$. The corresponding representations are non-split reducible with submodule $E\baseth$ if $s=2r+2$ and irreducible if $s>2r+2$.

\subsubsection{}\label{rank1fourth4}
Assume that neither $L_{1}$ nor $L_{2}$ are $\phi$- and $N$-invariant, $L_{1}\not\subset E(\baset,\baseth)$, and $\baset,\baseth\not\in L_{2}$. By admissibility, $0=\hodgen(E\baset)\leq\newtonn(E\baset)=\val\lambda+1$, $0=\hodgen(E\baseth)\leq\newtonn(E\baseth)=\val\lambda$, and $r=\hodgen(E(\baset,\baseth))\leq\newtonn(E(\baset,\baseth))=2\val\lambda+1$. So admissible filtered $(\phi,N)$-modules occur in this case if and only if $2s\geq r+1 $. If $0<r<s$, then $r+1< 2s$ and so there are always admissible filtered $(\phi,N)$-modules in this case and the corresponding representations are irreducible.

\subsection{The fifth case of $\rank N=1$}
Assume that
$$[\phi]=
\begin{small}\left(
\begin{array}{ccc}
    p\lambda &  0 &  0 \\
    0  &  \lambdat  & 0 \\
    0  &  0  & \lambda \\
\end{array}
\right)
\end{small}
\mbox{ and }
[N]=
\begin{small}\left(
\begin{array}{ccc}
    0  &  0 &  0 \\
    0  &  0  & 0 \\
    1  &  0  & 0 \\
\end{array}
\right)
\end{small}
$$
with $\lambda\not=\lambdat\not=p\lambda$. By admissibility, $$2\val\lambda+\val\lambdat=r+s-1.$$

\begin{lemm}\label{rank1typeofPfifth}
For a $3\times 3$-matrix $P=(P_{i,j})$, $P[\phi]=[\phi]P$ and $P[N]=[N]P$ if and only if $P$ is a diagonal matrix with $P_{1,1}=P_{3,3}$.
\end{lemm}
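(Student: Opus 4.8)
The plan is to follow the same two-step strategy used in the previous rank-one lemmas: first impose $P[N]=[N]P$ to cut $P$ down to a small family, and then impose $P[\phi]=[\phi]P$ to finish. Writing $P=(P_{i,j})$ and computing both products directly, the only nonzero entry of $[N]$ sits in position $(3,1)$, so that $P[N]$ has its $i$-th row contributing $P_{i,3}$ into the first column, while $[N]P$ copies the first row of $P$ into its third row. Comparing these reads off as $P_{1,2}=P_{1,3}=P_{2,3}=0$ together with $P_{1,1}=P_{3,3}$; in other words, after this first step $P$ is forced to be lower triangular with equal $(1,1)$ and $(3,3)$ entries.

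Next I would substitute this reduced form of $P$ into $P[\phi]=[\phi]P$. Since $[\phi]$ is diagonal with entries $p\lambda,\lambdat,\lambda$, the equality is automatic on the diagonal and on the strictly-upper entries (already zero), and comparing the three strictly-lower entries of the two products gives $P_{2,1}(p\lambda-\lambdat)=0$, $P_{3,1}(p\lambda-\lambda)=0$, and $P_{3,2}(\lambdat-\lambda)=0$. The key point is that these three diagonal Frobenius values are pairwise distinct: $p\lambda\neq\lambdat$ and $\lambdat\neq\lambda$ are exactly the running hypotheses $\lambda\neq\lambdat\neq p\lambda$, while $p\lambda\neq\lambda$ holds because $\lambda\neq0$ (as $\phi$ is invertible) and $p\neq1$. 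Hence $P_{2,1}=P_{3,1}=P_{3,2}=0$, and $P$ is diagonal with $P_{1,1}=P_{3,3}$.

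Finally I would verify the converse, which is immediate: a diagonal $P$ commutes with the diagonal matrix $[\phi]$ automatically, and it commutes with $[N]$ precisely when its $(1,1)$ and $(3,3)$ entries agree, since $P[N]$ and $[N]P$ each have a unique nonzero entry in position $(3,1)$, equal to $P_{3,3}$ and $P_{1,1}$ respectively. There is no genuine obstacle in this lemma; the only subtlety worth recording is why $p\lambda\neq\lambda$, which is where the hypotheses $\lambda\neq0$ and $p\neq1$ enter to upgrade the given conditions to full pairwise distinctness of the three eigenvalues — and it is exactly this distinctness that forces every off-diagonal entry of $P$ to vanish.
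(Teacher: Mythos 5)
Your proof is correct and follows essentially the same two-step route as the paper: first use $P[N]=[N]P$ to force $P$ lower triangular with $P_{1,1}=P_{3,3}$, then use $P[\phi]=[\phi]P$ to kill the strictly-lower entries. The only difference is that you spell out the pairwise distinctness of $p\lambda$, $\lambdat$, $\lambda$ (in particular $p\lambda\neq\lambda$ from $\lambda\neq 0$ and $p\neq 1$), which the paper leaves implicit.
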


\begin{proof}
The equation $P[N]=[N]P$ forces that $P$ be a lower triangle matrix with $P_{1,1}=P_{3,3}$. Then $P[\phi]=[\phi] P$ forces that $P$ be a diagonal matrix with $P_{1,1}=P_{3,3}$.
\end{proof}

\begin{lemm}\label{rank1invariantfifth}
$E\baset$, $E\baseth$, $E(\baset,\baseth)$, and $E(\baseo,\baseth)$ are the only nontrivial proper $\phi$- and $N$-invariant subspaces.
\end{lemm}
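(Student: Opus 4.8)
The plan is to reduce the question of $\phi$-invariance to the crystalline case already handled in Section~2, and then to impose $N$-invariance by direct inspection. The key observation is that here $[\phi]$ is diagonal with eigenvalues $p\lambda$, $\lambdat$, and $\lambda$ attached to the eigenvectors $\baseo$, $\baset$, $\baseth$ respectively. These three eigenvalues are pairwise distinct: we have $p\lambda\neq\lambda$ since $p\neq 1$ and $\lambda\neq 0$, while $\lambdat\neq\lambda$ and $\lambdat\neq p\lambda$ are exactly the standing hypothesis $\lambda\neq\lambdat\neq p\lambda$. Hence $\phi$ falls under the sixth case of $N=0$, and Lemma~\ref{crysinvariantsixth} applies verbatim.

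First I would invoke Lemma~\ref{crysinvariantsixth} to conclude that the only nontrivial proper $\phi$-invariant subspaces of $D$ are the six coordinate subspaces
$$E\baseo,\quad E\baset,\quad E\baseth,\quad E(\baseo,\baset),\quad E(\baset,\baseth),\quad E(\baseo,\baseth).$$
Any subspace that is both $\phi$- and $N$-invariant must in particular be $\phi$-invariant, so it suffices to test these six candidates for $N$-invariance.

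Next I would run through the list using the relations $N\baseo=\baseth$ and $N\baset=0=N\baseth$. The subspaces $E\baseo$ and $E(\baseo,\baset)$ both fail, because $N\baseo=\baseth$ lies in neither. The remaining four all survive: $E\baset$ and $E\baseth$ are killed by $N$, hence trivially $N$-stable; $E(\baset,\baseth)$ is annihilated by $N$; and $E(\baseo,\baseth)$ is $N$-stable since $N\baseo=\baseth\in E(\baseo,\baseth)$ and $N\baseth=0$. This yields exactly the four subspaces $E\baset$, $E\baseth$, $E(\baset,\baseth)$, and $E(\baseo,\baseth)$ claimed in the statement, matching the format of the proofs of Lemmas~\ref{rank1invariantfirst} through~\ref{rank1invariantfourth}.

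There is no real obstacle here; the only point requiring a moment's care is the verification that the three eigenvalues are genuinely distinct, so that Lemma~\ref{crysinvariantsixth} is applicable rather than one of the degenerate cases with repeated eigenvalues. Once that is noted, the argument is the routine ``$\phi$-invariant list, then filter by $N$'' computation, and the proof is a single short paragraph.
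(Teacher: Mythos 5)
Your proof is correct and takes essentially the same route as the paper's: the paper likewise reduces to Lemma \ref{crysinvariantsixth} to obtain the six $\phi$-invariant subspaces and then checks $N$-invariance on that list. Your write-up is in fact slightly more careful, since you explicitly verify that $p\lambda$, $\lambdat$, $\lambda$ are pairwise distinct (using $\lambda\neq 0$ and the hypothesis $\lambda\neq\lambdat\neq p\lambda$) before invoking that lemma, and you spell out the $N$-invariance check that the paper dismisses as easy.
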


\begin{proof}
By Lemma \ref{crysinvariantsixth}, we know that the only nontrivial proper $\phi$-invariant subspaces are $E\baseo$, $E\baset$, $E\baseth$, $E(\baseo,\baset)$, $E(\baset,\baseth)$, and $E(\baseo,\baseth)$. Now it is easy to check which ones are $N$-invariant among these subspaces.
\end{proof}

We start to collect the admissible filtered $(\phi,N)$-modules in this case.
\subsubsection{}
Assume that $L_{2}=E(\baset,\baseth)$. Then, by admissibility, $r\leq\hodgen(E(\baseo,\baseth))\leq2\val\lambda+1$ and $r+s=\hodgen(E(\baset,\baseth))\leq\newtonn(E(\baset,\baseth))=\val\lambda+\val\lambdat$, which contradicts to $2\val\lambda+\val\lambdat=r+s-1$.

\subsubsection{}\label{rank1fifth1}
Assume that $L_{1}=E\baset$ and $L_{2}\not=E(\baset,\baseth)$. Then $\baseth\not\in L_{2}$ and, by admissibility, $s=\hodgen(E\baset)\leq\newtonn(E\baset)=\val\lambdat$, $0=\hodgen(E\baseth)\leq\newtonn(E\baseth)=\val\lambda$,
$s=\hodgen(E(\baset,\baseth))\leq\newtonn(E(\baset,\baseth))=\val\lambda+\val\lambdat$, and $r=\hodgen(E(\baseo,\baseth))\leq\newtonn(E(\baseo,\baseth))=2\val\lambda+1$, which implies $\val\lambda=\frac{r-1}{2}$ and $\val\lambdat=s$. So we have admissible filtered $(\phi,N)$-modules, but the corresponding representations are decomposable with submodules $E(\baseo,\baseth)$ and $E\baset$. Moreover, $E\baseth$ and $E(\baset,\baseth)$ are submodules as well if $r=1$.

\subsubsection{}
Assume that $L_{1}=E\baseth$. Then, by admissibility,
$s=\hodgen(E\baseth)\leq\newtonn(E\baseth)=\val\lambda$ and $s\leq\hodgen(E(\baset,\baseth))\leq\newtonn(E(\baset,\baseth))=\val\lambda+\val\lambdat$, which contradicts to $2\val\lambda+\val\lambdat=r+s-1$.

\subsubsection{}\label{rank1fifth2}
Assume that $L_{1}$ is not $\phi$- and $N$-invariant and $L_{2}=E(\baseo,\baseth)$. Then $L_{1}\not\subset E(\baset,\baseth)$ and, by admissibility, $0=\hodgen(E\baset)\leq\newtonn(E\baset)=\val\lambdat$, $r=\hodgen(E\baseth)\leq\newtonn(E\baseth)=\val\lambda$, $r=\hodgen(E(\baset,\baseth))\leq\newtonn(E(\baset,\baseth))=\val\lambda+\val\lambdat$, and $r+s=\hodgen(E(\baseo,\baseth))\leq\newtonn(E(\baseo,\baseth))=2\val\lambda+1$, which implies $\val\lambda=\frac{r+s-1}{2}$ and $\val\lambdat=0$. So we have admissible filtered $(\phi,N)$-modules, but the corresponding representations are decomposable with submodules $E(\baseo,\baseth)$ and $E\baset$. Moreover, $E\baseth$ and $E(\baset,\baseth)$ are submodules as well if $s=r+1$.

\subsubsection{}\label{rank1fifth3}
Assume that neither $L_{1}$ nor $L_{2}$ are $\phi$- and $N$-invariant and $L_{1}\subset E(\baset,\baseth)$. Then $\baset,\baseth\not\in L_{2}$ and $L_{1}\not\subset E(\baseo,\baseth)$. By admissibility, $0=\hodgen(E\baset)\leq\newtonn(E\baset)=\val\lambdat$, $0=\hodgen(E\baseth)\leq\newtonn(E\baseth)=\val\lambda$, $s=\hodgen(E(\baset,\baseth))\leq\newtonn(E(\baset,\baseth))=\val\lambda+\val\lambdat$, and $r=\hodgen(E(\baseo,\baseth))\leq\newtonn(E(\baseo,\baseth))= 2\val\lambda+1$. So, for $\frac{r-1}{2}\leq\val\lambda\leq r-1$ and $2\val\lambda+\val\lambdat=r+s-1$, we have admissible filtered $(\phi,N)$-modules. The corresponding representations are
\begin{itemize}
\item non-split reducible with submodules $E\baseth$, $E(\baset,\baseth)$, and $E(\baseo,\baseth)$ if $r=1$,
\item non-split reducible with submodule $E(\baseo,\baseth)$ if $\val\lambda=\frac{r-1}{2}$ and if $r>1$,
\item non-split reducible with submodule $E(\baset,\baseth)$ if $\val\lambda=r-1$ and if $r>1$, and
\item irreducible if $\frac{r-1}{2}<\val\lambda<r-1$ and $r>1$.
\end{itemize}

\subsubsection{}\label{rank1fifth4}
Assume that neither $L_{1}$ nor $L_{2}$ are $\phi$- and $N$-invariant, $L_{1}\subset E(\baseo,\baseth)$, and $\baset\in L_{2}$. Then $\baseth\not\in L_{2}$ and $L_{1}\not\subset E(\baset,\baseth)$. By admissibility,
$r=\hodgen(E\baset)\leq\newtonn(E\baset)=\val\lambdat$, $0=\hodgen(E\baseth)\leq\newtonn(E\baseth)=\val\lambda$, $r=\hodgen(E(\baset,\baseth))\leq\newtonn(E(\baset,\baseth))=\val\lambda+\val\lambdat$, and $s=\hodgen(E(\baseo,\baseth))\leq\newtonn(E(\baseo,\baseth))=2\val\lambda+1$, which implies $\val\lambda=\frac{s-1}{2}$ and $\val\lambdat=r$. So we have admissible filtered $(\phi,N)$-modules, but the corresponding representations are decomposable with submodules $E(\baseo,\baseth)$ and $E\baset$.

\subsubsection{}\label{rank1fifth5}
Assume that neither $L_{1}$ nor $L_{2}$ are $\phi$- and $N$-invariant, $L_{1}\subset E(\baseo,\baseth)$, and $\baset\not\in L_{2}$. Then $\baseth\not\in L_{2}$ and $L_{1}\not\subset E(\baset,\baseth)$. By admissibility, $0=\hodgen(E\baset)\leq\newtonn(E\baset)=\val\lambdat$, $0=\hodgen(E\baseth)\leq\newtonn(E\baseth)=\val\lambda$, $r=\hodgen(E(\baset,\baseth))\leq\newtonn(E(\baset,\baseth))=\val\lambda+\val\lambdat$, and $s=\hodgen(E(\baseo,\baseth))\leq\newtonn(E(\baseo,\baseth))=2\val\lambda+1$. So, for $\frac{s-1}{2}\leq\val\lambda\leq\frac{r+s-1}{2}$ and $2\val\lambda+\val\lambdat=r+s-1$, we have admissible filtered $(\phi,N)$-modules.
The corresponding representations are
\begin{itemize}
\item non-split reducible with submodule $E(\baseo,\baseth)$ if $\val\lambda=\frac{s-1}{2}$,
\item non-split reducible with submodules $E\baset$ and $E(\baset,\baseth)$ if $\val\lambda=\frac{r+s-1}{2}$ and if $s=r+1$,
\item non-split reducible with submodule $E\baset$ if $\val\lambda=\frac{r+s-1}{2}$ and if $s>r+1$, and
\item irreducible if $\frac{s-1}{2}<\val\lambda<\frac{r+s-1}{2}$.
\end{itemize}

\subsubsection{}\label{rank1fifth6}
Assume that neither $L_{1}$ nor $L_{2}$ are $\phi$- and $N$-invariant, $L_{1}$ is not contained in any $\phi$- and $N$-invariant subspaces, and $\baset\in L_{2}$. Then $\baseth\not\in L_{2}$ and, by admissibility, $r=\hodgen(E\baset)\leq\newtonn(E\baset)=\val\lambdat$, $0=\hodgen(E\baseth)\leq\newtonn(E\baseth)=\val\lambda$, $r=\hodgen(E(\baset,\baseth))\leq\newtonn(E(\baset,\baseth))=\val\lambda+\val\lambdat$, and $r=\hodgen(E(\baseo,\baseth))\leq\newtonn(E(\baseo,\baseth))=2\val\lambda+1$. So, for $\frac{r-1}{2}\leq\val\lambda\leq\frac{s-1}{2}$ and $2\val\lambda+\val\lambdat=r+s-1$, we have admissible filtered $(\phi,N)$-modules.
The corresponding representations are
\begin{itemize}
\item non-split reducible with submodules $E(\baseo,\baseth)$ and $E\baseth$ if $\val\lambda=\frac{r-1}{2}$ and if $r=1$,
\item non-split reducible with submodule $E(\baseo,\baseth)$ if $\val\lambda=\frac{r-1}{2}$ and if $r>1$,
\item non-split reducible with submodule $E\baset$ if $\val\lambda=\frac{s-1}{2}$, and
\item irreducible if $\frac{r-1}{2}<\val\lambda<\frac{s-1}{2}$.
\end{itemize}

\subsubsection{}\label{rank1fifth7}
Assume that neither $L_{1}$ nor $L_{2}$ are $\phi$- and $N$-invariant, $L_{1}$ is not contained in any $\phi$- and $N$-invariant subspaces, and $\baseth\in L_{2}$. Then $\baset\not\in L_{2}$ and, by admissibility, $0=\hodgen(E\baset)\leq\newtonn(E\baset)=\val\lambdat$, $r=\hodgen(E\baseth)\leq\newtonn(E\baseth)=\val\lambda$, $r=\hodgen(E(\baset,\baseth))\leq\newtonn(E(\baset,\baseth))=\val\lambda+\val\lambdat$, and $r=\hodgen(E(\baseo,\baseth))\leq\newtonn(E(\baseo,\baseth))=2\val\lambda+1$. So, for $r\leq\val\lambda\leq\frac{r+s-1}{2}$, $2\val\lambda+\val\lambdat=r+s-1$, and $s\geq r+1$, we have admissible filtered $(\phi,N)$-modules.
The corresponding representations are
\begin{itemize}
\item non-split reducible with submodules $E\baset$, $E\baseth$, and $E(\baset,\baseth)$ if $s=r+1$,
\item non-split reducible with submodule $E\baseth$ if $\val\lambda=r$ and if $s>r+1$,
\item non-split reducible with submodule $E\baset$ if $\val\lambda=\frac{r+s-1}{2}$ and if $s>r+1$, and
\item irreducible if $r<\val\lambda<\frac{r+s-1}{2}$ and if $s>r+1$.
\end{itemize}

\subsubsection{}\label{rank1fifth8}
Assume that neither $L_{1}$ nor $L_{2}$ are $\phi$- and $N$-invariant, $L_{1}$ is not contained in any $\phi$- and $N$-invariant subspaces, and $\baset,\baseth\not\in L_{2}$. Then, by admissibility, $0=\hodgen(E\baset)\leq\newtonn(E\baset)=\val\lambdat$, $0=\hodgen(E\baseth)\leq\newtonn(E\baseth)=\val\lambda$, $r=\hodgen(E(\baset,\baseth))\leq\newtonn(E(\baset,\baseth))=\val\lambda+\val\lambdat$, and $r=\hodgen(E(\baseo,\baseth))\leq\newtonn(E(\baseo,\baseth))=2\val\lambda+1$. So, for $\frac{r-1}{2}\leq\val\lambda\leq\frac{r+s-1}{2}$ and $2\val\lambda+\val\lambdat=r+s-1$, we have admissible filtered $(\phi,N)$-modules.
The corresponding representations are
\begin{itemize}
\item non-split reducible with submodules $E\baseth$ and $E(\baseo,\baseth)$ if $\val\lambda=\frac{r-1}{2}$ and if $r=1$,
\item non-split reducible with submodule $E(\baseo,\baseth)$ if $\val\lambda=\frac{r-1}{2}$ and if $r>1$,
\item non-split reducible with submodules $E\baset$ and $E(\baset,\baseth)$ if $\val\lambda=\frac{r+s-1}{2}$ and if $s=r+1$,
\item non-split reducible with submodule $E\baset$ if $\val\lambda=\frac{r+s-1}{2}$ and if $s>r+1$, and
\item irreducible if $\frac{r-1}{2}<\val\lambda<\frac{r+s-1}{2}$.
\end{itemize}

\subsection{List of the isomorphism classes with $\rank N=1$}\label{ssec:list of N=1}
In the previous subsections, we found all of the admissible filtered $(\phi,N)$-modules of Hodge type $(0,r,s)$ for $0<r<s$ with $\rank N=1$. In this subsection, we classify the isomorphism classes of the admissible filtered $(\phi,N)$-modules on $D=E(\baseo,\baset,\baseth)$.

The following example arises from \ref{rank1first1}.
\begin{exam}
A filtered $(\phi,N)$-module of Hodge type $(0,r,s)$
$$D^{1}_{\rank  N=1}=D^{1}_{\rank  N=1}(\lambda,\mfl);$$
\begin{itemize}
\item $\fil{r} D=E(\baseo+\mfl\baseth,\baset)$ and $\fil{s} D=E(\baseo+\mfl\baseth)$.
\item $[N]=
\begin{small}\left(
  \begin{array}{ccc}
    0 & 0  & 0 \\
    0 & 0  & 0 \\
    1 & 0  & 0 \\
  \end{array}
\right)
\end{small}
$ and
$[\phi]=
\begin{small}\left(
  \begin{array}{ccc}
    p\lambda & 0  & 0 \\
    0 & \lambda & 0 \\
    0 & 0 & \lambda \\
  \end{array}
\right)
\end{small}
$ for $\lambda$ in $E$.
\item $\mfl\in E$, $\val\lambda=\frac{r+s-1}{3}$, and $s=2r+1$.
\end{itemize}
\end{exam}

\begin{prop} \label{class rank1 D1}
\begin{enumerate}
\item $D^{1}_{\rank  N=1}$ represents admissible filtered $(\phi,N)$-modules with $\rank N=1$.
\item The corresponding representations to $D^{1}_{\rank N=1}$ are decomposable with submodules $E\baset$ and $E(\baseo,\baseth)$.
\item $D^{1}_{\rank  N=1}(\lambda,\mfl)$ is isomorphic to $D^{1}_{\rank  N=1}(\lambda',\mfl')$ if and only if $\lambda=\lambda'$ and $\mfl=\mfl'$.
\end{enumerate}
\end{prop}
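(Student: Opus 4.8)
The plan is to read off parts (1) and (2) directly from the case analysis in \ref{rank1first1}, where one assumes $L_1\subset E(\baseo,\baseth)$, $L_1\not\subset E(\baset,\baseth)$, and that neither $L_1$ nor $L_2$ is $\phi$- and $N$-invariant. First I would observe that these two conditions pin $L_1$ down to the form $L_1=E(\baseo+\mfl\baseth)$ for a unique $\mfl\in E$: the first condition forces the $\baset$-coefficient to vanish, the second forces the $\baseo$-coefficient to be nonzero, so we may scale it to $1$. A two-dimensional $L_2\supset L_1$ can then be written $L_2=E(\baseo+\mfl\baseth,\,\baset+\beta\baseth)$ for some $\beta\in E$, by subtracting a suitable multiple of $\baseo+\mfl\baseth$ from a second generator; the coefficient of $\baset$ must be nonzero, since otherwise $L_2=E(\baseo,\baseth)$ would be $\phi$- and $N$-invariant by Lemma \ref{rank1invariantfirst}, contrary to assumption.

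Next I would normalize $\beta$ away. By Lemma \ref{rank1typeofPfirst} the matrices $P$ commuting with both $[\phi]$ and $[N]$ are exactly the lower-triangular ones with $P_{1,1}=P_{3,3}$ and $P_{2,1}=P_{3,1}=0$, and any such $P$ is an automorphism of the underlying $(\phi,N)$-module. Choosing $P$ with $P\baseo=\baseo$, $P\baseth=\baseth$, and $P\baset=\baset-\beta\baseth$ fixes $L_1$ and sends $\baset+\beta\baseth$ to $\baset$, hence carries $L_2$ to $E(\baseo+\mfl\baseth,\baset)$. This is precisely the normal form of the Example, which gives (1); that these modules are admissible and that admissibility forces $s=2r+1$ is exactly what was checked in \ref{rank1first1}. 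For (2), specializing the submodules $E(\baset,\baseth)\cap L_2$ and $E(\baseo,\baseth)$ found in \ref{rank1first1} to this normal form yields $E\baset$ and $E(\baseo,\baseth)$; one checks directly that both $\fil{r}D$ and $\fil{s}D$ are compatible with the splitting $D=E\baset\oplus E(\baseo,\baseth)$, so the representation is decomposable.

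For (3) the implication $(\Leftarrow)$ is immediate. For $(\Rightarrow)$, any isomorphism preserves the Frobenius eigenvalues $\{p\lambda,\lambda,\lambda\}$; since $p\neq1$ and $\lambda\neq0$ these determine $\lambda$ as the repeated eigenvalue, forcing $\lambda=\lambda'$. An isomorphism $D^{1}_{\rank N=1}(\lambda,\mfl)\to D^{1}_{\rank N=1}(\lambda,\mfl')$ is then an automorphism of the common underlying $(\phi,N)$-module, hence given by a matrix $P$ as in Lemma \ref{rank1typeofPfirst}. It must carry $\fil{s}D=E(\baseo+\mfl\baseth)$ onto $E(\baseo+\mfl'\baseth)$; but because $P_{1,1}=P_{3,3}$ and $P$ has no further entries in its first and third columns, we get $P(\baseo+\mfl\baseth)=P_{1,1}(\baseo+\mfl\baseth)$, so the image line is $E(\baseo+\mfl\baseth)$ and we conclude $\mfl=\mfl'$.

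The only step that is more than bookkeeping is the claim in (3) that $\mfl$ is a genuine invariant and cannot be scaled away. This rests entirely on the constraint $P_{1,1}=P_{3,3}$ in Lemma \ref{rank1typeofPfirst}: the commuting automorphisms act by the same scalar on the $\baseo$- and $\baseth$-axes, so they can never rescale the $\baseth$-coefficient of $\baseo+\mfl\baseth$. Everything else is a direct specialization of \ref{rank1first1} together with the description of the $\phi$- and $N$-invariant subspaces in Lemma \ref{rank1invariantfirst}.
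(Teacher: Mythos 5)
Your proposal is correct and follows essentially the same route as the paper: normalize $L_1=E(\baseo+\mfl\baseth)$ and $L_2=E(\baseo+\mfl\baseth,\baset+\beta\baseth)$, kill $\beta$ by a change of basis commuting with $\phi$ and $N$ as permitted by Lemma \ref{rank1typeofPfirst}, and deduce (3) from eigenvalue preservation plus the constraint $P_{1,1}=P_{3,3}$ on commuting matrices. Your part (3) is in fact marginally more direct than the paper's, since you compute $P(\baseo+\mfl\baseth)=P_{1,1}(\baseo+\mfl\baseth)$ straight from the shape of $P$ rather than first arguing that $P$ must be diagonal.
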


\begin{proof}
From \ref{rank1first1}, if we let $L_{1}=E(\baseo+a\baseth)$ and $L_{2}=(\baseo+a\baseth,\baset+b\baseth)$, then, by change of a basis: $\baseo\mapsto\baseo$, $\baset\mapsto\baset-b\baseth$, and $\baseth\mapsto\baseth$, we get $D^{1}_{\rank N=1}$. Notice that this change of a basis does not change the matrix presentation of $\phi$ by Lemma \ref{rank1typeofPfirst}. So now the part (1) and (2) are immediate from \ref{rank1first1}.

For the part (3), assume that $T$ is an isomorphism from $D^{1}_{\rank  N=1}(\lambda,\mfl)$ to $D^{1}_{\rank  N=1}(\lambda',\mfl')$. Clearly, $\lambda=\lambda'$, and, by Lemma \ref{rank1typeofPfirst}, $[T]$ is a lower triangle matrix such that $T_{1,1}=T_{3,3}$ and $T_{2,1}=T_{3,1}=0$. Since $T$ preserves the filtration, $[T]$ should be a diagonal matrix with $T_{1,1}=T_{3,3}$, which implies that $\mfl=\mfl'$. The converse is clear.
\end{proof}

The following example arises from \ref{rank1first2}.
\begin{exam}
A filtered $(\phi,N)$-module of Hodge type $(0,r,s)$
$$D^{2}_{\rank  N=1}=D^{2}_{\rank  N=1}(\lambda,[\mflo:\mflt]);$$
\begin{itemize}
\item $\fil{r} D=E(\baseo+\baset,\mflo\baset+\mflt\baseth)$ and $\fil{s} D=E(\baseo+\baset)$.
\item $[N]=
\begin{small}\left(
  \begin{array}{ccc}
    0 & 0  & 0 \\
    0 & 0  & 0 \\
    1 & 0  & 0 \\
  \end{array}
\right)
\end{small}
$ and
$[\phi]=
\begin{small}\left(
  \begin{array}{ccc}
    p\lambda & 0  & 0 \\
    0 & \lambda & 0 \\
    0 & 0 & \lambda \\
  \end{array}
\right)
\end{small}
$ for $\lambda$ in $E$.
\item $[\mflo:\mflt]\in \mathbb{P}^{1}(E)$, $\val\lambda=\frac{r+s-1}{3}$, and $s\geq 2r+1$.
\end{itemize}
\end{exam}

\begin{prop} \label{class rank1 D2}
\begin{enumerate}
\item $D^{2}_{\rank  N=1}$ represents admissible filtered $(\phi,N)$-modules with $\rank N=1$.
\item The corresponding representations to $D^{2}_{\rank N=1}$ are
    \begin{itemize}
    \item non-split reducible with submodule $E(\mflo\baset+\mflt\baseth)$ if $s=2r+1$ and
    \item irreducible if $s>2r+1$.
    \end{itemize}
\item $D^{2}_{\rank  N=1}(\lambda,[\mflo:\mflt])$ is isomorphic to $D^{2}_{\rank  N=1}(\lambda',[\mflo':\mflt'])$ if and only if $\lambda=\lambda'$ and $[\mflo:\mflt]=[\mflo':\mflt']$.
\end{enumerate}
\end{prop}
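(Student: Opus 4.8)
The plan is to follow the template of the preceding propositions (in particular Proposition \ref{class rank1 D1}): first normalise the filtration of an arbitrary module arising from \ref{rank1first2} to the displayed form by a change of basis lying in the commutant of $\phi$ and $N$, which yields parts (1) and (2); then combine the description of that commutant with the requirement that isomorphisms preserve the filtration to obtain the criterion in part (3).

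For parts (1) and (2), I would begin from the configuration of \ref{rank1first2}. The conditions $L_{1}\not\subset E(\baseo,\baseth)$ and $L_{1}\not\subset E(\baset,\baseth)$ say that, after scaling the $\baseo$-coefficient to $1$, we have $L_{1}=\fil{s}D=E(\baseo+a\baset+c\baseth)$ with $a\neq0$, and a second generator of $L_{2}=\fil{r}D$ may be taken in $E(\baset,\baseth)$, say $d\baset+e\baseth$ with $(d,e)\neq(0,0)$. I would then take the matrix $P$ of the form in Lemma \ref{rank1typeofPfirst} with $P_{1,1}=P_{3,3}=1$, $P_{2,2}=a^{-1}$ and $P_{3,2}=-c/a$; a direct check shows $P$ sends $L_{1}$ to $E(\baseo+\baset)$ and $L_{2}$ to $E(\baseo+\baset,\ \mflo\baset+\mflt\baseth)$, where $[\mflo:\mflt]=[d:ae-cd]$ ranges over all of $\mathbb{P}^{1}(E)$ as $(d,e)$ varies. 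Since $P$ commutes with $\phi$ and $N$, the matrices $[\phi]$ and $[N]$ are unchanged, so the normalised module is exactly $D^{2}_{\rank N=1}(\lambda,[\mflo:\mflt])$. Parts (1) and (2) then read off from \ref{rank1first2}: admissibility holds iff $s\geq2r+1$, with the distinguished submodule $E(\baset,\baseth)\cap L_{2}$ there becoming the line $E(\mflo\baset+\mflt\baseth)$, non-split reducible if $s=2r+1$ and irreducible if $s>2r+1$.

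For part (3), let $T$ be an isomorphism from $D^{2}_{\rank N=1}(\lambda,[\mflo:\mflt])$ to $D^{2}_{\rank N=1}(\lambda',[\mflo':\mflt'])$. As $T$ intertwines the two Frobenius maps it preserves their eigenvalues, so $\lambda=\lambda'$, and by Lemma \ref{rank1typeofPfirst} the matrix $[T]$ is lower triangular with $T_{1,1}=T_{3,3}$ and $T_{2,1}=T_{3,1}=0$. The key step is that $T$ must carry $\fil{s}D=E(\baseo+\baset)$ to $E(\baseo+\baset)$; computing $[T](\baseo+\baset)=(T_{1,1},T_{2,2},T_{3,2})$ and requiring proportionality to $(1,1,0)$ forces $T_{2,2}=T_{1,1}$ and $T_{3,2}=0$, so $[T]$ is a scalar matrix. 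Preservation of $\fil{r}D$ then reduces to equality of the planes $E(\baseo+\baset,\mflo\baset+\mflt\baseth)$ and $E(\baseo+\baset,\mflo'\baset+\mflt'\baseth)$, which upon reducing modulo $\baseo+\baset$ holds precisely when $[\mflo:\mflt]=[\mflo':\mflt']$. The converse is immediate, completing part (3).

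The computations are all routine, so there is no genuine obstacle; the only point demanding care lies in parts (1) and (2), namely verifying that the normalising $P$ really belongs to the commutant of Lemma \ref{rank1typeofPfirst} (so that $[\phi]$ and $[N]$ are untouched) and that the resulting parametrisation is by the full projective line, with the degenerate point $[\mflo:\mflt]=[0:1]$ corresponding to the subcase $\baseth\in L_{2}$.
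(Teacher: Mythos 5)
Your proposal is correct and matches the paper's own proof essentially step for step: the normalising matrix $P$ you construct (lower triangular, $P_{1,1}=P_{3,3}=1$, $P_{2,2}=a^{-1}$, $P_{3,2}=-c/a$) is exactly the paper's change of basis $\baseo\mapsto\baseo$, $a\baset\mapsto\baset-b\baseth$, $\baseth\mapsto\baseth$ from \ref{rank1first2}, and your part (3) is precisely the argument the paper invokes by reference to Proposition \ref{class rank1 D1} (Lemma \ref{rank1typeofPfirst} plus preservation of $\fil{s}D$ forces $[T]$ to be scalar, and preservation of $\fil{r}D$ then pins down $[\mflo:\mflt]$). Your added checks — that $P$ lies in the commutant and that $[d:ae-cd]$ sweeps out all of $\mathbb{P}^{1}(E)$ — are the same points the paper flags, just spelled out more explicitly.
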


\begin{proof}
From \ref{rank1first2}, if we let $L_{1}=E(\baseo+a\baset+b\baseth)$ and $L_{2}=E(\baseo+a\baset+b\baseth,c\baset+d\baseth)$ with $a\neq0$ and $[c,d]\in\mathbb{P}^{1}(E)$, then, by change of a basis: $\baseo\mapsto\baseo$, $a\baset\mapsto\baset-b\baseth$, and $\baseth\mapsto\baseth$, we get $D^{2}_{\rank N=1}$. Now the part (1) and (2) are immediate from \ref{rank1first2}.
For the part (3), use the same argument as in Proposition \ref{class rank1 D1}.
\end{proof}

The following example arises from \ref{rank1second1}.
\begin{exam}
A filtered $(\phi,N)$-module of Hodge type $(0,r,s)$
$$D^{3}_{\rank  N=1}=D^{3}_{\rank  N=1}(\lambda,\mathfrak{L});$$
\begin{itemize}
\item $\fil{r} D=E(\baset,\baseo+\mathfrak{L}\baseth)$ and $\fil{s} D=E\baset$.
\item
$[N]=
\begin{small}\left(
  \begin{array}{ccc}
    0 & 0  & 0 \\
    0 & 0  & 0 \\
    1 & 0  & 0 \\
  \end{array}
\right)
\end{small}
$ and
$[\phi]=
\begin{small}\left(
  \begin{array}{ccc}
    p\lambda & 0  & 0 \\
    0 & \lambda & 0 \\
    0 & 1 & \lambda \\
  \end{array}
\right)
\end{small}$
for $\lambda$ in $E$.
\item $\mathfrak{L}\in E$, $\val\lambda=\frac{r+s-1}{3}$, and $s\leq 2r-2$.
\end{itemize}
\end{exam}

\begin{prop}\label{class rank1 D3}
\begin{enumerate}
\item $D^{3}_{\rank  N=1}$ represents admissible filtered $(\phi,N)$-modules with $\rank N=1$.
\item The corresponding representations to $D^{3}_{\rank N=1}$ are
     \begin{itemize}
     \item non-split reducible with submodule $E(\baset,\baseth)$ if $s=2r-2$ and
     \item irreducible if $s<2r-2$.
     \end{itemize}
\item $D^{3}_{\rank  N=1}(\lambda,\mfl)$ is isomorphic to $D^{3}_{\rank  N=1}(\lambda',\mfl')$ if and only if $\lambda=\lambda'$ and $\mfl=\mfl'$.
\end{enumerate}
\end{prop}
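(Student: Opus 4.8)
The plan is to follow the template of Propositions \ref{class rank1 D1} and \ref{class rank1 D2}: deduce parts (1) and (2) directly from the case analysis in \ref{rank1second1}, and settle part (3) using the commutant description in Lemma \ref{rank1typeofPsecond}.

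For parts (1) and (2), I would begin from the generic shape recorded in \ref{rank1second1}. There $L_{1}$ is a line in $E(\baset,\baseth)$ other than $E\baseth$, so $L_{1}=E(\baset+a\baseth)$ for some $a\in E$, and $L_{2}=E(\baset+a\baseth,\ \baseo+b\baset+c\baseth)$ with $\baseth\notin L_{2}$. Since $\baset+a\baseth$ spans $L_{1}\subset L_{2}$, I may first replace the second generator by $\baseo+b\baset+c\baseth-b(\baset+a\baseth)=\baseo+(c-ab)\baseth$, which does not alter $L_{2}$. Applying next the change of basis $\baseo\mapsto\baseo$, $\baset\mapsto\baset-a\baseth$, $\baseth\mapsto\baseth$---whose matrix is lower triangular with equal diagonal entries and only $P_{3,2}$ nonzero, hence lies in the commutant of Lemma \ref{rank1typeofPsecond} and fixes both $[\phi]$ and $[N]$---sends $L_{1}$ to $E\baset$ and $L_{2}$ to $E(\baset,\ \baseo+\mfl\baseth)$ with $\mfl=c-ab$. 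This is exactly $D^{3}_{\rank N=1}(\lambda,\mfl)$, so part (1) and the reducibility statements of part (2) follow at once from \ref{rank1second1}, the distinguished submodule for $s=2r-2$ being the invariant subspace $E(\baset,\baseth)$.

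For part (3), suppose $T$ is an isomorphism from $D^{3}_{\rank N=1}(\lambda,\mfl)$ to $D^{3}_{\rank N=1}(\lambda',\mfl')$. Since any isomorphism preserves the Jordan form of $\phi$ and $[\phi]$ depends only on $\lambda$, we get $\lambda=\lambda'$; thus $T$ commutes with $[\phi]$ and $[N]$, and Lemma \ref{rank1typeofPsecond} forces $[T]$ to be lower triangular with $T_{1,1}=T_{2,2}=T_{3,3}$ and $T_{2,1}=T_{3,1}=0$, so its only possible off-diagonal entry is $T_{3,2}$. Preservation of $\fils D=E\baset$ then forces $T_{3,2}=0$, making $T$ a scalar; and once $T$ is a scalar, preservation of $\filr D=E(\baset,\baseo+\mfl\baseth)$ forces $\mfl=\mfl'$ (as $(\mfl-\mfl')\baseth$ would otherwise have to lie in $E\baset$). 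The converse is immediate.

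The bookkeeping in (1) and (2) is routine; the one point to verify carefully is that after clearing the $\baset$-term of the $\baseo$-generator the normalizing map genuinely lies in the commutant of Lemma \ref{rank1typeofPsecond}, so that $[\phi]$ and $[N]$ really are preserved. The decisive step is the rigidity in (3): because the commutant is so small, the single requirement that $\fils D=E\baset$ be preserved already collapses $T$ to a scalar, after which $\mfl$ is a complete isomorphism invariant.
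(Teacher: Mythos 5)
Your proposal is correct and follows essentially the same route as the paper: parts (1) and (2) via the same change of basis $\baseo\mapsto\baseo$, $\baset\mapsto\baset-a\baseth$, $\baseth\mapsto\baseth$ reducing the configuration of \ref{rank1second1} to the normal form $D^{3}_{\rank N=1}$, and part (3) via the commutant description of Lemma \ref{rank1typeofPsecond} plus preservation of the filtration forcing $T$ to be scalar. Your write-up merely makes explicit two steps the paper leaves implicit (clearing the $\baset$-coefficient of the second generator of $L_{2}$, and the fact that preserving $\fils D=E\baset$ alone kills $T_{3,2}$), which is a faithful elaboration rather than a different argument.
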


\begin{proof}
From \ref{rank1second1} if we let $L_{1}=E(\baset+a\baseth)$ and $L_{2}=E(\baset+a\baseth,\baseo+b\baseth)$, then, by change of a basis: $\baseo\mapsto\baseo$, $\baset\mapsto\baset-a\baseth$, and $\baseth\mapsto\baseth$, we get $D^{3}_{\rank N=1}$. So now the part (1) and (2) are immediate from \ref{rank1second1}.

For the part (3), assume that $T$ is an isomorphism from $D^{3}_{\rank  N=1}(\lambda,\mfl)$ to $D^{3}_{\rank  N=1}(\lambda',\mfl')$. Clearly, $\lambda=\lambda'$, and, by lemma \ref{rank1typeofPsecond}, $[T]$ is a lower triangle matrix such that $T_{1,1}=T_{2,2}=T_{3,3}$ and $T_{2,1}=T_{3,1}=0$. Since $T$ preserves the filtration, $[T]$ should be a scalar multiple of the identity, which implies $\mfl=\mfl'$. The converse is clear.
\end{proof}

The following example arises from \ref{rank1second2}.
\begin{exam}
A filtered $(\phi,N)$-module of Hodge type $(0,r,s)$
$$D^{4}_{\rank  N=1}=D^{4}_{\rank  N=1}(\lambda,\mathfrak{L});$$
\begin{itemize}
\item $\fil{r} D=E(\baseo+\mathfrak{L}\baseth,\baset)$ and $\fil{s} D=E(\baseo+\mathfrak{L}\baseth)$.
\item
$[N]=
\begin{small}\left(
  \begin{array}{ccc}
    0 & 0  & 0 \\
    0 & 0  & 0 \\
    1 & 0  & 0 \\
  \end{array}
\right)
\end{small}
$ and
$[\phi]=
\begin{small}\left(
  \begin{array}{ccc}
    p\lambda & 0  & 0 \\
    0 & \lambda & 0 \\
    0 & 1 & \lambda \\
  \end{array}
\right)
\end{small}
$ for $\lambda$ in $E$.
\item $\mfl\in E$, $\val\lambda=\frac{r+s-1}{3}$, and $s\leq 2r+1$.
\end{itemize}
\end{exam}

\begin{prop} \label{class rank1 D4}
\begin{enumerate}
\item $D^{4}_{\rank  N=1}$ represents admissible filtered $(\phi,N)$-modules with $\rank N=1$.
\item The corresponding representations to $D^{4}_{\rank N=1}$ are
     \begin{itemize}
     \item non-split reducible with submodule $E(\baseo,\baseth)$ if $s=2r+1$ and
     \item irreducible if $s<2r+1$.
     \end{itemize}
\item $D^{4}_{\rank  N=1}(\lambda,\mfl)$ is isomorphic to $D^{4}_{\rank  N=1}(\lambda',\mfl')$ if and only if $\lambda=\lambda'$ and $\mfl=\mfl'$.
\end{enumerate}
\end{prop}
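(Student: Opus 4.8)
The plan is to follow the template already used for Propositions~\ref{class rank1 D1} and~\ref{class rank1 D3}, since the present case is of exactly the same flavor: the admissibility condition and the submodule structure have already been recorded in subsection~\ref{rank1second2}, so the real work is (a) to exhibit a change of basis commuting with $\phi$ and $N$ that carries an arbitrary module of this type into the normal form $D^{4}_{\rank N=1}(\lambda,\mfl)$, and (b) to read off exactly which parameters survive as isomorphism invariants.

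For parts~(1) and~(2) I would begin from the data in \ref{rank1second2}. There the standing hypotheses are that $L_{1}\subset E(\baseo,\baseth)$ while $L_{1}\not\subset E(\baset,\baseth)$, and that $\baseth\notin L_{2}$. The first two conditions force $L_{1}=E(\baseo+b\baseth)$ after scaling, and the last forces a second generator of the shape $\baset+c\baseth$, so $L_{2}=E(\baseo+b\baseth,\baset+c\baseth)$. I would then apply the basis change $\baseo\mapsto\baseo$, $\baset\mapsto\baset-c\baseth$, $\baseth\mapsto\baseth$; its matrix is lower triangular with equal diagonal entries and $P_{2,1}=P_{3,1}=0$, hence commutes with $[\phi]$ and $[N]$ by Lemma~\ref{rank1typeofPsecond} and leaves their presentations unchanged. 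Under it $\baset+c\baseth\mapsto\baset$ while $\baseo+b\baseth$ is fixed, so the filtration becomes $\fil{r}D=E(\baseo+\mfl\baseth,\baset)$ and $\fil{s}D=E(\baseo+\mfl\baseth)$ with $\mfl=b$. This is precisely $D^{4}_{\rank N=1}(\lambda,\mfl)$, and parts~(1) and~(2) are then immediate consequences of what was established in \ref{rank1second2}.

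For part~(3) I would argue as in Proposition~\ref{class rank1 D3}. An isomorphism $T\colon D^{4}_{\rank N=1}(\lambda,\mfl)\to D^{4}_{\rank N=1}(\lambda',\mfl')$ preserves the Jordan form of $\phi$, so $\lambda=\lambda'$, and by Lemma~\ref{rank1typeofPsecond} its matrix is lower triangular with $T_{1,1}=T_{2,2}=T_{3,3}=a$ and $T_{2,1}=T_{3,1}=0$. Compatibility with $\fil{s}$ is the decisive constraint: since $T_{3,1}=0$ one computes $T(\baseo+\mfl\baseth)=a(\baseo+\mfl\baseth)$, and requiring this to lie in $\fil{s}D'=E(\baseo+\mfl'\baseth)$ forces $\mfl=\mfl'$. (The remaining entry $T_{3,2}$ is then pinned to $0$ by compatibility with $\fil{r}$, so in fact every such isomorphism is a scalar, but only $\mfl=\mfl'$ is needed for the statement.) The converse direction is clear.

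I do not anticipate a genuine obstacle here. The only point requiring care is checking that the normalizing substitution really lands in the stated normal form without disturbing $[\phi]$ or $[N]$, and this is guaranteed by Lemma~\ref{rank1typeofPsecond}, which exactly delimits the available freedom: a single off-diagonal entry $P_{3,2}$ together with an overall scalar. The conceptual content is simply that, once this freedom has been used to clear $c$, the residual parameter $b=\mfl$ persists as an honest invariant, which is exactly what the $\fil{s}$ computation in part~(3) confirms.
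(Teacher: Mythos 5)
Your proposal is correct and takes essentially the same route as the paper: both reduce the configuration of \ref{rank1second2} to the normal form via the change of basis $\baset\mapsto\baset-c\baseth$ (allowed because it commutes with $[\phi]$ and $[N]$ by Lemma \ref{rank1typeofPsecond}), and both prove part (3) from Lemma \ref{rank1typeofPsecond} plus preservation of the filtration. The only cosmetic difference is the order in part (3): you extract $\mfl=\mfl'$ directly from preservation of $\fil{s}D$ and then pin $T_{3,2}=0$ using $\fil{r}D$, whereas the paper (following Proposition \ref{class rank1 D3}) first concludes $[T]$ is scalar and then deduces $\mfl=\mfl'$; the two are equivalent.
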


\begin{proof}
From \ref{rank1second2}, if we let $L_{1}=E(\baseo+a\baseth)$ and $L_{2}=E(\baseo+a\baseth,\baset+b\baseth)$, then, by change of a basis: $\baseo\mapsto\baseo$, $\baset\mapsto\baset-b\baseth$, and $\baseth\mapsto\baseth$, we get $D^{4}_{\rank N=1}$. So now the part (1) and (2) are immediate from \ref{rank1second2}. For the part (3), use the same argument as in Proposition \ref{class rank1 D3}.
\end{proof}

The following example arises from \ref{rank1second3}.
\begin{exam}
A filtered $(\phi,N)$-module of Hodge type $(0,r,s)$
$$D^{5}_{\rank  N=1}=D^{5}_{\rank  N=1}(\lambda,\mfl);$$
\begin{itemize}
\item $\fil{r} D=E(\baseo+\mathfrak{L}\baset,\baseth)$ and $\fil{s} D=E(\baseo+\mathfrak{L}\baset)$.
\item
$[N]=
\begin{small}\left(
  \begin{array}{ccc}
    0 & 0  & 0 \\
    0 & 0  & 0 \\
    1 & 0  & 0 \\
  \end{array}
\right)
\end{small}
$ and
$[\phi]=
\begin{small}\left(
  \begin{array}{ccc}
    p\lambda & 0  & 0 \\
    0 & \lambda & 0 \\
    0 & 1 & \lambda \\
  \end{array}
\right)
\end{small}
$ for $\lambda$ in $E$.
\item $\mfl\in E^{\times}$, $\val\lambda=\frac{r+s-1}{3}$, and $s\geq 2r+1$.
\end{itemize}
\end{exam}

\begin{prop} \label{class rank1 D5}
\begin{enumerate}
\item $D^{5}_{\rank  N=1}$ represents admissible filtered $(\phi,N)$-modules with $\rank N=1$.
\item The corresponding representations to $D^{5}_{\rank N=1}$ are
     \begin{itemize}
     \item non-split reducible with submodule $E\baseth$ if $s=2r+1$ and
     \item irreducible if $s>2r+1$.
     \end{itemize}
\item $D^{5}_{\rank  N=1}(\lambda,\mfl)$ is isomorphic to $D^{5}_{\rank  N=1}(\lambda',\mfl')$ if and only if $\lambda=\lambda'$ and $\mfl=\mfl'$.
\end{enumerate}
\end{prop}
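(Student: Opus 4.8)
The plan is to follow exactly the template of Propositions \ref{class rank1 D3} and \ref{class rank1 D4}: produce a normal form for the pairs $(L_{1},L_{2})$ coming from \ref{rank1second3} by a change of basis that centralizes both $\phi$ and $N$, and then read off admissibility, reducibility, and the isomorphism classes. First I would record the generic shape of $L_{1}$ and $L_{2}$. By Lemma \ref{rank1invariantsecond} the proper nonzero $\phi$- and $N$-invariant subspaces are $E\baseth$, $E(\baseo,\baseth)$, and $E(\baset,\baseth)$; the hypothesis of \ref{rank1second3} that $L_{1}$ lies in none of them forces $L_{1}=E(\baseo+a\baset+b\baseth)$ with $a\neq 0$, and the hypothesis $\baseth\in L_{2}$ then gives $L_{2}=E(\baseo+a\baset,\baseth)$.

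Next I would normalize. The change of basis $\baseo\mapsto\baseo$, $\baset\mapsto\baset-\tfrac{b}{a}\baseth$, $\baseth\mapsto\baseth$ sends $\baseo+a\baset+b\baseth$ to $\baseo+a\baset$ and fixes $\baseth$, so it carries $(L_{1},L_{2})$ to $\bigl(E(\baseo+\mfl\baset),\,E(\baseo+\mfl\baset,\baseth)\bigr)$ with $\mfl=a$; its matrix is lower triangular of the form described in Lemma \ref{rank1typeofPsecond}, hence it commutes with $[\phi]$ and $[N]$ and does not disturb their matrix presentations. This yields $D^{5}_{\rank N=1}(\lambda,\mfl)$ and makes parts (1) and (2) immediate from the analysis in \ref{rank1second3}. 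The one point to flag is that the condition $a\neq 0$ (i.e. that $L_{1}$ avoids every invariant subspace) is precisely what forces $\mfl\in E^{\times}$ rather than $\mfl\in E$; this is the feature distinguishing $D^{5}_{\rank N=1}$ from $D^{4}_{\rank N=1}$.

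For part (3), suppose $T\colon D^{5}_{\rank N=1}(\lambda,\mfl)\to D^{5}_{\rank N=1}(\lambda',\mfl')$ is an isomorphism. Since $T$ intertwines the two Frobenius maps it preserves their eigenvalues, forcing $\lambda=\lambda'$. By Lemma \ref{rank1typeofPsecond}, $[T]$ is lower triangular with $T_{1,1}=T_{2,2}=T_{3,3}$ and $T_{2,1}=T_{3,1}=0$, so its only freedom besides scaling is the entry $T_{3,2}$. Applying $[T]$ to the generator $\baseo+\mfl\baset$ of $\fil{s}D$ produces a $\baseth$-component equal to $\mfl\,T_{3,2}$; since $T$ must send $\fil{s}D$ into $E(\baseo+\mfl'\baset)$, this component vanishes, and because $\mfl\neq 0$ we get $T_{3,2}=0$. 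Hence $[T]$ is a scalar matrix, and comparing $\fil{s}D$ on the two sides gives $\mfl=\mfl'$; the converse is clear. The only genuinely delicate step is this last one --- checking that preservation of the filtration, combined with $\mfl\in E^{\times}$, kills $T_{3,2}$ --- but it is the same mechanism already used in Propositions \ref{class rank1 D3} and \ref{class rank1 D4}, and I expect no real obstacle beyond bookkeeping.
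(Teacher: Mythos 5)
Your proposal is correct and takes essentially the same route as the paper: the same normal form for $(L_{1},L_{2})$ from \ref{rank1second3}, the same change of basis $\baseo\mapsto\baseo$, $\baset\mapsto\baset-\tfrac{b}{a}\baseth$, $\baseth\mapsto\baseth$ (commuting with $[\phi]$ and $[N]$ by Lemma \ref{rank1typeofPsecond}), and the same rigidity argument for part (3). The only difference is that you spell out the step the paper delegates to Proposition \ref{class rank1 D3} --- namely that preservation of $\fil{s}D$ together with $\mfl\in E^{\times}$ forces $T_{3,2}=0$, hence $[T]$ scalar --- which is a welcome clarification but not a new idea.
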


\begin{proof}
From \ref{rank1second3}, if we let $L_{1}=E(\baseo+a\baset+b\baseth)$ with $a\not=0$ and $L_{2}=E(\baseo+a\baset+b\baseth,\baseth)$, then, by change of a basis: $\baseo\mapsto\baseo$, $\baset\mapsto\baset-\frac{b}{a}\baseth$, and $\baseth\mapsto\baseth$, we get $D^{5}_{\rank N=1}$. So now the part (1) and (2) are immediate from \ref{rank1second3}. For the part (3), use the same argument as in Proposition \ref{class rank1 D3}.
\end{proof}

The following example arises from \ref{rank1second4}.
\begin{exam}
A filtered $(\phi,N)$-module of Hodge type $(0,r,s)$
$$D^{6}_{\rank  N=1}=D^{6}_{\rank  N=1}(\lambda,\mflo,\mflt);$$
\begin{itemize}
\item $\fil{r} D=E(\baseo+\mflo\baset,\baset+\mflt\baseth)$ and $\fil{s} D=E(\baseo+\mflo\baset)$.
\item
$[N]=
\begin{small}\left(
  \begin{array}{ccc}
    0 & 0  & 0 \\
    0 & 0  & 0 \\
    1 & 0  & 0 \\
  \end{array}
\right)
\end{small}
$ and
$[\phi]=
\begin{small}\left(
  \begin{array}{ccc}
    p\lambda & 0  & 0 \\
    0 & \lambda & 0 \\
    0 & 1 & \lambda \\
  \end{array}
\right)
\end{small}
$ for $\lambda$ in $E$.
\item $\mathfrak{L}_{1}\in E^{\times}$, $\mathfrak{L}_{2}\in E$, and $\val\lambda=\frac{r+s-1}{3}$.
\end{itemize}
\end{exam}

\begin{prop} \label{class rank1 D6}
\begin{enumerate}
\item $D^{6}_{\rank  N=1}$ represents admissible filtered $(\phi,N)$-modules with $\rank N=1$.
\item The corresponding representations to $D^{6}_{\rank  N=1}$ are irreducible.
\item $D^{6}_{\rank  N=1}(\lambda,\mflo,\mflt)$ is isomorphic to $D^{6}_{\rank  N=1}(\lambda',\mflo',\mflt')$ if and only if $\lambda=\lambda'$, $\mflo=\mflo'$, and $\mflt=\mflt'$.
\end{enumerate}
\end{prop}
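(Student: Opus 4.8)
The plan is to follow the template of Proposition \ref{class rank1 D5}: reduce the arbitrary configuration described in \ref{rank1second4} to the normal form $D^{6}_{\rank N=1}$ by a base change lying in the joint centralizer of $[\phi]$ and $[N]$, so that parts (1) and (2) are inherited directly from \ref{rank1second4}, and then read off $\lambda$, $\mflo$, $\mflt$ as isomorphism invariants to establish part (3). Throughout I use that $K_{0}=\QP$, so $\phi$ is $E$-linear and any isomorphism preserves the eigenvalues of $\phi$.

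For parts (1) and (2), I would begin with an arbitrary module of the type in \ref{rank1second4}. There $L_{1}$ is a line contained in no $\phi$- and $N$-invariant subspace, so by Lemma \ref{rank1invariantsecond} it avoids $E\baseth$, $E(\baseo,\baseth)$ and $E(\baset,\baseth)$; hence $L_{1}=E(\baseo+a\baset+b\baseth)$ with $a\neq0$ (and nonzero $\baseo$-coordinate, which I normalize to $1$). Since $L_{1}\subset L_{2}$, $\Dim{E}L_{2}=2$, and $\baseth\notin L_{2}$, a second generator can be chosen with vanishing $\baseo$-coordinate and nonzero $\baset$-coordinate, giving $L_{2}=E(\baseo+a\baset+b\baseth,\;\baset+c\baseth)$. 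By Lemma \ref{rank1typeofPsecond} the invertible matrices commuting with both $[\phi]$ and $[N]$ are exactly the lower-triangular ones with equal diagonal entries $\alpha\in E^{\times}$ and only the $(3,2)$-entry $\gamma$ otherwise nonzero. Applying such a map with $\gamma=-b\alpha/a$ annihilates the $\baseth$-coordinate of the generator of $L_{1}$ and carries $(L_{1},L_{2})$ to the stated normal form, with $\mflo=a\in E^{\times}$ and $\mflt=c-b/a\in E$. Because this base change fixes $[\phi]$ and $[N]$, admissibility and irreducibility are precisely those proved in \ref{rank1second4}, yielding (1) and (2).

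For part (3), let $T$ be an isomorphism from $D^{6}_{\rank N=1}(\lambda,\mflo,\mflt)$ to $D^{6}_{\rank N=1}(\lambda',\mflo',\mflt')$. Preservation of eigenvalues forces $\lambda=\lambda'$, so $[\phi]=[\phi']$, $[N]=[N']$, and by Lemma \ref{rank1typeofPsecond} the matrix $[T]$ again has diagonal $\alpha$ and single off-diagonal entry $\gamma$ in the $(3,2)$-slot. Computing $T(\baseo+\mflo\baset)=\alpha\baseo+\alpha\mflo\baset+\gamma\mflo\baseth$ and demanding that it lie in $\fil{s}D'=E(\baseo+\mflo'\baset)$ forces the $\baseth$-coordinate $\gamma\mflo$ to vanish; since $\mflo\in E^{\times}$ this gives $\gamma=0$, hence $[T]=\alpha I$ and $\mflo=\mflo'$. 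With $T$ now a scalar, comparing $\fil{r}D$ with $\fil{r}D'$ forces $\mflt=\mflt'$. The converse is immediate, the identity serving as an isomorphism when all three parameters agree.

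The only delicate point is the rigidity underlying part (3): the joint centralizer is merely two-dimensional, consisting of scalars together with a single nilpotent direction in the $(3,2)$-slot. The crux is that the hypothesis $\mflo\neq0$ is exactly what forces this nilpotent parameter to act trivially on $\fil{s}D$, so that once $L_{1}$ has been normalized there is no freedom left to move $\mflt$. Thus $\mflt$ survives as a genuine modulus rather than being absorbed into an automorphism, which is precisely what makes $D^{6}_{\rank N=1}$ an honest two-parameter family; checking this rigidity carefully is the main thing to get right.
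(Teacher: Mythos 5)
Your proposal is correct and follows essentially the same route as the paper: normalize $(L_{1},L_{2})$ from \ref{rank1second4} by the base change $\baset\mapsto\baset-\frac{b}{a}\baseth$ (which commutes with $[\phi]$ and $[N]$ by Lemma \ref{rank1typeofPsecond}), inherit (1) and (2), and then use Lemma \ref{rank1typeofPsecond} plus filtration-preservation to force any isomorphism to be scalar for (3). Your explicit observation that $\mflo\neq0$ is what kills the $(3,2)$-entry $\gamma$ is exactly the rigidity the paper invokes implicitly by citing the argument of Proposition \ref{class rank1 D3}.
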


\begin{proof}
From \ref{rank1second4}, if we let $L_{1}=E(\baseo+a\baset+b\baseth)$ with $a\not=0$ and $L_{2}=E(\baseo+a\baset+b\baseth,\baset+c\baseth)$, then, by change of a basis: $\baseo\mapsto\baseo$, $\baset\mapsto\baset-\frac{b}{a}\baseth$, and $\baseth\mapsto\baseth$, we get $D^{6}_{\rank N=1}$. So now the part (1) and (2) are immediate from \ref{rank1second4}. For the part (3), use the same argument as in Proposition \ref{class rank1 D3}.
\end{proof}

The following example arises from \ref{rank1third1}.
\begin{exam}
A filtered $(\phi,N)$-module of Hodge type $(0,r,s)$
$$D^{7}_{\rank  N=1}=D^{7}_{\rank  N=1}(\lambda,\mfl);$$
\begin{itemize}
\item $\fil{r} D=E(\baseo+\mfl\baseth,\baset)$ and $\fil{s} D=E(\baseo+\mfl\baseth)$.
\item
$[N]=
\begin{small}\left(
  \begin{array}{ccc}
    0 & 0  & 0 \\
    0 & 0  & 0 \\
    1 & 0  & 0 \\
  \end{array}
\right)
\end{small}
$ and
$[\phi]=
\begin{small}\left(
  \begin{array}{ccc}
    p\lambda & 0  & 0 \\
    0 & p\lambda & 0 \\
    0 & 0 & \lambda \\
  \end{array}
\right)
\end{small}
$ for $\lambda$ in $E$.
\item $\mfl\in E$, $\val\lambda=\frac{r+s-2}{3}$, and $s=2r-1$.
\end{itemize}
\end{exam}

\begin{prop} \label{class rank1 D7}
\begin{enumerate}
\item $D^{7}_{\rank  N=1}$ represents admissible filtered $(\phi,N)$-modules with $\rank N=1$.
\item The corresponding representations to $D^{7}_{\rank  N=1}$ are decomposable with submodules $E\baset$ and $E(\baseo,\baseth)$.
\item $D^{7}_{\rank  N=1}(\lambda,\mfl)$ is isomorphic to $D^{7}_{\rank  N=1}(\lambda',\mfl')$ if and only if $\lambda=\lambda'$ and $\mfl=\mfl'$.
\end{enumerate}
\end{prop}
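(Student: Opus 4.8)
The plan is to follow the two-step template already used in Propositions \ref{class rank1 D1}--\ref{class rank1 D6}: first I would normalize the filtration of a general admissible module from \ref{rank1third1} into the standard shape of $D^{7}_{\rank N=1}$ by a base change that fixes the presentations of $[\phi]$ and $[N]$, which delivers parts (1) and (2) with no new admissibility computation; then I would exploit the explicit centralizer of Lemma \ref{rank1typeofPthird} to determine the isomorphism class in part (3).

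For the normalization I would start from the generic filtration in case \ref{rank1third1}. Since there $L_{1}\not\subset E(\baset,\baseth)$ and $\baset\in L_{2}$, I can take $L_{1}=E(\baseo+a\baset+b\baseth)$ and $L_{2}=E(\baseo+a\baset+b\baseth,\baset)$ for suitable $a,b\in E$. I would then apply the change of basis $\baseo\mapsto\baseo-a\baset$, $\baset\mapsto\baset$, $\baseth\mapsto\baseth$; a direct check shows it carries $L_{1}$ to $E(\baseo+b\baseth)$ and $L_{2}$ to $E(\baseo+b\baseth,\baset)$, which is exactly the filtration of $D^{7}_{\rank N=1}$ with $\mfl=b$. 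The point is that the matrix of this base change is lower triangular with equal $(1,1)$ and $(3,3)$ entries and vanishing $(3,1)$ and $(3,2)$ entries, hence commutes with both $[\phi]$ and $[N]$ by Lemma \ref{rank1typeofPthird} and leaves their presentations unchanged. Consequently the admissibility conclusion ($s=2r-1$) and the submodule structure from \ref{rank1third1} transport verbatim, giving decomposability with submodules $E\baset$ and the $\phi$- and $N$-invariant plane $E(\baseo,\baseth)$, which is parts (1) and (2).

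For part (3) I would take an isomorphism $T$ from $D^{7}_{\rank N=1}(\lambda,\mfl)$ to $D^{7}_{\rank N=1}(\lambda',\mfl')$. Since an isomorphism preserves the Jordan form of $\phi$, at once $\lambda=\lambda'$. By Lemma \ref{rank1typeofPthird} the matrix $[T]$ is lower triangular with $T_{1,1}=T_{3,3}$ and $T_{3,1}=T_{3,2}=0$, so the only possibly nonzero off-diagonal entry is $T_{2,1}$. Requiring $T$ to send $\fil{s} D=E(\baseo+\mfl\baseth)$ to $\fil{s} D'=E(\baseo+\mfl'\baseth)$, I would compute $T(\baseo+\mfl\baseth)$ and note that the absence of a $\baset$-component forces $T_{2,1}=0$, so $[T]$ is diagonal; comparing the surviving $\baseo$- and $\baseth$-coefficients then gives $\mfl=\mfl'$. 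The converse is immediate.

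The only point demanding genuine care, and the step I expect to be the mild obstacle, is verifying that the normalizing base change indeed lies in the centralizer of Lemma \ref{rank1typeofPthird}: this is precisely what makes parts (1) and (2) follow \emph{immediately} rather than requiring a fresh admissibility check, and it is also the rigidity that, in part (3), pins down $[T]$ tightly enough to separate the parameter $\mfl$.
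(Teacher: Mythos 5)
Your proposal is correct and follows essentially the same route as the paper's own proof: the identical normalization $L_{1}=E(\baseo+a\baset+b\baseth)$, $L_{2}=E(\baseo+a\baset+b\baseth,\baset)$ with the base change $\baseo\mapsto\baseo-a\baset$ (which lies in the centralizer of Lemma \ref{rank1typeofPthird}, so parts (1) and (2) transport from \ref{rank1third1}), followed by the same rigidity argument forcing $[T]$ to be diagonal and hence $\mfl=\mfl'$. The extra details you supply (the explicit centralizer check and the computation of $T(\baseo+\mfl\baseth)$) are exactly the steps the paper leaves implicit.
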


\begin{proof}
From \ref{rank1third1}, if we let $L_{1}=E(\baseo+a\baset+b\baseth)$ and $L_{2}=E(\baseo+a\baset+b\baseth,\baset)$, then, by change of a basis: $\baseo\mapsto\baseo-a\baset$, $\baset\mapsto\baset$, and $\baseth\mapsto\baseth$, we get $D^{7}_{\rank N=1}$. So now the part (1) and (2) are immediate from \ref{rank1third1}.

For the part (3), assume that $T$ is an isomorphism from $D^{7}_{\rank  N=1}(\lambda,\mfl)$ to $D^{7}_{\rank  N=1}(\lambda',\mfl')$. Clearly, $\lambda=\lambda'$, and, by lemma \ref{rank1typeofPthird}, $[T]$ is a lower triangle matrix such that $T_{1,1}=T_{3,3}$ and $T_{3,1}=T_{3,2}=0$. Since $T$ preserves the filtration, $[T]$ should be a diagonal matrix with $T_{1,1}=T_{3,3}$, which implies $\mfl=\mfl'$. The converse is clear.
\end{proof}

The following example arises from \ref{rank1third2}.
\begin{exam}
A filtered $(\phi,N)$-module of Hodge type $(0,r,s)$
$$D^{8}_{\rank  N=1}=D^{8}_{\rank  N=1}(\lambda,[\mflo:\mflt]);$$
\begin{itemize}
\item $\fil{r} D=E(\baseo,\baset+\baseth)$ and
$\fil{s} D=E(\mflo\baseo+\mflt(\baset+\baseth))$
\item $
[N]=
\begin{small}\left(
  \begin{array}{ccc}
    0 & 0  & 0 \\
    0 & 0  & 0 \\
    1 & 0  & 0 \\
  \end{array}
\right)
\end{small}
$ and $
[\phi]=
\begin{small}
\left(
  \begin{array}{ccc}
    p\lambda & 0  & 0 \\
    0 & p\lambda & 0 \\
    0 & 0 & \lambda \\
  \end{array}
\right)
\end{small}
$ for $\lambda$ in $E$.
\item $[\mflo:\mflt]\in \mathbb{P}^{1}(E)$, $\val\lambda=\frac{r+s-2}{3}$, and $s\leq 2r-1$.
\end{itemize}
\end{exam}

\begin{prop} \label{class rank1 D8}
\begin{enumerate}
\item $D^{8}_{\rank  N=1}$ represents admissible filtered $(\phi,N)$-modules with $\rank N=1$.
\item The corresponding representations to $D^{8}_{rank{}N=1}$ are
     \begin{itemize}
     \item non-split reducible with submodule $E(\mflo\baseo+\mflt\baset,\baseth)$ if $s=2r-1$ and
     \item irreducible if $s<2r-1$.
     \end{itemize}
\item $D^{8}_{\rank  N=1}(\lambda,[\mflo:\mflt])$ is isomorphic to $D^{8}_{\rank  N=1}(\lambda',[\mflo':\mflt'])$ if and only if $\lambda=\lambda'$ and $[\mflo:\mflt]=[\mflo':\mflt']$.
\end{enumerate}
\end{prop}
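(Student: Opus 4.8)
The plan is to follow the template used for the earlier members of this section: parts (1) and (2) will be read off from the analysis in \ref{rank1third2} after an explicit filtration-compatible change of basis, while part (3) will come from Lemma \ref{rank1typeofPthird} together with the requirement that an isomorphism preserve the Hodge filtration. The one new feature relative to $D^{7}_{\rank N=1}$ is that the parameter now lives in $\mathbb{P}^{1}(E)$, so the real content is to verify that this projective class is a complete invariant.

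First I would record the shape of the data in \ref{rank1third2}. There $L_{2}$ is non-invariant with $\baset,\baseth\not\in L_{2}$, so $L_{2}$ meets $E(\baset,\baseth)$ in a single line, which up to scaling is $E(\baset+e\baseth)$ with $e\neq0$; reducing the other generator modulo this line gives $L_{2}=E(\baseo+f\baset,\baset+e\baseth)$, and $L_{1}$ is then an arbitrary line in the plane $L_{2}$. I would then apply the automorphism of Lemma \ref{rank1typeofPthird} given by $\baseo\mapsto\baseo-ef\baset$, $\baset\mapsto e\baset$, $\baseth\mapsto\baseth$: it fixes $[\phi]$ and $[N]$, sends $\baset+e\baseth$ to a multiple of $\baset+\baseth$ and $\baseo+f\baset$ to a multiple of $\baseo$, hence carries $L_{2}$ to $E(\baseo,\baset+\baseth)$ and $L_{1}$ to $E(\mflo\baseo+\mflt(\baset+\baseth))$ for a unique $[\mflo:\mflt]\in\mathbb{P}^{1}(E)$. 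Parts (1) and (2) are then immediate from \ref{rank1third2}: admissibility holds exactly when $s\leq2r-1$, and the unique two-dimensional $(\phi,N)$-invariant subspace containing $L_{1}$, namely $E(\mflo\baseo+\mflt\baset,\baseth)$ by Lemma \ref{rank1invariantthrid}, is the submodule appearing when $s=2r-1$.

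For part (3), let $T$ be an isomorphism $D^{8}_{\rank N=1}(\lambda,[\mflo:\mflt])\to D^{8}_{\rank N=1}(\lambda',[\mflo':\mflt'])$. Intertwining the Frobenii forces $\lambda=\lambda'$, and by Lemma \ref{rank1typeofPthird} the matrix $[T]$ is lower triangular with $T_{1,1}=T_{3,3}$ and $T_{3,1}=T_{3,2}=0$. The key observation is that $\fil{r}D=E(\baseo,\baset+\baseth)$ is independent of the parameter, so $T$ must stabilize this plane; imposing $T(E(\baseo,\baset+\baseth))=E(\baseo,\baset+\baseth)$ on a matrix of this shape forces $T_{2,1}=0$ and $T_{1,1}=T_{2,2}$, i.e.\ $[T]$ is a scalar. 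Since a scalar fixes every line of $L_{2}$, comparing $\fil{s}D$ then yields $[\mflo:\mflt]=[\mflo':\mflt']$; the converse is clear.

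The step that must be handled with care is precisely this stabilizer computation, because it is what guarantees that $\mathbb{P}^{1}(E)$ is the honest moduli with no further identifications. This is in contrast with the crystalline module $D_{cris}^{26}$ of Proposition \ref{class crys D26}, where interchanging equal-valuation eigenvalues imposed relations such as $\mfl\mfl'=1$ on the parameter. No such collapsing occurs here: although $\phi$ has the repeated eigenvalue $p\lambda$ on $E(\baseo,\baset)$, the two directions are distinguished by the monodromy operator, since $N\baseo=\baseth\neq0$ while $N\baset=0$. Consequently Lemma \ref{rank1typeofPthird} admits no automorphism interchanging $\baseo$ and $\baset$, the stabilizer of the normalized plane collapses to the scalars, and the projective parameter survives intact as a complete invariant.
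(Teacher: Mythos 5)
Your proposal is correct and follows essentially the same route as the paper: normalize the filtration by a change of basis of the shape permitted by Lemma \ref{rank1typeofPthird} (so that $[\phi]$ and $[N]$ are unchanged), read off parts (1) and (2) from \ref{rank1third2}, and for part (3) use that any intertwiner preserving $\fil{r}D=E(\baseo,\baset+\baseth)$ must be a scalar, so the projective parameter is a complete invariant. The only cosmetic difference is organizational: you normalize $L_{2}$ uniformly and then let $L_{1}$ produce $[\mflo:\mflt]$, whereas the paper treats the cases $[\mflo:\mflt]\neq[0:1]$ and $[\mflo:\mflt]=[0:1]$ separately with two changes of basis.
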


\begin{proof}
From \ref{rank1third2}, we may let either $L_{1}=E(\baseo+a\baset+b\baseth)$ or $L_{1}=E(\baset+c\baseth)$ with $c\neq0$. If $L_{1}=E(\baseo+a\baset+b\baseth)$, then we may let $L_{2}=E(\baseo+a\baset+b\baseth,\baset+d\baseth)$ with $d\not=0$ and so, by change of a basis: $\baseo\mapsto\baseo+(b-ad)\baset$, $\baset\mapsto d\baset$, and $\baseth\mapsto\baseth$, we get $D^{8}_{\rank  N=1}$ for $[\mflo:\mflt]\neq[0:1]$. If $L_{1}=E(\baset+c\baseth)$ with $c\not=0$, then we may let $L_{2}=E(\baset+c\baseth,\baseo+f\baseth)$, and so, by change of a basis: $\baseo\mapsto\baseo+f\baset$, $\baset\mapsto c\baset$, and $\baseth\mapsto\baseth$, we get the other part of $D^{8}_{\rank  N=1}$. So now the part (1) and (2) are immediate from \ref{rank1third2}. For the part (3), use the same argument as in Proposition \ref{class rank1 D7}
\end{proof}

The following example arises from \ref{rank1fourth1}.
\begin{exam}
A filtered $(\phi,N)$-module of Hodge type $(0,r,s)$
$$D^{9}_{\rank  N=1}=D^{9}_{\rank  N=1}(\lambda,\mfl);$$
\begin{itemize}
\item $\fil{r} D=E(\baset+\mfl\baseth,\baseo)$ and $\fil{s} D=E(\baset+\mfl\baseth)$.
\item $
[N]=
\begin{small}\left(
  \begin{array}{ccc}
    0 & 0  & 0 \\
    0 & 0  & 0 \\
    1 & 0  & 0 \\
  \end{array}
\right)
\end{small}
$ and $
[\phi]=
\begin{small}\left(
  \begin{array}{ccc}
    p\lambda & 0  & 0 \\
    1 & p\lambda & 0 \\
    0 & 0 & \lambda \\
  \end{array}
\right)
\end{small}
$ for $\lambda$ in $E$.
\item $\mfl\in E^{\times}$, $\val\lambda=\frac{r+s-2}{3}$, and $s\leq 2r-1$.
\end{itemize}
\end{exam}

\begin{prop} \label{class rank1 D9}
\begin{enumerate}
\item $D^{9}_{\rank  N=1}$ represents admissible filtered $(\phi,N)$-modules with $\rank N=1$.
\item The corresponding representations to $D^{9}_{\rank N=1}$ are
     \begin{itemize}
     \item non-split reducible with submodule $E(\baset,\baseth)$ if $s=2r-1$ and
     \item irreducible if $s<2r-1$.
     \end{itemize}
\item $D^{9}_{\rank  N=1}(\lambda,\mfl)$ is isomorphic to $D^{9}_{\rank  N=1}(\lambda',\mfl')$ if and only if $\lambda=\lambda'$ and $\mfl=\mfl'$.
\end{enumerate}
\end{prop}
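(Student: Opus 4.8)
The plan is to follow exactly the three-step template already used for Propositions \ref{class rank1 D3}--\ref{class rank1 D8}: first normalize an arbitrary module arising in case \ref{rank1fourth1} to the displayed form by a base change that commutes with $[\phi]$ and $[N]$ (hence leaves both matrices unchanged), then read off parts (1) and (2) directly from the admissibility analysis already carried out in \ref{rank1fourth1}, and finally determine the isomorphisms by combining Lemma \ref{rank1typeofPfourth} with the filtration.

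For parts (1) and (2) I would start from the generic configuration of \ref{rank1fourth1}, where neither $L_1$ nor $L_2$ is $\phi$- and $N$-invariant and $L_1\subset E(\baset,\baseth)$. Since $E\baset$ and $E\baseth$ are the only invariant lines inside $E(\baset,\baseth)$ by Lemma \ref{rank1invariantfourth}, and $L_1$ is not invariant, after scaling I may write $L_1=E(\baset+\mfl\baseth)$ with $\mfl\in E^{\times}$. Because $\baset,\baseth\notin L_2$, the second generator of $L_2$ has nonzero $\baseo$-component, and subtracting a multiple of $\baset+\mfl\baseth$ I may take $L_2=E(\baset+\mfl\baseth,\ \baseo+a\baseth)$ for some $a\in E$. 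I then apply the base change $\baseo\mapsto\baseo+\tfrac{a}{\mfl}\baset$, $\baset\mapsto\baset$, $\baseth\mapsto\baseth$: this sends $\baseo+a\baseth$ to $\baseo+\tfrac{a}{\mfl}(\baset+\mfl\baseth)$, so it carries the pair to $\fil{s}D=E(\baset+\mfl\baseth)$ and $\fil{r}D=E(\baseo,\baset+\mfl\baseth)$, i.e. to $D^{9}_{\rank N=1}(\lambda,\mfl)$. Its matrix has the shape of Lemma \ref{rank1typeofPfourth} (only the $(2,1)$-entry is off the diagonal), so it fixes $[\phi]$ and $[N]$ and also fixes the submodule $E(\baset,\baseth)$; hence parts (1) and (2) follow immediately from \ref{rank1fourth1}, which already gives admissibility and the dichotomy non-split reducible ($s=2r-1$) versus irreducible ($s<2r-1$).

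For part (3), suppose $T$ is an isomorphism $D^{9}_{\rank N=1}(\lambda,\mfl)\to D^{9}_{\rank N=1}(\lambda',\mfl')$. It preserves the Jordan form of $\phi$, so $\lambda=\lambda'$, and by Lemma \ref{rank1typeofPfourth} the matrix $[T]$ is lower triangular with $T_{1,1}=T_{2,2}=T_{3,3}=t$ and $T_{3,1}=T_{3,2}=0$. As an isomorphism of filtered modules $T$ must carry $\fil{s}D$ onto $\fil{s}D'$; but $T(\baset+\mfl\baseth)=t(\baset+\mfl\baseth)$, so $E(\baset+\mfl\baseth)=E(\baset+\mfl'\baseth)$, which forces $\mfl=\mfl'$. (Comparing $\fil{r}D$ then forces $T_{2,1}=0$, so $T$ is a scalar, but this is not needed for the statement.) The converse is clear.

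The main obstacle is the normalization underlying part (1): one must verify that the a priori two-parameter family $(\mfl,a)$ emerging from \ref{rank1fourth1} collapses to the one-parameter family $D^{9}_{\rank N=1}(\lambda,\mfl)$, and that this collapse can be realized by a base change of the very restricted shape allowed by Lemma \ref{rank1typeofPfourth}. Everything hinges on $\mfl\in E^{\times}$: dividing by $\mfl$ is precisely what lets the stray $a\baseth$ term be absorbed into the $\baset+\mfl\baseth$ direction. Once that observation is in place, the remaining checks are the routine verifications already seen in the preceding propositions.
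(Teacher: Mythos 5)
Your proposal is correct and follows essentially the same route as the paper: the same normalization $L_{1}=E(\baset+\mfl\baseth)$, $L_{2}=E(\baset+\mfl\baseth,\baseo+a\baseth)$ with the base change $\baseo\mapsto\baseo+\tfrac{a}{\mfl}\baset$ (which commutes with $[\phi]$ and $[N]$ by Lemma \ref{rank1typeofPfourth}), parts (1) and (2) read off from \ref{rank1fourth1}, and part (3) from Lemma \ref{rank1typeofPfourth} plus preservation of the filtration. The only cosmetic difference is in part (3), where you extract $\mfl=\mfl'$ directly from $\fil{s}D$ rather than first showing $T$ is scalar as the paper does; both orderings are valid.
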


\begin{proof}
From \ref{rank1fourth1}, if we let $L_{1}=E(\baset+a\baseth)$ with $a\not=0$ and $L_{2}=E(\baset+a\baseth,\baseo+b\baseth)$, then, by change of a basis: $\baseo\mapsto\baseo+\frac{b}{a}\baset$, $\baset\mapsto\baset$, and $\baseth\mapsto\baseth$, we get $D^{9}_{\rank N=1}$. So now the part (1) and (2) are immediate from \ref{rank1fourth1}.

For the part (3), assume that $T$ is an isomorphism from $D^{9}_{\rank  N=1}(\lambda,\mfl)$ to $D^{9}_{\rank  N=1}(\lambda',\mfl')$. Then $\lambda=\lambda'$ and, by Lemma \ref{rank1typeofPfourth}, $[T]$ is a lower triangle matrix with $T_{1,1}=T_{2,2}=T_{3,3}$ and $T_{3,1}=0=T_{3,2}$. Since $T$ preserves the filtration, it should be a scalar multiple of the identity, which implies that $\mfl=\mfl'$. The converse is trivial.
\end{proof}

The following example arises from \ref{rank1fourth2}.
\begin{exam}
A filtered $(\phi,N)$-module of Hodge type $(0,r,s)$
$$D^{10}_{\rank  N=1}=D^{10}_{\rank  N=1}(\lambda,\mfl);$$
\begin{itemize}
\item $\fil{r} D=E(\baseo+\mfl\baseth,\baset)$ and $\fil{s} D=E(\baseo+\mfl\baseth)$.
\item $
[N]=
\begin{small}\left(
  \begin{array}{ccc}
    0 & 0  & 0 \\
    0 & 0  & 0 \\
    1 & 0  & 0 \\
  \end{array}
\right)
\end{small}
$ and $
[\phi]=
\begin{small}\left(
  \begin{array}{ccc}
    p\lambda & 0  & 0 \\
    1 & p\lambda & 0 \\
    0 & 0 & \lambda \\
  \end{array}
\right)
\end{small}
$ for $\lambda$ in $E$.
\item $\mfl\in E$, $\val\lambda=\frac{r+s-2}{3}$, and $s\geq 2r-1$.
\end{itemize}
\end{exam}

\begin{prop} \label{class rank1 D10}
\begin{enumerate}
\item $D^{10}_{\rank  N=1}$ represents admissible filtered $(\phi,N)$-modules with $\rank N=1$.
\item The corresponding representations to $D^{10}_{\rank N=1}$ are
     \begin{itemize}
     \item non-split reducible with submodule $E\baset$ if $s=2r-1$ and
     \item irreducible if $s>2r-1$.
     \end{itemize}
\item $D^{10}_{\rank  N=1}(\lambda,\mfl)$ is isomorphic to $D^{10}_{\rank  N=1}(\lambda',\mfl')$ if and only if $\lambda=\lambda'$ and $\mfl=\mfl'$.
\end{enumerate}
\end{prop}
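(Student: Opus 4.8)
The plan is to follow the template of the three preceding propositions, especially Proposition~\ref{class rank1 D9}, since $D^{10}_{\rank N=1}$ arises from the analysis in~\ref{rank1fourth2}, where $[\phi]$ and $[N]$ are already fixed and the only remaining freedom is the position of the filtration. The whole argument reduces to normalizing that filtration under the restricted group of base changes that leave the displayed matrices of $\phi$ and $N$ unchanged.

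First I would establish parts (1) and (2) by putting the filtration into the normal form of the example. The case~\ref{rank1fourth2} is exactly $L_1 \not\subset E(\baset,\baseth)$ with $\baset \in L_2$, so I may write $L_1 = E(\baseo + a\baset + b\baseth)$ and hence $L_2 = E(\baseo + a\baset + b\baseth, \baset)$. The key observation is that I am free to replace the given basis by any basis related to it through a matrix $P$ with $P[\phi] = [\phi]P$ and $P[N] = [N]P$, since such a change preserves both displayed matrices; by Lemma~\ref{rank1typeofPfourth} these $P$ are precisely the lower triangular matrices with equal diagonal entries and $P_{3,1} = P_{3,2} = 0$. Applying the change of basis $\baseo \mapsto \baseo - a\baset$, $\baset \mapsto \baset$, $\baseth \mapsto \baseth$ then clears the $\baset$-coordinate of the generator of $L_1$ and puts the filtration into the form $\fil{s}D = E(\baseo + \mfl\baseth)$ and $\fil{r}D = E(\baseo + \mfl\baseth, \baset)$ with $\mfl = b$. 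Once the module is in this normal form, admissibility, the condition $s \geq 2r - 1$, and the submodule structure are read off directly from~\ref{rank1fourth2}: the representation is non-split reducible with submodule $E\baset$ when $s = 2r-1$ and irreducible when $s > 2r - 1$.

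For part (3), I would argue as in Proposition~\ref{class rank1 D9}. Any isomorphism $T : D^{10}_{\rank N=1}(\lambda,\mfl) \to D^{10}_{\rank N=1}(\lambda',\mfl')$ preserves the Jordan form of $\phi$, forcing $\lambda = \lambda'$, and by Lemma~\ref{rank1typeofPfourth} its matrix $[T]$ is lower triangular with $T_{1,1} = T_{2,2} = T_{3,3}$ and $T_{3,1} = T_{3,2} = 0$. The decisive step is to impose compatibility with the filtrations: computing $T(\baseo + \mfl\baseth)$ and demanding that it lie in $E(\baseo + \mfl'\baseth)$ forces the single remaining off-diagonal entry $T_{2,1}$ to vanish, so that $[T]$ is a scalar multiple of the identity, and comparing $\baseth$-coordinates then yields $\mfl = \mfl'$. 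The converse is immediate.

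I do not expect any serious obstacle: the Frobenius and monodromy matrices are already in normal form, so the entire content is the bookkeeping of the filtration under the base changes allowed by Lemma~\ref{rank1typeofPfourth}. The only point requiring a little care is checking that the chosen base change simultaneously moves the generator of $L_1$ into the $\baseo$--$\baseth$ plane and fixes $E\baset \subset L_2$; this is guaranteed because the allowed $P$ scale $\baset$ and $\baseth$ by the common diagonal entry and can only mix $\baset$ into $\baseo$.
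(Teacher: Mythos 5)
Your proposal is correct and follows essentially the same route as the paper: the identical change of basis $\baseo\mapsto\baseo-a\baset$ (allowed by Lemma~\ref{rank1typeofPfourth}) to normalize the filtration from~\ref{rank1fourth2}, and the same rigidity argument for part (3) as in Proposition~\ref{class rank1 D9}. Your explicit computation showing that filtration-compatibility kills $T_{2,1}$ and forces $\mfl=\mfl'$ is just a spelled-out version of the paper's one-line assertion that $T$ must be a scalar multiple of the identity.
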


\begin{proof}
From \ref{rank1fourth2}, if we let $L_{1}=E(\baseo+a\baset+b\baseth)$ and $L_{2}=E(\baseo+a\baset+b\baseth,\baset)$, then, by change of a basis: $\baseo\mapsto\baseo-a\baset$, $\baset\mapsto\baset$, and $\baseth\mapsto\baseth$, we get $D^{10}_{\rank N=1}$. So now the part (1) and (2) are immediate from \ref{rank1fourth2}. For the part (3), use the same argument as in Proposition \ref{class rank1 D9}.
\end{proof}

The following example arises from \ref{rank1fourth3}.
\begin{exam}
A filtered $(\phi,N)$-module of Hodge type $(0,r,s)$
$$D^{11}_{\rank  N=1}=D^{11}_{\rank  N=1}(\lambda,\mfl);$$
\begin{itemize}
\item $\fil{r} D=E(\baseo,\baseth)$ and $\fil{s} D=E(\baseo+\mfl\baseth)$.
\item $
[N]=
\begin{small}\left(
  \begin{array}{ccc}
    0 & 0  & 0 \\
    0 & 0  & 0 \\
    1 & 0  & 0 \\
  \end{array}
\right)
\end{small}
$ and $
[\phi]=
\begin{small}\left(
  \begin{array}{ccc}
    p\lambda & 0  & 0 \\
    1 & p\lambda & 0 \\
    0 & 0 & \lambda \\
  \end{array}
\right)
\end{small}
$ for $\lambda$ in $E$.
\item $\mfl\in E$, $\val\lambda=\frac{r+s-2}{3}$, and $s\geq 2r+2$.
\end{itemize}
\end{exam}

\begin{prop} \label{class rank1 D11}
\begin{enumerate}
\item $D^{11}_{\rank  N=1}$ represents admissible filtered $(\phi,N)$-modules with $\rank N=1$.
\item The corresponding representations to $D^{11}_{\rank N=1}$ are
     \begin{itemize}
     \item non-split reducible with submodule $E\baseth$ if $s=2r+2$ and
     \item irreducible if $s>2r+2$.
     \end{itemize}
\item $D^{11}_{\rank  N=1}(\lambda,\mfl)$ is isomorphic to $D^{11}_{\rank  N=1}(\lambda',\mfl')$ if and only if $\lambda=\lambda'$ and $\mfl=\mfl'$.
\end{enumerate}
\end{prop}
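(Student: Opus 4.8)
The plan is to mirror the template of Propositions \ref{class rank1 D9} and \ref{class rank1 D10}: first normalize the filtration appearing in \ref{rank1fourth3} to the shape of $D^{11}_{\rank N=1}$, then read off parts (1) and (2), and finally compute the automorphisms for part (3). I would begin from the data of \ref{rank1fourth3}, where neither $L_{1}$ nor $L_{2}$ is $\phi$- and $N$-invariant, $L_{1}\not\subset E(\baset,\baseth)$, and $\baseth\in L_{2}$. Because $L_{1}$ is a line whose $\baseo$-coordinate cannot vanish, I may scale a generator so that $L_{1}=E(\baseo+a\baset+b\baseth)$ for some $a,b\in E$; since $\baseth\in L_{2}$ and $L_{1}\subset L_{2}$, this forces $L_{2}=E(\baseo+a\baset+b\baseth,\baseth)=E(\baseo+a\baset,\baseth)$.

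Next I would apply the change of basis $\baseo\mapsto\baseo-a\baset$, $\baset\mapsto\baset$, $\baseth\mapsto\baseth$. By Lemma \ref{rank1typeofPfourth} the matrix effecting this change commutes with both $[\phi]$ and $[N]$, hence leaves their presentations unchanged; under it $L_{1}$ becomes $E(\baseo+b\baseth)$ and $L_{2}$ becomes $E(\baseo,\baseth)$. This is precisely the filtration of $D^{11}_{\rank N=1}(\lambda,\mfl)$ with $\mfl=b$, so the module produced in \ref{rank1fourth3} is isomorphic to $D^{11}_{\rank N=1}$. Parts (1) and (2) are then immediate from the admissibility computation already performed in \ref{rank1fourth3}, which gave admissibility exactly when $s\geq 2r+2$, with the representation non-split reducible (submodule $E\baseth$) when $s=2r+2$ and irreducible when $s>2r+2$.

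For part (3), let $T$ be an isomorphism from $D^{11}_{\rank N=1}(\lambda,\mfl)$ to $D^{11}_{\rank N=1}(\lambda',\mfl')$. Since $T$ intertwines the two Frobenius maps it preserves the multiplicity-weighted eigenvalues, so matching the eigenvalue of multiplicity two gives $p\lambda=p\lambda'$, forcing $\lambda=\lambda'$. By Lemma \ref{rank1typeofPfourth}, $[T]$ is lower triangular with $T_{1,1}=T_{2,2}=T_{3,3}$ and $T_{3,1}=T_{3,2}=0$, so the only off-diagonal freedom is $T_{2,1}$. Applying $[T]$ to the generator $\baseo+\mfl\baseth$ of $\fil{s}D$ produces $T_{1,1}\baseo+T_{2,1}\baset+T_{1,1}\mfl\baseth$; for this to lie in $\fil{s}D'=E(\baseo+\mfl'\baseth)$ I need $T_{2,1}=0$ and then $\mfl=\mfl'$. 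Thus $T$ is a scalar, which automatically preserves $\fil{r}D=E(\baseo,\baseth)$, and the converse is clear. I expect the only delicate point to be this last computation: unlike in Proposition \ref{class rank1 D9}, here $\baseo\in\fil{s}D$, so preservation of $\fil{s}D$ alone (rather than a genericity hypothesis such as $\mfl\neq 0$) already kills $T_{2,1}$ and pins down $\mfl$; the change-of-basis normalization is otherwise entirely routine.
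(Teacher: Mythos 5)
Your proposal is correct and follows essentially the same route as the paper: the same normalization from \ref{rank1fourth3} via the change of basis $\baseo\mapsto\baseo-a\baset$ (justified by Lemma \ref{rank1typeofPfourth}), with parts (1) and (2) read off from \ref{rank1fourth3} and part (3) argued exactly as the paper does by citing the argument of Proposition \ref{class rank1 D9}. Your explicit computation showing that preservation of $\fil{s}D$ alone kills $T_{2,1}$ and forces $\mfl=\mfl'$ is a correct and slightly more careful rendering of the step the paper leaves implicit.
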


\begin{proof}
From \ref{rank1fourth3}, if we let $L_{1}=E(\baseo+a\baset+b\baseth)$ and $L_{2}=E(\baseo+a\baset+b\baseth,\baseth)$, then, by change of a basis: $\baseo\mapsto\baseo-a\baset$, $\baset\mapsto\baset$, and $\baseth\mapsto\baseth$, we get $D^{11}_{\rank N=1}$. So now the part (1) and (2) immediate from \ref{rank1fourth3}. For the part (3), use the same argument as in Proposition \ref{class rank1 D9}.
\end{proof}

The following example arises from \ref{rank1fourth4}.
\begin{exam}
A filtered $(\phi,N)$-module of Hodge type $(0,r,s)$
$$D^{12}_{\rank  N=1}=D^{12}_{\rank  N=1}(\lambda,\mflo,\mflt);$$
\begin{itemize}
\item $\fil{r} D=E(\baseo+\mflo\baseth,\baset+\mflt\baseth)$ and $\fil{s} D=E(\baseo+\mflo\baseth)$.
\item $
[N]=
\begin{small}\left(
  \begin{array}{ccc}
    0 & 0  & 0 \\
    0 & 0  & 0 \\
    1 & 0  & 0 \\
  \end{array}
\right)
\end{small}
$ and $
[\phi]=
\begin{small}\left(
  \begin{array}{ccc}
    p\lambda & 0  & 0 \\
    1 & p\lambda & 0 \\
    0 & 0 & \lambda \\
  \end{array}
\right)
\end{small}
$ for $\lambda$ in $E$.
\item $\mflo\in E$, $\mflt\in E^{\times}$, and $\val\lambda=\frac{r+s-2}{3}$.
\end{itemize}
\end{exam}

\begin{prop} \label{class rank1 D12}
\begin{enumerate}
\item $D^{12}_{\rank  N=1}$ represents admissible filtered $(\phi,N)$-modules with $\rank N=1$.
\item The corresponding representations to $D^{12}_{\rank N=1}$ are irreducible.
\item $D^{12}_{\rank  N=1}(\lambda,\mflo,\mflt)$ is isomorphic to $D^{10}_{\rank  N=1}(\lambda',\mflo',\mflt')$ if and only if $\lambda=\lambda'$, $\mflo=\mflo'$, and $\mflt=\mflt'$.
\end{enumerate}
\end{prop}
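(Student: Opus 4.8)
The plan is to follow the template already established in Propositions \ref{class rank1 D9}--\ref{class rank1 D11}, since $D^{12}_{\rank N=1}$ arises from the fourth case of $\rank N=1$ and therefore shares the normal forms of $[\phi]$ and $[N]$, the commuting-matrix description of Lemma \ref{rank1typeofPfourth}, and the invariant-subspace list of Lemma \ref{rank1invariantfourth} with those cases.

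For parts (1) and (2), I would start from the data of \ref{rank1fourth4}. Since neither $L_{1}$ nor $L_{2}$ is $\phi$- and $N$-invariant, $L_{1}\not\subset E(\baset,\baseth)$, and $\baset,\baseth\not\in L_{2}$, Lemma \ref{rank1invariantfourth} forces $L_{1}=E(\baseo+a\baset+b\baseth)$ and, writing a complementary generator of $L_{2}$ with vanishing $\baseo$-coefficient, $L_{2}=E(\baseo+a\baset+b\baseth,\baset+c\baseth)$; here the condition $\baset\not\in L_{2}$ is exactly $c\neq 0$, while $\baseth\not\in L_{2}$ is automatic. I would then apply the change of basis $\baseo\mapsto\baseo-a\baset$, $\baset\mapsto\baset$, $\baseth\mapsto\baseth$. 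Its matrix is lower triangular with equal diagonal entries and vanishing $(3,1)$ and $(3,2)$ entries, so it lies in the commuting family of Lemma \ref{rank1typeofPfourth} and hence leaves $[\phi]$ and $[N]$ unchanged; it carries $L_{1},L_{2}$ to $E(\baseo+b\baseth)$ and $E(\baseo+b\baseth,\baset+c\baseth)$. Setting $\mflo=b\in E$ and $\mflt=c\in E^{\times}$ yields exactly $D^{12}_{\rank N=1}$, so (1) and (2) follow immediately from \ref{rank1fourth4}, where the corresponding representations were already shown to be irreducible.

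For part (3), I would argue as in Proposition \ref{class rank1 D9}. Given an isomorphism $T$ from $D^{12}_{\rank N=1}(\lambda,\mflo,\mflt)$ to $D^{12}_{\rank N=1}(\lambda',\mflo',\mflt')$, it preserves the eigenvalues of $\phi$, forcing $\lambda=\lambda'$; then by Lemma \ref{rank1typeofPfourth} the matrix $[T]$ is lower triangular with $T_{1,1}=T_{2,2}=T_{3,3}$ and $T_{3,1}=0=T_{3,2}$, so its only possibly nonzero off-diagonal entry is $T_{2,1}$. The requirement $T(\fil{s}D)=\fil{s}D'$ kills the $\baset$-coefficient of $T(\baseo+\mflo\baseth)$, forcing $T_{2,1}=0$ and $\mflo=\mflo'$, whence $[T]$ is a scalar multiple of the identity. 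Finally $T(\fil{r}D)=\fil{r}D'$ together with $\mflo=\mflo'$ forces $\baset+\mflt\baseth$ and $\baset+\mflt'\baseth$ to span the same line modulo $E(\baseo+\mflo\baseth)$, giving $\mflt=\mflt'$. The converse is realized by the identity map.

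The bookkeeping is entirely routine; the only points requiring care are the usual ones. First, one must check that the normalizing change of basis genuinely commutes with both $[\phi]$ and $[N]$, i.e.\ lies in the family of Lemma \ref{rank1typeofPfourth}, so that the normal forms are preserved. Second, because two parameters $\mflo,\mflt$ are present, the two filtration conditions must be used in the correct order, $\fil{s}$ first to pin down $\mflo$ (and to collapse $[T]$ to a scalar) and $\fil{r}$ afterward to pin down $\mflt$; this is the mild distinction from the single-parameter cases \ref{class rank1 D10}--\ref{class rank1 D11}, and it mirrors the two-parameter argument of Proposition \ref{class rank1 D6}.
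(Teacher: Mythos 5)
Your proposal is correct and takes essentially the same approach as the paper: the same normalization of $L_{1},L_{2}$ from \ref{rank1fourth4} via the change of basis $\baseo\mapsto\baseo-a\baset$ (lying in the commuting family of Lemma \ref{rank1typeofPfourth}), and for part (3) the same rigidity argument the paper invokes by reference to Proposition \ref{class rank1 D9}, in which preservation of the filtration collapses $[T]$ to a scalar and then forces $\mflo=\mflo'$ and $\mflt=\mflt'$. You merely spell out the steps the paper compresses into ``use the same argument as in Proposition \ref{class rank1 D9}.''
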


\begin{proof}
From \ref{rank1fourth4}, if we let $L_{1}=E(\baseo+a\baset+b\baseth)$ and $L_{2}=E(\baseo+a\baset+b\baseth,\baset+c\baseth)$ with $c\not=0$, then, by change of a basis: $\baseo\mapsto\baseo-a\baset$, $\baset\mapsto\baset$, and $\baseth\mapsto\baseth$, we get $D^{12}_{\rank N=1}$. So now the part (1) and (2) are immediate from \ref{rank1fourth4}. For the part (3), use the same argument as in Proposition \ref{class rank1 D9}.
\end{proof}

The following example arises from \ref{rank1fifth1}.
\begin{exam}
A filtered $(\phi,N)$-module of Hodge type $(0,r,s)$
$$D^{13}_{\rank  N=1}=D^{13}_{\rank  N=1}(\lambda,\lambdat,\mathfrak{L});$$
\begin{itemize}
\item $\fil{r} D=E(\baset,\baseo+\mfl\baseth)$ and $\fil{s} D=E\baset$.
\item $
[N]=
\begin{small}\left(
  \begin{array}{ccc}
    0 & 0  & 0 \\
    0 & 0  & 0 \\
    1 & 0  & 0 \\
  \end{array}
\right)
\end{small}
$ and $
[\phi]=
\begin{small}\left(
  \begin{array}{ccc}
    p\lambda & 0  & 0 \\
    0 & \lambdat & 0 \\
    0 & 0 & \lambda \\
  \end{array}
\right)
\end{small}
$ for $\lambda\not=\lambdat\not=p\lambda$ in $E$.
\item $\mfl\in E$, $\val\lambda=\frac{r-1}{2}$, and $\val\lambdat=s$.
\end{itemize}
\end{exam}

\begin{prop} \label{class rank1 D13}
\begin{enumerate}
\item $D^{13}_{\rank  N=1}$ represents admissible filtered $(\phi,N)$-modules with $\rank N=1$.
\item The corresponding representations to $D^{13}_{\rank N=1}$ are decomposable with submodules $E\baset$ and $E(\baseo,\baseth)$; moreover, $E\baseth$ and $E(\baset,\baseth)$ are submodules as well if $r=1$.
\item $D^{13}_{\rank  N=1}(\lambda,\lambdat,\mfl)$ is isomorphic to $D^{13}_{\rank  N=1}(\lambda',\lambdat',\mfl')$ if and only if $\lambda=\lambda'$, $\lambdat=\lambdat'$, and $\mfl=\mfl'$.
\end{enumerate}
\end{prop}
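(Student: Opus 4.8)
The plan is to follow the template used for the earlier rank-one families: read off parts (1) and (2) directly from the case analysis in \ref{rank1fifth1}, and concentrate the real work on the isomorphism classification of part (3). First I would observe that in \ref{rank1fifth1} the filtration is determined by $L_{1}=E\baset$ and a two-dimensional space $L_{2}\neq E(\baset,\baseth)$ containing $\baset$. Since $\baset\in L_{2}$, a spanning vector of $L_{2}$ complementary to $\baset$ can be scaled and reduced modulo $\baset$ to the form $\baseo+\mfl\baseth$, so that $\fil{r}D=E(\baset,\baseo+\mfl\baseth)$ and $\fil{s}D=E\baset$ with no further change of basis needed. Admissibility and the submodule structure, with the additional submodules $E\baseth$ and $E(\baset,\baseth)$ appearing exactly when $r=1$ (equivalently $\val\lambda=0$), are then immediate from \ref{rank1fifth1}, giving (1) and (2).

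For part (3), suppose $T$ is an isomorphism $D^{13}_{\rank N=1}(\lambda,\lambdat,\mfl)\to D^{13}_{\rank N=1}(\lambda',\lambdat',\mfl')$. The first step is to pin down the Frobenius parameters. An isomorphism intertwines the two diagonal Frobenii, so they share the eigenvalue multiset $\{p\lambda,\lambdat,\lambda\}=\{p\lambda',\lambdat',\lambda'\}$; because the valuations $\val{p\lambda}=\frac{r+1}{2}$, $\val\lambdat=s$, and $\val\lambda=\frac{r-1}{2}$ are pairwise distinct (using $s>r\geq1$), the matching is forced entry by entry, giving $\lambda=\lambda'$ and $\lambdat=\lambdat'$. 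Hence $\phi=\phi'$ and $N=N'$, so $T$ lies in the common commutant and Lemma \ref{rank1typeofPfifth} forces $[T]=\mathrm{diag}(t,t_{2},t)$ for some $t,t_{2}\in E^{\times}$. The second step is the filtration constraint: since $T(\baseo+\mfl\baseth)=t(\baseo+\mfl\baseth)$ must lie in $\fil{r}D'=E(\baset,\baseo+\mfl'\baseth)$, comparing the $\baseth$- and $\baseo$-coefficients yields $\mfl=\mfl'$. The converse is trivial, since matching all three parameters makes the identity an isomorphism.

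I expect the main obstacle to be the first step of part (3): I must justify that equality of the \emph{unordered} eigenvalue multisets of $[\phi]$ and $[\phi']$ actually forces $\lambda=\lambda'$ and $\lambdat=\lambdat'$ individually, rather than allowing a permutation of the eigenvalues. This is precisely where the admissibility constraints $\val\lambda=\frac{r-1}{2}$ and $\val\lambdat=s$ (together with $\lambda\neq\lambdat\neq p\lambda$) do the work, by making the three valuations distinct and so rigidifying the correspondence. Once this is settled, the passage to the commutant via Lemma \ref{rank1typeofPfifth} and the short filtration computation are routine and parallel the proofs of Proposition \ref{class rank1 D1} and its successors.
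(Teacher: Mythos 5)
Your proof is correct and takes essentially the same route as the paper: parts (1) and (2) are read off from \ref{rank1fifth1} after normalizing $L_{2}=E(\baset,\baseo+\mfl\baseth)$, and part (3) reduces to Lemma \ref{rank1typeofPfifth} plus preservation of the filtration. Your only additions are improvements in rigor rather than a different method: you justify the forced matching $\lambda=\lambda'$, $\lambdat=\lambdat'$ by the pairwise-distinct valuations $\frac{r-1}{2}$, $\frac{r+1}{2}$, $s$ (a step the paper merely asserts), and your direct coefficient comparison correctly yields $\mfl=\mfl'$ without the paper's slightly overstated claim that $T$ must be a scalar multiple of the identity (the commutant element $\mathrm{diag}(t,t_{2},t)$ with $t_{2}\neq t$ also preserves the filtration, but the conclusion is unaffected).
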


\begin{proof}
From \ref{rank1fifth1}, if we let $L_{1}=E\baset$ and $L_{2}=E(\baset,\baseo+a\baseth)$, then we get $D^{13}_{\rank N=1}$. So now the part (1) and (2) are immediate from \ref{rank1fifth1}.

For the part (3), assume that $T$ is an isomorphism from $D^{13}_{\rank  N=1}(\lambda,\lambdat,\mfl)$ to $D^{13}_{\rank  N=1}(\lambda',\lambdat',\mfl)$. Then $\lambda=\lambda'$, $\lambdat=\lambdat'$, and, by Lemma \ref{rank1typeofPfifth}, $[T]$ is a diagonal matrix with $T_{1,1}=T_{3,3}$. Since $T$ preserves the filtration, it should be a scalar multiple of the identity, which implies that $\mfl=\mfl'$. The converse is trivial.
\end{proof}

The following example arises from \ref{rank1fifth2}.
\begin{exam}
A filtered $(\phi,N)$-module of Hodge type $(0,r,s)$
$$D^{14}_{\rank  N=1}=D^{14}_{\rank  N=1}(\lambda,\lambdat,\mathfrak{L});$$
\begin{itemize}
\item $\fil{r} D=E(\baseo,\baseth)$ and $\fil{s} D=E(\baseo+\mfl\baseth)$.
\item $
[N]=
\begin{small}\left(
  \begin{array}{ccc}
    0 & 0  & 0 \\
    0 & 0  & 0 \\
    1 & 0  & 0 \\
  \end{array}
\right)
\end{small}
$ and $
[\phi]=
\begin{small}\left(
  \begin{array}{ccc}
    p\lambda & 0  & 0 \\
    0 & \lambdat & 0 \\
    0 & 0 & \lambda \\
  \end{array}
\right)
\end{small}
$ for $\lambda\not=\lambdat\not=p\lambda$ in $E$.
\item $\mfl\in E$, $\val\lambda=\frac{r+s-1}{2}$, and $\val\lambdat=0$.
\end{itemize}
\end{exam}

\begin{prop} \label{class rank1 D14}
\begin{enumerate}
\item $D^{14}_{\rank  N=1}$ represents admissible filtered $(\phi,N)$-modules with $\rank N=1$.
\item The corresponding representations to $D^{14}_{\rank N=1}$ are decomposable with submodules $E\baset$ and $E(\baseo,\baseth)$; moreover, $E\baseth$ and $E(\baset,\baseth)$ are submodules as well if $s=r+1$.
\item $D^{14}_{\rank  N=1}(\lambda,\lambdat,\mfl)$ is isomorphic to $D^{14}_{\rank  N=1}(\lambda',\lambdat',\mfl')$ if and only if $\lambda=\lambda'$, $\lambdat=\lambdat'$, and $\mfl=\mfl'$.
\end{enumerate}
\end{prop}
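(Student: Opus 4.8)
The plan is to follow the template already established for the rank-one cases sharing the same Jordan type of $[\phi]$, in particular the proof of Proposition \ref{class rank1 D13}, since $D^{14}_{\rank N=1}$ arises from the same diagonal Frobenius with entries $p\lambda,\lambdat,\lambda$ and is therefore governed by Lemma \ref{rank1typeofPfifth}. Parts (1) and (2) will be read off directly from subsection \ref{rank1fifth2}, while part (3) will be extracted from the explicit shape of the matrices commuting with $[\phi]$ and $[N]$ together with the requirement that an isomorphism respect the Hodge filtration.

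For parts (1) and (2), I start from the generic pair $(L_{1},L_{2})$ produced in \ref{rank1fifth2}: there $L_{2}=E(\baseo,\baseth)=\fil{r}D$ and $L_{1}\subset L_{2}$ is a one-dimensional, non-invariant line. Since the only invariant line inside $E(\baseo,\baseth)$ is $E\baseth$, and $L_{1}$ is assumed neither invariant nor contained in $E(\baset,\baseth)$, necessarily $L_{1}=E(\baseo+a\baseth)$ for some $a\in E$. Taking $\mfl=a$ already places the module in the normal form of $D^{14}_{\rank N=1}$, so no further change of basis is needed; the valuation constraints $\val\lambda=\frac{r+s-1}{2}$ and $\val\lambdat=0$, together with the submodule description (decomposable with submodules $E\baset$ and $E(\baseo,\baseth)$, with the extra submodules $E\baseth$ and $E(\baset,\baseth)$ precisely when $s=r+1$), are then inherited verbatim from \ref{rank1fifth2}.

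For part (3), suppose $T$ is an isomorphism from $D^{14}_{\rank N=1}(\lambda,\lambdat,\mfl)$ to $D^{14}_{\rank N=1}(\lambda',\lambdat',\mfl')$. Because $T$ intertwines the two Frobenii, it preserves the eigenvalues of $\phi$, giving $\lambda=\lambda'$ and $\lambdat=\lambdat'$. Lemma \ref{rank1typeofPfifth} then forces $[T]$ to be diagonal with $T_{1,1}=T_{3,3}$, so $T$ scales $\baseo+\mfl\baseth$ by $T_{1,1}$ and fixes the line $E(\baseo+\mfl\baseth)$. Imposing that $T$ carry $\fil{s}D=E(\baseo+\mfl\baseth)$ into $E(\baseo+\mfl'\baseth)$ thus forces $\mfl=\mfl'$; the converse is immediate.

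The only point requiring care --- the analogue of the main obstacle in these propositions --- is confirming that $\mfl$ is a genuine modulus rather than an artefact that could be normalized away in parts (1)--(2). This is settled by the very same computation as in part (3): the only basis changes preserving both $[\phi]$ and $[N]$ are the diagonal matrices with $T_{1,1}=T_{3,3}$ of Lemma \ref{rank1typeofPfifth}, and these act by a common scalar on $\baseo$ and $\baseth$, hence fix the line $E(\baseo+\mfl\baseth)$, so $\mfl$ cannot be altered. Consistency between the reduction in (1)--(2) and the rigidity in (3) is therefore automatic, and the remainder of the argument is routine.
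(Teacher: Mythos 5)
Your proof is correct and takes essentially the same route as the paper: parts (1) and (2) are read off from \ref{rank1fifth2} after observing that $L_{1}=E(\baseo+a\baseth)$ and $L_{2}=E(\baseo,\baseth)$ are already in the normal form of $D^{14}_{\rank N=1}$ (no basis change needed), and part (3) is the same rigidity argument via Lemma \ref{rank1typeofPfifth} plus filtration preservation that the paper invokes by citing Proposition \ref{class rank1 D13}.
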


\begin{proof}
From \ref{rank1fifth2}, if we let $L_{1}=E(\baseo+a\baseth)$ and $L_{2}=E(\baseo,\baseth)$, then we get $D^{14}_{\rank N=1}$. So now the part (1) and (2) are immediate from \ref{rank1fifth2}. For the part (3), use the same argument in Proposition \ref{class rank1 D13}.
\end{proof}

The following example arises from \ref{rank1fifth3}.
\begin{exam}
A filtered $(\phi,N)$-module of Hodge type $(0,r,s)$
$$D^{15}_{\rank  N=1}=D^{15}_{\rank  N=1}(\lambda,\lambdat,\mfl);$$
\begin{itemize}
\item $\fil{r} D=E(\baset+\baseth,\baseo+\mfl\baseth)$ and $\fil{s} D=E(\baset+\baseth)$.
\item $
[N]=
\begin{small}\left(
  \begin{array}{ccc}
    0 & 0  & 0 \\
    0 & 0  & 0 \\
    1 & 0  & 0 \\
  \end{array}
\right)
\end{small}
$ and $
[\phi]=
\begin{small}\left(
  \begin{array}{ccc}
    p\lambda & 0  & 0 \\
    0 & \lambdat & 0 \\
    0 & 0 & \lambda \\
  \end{array}
\right)
\end{small}
$ for $\lambda\not=\lambdat\not=p\lambda$ in $E$.
\item $\mathfrak{L}\in E$, $\frac{r-1}{2}\leq\val\lambda\leq r-1$, and $2\val\lambda+\val\lambdat=r+s-1$.
\end{itemize}
\end{exam}

\begin{prop} \label{class rank1 D15}
\begin{enumerate}
\item $D^{15}_{\rank  N=1}$ represents admissible filtered $(\phi,N)$-modules with $\rank N=1$.
\item The corresponding representations to $D^{15}_{\rank N=1}$ are
     \begin{itemize}
     \item non-split reducible with submodules $E\baseth$, $E(\baset,\baseth)$, and $E(\baseo,\baseth)$ if $r=1$,
     \item non-split reducible with submodule $E(\baseo,\baseth)$ if $\val\lambda=\frac{r-1}{2}$ and $r>1$,
     \item non-split reducible with submodule $E(\baset,\baseth)$ if $\val\lambda=r-1$ and $r>1$, and
     \item irreducible if $\frac{r-1}{2}<\val\lambda<r-1$ and $r>1$.
     \end{itemize}
\item $D^{15}_{\rank  N=1}(\lambda,\lambdat,\mfl)$ is isomorphic to $D^{15}_{\rank  N=1}(\lambda',\lambdat',\mfl')$ if and only if $\lambda=\lambda'$, $\lambdat=\lambdat'$, and $\mfl=\mfl'$.
\end{enumerate}
\end{prop}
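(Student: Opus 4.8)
The plan is to follow the template of Propositions \ref{class rank1 D13} and \ref{class rank1 D14}: normalize an arbitrary module arising from \ref{rank1fifth3} to the stated form $D^{15}_{\rank N=1}$ by an allowable change of basis, so that parts (1) and (2) become restatements of \ref{rank1fifth3}, and then compute the automorphisms to obtain the isomorphism criterion in part (3).

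First I would read off the filtration from \ref{rank1fifth3}. Since $L_{1}\subset E(\baset,\baseth)$ is not $\phi$- and $N$-invariant, Lemma \ref{rank1invariantfifth} rules out $L_{1}=E\baset$, so $L_{1}=E(\baset+a\baseth)$ with $a\neq 0$; and since $L_{2}\supset L_{1}$ is two-dimensional and not invariant, it is not contained in $E(\baset,\baseth)$, so after reducing a second generator modulo $L_{1}$ we may take $L_{2}=E(\baset+a\baseth,\baseo+d\baseth)$ for some $d\in E$. By Lemma \ref{rank1typeofPfifth} the only base changes fixing the shapes of $[\phi]$ and $[N]$ are the diagonal matrices of the form $\mathrm{diag}(t,u,t)$ with $t,u\in E^{\times}$; choosing $t=1$, $u=a$ sends $\baset+a\baseth\mapsto a(\baset+\baseth)$ and fixes $\baseo+d\baseth$, which puts the module in the form $D^{15}_{\rank N=1}(\lambda,\lambdat,\mfl)$ with $\mfl=d$. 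Parts (1) and (2) then follow verbatim from the admissibility and (ir)reducibility bookkeeping already carried out in \ref{rank1fifth3}.

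For part (3), I would take an isomorphism $T$ between two such modules. As $[\phi]$ is diagonal with the three distinct eigenvalues $p\lambda$, $\lambdat$, $\lambda$ (distinct because $\lambda\neq\lambdat\neq p\lambda$ and $p\neq 1$), $T$ must carry eigenlines to eigenlines; and since $T$ also intertwines $N$, which sends the $p\lambda$-eigenline $E\baseo$ onto the $\lambda$-eigenline $E\baseth$ and annihilates $E(\baset,\baseth)$, the unique eigenline outside $\ker N$ must map to the unique eigenline outside $\ker N'$, forcing $p\lambda$ to correspond to $p\lambda'$ and $\lambda$ to $\lambda'$. Hence $\lambda=\lambda'$ and $\lambdat=\lambdat'$, so $T$ commutes with both $[\phi]$ and $[N]$ and Lemma \ref{rank1typeofPfifth} gives $[T]=\mathrm{diag}(t,u,t)$. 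Requiring $T(\fil{s}D)=E(\baset+\baseth)$ forces $u\baset+t\baseth\in E(\baset+\baseth)$, i.e. $u=t$, so $T$ is a scalar; preservation of $\fil{r}D=E(\baset+\baseth,\baseo+\mfl\baseth)$ then forces $\mfl=\mfl'$. The converse is immediate.

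Everything here is routine once Lemma \ref{rank1typeofPfifth} has cut the symmetry group down to the rescalings $\mathrm{diag}(t,u,t)$. The one point I expect to require care, and the only place the argument could break, is confirming that this small group genuinely leaves $\mfl$ invariant rather than scaling it away; this is exactly what the filtration-preservation computation in part (3) checks, where the constraint $\fil{s}D=E(\baset+\baseth)$ pins $T$ down to a scalar before $\mfl$ can be read off.
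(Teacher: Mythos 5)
Your proposal is correct and follows essentially the same route as the paper: normalize the data from \ref{rank1fifth3} by a change of basis of the form $\mathrm{diag}(1,a,1)$ permitted by Lemma \ref{rank1typeofPfifth} (so parts (1) and (2) are inherited from \ref{rank1fifth3}), then use that lemma together with preservation of $\fil{s}D$ and $\fil{r}D$ to force any isomorphism to be scalar, giving $\mfl=\mfl'$. Your extra care in deriving $\lambda=\lambda'$ and $\lambdat=\lambdat'$ from the interaction of $N$ with the eigenlines is just a fleshed-out version of the step the paper dismisses as clear, not a different argument.
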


\begin{proof}
From \ref{rank1fifth3}, if we let $L_{1}=E(\baset+a\baseth)$ and $L_{2}=E(\baset+a\baseth,\baseo+b\baseth)$ with $a\not=0$, then, by change of a basis: $\baseo\mapsto\baseo$, $\baset\mapsto a\baset$, and $\baseth\mapsto\baseth$, we get $D^{15}_{\rank N=1}$. So now the part (1) and (2) are immediate from \ref{rank1fifth3}. For the part (3), use the same argument in Proposition \ref{class rank1 D13}.
\end{proof}

The following example arises from \ref{rank1fifth4}.
\begin{exam}
A filtered $(\phi,N)$-module of Hodge type $(0,r,s)$
$$D^{16}_{\rank  N=1}=D^{16}_{\rank  N=1}(\lambda,\lambdat,\mathfrak{L});$$
\begin{itemize}
\item $\fil{r} D=E(\baseo+\mfl\baseth,\baset)$ and $\fil{s} D=E(\baseo+\mfl\baseth)$.
\item $
[N]=
\begin{small}\left(
  \begin{array}{ccc}
    0 & 0  & 0 \\
    0 & 0  & 0 \\
    1 & 0  & 0 \\
  \end{array}
\right)
\end{small}
$ and $
[\phi]=
\begin{small}\left(
  \begin{array}{ccc}
    p\lambda & 0  & 0 \\
    0 & \lambdat & 0 \\
    0 & 0 & \lambda \\
  \end{array}
\right)
\end{small}
$ for $\lambda\not=\lambdat\not=p\lambda$ in $E$.
\item $\mfl\in E$, $\val\lambda=\frac{s-1}{2}$, and $\val\lambdat=r$.
\end{itemize}
\end{exam}

\begin{prop} \label{class rank1 D16}
\begin{enumerate}
\item $D^{16}_{\rank  N=1}$ represents admissible filtered $(\phi,N)$-modules with $\rank N=1$.
\item The corresponding representations to $D^{16}_{\rank N=1}$ are decomposable with submodules $E\baset$ and $E(\baseo,\baseth)$.
\item $D^{16}_{\rank  N=1}(\lambda,\lambdat,\mfl)$ is isomorphic to $D^{16}_{\rank  N=1}(\lambda',\lambdat',\mfl')$ if and only if $\lambda=\lambda'$, $\lambdat=\lambdat'$, and $\mfl=\mfl'$.
\end{enumerate}
\end{prop}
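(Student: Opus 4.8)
The plan is to reuse the three-part template established for the earlier members of this family, in particular the argument of Proposition~\ref{class rank1 D13}, specializing it to the data extracted in subsection~\ref{rank1fifth4}.

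For parts (1) and (2) I would begin from the shape of the filtration forced in \ref{rank1fifth4}. There the hypotheses $L_{1}\subset E(\baseo,\baseth)$ and $L_{1}\not\subset E(\baset,\baseth)$ say that a generator of $L_{1}$ has nonzero $\baseo$-coordinate and vanishing $\baset$-coordinate, so after rescaling $L_{1}=E(\baseo+a\baseth)$ for a unique $a\in E$; the conditions $\baset\in L_{2}$ and $L_{1}\subset L_{2}$ then force $L_{2}=E(\baseo+a\baseth,\baset)$, and one checks that $\baseth\not\in L_{2}$ holds automatically. Writing $\mfl=a$, this is verbatim the filtration recorded in the example $D^{16}_{\rank N=1}$, so no change of basis is required. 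Parts (1) and (2) --- admissibility under the constraints $\val\lambda=\frac{s-1}{2}$, $\val\lambdat=r$, together with the assertion that the associated representation is decomposable with submodules $E\baset$ and $E(\baseo,\baseth)$ --- are then immediate transcriptions of the conclusions of \ref{rank1fifth4}.

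For part (3) I would argue exactly as in Proposition~\ref{class rank1 D13}. Let $T$ be an isomorphism from $D^{16}_{\rank N=1}(\lambda,\lambdat,\mfl)$ to $D^{16}_{\rank N=1}(\lambda',\lambdat',\mfl')$. Since $T$ conjugates $\phi$ to $\phi'$ while intertwining $N$, and since the three eigenvalues $p\lambda,\lambdat,\lambda$ are distinct, matching the eigenline attached to each basis vector (using $N\baseo=\baseth$, hence $T(E\baseth)=E\baseth$ and $T(\ker N)=\ker N$, to distinguish $\baseo$ from $\baseth$) gives $\lambda=\lambda'$ and $\lambdat=\lambdat'$; consequently $[T]$ lies in the common commutant of $[\phi]$ and $[N]$, which by Lemma~\ref{rank1typeofPfifth} consists precisely of the diagonal matrices $\mathrm{diag}(x,y,x)$. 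Imposing that $T$ send $\fil{s}D=E(\baseo+\mfl\baseth)$ into $E(\baseo+\mfl'\baseth)$ gives $x(\baseo+\mfl\baseth)\in E(\baseo+\mfl'\baseth)$, whence $\mfl=\mfl'$; the converse is trivial.

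I do not expect a genuine obstacle. The only step needing a moment's care is the bookkeeping in the first paragraph: one must confirm that the constraints of \ref{rank1fifth4} genuinely rigidify $L_{1}$ and $L_{2}$ down to the single parameter $\mfl$, so that --- unlike cases such as $D^{15}_{\rank N=1}$, where a rescaling of $\baset$ was needed --- the normal form can be read off directly. Once that is verified, everything reduces to quoting \ref{rank1fifth4} and Lemma~\ref{rank1typeofPfifth}.
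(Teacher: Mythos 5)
Your proposal is correct and follows essentially the same route as the paper: parts (1) and (2) are read off from \ref{rank1fifth4} after observing that the constraints there already put $(L_{1},L_{2})$ in the normal form $\bigl(E(\baseo+\mfl\baseth),\,E(\baseo+\mfl\baseth,\baset)\bigr)$ with no change of basis, and part (3) reduces via Lemma \ref{rank1typeofPfifth} to a diagonal commutant preserving the filtration, exactly as in Proposition \ref{class rank1 D13}. Your explicit use of the $N$-structure to pin down the eigenline matching (hence $\lambda=\lambda'$, $\lambdat=\lambdat'$) merely fills in a step the paper states as "clear," so it is an elaboration rather than a different argument.
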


\begin{proof}
From \ref{rank1fifth4}, if we let $L_{1}=E(\baseo+a\baseth)$ and $L_{2}=E(\baseo+a\baseth,\baset)$, then we get $D^{16}_{\rank N=1}$. So now the part (1) and (2) are immediate from \ref{rank1fifth4}. For the part (3), use the same argument in Proposition \ref{class rank1 D13}.
\end{proof}

The following example arises from \ref{rank1fifth5}.
\begin{exam}
A filtered $(\phi,N)$-module of Hodge type $(0,r,s)$
$$D^{17}_{\rank  N=1}=D^{17}_{\rank  N=1}(\lambda,\lambdat,\mfl);$$
\begin{itemize}
\item $\fil{r} D=E(\baseo+\mfl\baseth,\baset+\baseth)$ and $\fil{s} D=E(\baseo+\mfl\baseth)$.
\item $
[N]=
\begin{small}\left(
  \begin{array}{ccc}
    0 & 0  & 0 \\
    0 & 0  & 0 \\
    1 & 0  & 0 \\
  \end{array}
\right)
\end{small}
$ and $
[\phi]=
\begin{small}\left(
  \begin{array}{ccc}
    p\lambda & 0  & 0 \\
    0 & \lambdat & 0 \\
    0 & 0 & \lambda \\
  \end{array}
\right)
\end{small}
$ for $\lambda\not=\lambdat\not=p\lambda$ in $E$.
\item $\mathfrak{L}\in E$, $\frac{s-1}{2}\leq\val\lambda\leq\frac{r+s-1}{2}$, and $2\val\lambda+\val\lambdat=r+s-1$.
\end{itemize}
\end{exam}

\begin{prop} \label{class rank1 D17}
\begin{enumerate}
\item $D^{17}_{\rank  N=1}$ represents admissible filtered $(\phi,N)$-modules with $\rank N=1$.
\item The corresponding representations to $D^{17}_{\rank N=1}$ are
     \begin{itemize}
     \item non-split reducible with submodules $E(\baseo,\baseth)$ if $\val\lambda=\frac{s-1}{2}$,
     \item non-split reducible with submodule $E\baset$ and $E(\baset,\baseth)$ if $\val\lambda=\frac{r+s-1}{2}$ and $s=r+1$,
     \item non-split reducible with submodule $E\baset$ if $\val\lambda=\frac{r+s-1}{2}$ and $s>r+1$, and
     \item irreducible if $\frac{s-1}{2}<\val\lambda<\frac{r+s-1}{2}$.
     \end{itemize}
\item $D^{17}_{\rank  N=1}(\lambda,\lambdat,\mfl)$ is isomorphic to $D^{17}_{\rank  N=1}(\lambda',\lambdat',\mfl')$ if and only if $\lambda=\lambda'$, $\lambdat=\lambdat'$, and $\mfl=\mfl'$.
\end{enumerate}
\end{prop}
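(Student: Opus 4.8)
The plan is to follow the template established in Propositions \ref{class rank1 D13}--\ref{class rank1 D16}: parts (1) and (2) will be read off from the case analysis in \ref{rank1fifth5} after choosing a normal-form representative, and part (3) will follow from Lemma \ref{rank1typeofPfifth} together with the requirement that an isomorphism preserve both the Jordan form of $\phi$ and the Hodge filtration.

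First I would recall the geometry of \ref{rank1fifth5}: neither $L_{1}$ nor $L_{2}$ is $\phi$- and $N$-invariant, $L_{1}\subset E(\baseo,\baseth)$, and $\baset\notin L_{2}$. Since $L_{1}$ is a line inside $E(\baseo,\baseth)$ other than the invariant line $E\baseth$ (see Lemma \ref{rank1invariantfifth}), I may normalize $L_{1}=E(\baseo+a\baseth)$; since $L_{2}\supset L_{1}$ is two-dimensional and does not contain $\baset$, a second generator must have nonzero $\baseth$-coordinate, so $L_{2}=E(\baseo+a\baseth,\baset+b\baseth)$ with $b\neq0$. The residual freedom in the basis is exactly the centralizer of $\phi$ and $N$, which by Lemma \ref{rank1typeofPfifth} consists of the diagonal matrices with equal $(1,1)$ and $(3,3)$ entries. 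Applying the change of basis $\baseo\mapsto\baseo$, $\baset\mapsto b\baset$, $\baseth\mapsto\baseth$ normalizes the second generator to $\baset+\baseth$ and leaves $a$ as the single parameter $\mfl$, producing the filtration of $D^{17}_{\rank N=1}$. Because this change of basis commutes with $[\phi]$ and $[N]$, it leaves the Frobenius and monodromy matrices untouched, so the admissibility range $\frac{s-1}{2}\le\val\lambda\le\frac{r+s-1}{2}$ and the complete list of submodules are inherited verbatim from \ref{rank1fifth5}; this gives (1) and (2).

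For part (3), let $T$ be an isomorphism from $D^{17}_{\rank N=1}(\lambda,\lambdat,\mfl)$ to $D^{17}_{\rank N=1}(\lambda',\lambdat',\mfl')$. An isomorphism preserves the eigenvalues of $\phi$, so $\lambda=\lambda'$ and $\lambdat=\lambdat'$; Lemma \ref{rank1typeofPfifth} then forces $[T]$ to be diagonal with $T_{1,1}=T_{3,3}$. Evaluating $T$ on the generator $\baseo+\mfl\baseth$ of $\fils D$ shows $T$ sends $E(\baseo+\mfl\baseth)$ to $E(\baseo+\mfl\baseth)$, and comparing with $\fils$ of the target yields $\mfl=\mfl'$; the converse is clear. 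This is formally identical to the argument in Proposition \ref{class rank1 D13}.

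Since the substantive admissibility and submodule computations are already carried out in \ref{rank1fifth5}, the only real obstacle is the bookkeeping of the normal form: one must check that the normalizing transformation genuinely lies in the centralizer of Lemma \ref{rank1typeofPfifth} (so that $[\phi]$ and $[N]$ are unchanged) and that $\mfl$ ranges freely over all of $E$, including $\mfl=0$ (corresponding to $L_{1}=E\baseo$, which is not invariant). Once these are confirmed, the remaining verifications are routine.
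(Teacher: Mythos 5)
Your proposal is correct and follows essentially the same route as the paper: normalize $L_{1}=E(\baseo+a\baseth)$, $L_{2}=E(\baseo+a\baseth,\baset+b\baseth)$ with $b\neq0$, apply the centralizer element $\baseo\mapsto\baseo$, $\baset\mapsto b\baset$, $\baseth\mapsto\baseth$ to reduce to the normal form of $D^{17}_{\rank N=1}$ and quote \ref{rank1fifth5} for parts (1) and (2), then use Lemma \ref{rank1typeofPfifth} and preservation of the filtration for part (3). The only (harmless) deviation is that you extract $\mfl=\mfl'$ directly from $\fil{s}D$ using $T_{1,1}=T_{3,3}$, whereas the paper's cited argument (Proposition \ref{class rank1 D13}) first concludes $T$ is a scalar multiple of the identity.
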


\begin{proof}
From \ref{rank1fifth5}, if we let $L_{1}=E(\baseo+a\baseth)$ and $L_{2}=E(\baseo+a\baseth,\baset+b\baseth)$ with $b\not=0$, then, by change of a basis: $\baseo\mapsto\baseo$, $\baset\mapsto b\baset$, and $\baseth\mapsto\baseth$, we get $D^{17}_{\rank N=1}$. So now the part (1) and (2) are immediate from \ref{rank1fifth5}. For the part (3), use the same argument as in Proposition \ref{class rank1 D13}.
\end{proof}

The following example arises from \ref{rank1fifth6}.
\begin{exam}
A filtered $(\phi,N)$-module of Hodge type $(0,r,s)$
$$D^{18}_{\rank  N=1}=D^{18}_{\rank  N=1}(\lambda,\lambdat,\mfl);$$
\begin{itemize}
\item $\fil{r} D=E(\baseo+\mfl\baseth,\baset)$ and $\fil{s} D=E(\baseo+\baset+\mfl\baseth)$.
\item $
[N]=
\begin{small}\left(
  \begin{array}{ccc}
    0 & 0  & 0 \\
    0 & 0  & 0 \\
    1 & 0  & 0 \\
  \end{array}
\right)
\end{small}
$ and $
[\phi]=
\begin{small}\left(
  \begin{array}{ccc}
    p\lambda & 0  & 0 \\
    0 & \lambdat & 0 \\
    0 & 0 & \lambda \\
  \end{array}
\right)
\end{small}
$ for $\lambda\not=\lambdat\not=p\lambda$ in $E$.
\item $\mfl\in E$, $\frac{r-1}{2}\leq\val\lambda\leq\frac{s-1}{2}$, and $2\val\lambda+\val\lambdat=r+s-1$.
\end{itemize}
\end{exam}

\begin{prop} \label{class rank1 D18}
\begin{enumerate}
\item $D^{18}_{\rank  N=1}$ represents admissible filtered $(\phi,N)$-modules with $\rank N=1$.
\item The corresponding representations to $D^{18}_{\rank N=1}$ are
     \begin{itemize}
     \item non-split reducible with submodules $E\baseth$ and $E(\baseo,\baseth)$ if $\val\lambda=\frac{r-1}{2}$ and $r=1$,
     \item non-split reducible with submodule $E(\baseo,\baseth)$ if $\val\lambda=\frac{r-1}{2}$ and $r>1$,
     \item non-split reducible with submodule $E\baset$ if $\val\lambda=\frac{s-1}{2}$, and
     \item irreducible if $\frac{r-1}{2}<\val\lambda<\frac{s-1}{2}$.
     \end{itemize}
\item $D^{18}_{\rank  N=1}(\lambda,\lambdat,\mfl)$ is isomorphic to $D^{18}_{\rank  N=1}(\lambda',\lambdat',\mfl')$ if and only if $\lambda=\lambda'$, $\lambdat=\lambdat'$, and $\mfl=\mfl'$.
\end{enumerate}
\end{prop}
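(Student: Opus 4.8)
The plan is to follow the template already established in Propositions~\ref{class rank1 D13}--\ref{class rank1 D17}, deriving everything from the analysis recorded in Subsection~\ref{rank1fifth6}. There we assumed that neither $L_{1}$ nor $L_{2}$ is $\phi$- and $N$-invariant, that $L_{1}$ lies in no $\phi$- and $N$-invariant subspace, and that $\baset\in L_{2}$ (so $\baseth\notin L_{2}$). First I would write down the general shape of such a filtration. Since $L_{1}$ avoids the invariant subspace $E(\baset,\baseth)$, its generator has nonzero $\baseo$-component, so we may take $L_{1}=E(\baseo+a\baset+b\baseth)$; the remaining constraint ``contained in no invariant subspace'' reduces, by Lemma~\ref{rank1invariantfifth}, to $a\neq0$ (this is exactly what keeps $L_{1}$ out of $E(\baseo,\baseth)$). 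Because $\baset\in L_{2}$ we then have $L_{2}=E(\baseo+a\baset+b\baseth,\baset)$. Applying the change of basis $\baseo\mapsto\baseo$, $\baset\mapsto\frac{1}{a}\baset$, $\baseth\mapsto\baseth$ normalizes the $\baset$-coefficient of the generator of $L_{1}$ to $1$ and sends the filtration to $\fil{r}D=E(\baseo+\mfl\baseth,\baset)$, $\fil{s}D=E(\baseo+\baset+\mfl\baseth)$ with $\mfl=b$. The point to verify is that this change of basis is of the type permitted by Lemma~\ref{rank1typeofPfifth}, namely diagonal with equal $(1,1)$- and $(3,3)$-entries; as it only rescales $\baset$, it commutes with both $[\phi]$ and $[N]$, so the displayed matrices are left unchanged. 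With the module now in normal form, parts (1) and (2) are immediate restatements of the admissibility range $\frac{r-1}{2}\le\val\lambda\le\frac{s-1}{2}$ and the submodule bookkeeping of~\ref{rank1fifth6}.

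For part (3) I would argue exactly as in Proposition~\ref{class rank1 D13}. Any isomorphism $T\colon D^{18}_{\rank N=1}(\lambda,\lambdat,\mfl)\to D^{18}_{\rank N=1}(\lambda',\lambdat',\mfl')$ preserves the eigenvalues of $\phi$, forcing $\lambda=\lambda'$ and $\lambdat=\lambdat'$; Lemma~\ref{rank1typeofPfifth} then shows $[T]$ is diagonal with $T_{1,1}=T_{3,3}$. The additional input is the filtration: writing $[T]=\mathrm{diag}(t,u,t)$ and imposing $T(\fil{s}D)=\fil{s}D$, the image $t\baseo+u\baset+t\mfl\baseth$ of the generator of $\fil{s}D$ must be proportional to $\baseo+\baset+\mfl'\baseth$; comparing the $\baseo$- and $\baset$-coefficients forces $u=t$, so $T$ is a scalar, and then comparing the $\baseth$-coefficient gives $\mfl=\mfl'$. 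The converse is clear.

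I expect no genuine obstacle, since the argument is structurally identical to the preceding cases; the only step demanding attention is the one in part (3) where compatibility with $\fil{s}D$ upgrades the \emph{diagonal} matrix supplied by Lemma~\ref{rank1typeofPfifth} to a \emph{scalar} one. This is precisely what pins down $\mfl$ and certifies it as a genuine invariant of the isomorphism class rather than a removable parameter, so I would present that computation explicitly while leaving the basis-change verification (routine via Lemma~\ref{rank1typeofPfifth}) to the reader.
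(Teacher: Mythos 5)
Your proposal is correct and follows essentially the same route as the paper: the same normalization $L_{1}=E(\baseo+a\baset+b\baseth)$, $a\neq0$, $L_{2}=E(\baseo+a\baset+b\baseth,\baset)$ from \ref{rank1fifth6}, the same basis change $\baset\mapsto\frac{1}{a}\baset$ (which Lemma \ref{rank1typeofPfifth} shows leaves $[\phi]$ and $[N]$ unchanged), and for part (3) the same argument the paper imports from Proposition \ref{class rank1 D13}, namely that Lemma \ref{rank1typeofPfifth} makes $[T]$ diagonal with $T_{1,1}=T_{3,3}$ and preservation of $\fil{s}D$ upgrades it to a scalar, forcing $\mfl=\mfl'$. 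The only difference is that you write out that coefficient comparison explicitly where the paper cites it by reference.
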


\begin{proof}
From \ref{rank1fifth6}, if we let $L_{1}=E(\baseo+a\baset+b\baseth)$ and $L_{2}=E(\baseo+a\baset+b\baseth,\baset)$ with $a\not=0$, then, by change of a basis: $\baseo\mapsto\baseo$, $\baset\mapsto\frac{1}{a}\baset$, and $\baseth\mapsto\baseth$, we get $D^{18}_{\rank N=1}$. So now the part (1) and (2) are immediate from \ref{rank1fifth6}. For the part (3), use the same argument as in Proposition \ref{class rank1 D13}.
\end{proof}

The following example arises from \ref{rank1fifth7}.
\begin{exam}
A filtered $(\phi,N)$-module of Hodge type $(0,r,s)$
$$D^{19}_{\rank  N=1}=D^{19}_{\rank  N=1}(\lambda,\lambdat,\mfl);$$
\begin{itemize}
\item $\fil{r} D=E(\baseo+\baset,\baseth)$ and $\fil{s} D=E(\baseo+\baset+\mfl\baseth)$.
\item $
[N]=
\begin{small}\left(
  \begin{array}{ccc}
    0 & 0  & 0 \\
    0 & 0  & 0 \\
    1 & 0  & 0 \\
  \end{array}
\right)
\end{small}
$ and $
[\phi]=
\begin{small}\left(
  \begin{array}{ccc}
    p\lambda & 0  & 0 \\
    0 & \lambdat & 0 \\
    0 & 0 & \lambda \\
  \end{array}
\right)
\end{small}
$ for $\lambda\not=\lambdat\not=p\lambda$ in $E$.
\item $\mfl\in E$, $r\leq\val\lambda\leq\frac{r+s-1}{2}$, $2\val\lambda+\val\lambdat=r+s-1$, and $s\geq r+1$.
\end{itemize}
\end{exam}

\begin{prop} \label{class rank1 D19}
\begin{enumerate}
\item $D^{19}_{\rank  N=1}$ represents admissible filtered $(\phi,N)$-modules with $\rank N=1$.
\item The corresponding representations to $D^{19}_{\rank N=1}$ are
     \begin{itemize}
     \item non-split reducible with submodules $E\baset$, $E\baseth$, and $E(\baset,\baseth)$ if $s=r+1$,
     \item non-split reducible with submodule $E\baseth$ if $\val\lambda=r$ and $s>r+1$,
     \item non-split reducible with submodule $E\baset$ if $\val\lambda=\frac{r+s-1}{2}$ and $s>r+1$, and
     \item irreducible if $r<\val\lambda<\frac{r+s-1}{2}$ and $s>r+1$.
     \end{itemize}
\item $D^{19}_{\rank  N=1}(\lambda,\lambdat,\mfl)$ is isomorphic to $D^{19}_{\rank  N=1}(\lambda',\lambdat',\mfl')$ if and only if $\lambda=\lambda'$, $\lambdat=\lambdat'$, and $\mfl=\mfl'$.
\end{enumerate}
\end{prop}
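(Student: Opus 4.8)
The plan is to follow verbatim the template established in Propositions \ref{class rank1 D13}--\ref{class rank1 D18}: parts (1) and (2) will come directly from the case analysis carried out in \ref{rank1fifth7} once the general filtration is normalized by an allowable change of basis, and part (3) will be a short hands-on computation with the isomorphisms permitted by Lemma \ref{rank1typeofPfifth}.

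First I would extract from \ref{rank1fifth7} the shape of a general pair $(L_1,L_2)$ in this case. Since $L_1$ is required to lie in no $\phi$- and $N$-invariant subspace, comparison with the list in Lemma \ref{rank1invariantfifth} shows that $L_1 = E(\baseo + a\baset + b\baseth)$ with $a \neq 0$ (a nonzero $\baseo$-coordinate is forced in order to avoid $E(\baset,\baseth)$, and $a \neq 0$ is forced to avoid $E(\baseo,\baseth)$), while the hypothesis $\baseth \in L_2$ pins down $L_2 = E(\baseo + a\baset, \baseth)$. To reach the standard form I would apply the change of basis $\baseo \mapsto \baseo$, $\baset \mapsto \tfrac{1}{a}\baset$, $\baseth \mapsto \baseth$, whose matrix $\mathrm{diag}(1, \tfrac{1}{a}, 1)$ is diagonal with equal $(1,1)$ and $(3,3)$ entries and therefore, by Lemma \ref{rank1typeofPfifth}, leaves $[\phi]$ and $[N]$ unchanged. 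This sends $\fils D = L_1$ to $E(\baseo + \baset + \mfl\baseth)$ and $\filr D = L_2$ to $E(\baseo + \baset, \baseth)$ with $\mfl := b \in E$ unconstrained, which is exactly $D^{19}_{\rank N=1}(\lambda, \lambdat, \mfl)$. Parts (1) and (2) then follow at once from the numerical conditions and the submodule bookkeeping already recorded in \ref{rank1fifth7}.

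For part (3) I would argue as in Proposition \ref{class rank1 D13}. An isomorphism $T$ preserves the Jordan form of $\phi$, so $\lambda = \lambda'$ and $\lambdat = \lambdat'$; Lemma \ref{rank1typeofPfifth} then forces $[T] = \mathrm{diag}(t, u, t)$. Imposing $T(\fils D) = \fils D'$ means that $t\baseo + u\baset + t\mfl\baseth$ is a scalar multiple of $\baseo + \baset + \mfl'\baseth$; comparing the first two coordinates gives $t = u$, so $T$ is a scalar multiple of the identity and is automatically compatible with $\filr D = E(\baseo + \baset, \baseth)$, and comparing the third coordinate then gives $\mfl = \mfl'$. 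The converse is immediate by taking $T$ to be the identity.

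The computation is entirely routine; the only place demanding genuine care is the bookkeeping in the first step, namely correctly translating the three invariance hypotheses of \ref{rank1fifth7} into the constraint $a \neq 0$ together with the precise description of $L_2$, and verifying that the normalizing change of basis is of the type sanctioned by Lemma \ref{rank1typeofPfifth}, so that it does not perturb the prescribed matrices of $\phi$ and $N$.
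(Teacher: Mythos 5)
Your proposal is correct and follows essentially the same route as the paper: you normalize the pair $(L_1,L_2)$ from \ref{rank1fifth7} by the same change of basis $\baseo\mapsto\baseo$, $\baset\mapsto\tfrac{1}{a}\baset$, $\baseth\mapsto\baseth$ (which Lemma \ref{rank1typeofPfifth} shows fixes $[\phi]$ and $[N]$), deduce parts (1) and (2) from the analysis in \ref{rank1fifth7}, and prove part (3) by the same argument the paper borrows from Proposition \ref{class rank1 D13}, namely that Lemma \ref{rank1typeofPfifth} plus preservation of $\fil{s}D$ forces the isomorphism to be a scalar multiple of the identity. Your write-up is in fact slightly more explicit than the paper's (spelling out why $a\neq 0$ and carrying out the coordinate comparison in part (3)), but the content is identical.
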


\begin{proof}
From \ref{rank1fifth7}, if we let $L_{1}=E(\baseo+a\baset+b\baseth)$ and $L_{2}=E(\baseo+a\baset+b\baseth,\baseth)$ with $a\not=0$, then, by change of a basis: $\baseo\mapsto\baseo$, $\baset\mapsto\frac{1}{a}\baset$, and $\baseth\mapsto\baseth$, we get $D^{19}_{\rank N=1}$. So now the part (1) and (2) are immediate from \ref{rank1fifth7}. For the part (3), use the same argument as in Proposition \ref{class rank1 D13}.
\end{proof}

The following example arises from \ref{rank1fifth8}.
\begin{exam}
A filtered $(\phi,N)$-module  of Hodge type $(0,r,s)$
$$D^{20}_{\rank  N=1}=D^{20}_{\rank  N=1}(\lambda,\lambdat,\mflo,\mflt);$$
\begin{itemize}
\item $\fil{r} D=E(\baseo+\baset+\mflo\baseth,\baset+\mflt\baseth)$ and $\fil{s} D=E(\baseo+\baset+\mflo\baseth)$.
\item $
[N]=
\begin{small}\left(
  \begin{array}{ccc}
    0 & 0  & 0 \\
    0 & 0  & 0 \\
    1 & 0  & 0 \\
  \end{array}
\right)
\end{small}
$ and $
[\phi]=
\begin{small}\left(
  \begin{array}{ccc}
    p\lambda & 0  & 0 \\
    0 & \lambdat & 0 \\
    0 & 0 & \lambda \\
  \end{array}
\right)
\end{small}
$ for $\lambda\not=\lambdat\not=p\lambda$ in $E$.
\item $\mflo\in E$, $\mflt\in E^{\times}$, $\frac{r-1}{2}\leq\val\lambda\leq\frac{r+s-1}{2}$, and $2\val\lambda+\val\lambdat=r+s-1$.
\end{itemize}
\end{exam}

\begin{prop} \label{class rank1 D20}
\begin{enumerate}
\item $D^{20}_{\rank  N=1}$ represents admissible filtered $(\phi,N)$-modules with $\rank N=1$.
\item The corresponding representations to $D^{20}_{\rank N=1}$ are
     \begin{itemize}
     \item non-split reducible with submodules $E\baseth$ and $E(\baseo,\baseth)$ if $\val\lambda=\frac{r-1}{2}$ and $r=1$,
     \item non-split reducible with submodule $E(\baseo,\baseth)$ if $\val\lambda=\frac{r-1}{2}$ and $r>1$,
     \item non-split reducible with submodule $E\baset$ and $E(\baset,\baseth)$ if $\val\lambda=\frac{r+s-1}{2}$ and $s=r+1$,
     \item non-split reducible with submodule $E\baset$ if $\val\lambda=\frac{r+s-1}{2}$ and $s>r+1$, and
     \item irreducible if $\frac{r-1}{2}<\val\lambda<\frac{r+s-1}{2}$.
     \end{itemize}
\item $D^{20}_{\rank  N=1}(\lambda,\lambdat,\mflo,\mflt)$ is isomorphic to $D^{20}_{\rank  N=1}(\lambda',\lambdat',\mflo',\mflt')$ if and only if $\lambda=\lambda'$, $\lambdat=\lambdat'$, $\mflo=\mflo'$, and $\mflt=\mflt'$.
\end{enumerate}
\end{prop}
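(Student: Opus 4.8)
The plan is to follow the template already used in Propositions \ref{class rank1 D13}--\ref{class rank1 D19}: parts (1) and (2) will fall out of the classification carried out in \ref{rank1fifth8} as soon as the filtration is placed in a normal form by a base change that leaves the matrices $[\phi]$ and $[N]$ unchanged, while part (3) will follow from the constraint that an isomorphism must intertwine $\phi$ and $N$ and preserve both steps of the filtration.

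First I would start from the generic data of \ref{rank1fifth8}, where $L_1$ is a line lying in no $\phi$- and $N$-invariant subspace and $L_2\supset L_1$ is a plane with $\baset,\baseth\notin L_2$. By Lemma \ref{rank1invariantfifth} the proper invariant subspaces are $E\baset$, $E\baseth$, $E(\baset,\baseth)$, $E(\baseo,\baseth)$, so avoiding all of them forces $L_1=E(\baseo+a\baset+b\baseth)$ with $a\neq 0$, and the condition $\baset,\baseth\notin L_2$ forces $L_2=E(\baseo+a\baset+b\baseth,\baset+c\baseth)$ with $c\neq 0$. Rescaling $\baset$ by $1/a$ is a diagonal base change of the shape permitted by Lemma \ref{rank1typeofPfifth}, hence one that fixes $[\phi]$ and $[N]$; it normalizes the $\baset$-coefficient of the generator of $L_1$ to $1$ and produces the standard shape of $D^{20}_{\rank N=1}$, with $\mflo\in E$ and $\mflt\in E^{\times}$. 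Parts (1) and (2) are then immediate from \ref{rank1fifth8}.

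For part (3), let $T$ be an isomorphism $D^{20}_{\rank N=1}(\lambda,\lambdat,\mflo,\mflt)\to D^{20}_{\rank N=1}(\lambda',\lambdat',\mflo',\mflt')$. Since $T$ intertwines $\phi$ and $N$, and since $\lambda\neq\lambdat\neq p\lambda$ (so the $p\lambda$- and $\lambda$-eigenlines, which are paired by $N$ through $N\baseo=\baseth$, cannot be interchanged with the $\lambdat$-eigenline), the eigenvalues must match on the nose: $\lambda=\lambda'$ and $\lambdat=\lambdat'$, and by Lemma \ref{rank1typeofPfifth} the matrix $[T]$ is diagonal with $T_{1,1}=T_{3,3}$, say $[T]=\mathrm{diag}(t,u,t)$. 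Preserving $\fils D$ requires $t\baseo+u\baset+t\mflo\baseth$ to be a scalar multiple of $\baseo+\baset+\mflo'\baseth$; comparing the $\baseo$- and $\baset$-coefficients forces $u=t$, so $T$ is a scalar, and comparing $\baseth$-coefficients gives $\mflo=\mflo'$. Preserving $\filr D$ then forces $\baset+\mflt\baseth\in\filr D'$, which, using $\mflo=\mflo'$, yields $\mflt=\mflt'$. The converse is clear.

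I expect no serious obstacle here, since the computation runs parallel to the one-parameter cases already settled. The only point requiring care, and the only genuine difference from Propositions \ref{class rank1 D13}--\ref{class rank1 D19}, is the separation of the two filtration parameters in part (3): one must first exploit preservation of the smaller step $\fils D$ to collapse $[T]$ to a scalar and pin down $\mflo$, and only afterwards use preservation of $\filr D$ to pin down $\mflt$.
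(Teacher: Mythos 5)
Your proposal is correct and takes essentially the same route as the paper: the same base change (rescaling $\baset$ by $1/a$, which is of the shape allowed by Lemma \ref{rank1typeofPfifth} and so fixes $[\phi]$ and $[N]$) puts the data of \ref{rank1fifth8} into the standard form $D^{20}_{\rank N=1}$, giving parts (1) and (2). Your part (3) is precisely the argument the paper invokes by reference to Proposition \ref{class rank1 D13} (eigenvalue matching, then Lemma \ref{rank1typeofPfifth} forces $[T]$ diagonal with $T_{1,1}=T_{3,3}$, then filtration preservation collapses $T$ to a scalar and pins down $\mflo$ and $\mflt$), merely written out in fuller detail.
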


\begin{proof}
From \ref{rank1fifth8}, if we let $L_{1}=E(\baseo+a\baset+b\baseth)$ and $L_{2}=E(\baseo+a\baset+b\baseth,\baset+c\baseth)$ with $ca\not=0$ then, by change of a basis: $\baseo\mapsto\baseo$, $\baset\mapsto\frac{1}{a}\baset$, and $\baseth\mapsto\baseth$, we get $D^{20}_{\rank N=1}$. So now the part (1) and (2) are immediate from \ref{rank1fifth8}. For the part (3), use the same argument as in Proposition \ref{class rank1 D13}.
\end{proof}

\begin{prop}
Every semi-stable representation of $G_{\QP}$ with regular Hodge--Tate weights and with $\rank N=1$ is isomorphic to a representation corresponding to some $D^{i}_{\rank N=1}$ up to twist by a power of the cyclotomic character.
\end{prop}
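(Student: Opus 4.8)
The plan is to mirror the two-sentence argument used for the crystalline case, reducing the statement to the exhaustive enumeration carried out in Subsections 3.1--3.5 together with the classification in Subsection \ref{ssec:list of N=1}. First I would invoke the theorem of Colmez and Fontaine \cite{CF}: a semi-stable representation $\rho : G_{\QP} \to \GL{3}(E)$ corresponds to an admissible filtered $(\phi,N)$-module $D = \Dst(\rho)$ of rank $3$, and the hypothesis $\rank N = 1$ on $\rho$ is precisely the condition $\rank N = 1$ on $D$. Since $\rho$ has regular Hodge--Tate weights $h_1 < h_2 < h_3$, twisting $\rho$ by $\varepsilon^{h_1}$ shifts all weights by $-h_1$ (recall $\varepsilon$ has weight $-1$ in our normalization), so after this twist $D$ has Hodge type $(0,r,s)$ with $r = h_2 - h_1$ and $s = h_3 - h_1$, and $0 < r < s$. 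It therefore suffices to show that every admissible filtered $(\phi,N)$-module of Hodge type $(0,r,s)$ with $\rank N = 1$ is isomorphic to some $D^{i}_{\rank N=1}$.

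Next I would recall the normalizations established at the beginning of Section $3$. Because $N$ is nilpotent of rank $1$ on a $3$-dimensional space its image lies in its kernel, so $N^{2}=0$ and a basis may be chosen with $N\baseo = \baseth$ and $N\baset = N\baseth = 0$. The relation $N\phi = p\phi N$ then forces $[\phi]$ into the shape displayed just before the first Lemma of Section $3$, and that Lemma supplies a base change $P$ commuting with $N$ which brings $[\phi]$ into exactly one of the five listed types. Hence any such $D$, after an allowable change of basis, falls into one of the five cases treated in Subsections 3.1--3.5, corresponding respectively to ``the first through fifth case of $\rank N=1$.''

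In each of those subsections all admissible modules were determined by running through every position of the Hodge flag $L_{1}\subset L_{2}$ relative to the $\phi$- and $N$-invariant subspaces (computed in Lemmas \ref{rank1invariantfirst}, \ref{rank1invariantsecond}, \ref{rank1invariantthrid}, \ref{rank1invariantfourth}, and \ref{rank1invariantfifth}) and applying the admissibility inequalities. The classification in Subsection \ref{ssec:list of N=1}, via Propositions \ref{class rank1 D1}--\ref{class rank1 D20}, then exhibits in each surviving case an explicit change of basis, fixing the normal forms of $N$ and $\phi$, that carries $D$ to one of the representatives $D^{i}_{\rank N=1}$. Assembling these, every admissible filtered $(\phi,N)$-module of Hodge type $(0,r,s)$ with $\rank N = 1$ is isomorphic to some $D^{i}_{\rank N=1}$, which gives the claim after untwisting.

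The one point requiring care is organizational rather than deep: one must confirm that the enumeration is genuinely complete, i.e.\ that the five $\phi$-types of the Lemma are mutually exhaustive, and that within each type every admissible configuration of $L_{1}\subset L_{2}$ was accounted for in Subsections 3.1--3.5 (including the degenerate positions in which $L_{1}$ or $L_{2}$ is $\phi$- and $N$-invariant, all of which were excluded by the admissibility inequalities). Granting this bookkeeping, no admissible module escapes the list, and the proof concludes exactly as in the crystalline case: the families $D^{i}_{\rank N=1}$ represent everything found in the preceding subsections.
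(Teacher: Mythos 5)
Your proposal is correct and follows essentially the same route as the paper: the paper's own proof is simply the observation that the enumeration in the preceding subsections is exhaustive and that the representatives $D^{i}_{\rank N=1}$ of Subsection \ref{ssec:list of N=1} account for everything found there. Your version merely makes explicit the standing reductions (the Colmez--Fontaine correspondence, the twist normalizing the lowest weight to $0$, and the normal forms of $N$ and $\phi$) that the paper established before Section $3$ and leaves implicit in its two-sentence proof.
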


\begin{proof}
We found all the admissible filtered $(\phi,N)$-modules with $\rank N=1$ in the previous subsections. Since the list of filtered modules in this subsection represents all of the modules in the previous subsections, we are done.
\end{proof}

\begin{prop}
Let $i,j\in\{1,2,...,20\}$. If $D^{i}_{\rank N=1}$ is isomorphic to $D^{j}_{\rank N=1}$, then $i=j$.
\end{prop}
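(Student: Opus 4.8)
The plan is to distinguish the twenty families using two isomorphism invariants: the conjugacy class of $\phi$ and the incidence pattern of the Hodge filtration with the $\phi$- and $N$-invariant subspaces. An isomorphism $\eta : D^{i}_{\rank N=1}\to D^{j}_{\rank N=1}$ satisfies $\phi'\circ\eta=\eta\circ\phi$ and $N'\circ\eta=\eta\circ N$, and (being invertible with inverse a morphism) also $\eta(\mathrm{Fil}^{s}D^{i})=\mathrm{Fil}^{s}D^{j}$ and $\eta(\mathrm{Fil}^{r}D^{i})=\mathrm{Fil}^{r}D^{j}$.

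First I would use the conjugacy class of $\phi$ to cut the problem down to the five cases of $\rank N=1$. Since $[\phi']=[\eta][\phi][\eta]^{-1}$, the matrices $[\phi]$ and $[\phi']$ share their eigenvalue multiset and Jordan type. Diagonalizability and the eigenvalue multiplicities already separate the pair of diagonalizable cases with a repeated eigenvalue (first and third), the pair of non-diagonalizable cases (second and fourth), and the case of three distinct eigenvalues (fifth). Within the first pair, the diagonal form $\mathrm{diag}(p\lambda,\lambda,\lambda)$ has its single eigenvalue equal to $p$ times the repeated one, whereas $\mathrm{diag}(p\lambda,p\lambda,\lambda)$ has it equal to $p^{-1}$ times the repeated one; within the second pair, the Jordan block sits on the eigenvalue that is $p^{-1}$ times, respectively $p$ times, the remaining eigenvalue. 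As $p\neq p^{-1}$, no eigenvalue multiset of one member can equal that of the other, so $i$ and $j$ must lie in the same case, i.e.\ in the same one of the blocks $\{1,2\}$, $\{3,4,5,6\}$, $\{7,8\}$, $\{9,10,11,12\}$, $\{13,\dots,20\}$.

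Next, within a single case the matrices $[\phi]$ and $[N]$ are literally the same for every listed module, so $[\eta]$ commutes with both and must have the restricted shape furnished by the relevant commutant lemma (Lemma \ref{rank1typeofPfirst}, \ref{rank1typeofPsecond}, \ref{rank1typeofPthird}, \ref{rank1typeofPfourth}, or \ref{rank1typeofPfifth}). I would verify, case by case, that such a matrix fixes \emph{each} of the $\phi$- and $N$-invariant subspaces recorded in Lemmas \ref{rank1invariantfirst}, \ref{rank1invariantsecond}, \ref{rank1invariantthrid}, \ref{rank1invariantfourth}, \ref{rank1invariantfifth}. For example, in the first case the shape of Lemma \ref{rank1typeofPfirst} gives $[\eta]\baseo\in E\baseo$ and $[\eta]\baseth\in E\baseth$, whence $E(\baseo,\baseth)$ and $E(\baset,\baseth)$ are each preserved; the other four cases are the same kind of one-line check, the point being that the commutant is (block) lower triangular or diagonal, so it can never interchange two of the listed invariant subspaces.

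Finally I would invoke the fact that, by their very construction, the modules inside a common case are separated precisely by the incidence of the line $L_{1}=\mathrm{Fil}^{s}D$ and the plane $L_{2}=\mathrm{Fil}^{r}D$ with these invariant subspaces: whether $L_{1}$ lies in a prescribed invariant subspace, whether $L_{1}$ lies in any invariant subspace at all, and whether a prescribed invariant line lies in $L_{2}$. Because $\eta$ carries the Hodge filtration of $D^{i}$ onto that of $D^{j}$ while fixing each invariant subspace, it preserves every one of these incidence relations; since the defining patterns of the subcases \ref{rank1first1}--\ref{rank1fifth8} are pairwise distinct, no $\eta$ can exist unless $i=j$. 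The only genuinely delicate point is the non-conjugacy of the five $\phi$-types, which hinges on $p\neq p^{-1}$; the remainder is the routine bookkeeping of checking that each commutant shape fixes the invariant subspaces and that the incidence patterns within each case are mutually exclusive.
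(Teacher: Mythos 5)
Your overall architecture is the same as the paper's: separate the twenty modules into the blocks $\{1,2\}$, $[3,6]$, $\{7,8\}$, $[9,12]$, $[13,20]$ by the Jordan type of $\phi$, then use the commutant Lemmas \ref{rank1typeofPfirst}--\ref{rank1typeofPfifth} to show that no filtration-preserving map can exist between distinct members of a block. Your justification of the blocking (the $p\neq p^{-1}$ argument for distinguishing the first case from the third and the second from the fourth) is in fact more careful than the paper's one-line appeal to preservation of Jordan forms, and it is correct.

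However, the step your endgame leans on is false as stated. You claim that a commutant matrix "fixes \emph{each} of the $\phi$- and $N$-invariant subspaces recorded in Lemmas \ref{rank1invariantfirst}, \ref{rank1invariantsecond}, \ref{rank1invariantthrid}, \ref{rank1invariantfourth}, \ref{rank1invariantfifth}" and "can never interchange two of the listed invariant subspaces." This holds in the second, fourth and fifth cases, where the listed subspaces are finite in number, but it fails in exactly the two cases where they form infinite families. In the first case every line of $E(\baset,\baseth)$ is invariant, yet a matrix as in Lemma \ref{rank1typeofPfirst} sends $\baset\mapsto P_{2,2}\baset+P_{3,2}\baseth$, so it moves the invariant line $E\baset$ whenever $P_{3,2}\neq 0$; in the third case every plane $E(a\baseo+b\baset,\baseth)$ is invariant, yet a matrix as in Lemma \ref{rank1typeofPthird} sends $\baseo\mapsto P_{1,1}\baseo+P_{2,1}\baset$, so it moves $E(\baseo,\baseth)$ whenever $P_{2,1}\neq 0$. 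Consequently, for the blocks $\{1,2\}$ and $\{7,8\}$ your final sentence ("it preserves every one of these incidence relations" because it fixes each invariant subspace) does not follow from anything you have proved. The gap is repairable, because the incidences that actually separate those blocks' members involve only subspaces the commutant does fix: $D^{1}_{\rank N=1}$ and $D^{2}_{\rank N=1}$ are distinguished by whether $\fil{s}D\subset E(\baseo,\baseth)$, and $E(\baseo,\baseth)$ is preserved in the first case; $D^{7}_{\rank N=1}$ and $D^{8}_{\rank N=1}$ are distinguished by whether $\baset\in\fil{r}D$, and $E\baset$ is preserved in the third case. But this must be verified incidence-by-incidence rather than asserted wholesale, and the verification is not vacuous: in the third case a commutant matrix with $P_{2,1}=\mfl P_{1,1}$ carries $\fil{s}D^{7}_{\rank N=1}(\lambda,\mfl)$ onto $\fil{s}D^{8}_{\rank N=1}(\lambda,[1:\mfl])$, so there it is only the $\fil{r}$-incidence with $E\baset$ that obstructs an isomorphism. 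With these two cases patched, your argument is complete and agrees in substance with the paper's proof.
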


\begin{proof}
Since an isomorphism preserves the Jordan forms of the Frobenius map, if $D^{i}_{\rank N=1}$ is isomorphic to $D^{j}_{\rank N=1}$, then both $i$ and $j$ belong to either $\{1,2\}$, $[3,6]$, $\{7,8\}$, $[9,12]$, or $[13,20]$.

$D^{1}_{\rank N=1}$ is not isomorphic to $D^{2}_{\rank N=1}$, since any invertible linear map of the form in Lemma \ref{rank1typeofPfirst} can not preserve $\fils D$. Likewise, $D^{7}_{\rank N=1}$ is not isomorphic to $D^{8}_{\rank N=1}$, since any invertible linear map of the form in Lemma \ref{rank1typeofPthird} can not preserve the filtration.

Let $i\neq j\in[3,6]$. Then $D^{i}_{\rank N=1}$ is not isomorphic to $D^{j}_{\rank N=1}$, since any invertible linear map of the form in Lemma \ref{rank1typeofPsecond} can not preserve the filtration. Likewise, for any $i\neq j\in [9,12]$, $D^{i}_{\rank N=1}$ is not isomorphic to $D^{j}_{\rank N=1}$, since any invertible linear map of the form in Lemma \ref{rank1typeofPfourth} can not preserve the filtration.

Let $i\neq j\in[13,20]$. By the same argument, any invertible linear map of the form in Lemma \ref{rank1typeofPfifth} can not preserve the filtration of $D^{i}_{\rank N=1}$ to the one of $D^{j}_{\rank N=1}$. Hence, if $i\neq j$ there are no isomorphism between $D^{i}_{\rank N=1}$ and $D^{j}_{\rank N=1}$.
\end{proof}

\section{Admissible filtered $(\phi,N)$-modules with $\rank N=2$}
In this section, we classify the admissible filtered $(\phi,N)$-modules of Hodge type $(0,r,s)$ for $0<r<s$ and with the monodromy operator $N$ of rank $2$. We follow exactly the same argument in the previous section. Assume first that $N$ has rank $2$. By choice of a basis for $D=E(\baseo,\baset,\baseth)$, we may set
$$N\baseo=\baset,\hspace{0.1cm}N\baset=\baseth,\mbox{ and }N\baseth=0.$$ From the equation $N\phi=p\phi N$, we should have that $$\phi\baseo=p^{2}x\baseo+py\baset+z\baseth,\hspace{0.2cm}\phi\baset=px\baset+y\baseth,\mbox{ and }\phi\baseth=x\baseth,$$ for some $y$, $z$, and $x\not=0$. By change of a basis, we can say a bit more.

\begin{lemm}
There is an invertible matrix $P$ such that
$$P[N]P^{-1}=[N]\mbox{ and }P[\phi]P^{-1}=
\begin{tiny}\left(
\begin{array}{ccc}
    p^{2}x &  0 & 0 \\
    0 &  px & 0 \\
    0 &  0 & x \\
\end{array}
\right)
\end{tiny}.
$$
\end{lemm}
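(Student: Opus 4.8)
The plan is to exploit the compatibility relation $N\phi = p\phi N$ together with the fact that $\phi$ has three distinct eigenvalues. First I would record the data in the given basis: $[\phi]$ is lower triangular with diagonal entries $p^{2}x,\ px,\ x$, and $[N]$ is the regular nilpotent Jordan block sending $\baseo\mapsto\baset\mapsto\baseth\mapsto 0$. Since $p$ is prime (so $p\neq 1$ and $p^{2}\neq 1$) and $x\neq 0$, the eigenvalues $p^{2}x,\ px,\ x$ are pairwise distinct, so $\phi$ is diagonalizable over $E$; let $v_{1},v_{2},v_{3}$ be eigenvectors for $p^{2}x,\ px,\ x$ respectively.

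The key observation is that $N$ shifts eigenvalues by the factor $1/p$: from $\phi N = p^{-1}N\phi$ one sees that if $\phi v=\lambda v$ then $\phi(Nv)=p^{-1}\lambda\,(Nv)$, so $Nv$ lies in the $(p^{-1}\lambda)$-eigenspace, or is zero. Applying this to the $v_{i}$: $Nv_{1}$ lies on the line $Ev_{2}$, $Nv_{2}$ on the line $Ev_{3}$, and $Nv_{3}$ in the $x/p$-eigenspace, which is $0$ because $x/p$ is not among the eigenvalues. Because we are in the rank $2$ case, $\ker N$ is one-dimensional, and since $v_{3}\in\ker N$ it must equal $Ev_{3}$; hence $v_{1},v_{2}\notin\ker N$, so $Nv_{1}=\alpha v_{2}$ and $Nv_{2}=\beta v_{3}$ with $\alpha,\beta\in E^{\times}$. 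Then I would rescale: put $w_{1}=v_{1}$, $w_{2}=Nw_{1}=\alpha v_{2}$, and $w_{3}=Nw_{2}=\alpha\beta v_{3}$. In the basis $w_{1},w_{2},w_{3}$ the operator $N$ has exactly the standard form $w_{1}\mapsto w_{2}\mapsto w_{3}\mapsto 0$, while $\phi$ is diagonal with entries $p^{2}x,\ px,\ x$ since each $w_{i}$ is still an eigenvector. Taking $P$ to be the corresponding change-of-basis matrix then yields $P[N]P^{-1}=[N]$ and the desired diagonal form of $P[\phi]P^{-1}$.

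An equivalent, more computational route—closer in spirit to the rank $1$ lemma, where an explicit $P$ was exhibited—is to note that $P[N]P^{-1}=[N]$ forces $P$ to centralize the regular nilpotent $[N]$, i.e. $P=aI+b[N]+c[N]^{2}$ is lower-triangular Toeplitz. Imposing $P[\phi]=DP$ with $D=\mathrm{diag}(p^{2}x,px,x)$ and normalizing $a=1$, the $(2,1)$ entry determines $b=y/\bigl(x(1-p)\bigr)$ and the $(3,1)$ entry determines $c$, with denominator a nonzero multiple of $x^{2}(1-p)(p^{2}-1)$; the $(2,2),(3,3)$ entries are automatic and the $(3,2)$ equation coincides with the $(2,1)$ one.

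I would not belabor these routine calculations. The only points that genuinely require care, and which I would flag as the crux, are (i) that the diagonalizing matrix may be chosen inside the centralizer of $N$—this is exactly what the eigenvalue-shifting property of $N$ guarantees—and (ii) that the relevant denominators $x(1-p)$ and $x(p^{2}-1)$ are nonzero, which is where the hypotheses that $p$ is prime and $x\neq 0$ enter; in the conceptual proof the same hypotheses reappear as the assertion that $\mathrm{rank}\,N=2$ forces $\alpha,\beta\neq 0$.
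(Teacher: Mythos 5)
Your proposal is correct, and it actually contains two valid proofs. The paper's own proof is the tersest possible version of your second (computational) route: it simply exhibits the explicit matrix
$P=\left(\begin{smallmatrix}1 & 0 & 0\\ \frac{y}{x(1-p)} & 1 & 0\\ \frac{py^{2}+zx(1-p)}{x^{2}(1-p)(1-p^{2})} & \frac{y}{x(1-p)} & 1\end{smallmatrix}\right)$
with no derivation; your Toeplitz ansatz $P=I+b[N]+c[N]^{2}$ recovers exactly this matrix, including the denominators $x(1-p)$ and $x^{2}(1-p)(1-p^{2})$, so that part matches the paper in substance while adding the explanation of why $P$ must have that shape (it lies in the centralizer of the regular nilpotent $[N]$). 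Your first route --- eigenvectors $v_{1},v_{2},v_{3}$ for the distinct eigenvalues $p^{2}x,px,x$, the eigenvalue-shifting identity $\phi(Nv)=p^{-1}\lambda(Nv)$ forcing $Nv_{1}\in Ev_{2}$, $Nv_{2}\in Ev_{3}$, $Nv_{3}=0$, and the rank-$2$ hypothesis forcing the two scalars to be nonzero so that $v_{1},Nv_{1},N^{2}v_{1}$ is a basis --- is genuinely different from anything written in the paper, and is arguably more illuminating: it shows conceptually why a simultaneous normal form for $(\phi,N)$ exists and why the hypotheses ($p$ not a root of unity, $x\neq 0$, $\rank N=2$) are exactly what is needed, whereas the paper's one-line proof buys only brevity and a verification that is purely mechanical. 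Both arguments are complete; no gaps.
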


\begin{proof}
Use the matrix $P=
\begin{tiny}\left(
\begin{array}{ccc}
    1 &  0 & 0 \\
    \frac{y}{x(1-p)} &  1 & 0 \\
    \frac{py^{2}+zx(1-p)}{x^{2}(1-p)(1-p^{2})} &  \frac{y}{x(1-p)} & 1 \\
\end{array}
\right)
\end{tiny}
$.
\end{proof}

In this section, we assume
$$
[\phi]=
\begin{small}
\left(
\begin{array}{ccc}
    p^{2}\lambda &  0 & 0 \\
    0 &  p\lambda & 0 \\
    0 &  0 & \lambda \\
\end{array}
\right)
\end{small}
\mbox{ and }
[N]=
\begin{small}\left(
\begin{array}{ccc}
    0 &  0 & 0 \\
    1 &  0 & 0 \\
    0 &  1 & 0 \\
\end{array}
\right)
\end{small}.$$
By admissibility, we have $$\val\lambda=\frac{r+s-3}{3}.$$

\begin{lemm}\label{rank2typeofP}
For a $3\times 3$-matrix $P=(P_{i,j})$, $P[\phi]=[\phi]P$ and $P[N]=[N]P$ if and only if $P$ is a scalar multiple of the identity.
\end{lemm}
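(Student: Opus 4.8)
The plan is to follow the same two-step strategy used in the commutant lemmas of the previous sections, exploiting the fact that here $[\phi]$ is diagonal with pairwise distinct entries while $[N]$ is a single nilpotent Jordan block. The converse direction is immediate, since a scalar multiple of the identity commutes with every matrix; so all of the content lies in the forward implication.

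First I would use the relation $P[\phi]=[\phi]P$. Writing $[\phi]=\mathrm{diag}(p^{2}\lambda,p\lambda,\lambda)$ and comparing the $(i,j)$ entries of the two products gives $P_{i,j}\,[\phi]_{j,j}=[\phi]_{i,i}\,P_{i,j}$, that is, $P_{i,j}([\phi]_{j,j}-[\phi]_{i,i})=0$ for all $i,j$. Because $\lambda\neq 0$ and $p\geq 2$, the three scalars $p^{2}\lambda$, $p\lambda$, $\lambda$ are pairwise distinct, so whenever $i\neq j$ the factor $[\phi]_{j,j}-[\phi]_{i,i}$ is nonzero and forces $P_{i,j}=0$. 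Hence $P$ must be diagonal, say $P=\mathrm{diag}(a,b,c)$.

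Next I would feed this diagonal form into $P[N]=[N]P$. Since $[N]$ has its only nonzero entries in positions $(2,1)$ and $(3,2)$, left multiplication by the diagonal $P$ scales row $i$ by $P_{i,i}$ while right multiplication scales column $j$ by $P_{j,j}$. Comparing the $(2,1)$ entries gives $b=a$, and comparing the $(3,2)$ entries gives $c=b$; together these yield $a=b=c$, which is exactly the assertion that $P$ is a scalar multiple of the identity.

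I do not expect any genuine obstacle here: the argument is the standard ``commutes with a diagonalizable matrix having distinct eigenvalues, hence is diagonal'' computation, followed by a two-entry comparison for $N$. The only point that deserves explicit mention is the distinctness of $p^{2}\lambda,p\lambda,\lambda$, which is where the hypotheses $\lambda\neq 0$ and $p$ prime enter; once that is noted, both steps reduce to the same routine entry-by-entry comparisons already carried out in Lemmas \ref{typeofPsixth} and \ref{rank1typeofPfifth}.
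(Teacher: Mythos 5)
Your proof is correct and is essentially the same entry-by-entry commutant computation as the paper's, just with the two relations used in the opposite order: the paper first uses $P[N]=[N]P$ (forcing $P$ lower triangular with $P_{1,1}=P_{2,2}=P_{3,3}$ and $P_{2,1}=P_{3,2}$) and then $P[\phi]=[\phi]P$ to kill the remaining entries, whereas you first use $P[\phi]=[\phi]P$ (forcing $P$ diagonal, via the pairwise distinctness of $p^{2}\lambda$, $p\lambda$, $\lambda$) and then $P[N]=[N]P$ to equate the diagonal entries. Both orders are equally routine and your observation about where $\lambda\neq 0$ and $p$ prime enter is exactly the right point to flag.
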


\begin{proof}
The equation $P[N]=[N]P$ forces that $P$ be a lower triangle matrix with $P_{1,1}=P_{2,2}=P_{3,3}$ and with $P_{2,1}=P_{3,2}$. Then $P[\phi]=[\phi]P$ forces that $P$ be a scalar multiple of the identity.
\end{proof}

\subsection{The only case of $\rank N=2$}
In this subsection, we collect the admissible filtered $(\phi,N)$-modules with $\rank N=2$.
\begin{lemm} \label{rank2invariant}
$E\baseth$ and $E(\baset,\baseth)$ are the only nontrivial proper $\phi$- and $N$-invariant subspaces.
\end{lemm}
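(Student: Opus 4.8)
The plan is to proceed exactly as in the rank-one cases (for instance the proof of Lemma \ref{rank1invariantfifth}): first identify all $\phi$-invariant subspaces using the classification already established for a $\phi$ with distinct eigenvalues, and then cut this list down by imposing $N$-invariance.

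First I would observe that $[\phi]$ is the diagonal matrix with eigenvalues $p^{2}\lambda$, $p\lambda$, and $\lambda$, and that these three scalars are pairwise distinct: since $p$ is a prime the powers $1,p,p^{2}$ are distinct, and $\lambda\neq 0$. Hence the hypotheses of Lemma \ref{crysinvariantsixth} are satisfied, and that lemma tells us the only nontrivial proper $\phi$-invariant subspaces are the six coordinate subspaces $E\baseo$, $E\baset$, $E\baseth$, $E(\baseo,\baset)$, $E(\baset,\baseth)$, and $E(\baseo,\baseth)$.

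Second, I would test each of these six candidates for $N$-invariance, using $N\baseo=\baset$, $N\baset=\baseth$, and $N\baseth=0$. Among the lines, $E\baseo$ and $E\baset$ fail (they are carried by $N$ into $E\baset$ and $E\baseth$ respectively), while $E\baseth$ survives because $N\baseth=0$. Among the planes, $E(\baseo,\baset)$ fails since $N\baset=\baseth$ leaves it and $E(\baseo,\baseth)$ fails since $N\baseo=\baset$ leaves it, whereas $E(\baset,\baseth)$ survives because $N$ sends $\baset\mapsto\baseth$ and $\baseth\mapsto 0$, both of which remain inside. This leaves exactly $E\baseth$ and $E(\baset,\baseth)$, proving the claim.

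There is essentially no obstacle here: the entire argument rests on the single nontrivial input, the classification of $\phi$-invariant subspaces in the distinct-eigenvalue situation (Lemma \ref{crysinvariantsixth}), after which the $N$-invariance test is a three-line verification. The only point requiring a moment's care is confirming that the three eigenvalues are genuinely distinct so that Lemma \ref{crysinvariantsixth} is applicable, and this is immediate from $1\neq p\neq p^{2}$ for a prime $p$.
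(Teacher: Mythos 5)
Your proof is correct and follows exactly the paper's own argument: the paper likewise invokes Lemma \ref{crysinvariantsixth} (applicable since $p^{2}\lambda$, $p\lambda$, $\lambda$ are distinct) to reduce to the six coordinate subspaces and then checks $N$-invariance, a verification it leaves to the reader but which you have carried out correctly.
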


\begin{proof}
By Lemma \ref{crysinvariantsixth}, we know that the only nontrivial proper $\phi$-invariant subspaces of $D$ are $E\baseo$, $E\baset$, $E\baseth$, $E(\baseo,\baset)$, $E(\baset,\baseth)$, and $E(\baseo,\baseth)$. Now it is easy to check which ones are $N$-invariant among these subspaces.
\end{proof}

We start to collect the admissible filtered $(\phi,N)$-modules in this case.
\subsubsection{}
Assume that $L_{1}=E\baseth$. Then, by admissibility, we have $s=\hodgen(E\baseth)\leq\newtonn(E\baseth)=\val\lambda$, which contradicts to $\val\lambda=\frac{r+s-3}{3}$.

\subsubsection{}
Assume that $L_{2}=E(\baset,\baseth)$. Then, by admissibility, we have $r+s=\hodgen(E(\baset,\baseth))\leq\newtonn(E(\baset,\baseth))=2\val\lambda+1$, which also contradicts to $\val\lambda=\frac{r+s-3}{3}$.

\subsubsection{} \label{rank2first1}
Assume that neither $L_{1}$ nor $L_{2}$ are $\phi$- and $N$-invariant and $L_{1}\subset E(\baset,\baseth)$. Then $\baseth\not\in L_{2}$ and, by admissibility, $0=\hodgen(E\baseth)\leq\newtonn(E\baseth)=\val\lambda$ and $s=\hodgen(E(\baset,\baseth))\leq\newtonn(E(\baset,\baseth))=2\val\lambda+1.$ Hence, there exist admissible filtered $(\phi,N)$-modules if and only if $s\leq 2r-3$. The corresponding representations are
\begin{itemize}
\item non-split reducible with submodule $E(\baset,\baseth)$ if $s=2r-3$, and
\item irreducible if $s<2r-3$.
\end{itemize}

\subsubsection{}  \label{rank2first2}
Assume that neither $L_{1}$ nor $L_{2}$ are $\phi$- and $N$-invariant and $\baseth\in L_{2}$. Then $L_{1}\not\subset E(\baset,\baseth)$ and, by admissibility, $r=\hodgen(E\baseth)\leq\newtonn(E\baseth)=\val\lambda$ and $r=\hodgen(E(\baset,\baseth))\leq\newtonn(E(\baset,\baseth))=2\val\lambda+1$. Hence, there exist admissible filtered $(\phi,N)$-modules if and only if $s\geq 2r+3$.  The corresponding representations are
\begin{itemize}
\item non-split reducible with submodule $E\baseth$ if $s=2r+3$, and
\item irreducible if $s>2r+3$.
\end{itemize}

\subsubsection{}  \label{rank2first3}
Assume that neither $L_{1}$ nor $L_{2}$ are $\phi$- and $N$-invariant, $L_{1}\not\subset E(\baset,\baseth)$, and $\baseth\not\in L_{2}$. Then, by admissibility, $0=\hodgen(E\baseth)\leq\newtonn(E\baseth)=\val\lambda$ and $r=\hodgen(E(\baset,\baseth))\leq\newtonn(E(\baset,\baseth))=2\val\lambda+1$. Hence, there exist admissible filtered $(\phi,N)$-modules if and only if $2s\geq r+3$. The corresponding representations are
\begin{itemize}
\item non-split reducible with submodules $E\baseth$ and $E(\baset,\baseth)$ if $2s=r+3$, or equivalently, if $r=1$ and $s=2$, and
\item irreducible if $2s>r+3$, or equivalently, if $s>2$.
\end{itemize}

\subsection{List of the isomorphism classes with $\rank N=2$} \label{ssec:list of N=2}
In the previous subsection, we found all of the admissible filtered $(\phi,N)$-modules of Hodge type $(0,r,s)$ for $0<r<s$ with $\rank N=2$. In this subsection, we classify the isomorphism classes of the admissible filtered $(\phi,N)$-modules on $D=E(\baseo,\baset,\baseth)$.

The following example arises from \ref{rank2first1}.
\begin{exam}
A filtered $(\phi,N)$-module of Hodge type $(0,r,s)$
$$D^{1}_{\rank  N=2}=D^{1}_{\rank  N=2}(\lambda,\mflo,\mflt);$$
\begin{itemize}
\item $\fil{r} D=E(\baset+\mflo\baseth,\baseo+\mflt\baseth)$ and $\fil{s} D=E(\baset+\mflo\baseth)$.
\item $
[N]=
\begin{small}\left(
  \begin{array}{ccc}
    0 & 0  & 0 \\
    1 & 0  & 0 \\
    0 & 1  & 0 \\
  \end{array}
\right)
\end{small}
$ and $
[\phi]=
\begin{small}\left(
  \begin{array}{ccc}
    p^{2}\lambda & 0  & 0 \\
    0 & p\lambda & 0 \\
    0 & 0 & \lambda \\
  \end{array}
\right)
\end{small}
$ for $\lambda$ in $E$.
\item $\mathfrak{L}_{1},\mathfrak{L}_{2}\in E$, $\val\lambda=\frac{r+s-3}{3}$, and $s\leq2r-3$.
\end{itemize}
\end{exam}

\begin{prop} \label{class rank2 D1}
\begin{enumerate}
\item $D^{1}_{\rank N=2}$ represents admissible filtered $(\phi,N)$-modules with $\rank N=2$.
\item The corresponding representations to $D^{1}_{\rank N=2}$ are
     \begin{itemize}
     \item non-split reducible with submodule $E(\baset,\baseth)$ if $s=2r-3$ and
     \item irreducible if $s<2r-3$.
     \end{itemize}
\item $D^{1}_{\rank  N=2}(\lambda,\mflo,\mflt)$ is isomorphic to $D^{1}_{\rank  N=2}(\lambda',\mflo',\mflt')$ if and only if $\lambda=\lambda'$, $\mflo=\mflo'$, and $\mflt=\mflt'$.
\end{enumerate}
\end{prop}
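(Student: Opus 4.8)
The plan is to read parts (1) and (2) off directly from the analysis in \ref{rank2first1}, and to deduce the rigid isomorphism criterion of part (3) from Lemma \ref{rank2typeofP}. The whole point of the rank $2$ case is that, unlike in Section $3$, there is essentially no basis-change freedom available: any $P$ preserving both $[\phi]$ and $[N]$ is a scalar. So I must be careful to produce the canonical form of the filtration purely by re-expressing subspaces, not by changing coordinates.

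First I would pin down that canonical form. In the situation of \ref{rank2first1} we have $L_{1}\subset E(\baset,\baseth)$ with $L_{1}$ not $\phi$- and $N$-invariant, so $L_{1}\neq E\baseth$ and hence $L_{1}=E(\baset+\mflo\baseth)$ for a unique $\mflo\in E$ (rescaling the generator does not change the subspace). Since $L_{2}\supset L_{1}$ is two-dimensional with $\baseth\not\in L_{2}$, any second generator $x\baseo+y\baset+z\baseth$ must have $x\neq0$, for otherwise $L_{2}=E(\baset,\baseth)\ni\baseth$; rescaling to $x=1$ and subtracting $y(\baset+\mflo\baseth)\in L_{1}$ rewrites the \emph{same} subspace as $L_{2}=E(\baset+\mflo\baseth,\baseo+\mflt\baseth)$ with $\mflt=z-y\mflo$. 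Neither step is a change of basis, which is exactly why both parameters $\mflo,\mflt$ survive. This exhibits every module of \ref{rank2first1} as some $D^{1}_{\rank N=2}(\lambda,\mflo,\mflt)$ and conversely, so parts (1) and (2) --- the admissibility for $s\le 2r-3$, the submodule $E(\baset,\baseth)$ when $s=2r-3$, and irreducibility when $s<2r-3$ --- are immediate from \ref{rank2first1} together with Lemma \ref{rank2invariant}.

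For part (3), I would take an isomorphism $T\colon D^{1}_{\rank N=2}(\lambda,\mflo,\mflt)\to D^{1}_{\rank N=2}(\lambda',\mflo',\mflt')$. Since a morphism intertwines the Frobenii, the eigenvalue multisets $\{p^{2}\lambda,p\lambda,\lambda\}$ and $\{p^{2}\lambda',p\lambda',\lambda'\}$ coincide; comparing the unique eigenvalue of smallest $p$-adic valuation gives $\lambda=\lambda'$, whence $[\phi]=[\phi']$. Then $[T]$ commutes with both $[\phi]$ and $[N]$, so Lemma \ref{rank2typeofP} forces $[T]$ to be a scalar multiple of the identity. A scalar fixes each subspace, so $T(\fil{s}D)=\fil{s}D'$ gives $E(\baset+\mflo\baseth)=E(\baset+\mflo'\baseth)$, i.e.\ $\mflo=\mflo'$, and then $T(\fil{r}D)=\fil{r}D'$ forces $(\mflt-\mflt')\baseth\in E(\baset+\mflo\baseth)$, i.e.\ $\mflt=\mflt'$; the converse is trivial since the identity is then an isomorphism. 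The computations are routine --- the only point needing care is the bookkeeping in the second paragraph, namely checking that $\baseth\not\in L_{2}$ forces a nonzero $\baseo$-coefficient so that the canonical form has exactly the two genuine invariants $\mflo,\mflt$, which is precisely what makes the rank $2$ classification so rigid.
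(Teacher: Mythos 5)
Your proof is correct and follows essentially the same route as the paper's: parts (1) and (2) are read off from \ref{rank2first1}, and part (3) follows by noting $\lambda=\lambda'$, invoking Lemma \ref{rank2typeofP} to force $[T]$ to be a scalar, and then comparing filtrations to get $\mflo=\mflo'$ and $\mflt=\mflt'$. The only difference is that you spell out what the paper leaves as ``immediate'' and ``easy to check'' --- in particular the derivation of the canonical form of $(L_1,L_2)$ by re-expressing generators rather than changing basis --- which is a faithful filling-in of the same argument, not a new one.
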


\begin{proof}
The part (1) and (2) are immediate from \ref{rank2first1}. For the part (3), assume that $T$ is an isomorphism from $D^{1}_{\rank  N=2}(\lambda,\mflo,\mflt)$ to $D^{1}_{\rank  N=2}(\lambda',\mflo',\mflt')$. Clearly, $\lambda=\lambda'$, and we know that $T$ is a scalar multiple of the identity by lemma \ref{rank2typeofP}. Since $T$ preserves the filtration, it is easy to check that $\mflo=\mflo'$ and $\mflt=\mflt'$. The converse is clear.
\end{proof}

The following example arises from \ref{rank2first2}.
\begin{exam}
A filtered $(\phi,N)$-module of Hodge type $(0,r,s)$
$$D^{2}_{\rank  N=2}=D^{2}_{\rank  N=2}(\lambda,\mflo,\mflt);$$
\begin{itemize}
\item $\fil{r} D=E(\baseo+\mflo\baset+\mflt\baseth,\baseth)$ and $\fil{s} D=E(\baseo+\mflo\baset+\mflt\baseth)$.
\item $
[N]=
\begin{small}\left(
  \begin{array}{ccc}
    0 & 0  & 0 \\
    1 & 0  & 0 \\
    0 & 1  & 0 \\
  \end{array}
\right)
\end{small}
$ and $
[\phi]=
\begin{small}\left(
  \begin{array}{ccc}
    p^{2}\lambda & 0  & 0 \\
    0 & p\lambda & 0 \\
    0 & 0 & \lambda \\
  \end{array}
\right)
\end{small}
$ for $\lambda$ in $E$.
\item $\mflo,\mflt\in E$, $\val\lambda=\frac{r+s-3}{3}$, and $s\geq2r+3$.
\end{itemize}
\end{exam}

\begin{prop} \label{class rank2 D2}
\begin{enumerate}
\item $D^{2}_{\rank N=2}$ represents admissible filtered $(\phi,N)$-modules with $\rank N=2$.
\item The corresponding representations to $D^{2}_{\rank N=2}$ are
     \begin{itemize}
     \item non-split reducible with submodule $E\baseth$ if $s=2r+3$ and
     \item irreducible if $s>2r+3$.
     \end{itemize}
\item $D^{2}_{\rank  N=2}(\lambda,\mflo,\mflt)$ is isomorphic to $D^{2}_{\rank  N=2}(\lambda',\mflo',\mflt')$ if and only if $\lambda=\lambda'$, $\mflo=\mflo'$, and $\mflt=\mflt'$.
\end{enumerate}
\end{prop}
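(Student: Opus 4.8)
The plan is to follow the template of Proposition~\ref{class rank2 D1}: parts (1) and (2) will come straight out of the case analysis in~\ref{rank2first2}, and the genuine content is the rigidity assertion of part~(3). First I would handle the filtration. In~\ref{rank2first2} we are in the subcase where neither $L_1$ nor $L_2$ is $\phi$- and $N$-invariant, where $\baseth\in L_2$, and where consequently $L_1\not\subset E(\baset,\baseth)$. Writing a generator of $L_1$ with its (necessarily nonzero) $\baseo$-coordinate normalized to $1$ gives $L_1=E(\baseo+\mflo\baset+\mflt\baseth)$, and since $\baseth\in L_2\supset L_1$ we may take $L_2=E(\baseo+\mflo\baset+\mflt\baseth,\baseth)$. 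This is precisely the filtration recorded in the Example for $D^{2}_{\rank N=2}$, so no normalization is lost. The admissibility bookkeeping and the identification of the proper invariant subspaces $E\baseth$ and $E(\baset,\baseth)$ via Lemma~\ref{rank2invariant}, already carried out in~\ref{rank2first2}, then yield (1) and the dichotomy ($E\baseth$ a subrepresentation when $s=2r+3$, irreducibility when $s>2r+3$) of (2) verbatim.

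The substance lies in part~(3), and the conceptual point is that, in contrast with the $N=0$ and $\rank N=1$ cases, there is no freedom left to absorb $\mflo,\mflt$: by Lemma~\ref{rank2typeofP} the only endomorphisms of $D$ commuting with both $[\phi]$ and $[N]$ are the scalars, so $\mflo$ and $\mflt$ are genuine moduli. Concretely, I would take an isomorphism $T\colon D^{2}_{\rank N=2}(\lambda,\mflo,\mflt)\to D^{2}_{\rank N=2}(\lambda',\mflo',\mflt')$. It preserves the eigenvalues of $\phi$, which forces $\lambda=\lambda'$ and hence $[\phi]=[\phi']$ and $[N]=[N']$ as matrices; thus $T$ commutes with $[\phi]$ and $[N]$ and is a scalar by Lemma~\ref{rank2typeofP}. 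A scalar fixes every line, so the requirement $T(\fil{s}D)=\fil{s}D'$, namely $E(\baseo+\mflo\baset+\mflt\baseth)=E(\baseo+\mflo'\baset+\mflt'\baseth)$, forces $\mflo=\mflo'$ and $\mflt=\mflt'$ by comparing coordinates (both generators carrying $\baseo$-coefficient $1$). The converse is immediate.

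I expect the only step needing a moment of care to be the observation that, once $\lambda=\lambda'$ is extracted from the eigenvalues, the intertwining conditions $\phi'\circ T=T\circ\phi$ and $N'\circ T=T\circ N$ become literal commutation relations $[\phi]T=T[\phi]$ and $[N]T=T[N]$, which is what lets Lemma~\ref{rank2typeofP} apply and pin $T$ down to a scalar. After that, everything is forced, and the proof is essentially identical to that of Proposition~\ref{class rank2 D1}.
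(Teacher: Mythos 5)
Your proposal is correct and follows essentially the same route as the paper: parts (1) and (2) are read off from \ref{rank2first2}, and part (3) is proved by extracting $\lambda=\lambda'$ from the eigenvalues, invoking Lemma \ref{rank2typeofP} to force the isomorphism to be a scalar, and then comparing generators of $\fil{s}D$ (this is exactly the argument of Proposition \ref{class rank2 D1}, which the paper cites verbatim for this case). Your write-up merely makes explicit the filtration normalization and the commutation step that the paper leaves implicit.
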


\begin{proof}
The part (1) and (2) are immediate from \ref{rank2first2}. For the part (3), use the same argument as in Proposition \ref{class rank2 D1}.
\end{proof}

The following example arises from \ref{rank2first3}.
\begin{exam}
A filtered $(\phi,N)$-module of Hodge type $(0,r,s)$
$$D^{3}_{\rank  N=2}=D^{3}_{\rank  N=2}(\lambda,\mflo,\mflt,\mflth);$$
\begin{itemize}
\item $\fil{r} D=E(\baseo+\mflo\baset+\mflt\baseth,\baset+\mflth\baseth)$ and $\fil{s} D=E(\baseo+\mflo\baset+\mflt\baseth)$.
\item $
[N]=
\begin{small}\left(
  \begin{array}{ccc}
    0 & 0  & 0 \\
    1 & 0  & 0 \\
    0 & 1  & 0 \\
  \end{array}
\right)
\end{small}
$ and $
[\phi]=
\begin{small}\left(
  \begin{array}{ccc}
    p^{2}\lambda & 0  & 0 \\
    0 & p\lambda & 0 \\
    0 & 0 & \lambda \\
  \end{array}
\right)
\end{small}
$ for $\lambda$ in $E$.
\item $\mflo,\mflt,\mflth\in E$ and $\val\lambda=\frac{r+s-3}{3}$.
\end{itemize}
\end{exam}

\begin{prop} \label{class rank2 D3}
\begin{enumerate}
\item $D^{3}_{\rank N=2}$ represents admissible filtered $(\phi,N)$-modules with $\rank N=2$.
\item The corresponding representations to $D^{3}_{\rank N=2}$ are
     \begin{itemize}
     \item non-split reducible with submodules $E\baseth$ and $E(\baset,\baseth)$ if $s=2$ and
     \item irreducible if $s>2$.
     \end{itemize}
\item $D^{3}_{\rank  N=2}(\lambda,\mflo,\mflt,\mflth)$ is isomorphic to $D^{3}_{\rank  N=2}(\lambda',\mflo',\mflt',\mflth')$ if and only if $\lambda=\lambda'$, $\mflo=\mflo'$, $\mflt=\mflt'$, and $\mflth=\mflth'$.
\end{enumerate}
\end{prop}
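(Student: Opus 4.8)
The plan is to follow the template of Propositions \ref{class rank2 D1} and \ref{class rank2 D2}, of which this is the most generic rank~$2$ analogue. For parts (1) and (2) I would argue that the assertions are immediate from the computation in \ref{rank2first3}. There $L_{1}$ is a line not contained in $E(\baset,\baseth)$, so a generator may be normalized to have $\baseo$-coefficient $1$, giving $L_{1}=E(\baseo+\mflo\baset+\mflt\baseth)$; and $L_{2}$ is a plane containing $L_{1}$ with $\baseth\notin L_{2}$, so its second generator may be taken with vanishing $\baseo$-component and nonzero $\baset$-component (the latter forced by $\baseth\notin L_{2}$), giving $\baset+\mflth\baseth$ after normalization. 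Thus every module of \ref{rank2first3} already has exactly the stated shape, with no residual change of basis available, and its admissibility and (ir)reducibility are precisely those recorded there. I would also remark that for integral weights $0<r<s$ the admissibility bound $2s\ge r+3$ is automatic, with equality exactly when $r=1,s=2$ (since $r\le s-1\le 2s-3$), which is why no extra constraint on $r,s$ appears and why part (2) splits into the cases $s=2$ and $s>2$.

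For part (3), let $T$ be an isomorphism from $D^{3}_{\rank N=2}(\lambda,\mflo,\mflt,\mflth)$ to $D^{3}_{\rank N=2}(\lambda',\mflo',\mflt',\mflth')$. First I would match Frobenius eigenvalues: $T$ intertwines $\phi$ and $\phi'$, so $\{\lambda,p\lambda,p^{2}\lambda\}=\{\lambda',p\lambda',p^{2}\lambda'\}$, and since the valuations of these eigenvalues are strictly increasing this forces $\lambda=\lambda'$. Then $[\phi]=[\phi']$ and $[N]=[N']$ as matrices, so Lemma \ref{rank2typeofP} shows $T$ is a scalar multiple of the identity.

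Because $T$ is scalar it fixes every subspace, so compatibility with the filtrations becomes equality of the filtered subspaces themselves. Comparing $\fil{s}D$ gives $E(\baseo+\mflo\baset+\mflt\baseth)=E(\baseo+\mflo'\baset+\mflt'\baseth)$, whence $\mflo=\mflo'$ and $\mflt=\mflt'$ (the $\baseo$-coefficients being $1$); comparing $\fil{r}D$ then requires $\baset+\mflth\baseth$ to lie in the span of $\baseo+\mflo\baset+\mflt\baseth$ and $\baset+\mflth'\baseth$, which a one-line coefficient comparison turns into $\mflth=\mflth'$. The converse is clear, as the identity map is an isomorphism once all four invariants agree. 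The proof is routine throughout; the only point I would watch is the final filtration comparison, making sure all three of $\mflo,\mflt,\mflth$ are genuinely pinned down with no leftover freedom --- this is exactly where the rigidity of Lemma \ref{rank2typeofP} (forcing $T$ to be scalar rather than a more general intertwiner) does the essential work.
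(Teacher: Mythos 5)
Your proposal is correct and takes essentially the same route as the paper: parts (1) and (2) are read off from \ref{rank2first3} (your normalization of $L_{1}$ and $L_{2}$ just makes explicit why every module there has the stated shape), and part (3) is the paper's argument verbatim --- match eigenvalues to get $\lambda=\lambda'$, invoke Lemma \ref{rank2typeofP} to force $T$ to be scalar, then compare $\fil{s}D$ and $\fil{r}D$ to pin down $\mflo,\mflt,\mflth$. The extra details you supply (the automatic bound $2s\ge r+3$ for integral $0<r<s$ and the final coefficient comparison) are accurate fillings-in of what the paper leaves as ``immediate'' and ``easy to check.''
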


\begin{proof}
The part (1) and (2) are immediate from \ref{rank2first3}. For the part (3), use the same argument as in Proposition \ref{class rank2 D1}.
\end{proof}

\begin{rema}
According to $D^{1}_{\rank N=2}$, $D^{2}_{\rank N=2}$, and $D^{3}_{\rank N=2}$, there are no irreducible admissible filtered $(\phi,N)$-modules of Hodge--Tate weights $(0,1,2)$ and with $\rank N=2$. But this is not a surprising result. For the $2$-dimensional case, there are no irreducible, semi-stable, non-crystalline representations of $G_{\QP}$ with Hodge--Tate weights $(0,1)$.
\end{rema}

\begin{prop}
Let $i,j\in\{1,2,3\}$. If $D^{i}_{\rank N=2}$ is isomorphic to $D^{j}_{\rank N=2}$, then $i=j$.
\end{prop}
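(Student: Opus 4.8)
The plan is to exploit the fact that the three modules $D^{1}_{\rank N=2}$, $D^{2}_{\rank N=2}$, $D^{3}_{\rank N=2}$ carry \emph{exactly} the same Frobenius matrix $[\phi]=\mathrm{diag}(p^{2}\lambda,p\lambda,\lambda)$ and the same monodromy matrix $[N]$; they differ only in their Hodge filtrations (and in the admissible ranges of $r,s$). First I would observe that any isomorphism $\eta\colon D^{i}_{\rank N=2}\to D^{j}_{\rank N=2}$ is, by definition, a $(K_{0}\otimes_{\QP}E)$-linear map intertwining the two Frobenius maps and the two monodromy operators. Since both pairs of matrices coincide, the matrix $[\eta]$ satisfies $[\eta][\phi]=[\phi][\eta]$ and $[\eta][N]=[N][\eta]$, so Lemma \ref{rank2typeofP} forces $[\eta]$ to be a scalar multiple of the identity.

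Second, I would use that a scalar multiple of the identity fixes every subspace of $D$. Hence, for such an $\eta$, compatibility with the Hodge filtration is equivalent to the literal equalities $\filr D^{i}_{\rank N=2}=\filr D^{j}_{\rank N=2}$ and $\fils D^{i}_{\rank N=2}=\fils D^{j}_{\rank N=2}$ as subspaces of $D=E(\baseo,\baset,\baseth)$. This reduces the proposition to checking that the filtrations recorded in the three examples can never coincide when $i\neq j$; in particular no actual computation with $\phi$ or $N$ remains, only linear algebra of the filtration lines and planes.

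Third, I would carry out this comparison. The line $\fils D^{1}_{\rank N=2}=E(\baset+\mflo\baseth)$ lies inside $E(\baset,\baseth)$, whereas $\fils D^{2}_{\rank N=2}$ and $\fils D^{3}_{\rank N=2}$ both have the shape $E(\baseo+\mflo\baset+\mflt\baseth)$ and thus have nonzero $\baseo$-coefficient; they therefore cannot equal $\fils D^{1}_{\rank N=2}$, ruling out $D^{1}_{\rank N=2}\cong D^{2}_{\rank N=2}$ and $D^{1}_{\rank N=2}\cong D^{3}_{\rank N=2}$. (As a cross-check, the admissible ranges $s\leq 2r-3$ for $D^{1}_{\rank N=2}$ and $s\geq 2r+3$ for $D^{2}_{\rank N=2}$ are already disjoint.) The remaining and most delicate case is $D^{2}_{\rank N=2}$ versus $D^{3}_{\rank N=2}$, since here the lines $\fils D$ have the same shape and could \emph{a priori} agree; this is the step I expect to be the main obstacle. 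I would separate them at the level of $\filr D$: noting that $\filr D^{2}_{\rank N=2}=E(\baseo+\mflo\baset+\mflt\baseth,\baseth)$ contains $\baseth$, while $\filr D^{3}_{\rank N=2}=E(\baseo+\mflo\baset+\mflt\baseth,\baset+\mflth\baseth)$ does not (solving $a(\baseo+\mflo\baset+\mflt\baseth)+b(\baset+\mflth\baseth)=\baseth$ forces $a=0$, then $b=0$, then $0=1$). Thus $\filr D^{2}_{\rank N=2}\neq\filr D^{3}_{\rank N=2}$, so $D^{2}_{\rank N=2}\not\cong D^{3}_{\rank N=2}$, and the proposition follows.
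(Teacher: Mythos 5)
Your proof is correct and takes essentially the same route as the paper: use Lemma \ref{rank2typeofP} to force any isomorphism to be a scalar multiple of the identity, and then note that no scalar can carry the filtration of one module onto that of another. The paper disposes of the pair $(1,2)$ by the disjointness of the parameter ranges and leaves the remaining filtration comparisons as ``easy to check,'' whereas you carry them out explicitly (the nonzero $\baseo$-coefficient of $\fil{s}D$ separates $D^{1}_{\rank N=2}$ from $D^{2}_{\rank N=2}$ and $D^{3}_{\rank N=2}$, and membership of $\baseth$ in $\fil{r}D$ separates $D^{2}_{\rank N=2}$ from $D^{3}_{\rank N=2}$), which is a sound filling-in of the same argument.
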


\begin{proof}
$D_{\rank N=2}^{1}$ is not isomorphic to $D^{2}_{\rank N=2}$, since $D^{1}_{\rank N=2}$ is defined for $s\leq2r-4$ and $D^{2}_{\rank N=2}$ for $s\geq2r+4$. For the other pairs, assume that $T$ is an isomorphism from $D^{i}_{\rank N=2}$ to $D^{j}_{\rank N=2}$. For each $k$, we let $\vphi_{k}$ be the Frobenius map for $D^{k}_{\rank N=2}$. Then obviously $\phi_{i}=\phi_{j}$. By Lemma \ref{rank2typeofP}, we know that $T$ is a scalar multiple of the identity. However, for each pair $(i,j)$ with $i\not=j$, it is easy to check that any scalar multiple of the identity can not preserve the filtration.
\end{proof}

\begin{prop}
Every $3$-dimensional semi-stable representation of $G_{\QP}$ with regular Hodge--Tate weights and with $\rank N=2$ is isomorphic to a representation corresponding to some $D^{i}_{\rank N=2}$ up to twist by a power of the cyclotomic character.
\end{prop}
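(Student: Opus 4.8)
The plan is to run exactly the same completeness argument used for the crystalline and $\rank N=1$ cases, now feeding in the single-type analysis of Section~4. First I would invoke the Colmez--Fontaine equivalence \cite{CF}: a $3$-dimensional semi-stable representation $V$ of $G_{\QP}$ with regular Hodge--Tate weights corresponds to an admissible filtered $(\phi,N)$-module $\Dst(V)$ with distinct Hodge--Tate weights. Twisting $V$ by a power $\varepsilon^{n}$ of the cyclotomic character shifts all the Hodge--Tate weights by $-n$ without altering the monodromy operator, so $\rank N$ is unchanged and, after a suitable twist, the Hodge type becomes $(0,r,s)$ with $0<r<s$. Thus it suffices to show that every admissible filtered $(\phi,N)$-module of Hodge type $(0,r,s)$ with $\rank N=2$ is isomorphic to one of $D^{1}_{\rank N=2}$, $D^{2}_{\rank N=2}$, or $D^{3}_{\rank N=2}$.

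Next I would recall that, on a $3$-dimensional space, a rank-$2$ nilpotent $N$ is a single Jordan block, so a basis can be chosen with $N\baseo=\baset$, $N\baset=\baseth$, and $N\baseth=0$, as at the start of Section~4. The relation $N\phi=p\phi N$ then forces $\phi$ into the shape displayed there, and the opening lemma of Section~4 produces a change of basis preserving $[N]$ that diagonalizes $[\phi]$ to $\mathrm{diag}(p^{2}\lambda,p\lambda,\lambda)$. Hence there is a \emph{single} type of pair $(\phi,N)$ up to the scalar $\lambda$, and admissibility pins down $\val\lambda=\frac{r+s-3}{3}$. All that remains free is the Hodge filtration, i.e.\ the flag $L_{1}\subset L_{2}$.

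I would then appeal to the casework already carried out in Section~4. By Lemma~\ref{rank2invariant} the only nontrivial proper $\phi$- and $N$-invariant subspaces are $E\baseth$ and $E(\baset,\baseth)$; admissibility immediately excludes the possibilities $L_{1}=E\baseth$ and $L_{2}=E(\baset,\baseth)$. The remaining relative positions of $L_{1}$ and $L_{2}$ with respect to these two invariant subspaces are precisely the three configurations treated in \ref{rank2first1}, \ref{rank2first2}, and \ref{rank2first3}; using the centralizer computation of Lemma~\ref{rank2typeofP} to normalize the filtration, these yield exactly the families $D^{1}_{\rank N=2}$, $D^{2}_{\rank N=2}$, and $D^{3}_{\rank N=2}$ listed in Subsection~\ref{ssec:list of N=2}. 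Since every admissible filtered $(\phi,N)$-module with $\rank N=2$ falls under one of these configurations, the list is complete and the proof is finished.

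The only point demanding care is the \emph{exhaustiveness} of this casework: one must check that every flag $L_{1}\subset L_{2}$ has been classified according to its intersection behaviour with $E\baseth$ and $E(\baset,\baseth)$, and that the normal forms do not overlap. This is the analogue of the main obstacle in the earlier sections, but here it is at its mildest, because Lemma~\ref{rank2typeofP} shows the centralizer of $(\phi,N)$ consists only of scalars; there is consequently almost no freedom left for further reduction, which both fixes the normal forms rigidly and makes the completeness verification transparent. The inequalities forced between $r$ and $s$ in each case (such as $s\le 2r-3$, $s\ge 2r+3$, and $2s\ge r+3$) are already recorded in the corresponding examples, so no additional computation is needed.
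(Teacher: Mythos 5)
Your proposal is correct and takes essentially the same route as the paper: the paper's own proof simply observes that the casework of Section~4 (excluding the invariant choices of $L_1$ and $L_2$ by admissibility, then treating the three remaining configurations \ref{rank2first1}, \ref{rank2first2}, \ref{rank2first3}) already exhausts all admissible filtered $(\phi,N)$-modules with $\rank N=2$, and that $D^{1}_{\rank N=2}$, $D^{2}_{\rank N=2}$, $D^{3}_{\rank N=2}$ represent them, with the reduction to Hodge type $(0,r,s)$ via Colmez--Fontaine and twisting left implicit from the introduction. Your write-up just makes those implicit steps explicit.
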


\begin{proof}
We found all the admissible filtered $(\phi,N)$-modules with $\rank N=2$ in the previous subsection. Since the list of filtered modules in this subsection represents all of the modules in the previous subsection, we are done.
\end{proof}

\bibliographystyle{alpha}

\end{document}